\DeclareMathOperator\Tr{Tr}
\DeclareMathOperator\diag{diag}
\DeclareMathOperator\Hess{Hess}
\DeclareMathOperator\Int{Int}
\newcommand{\tot}{\tfrac{1}{2}} 
\newcommand{\oo}[1]{\tfrac{1}{#1}}
\newcommand{\scl}[2]{\langle #1,#2 \rangle} 
\newcommand{\sabs}[1]{| #1 |} 
\newcommand{\abs}[1]{\left| #1 \right|} 
\newcommand{\babs}[1]{\big| #1 \big|} 
\newcommand{\ab}[1]{\langle #1 \rangle} 
\newcommand{\set}[1]{\{#1\}} 
\newcommand{\sets}[2]{\set{#1\,:\,#2}} 
\newcommand{\Bset}[1]{\Big\{#1\Big\}} 
\newcommand{\Bsets}[2]{\Bset{#1\,:\,#2}} 
\newcommand{\ind}[1]{ {\mathbf 1}_{{#1}}} 
\newcommand{\inds}[1]{ {\mathbf 1}_{\set{#1}}} 
\newcommand{\seq}[1]{\set{#1_n}_{n\in\N}} 
\newcommand{\norm}[1]{{||#1||}} 
\newcommand{\ft}[2]{#1\dots#2}
\renewcommand{\ft}[2]{#1,\dots,#2}
\newcommand{\prf}[1]{ \{ #1 \}_{t\in [0,T]}}
\newcommand{\downto}{\searrow}
\renewcommand{\implies}{\rightarrow}
\providecommand{\R}{} \renewcommand{\R}{{\mathbb R}}
\providecommand{\N}{} \renewcommand{\N}{{\mathbb N}}
\newcommand{\PP}{{\mathbb P}}
\newcommand{\EE}{{\mathbb E}}
\newcommand{\FFF}{{\mathbb F}}
\newcommand{\bmu}{\boldsymbol{\mu}}
\newcommand{\bnu}{\boldsymbol{\nu}}
\newcommand{\eps}{\varepsilon}
\newcommand{\ld}{\lambda}
\newcommand{\Ld}{\Lambda}
\newcommand{\vp}{\varphi}
\newcommand{\el}{{\mathbb L}} 
\newcommand{\lone}{\el^1}
\newcommand{\ltwo}{\el^2}
\newcommand{\linf}{\el^{\infty}}
\newcommand{\eqm}{\stackrel{m}{=}}
\newcommand{\pds}[1]{\frac{\partial}{\partial #1}}
\newcommand{\tpds}[1]{\tfrac{\partial}{\partial #1}}
\newcommand{\tds}[1]{\tfrac{d}{d #1}}
\newcommand{\tddt}{\tds{t}}
\newcommand{\tpddt}{\tpds{t}}
\newcommand{\pdm}[2]{\frac{\partial^2}{\partial #1\partial #2}}
\newcommand{\define}[1]{{\textbf{#1}}}
\newcounter{notecounter}
\newcommand{\efor}{\text{ for }}
\newcommand{\eand}{\text{ and }}
\newcommand{\ewhere}{\text{ where }}
\newcommand{\ewhen}{\text{ when }}
\newcommand{\itos}{It\^ o's}
\newcommand\bsa{{\boldsymbol{a}}}
\newcommand\bsb{{\boldsymbol{b}}}
\newcommand\bsc{{\boldsymbol{c}}}
\newcommand\bsfe{{\boldsymbol{\mathsf{e}}}}
\newcommand\bsf{{\boldsymbol{f}}}
\newcommand\bsg{{\boldsymbol{g}}}
\newcommand\bsh{{\boldsymbol{h}}}
\newcommand\bsfk{{\boldsymbol{\mathsf{k}}}}
\newcommand\bsfl{{\boldsymbol{\mathsf{l}}}}
\newcommand\bsfd{{\boldsymbol{\mathsf{d}}}}
\newcommand\bsfL{{\boldsymbol{\mathsf{L}}}}
\newcommand\bsp{{\boldsymbol{p}}}
\newcommand\bsq{{\boldsymbol{q}}}
\newcommand\sfq{{\mathsf{q}}}
\newcommand\bsfq{{\boldsymbol{\mathsf{q}}}}
\newcommand\bss{{\boldsymbol{s}}}
\newcommand\bsfs{{\boldsymbol{\mathsf{s}}}}
\newcommand\bsu{{\boldsymbol{u}}}
\newcommand\bsv{{\boldsymbol{v}}}
\newcommand\bsw{{\boldsymbol{w}}}
\newcommand\bsx{{\boldsymbol{x}}}
\newcommand\tx{{\tilde{x}}}
\newcommand\bsy{{\boldsymbol{y}}}
\newcommand\bsz{{\boldsymbol{z}}}
\newcommand\bsA{{\boldsymbol{A}}}
\newcommand\tA{{\tilde{A}}}
\newcommand\sC{{\mathcal C}}
\newcommand\tD{{\tilde{D}}}
\newcommand\sF{{\mathcal F}}
\newcommand\bsF{{\boldsymbol{F}}}
\newcommand\bsG{{\boldsymbol{G}}}
\newcommand\sL{{\mathcal L}}
\newcommand\bsL{{\boldsymbol{L}}}
\newcommand\sP{{\mathcal P}}
\newcommand\tT{{\tilde{T}}}
\newcommand\bsW{{\boldsymbol{W}}}
\newcommand\bsX{{\boldsymbol{X}}}
\newcommand\bsY{{\boldsymbol{Y}}}
\newcommand\bsZ{{\boldsymbol{Z}}}
\numberwithin{equation}{section}
\theoremstyle{plain}                
\newtheorem{theorem}{Theorem}[section]
\newtheorem{lemma}[theorem]{Lemma}
\newtheorem{proposition}[theorem]{Proposition}
\newtheorem{corollary}[theorem]{Corollary}
\theoremstyle{definition}           
\newtheorem{definition}[theorem]{Definition}
\newtheorem{assumption}[theorem]{Assumption}
\theoremstyle{remark}
\newtheorem{remark}[theorem]{Remark}
\DeclareMathOperator{\supp}{supp}
\newcommand{\bsxi}{\boldsymbol{\xi}}
\newcommand{\bsig}{\boldsymbol{\sigma}}
\renewcommand{\tx}{t,\bsx}
\newcommand{\txp}{t',\bsx'}
\newcommand{\txz}{t_0,\bsx_0}
\newcommand{\TR}{[0,T]\times \R^d}
\newcommand{\TBn}{[0,T]\times B_n}
\newcommand{\nvl}{\norm{\bsv}_{\linf}}
\newcommand{\lec}{\leq_C}
\newcommand{\gec}{\geq_C}
\newcommand{\dl}{\delta_{\usig/4}}
\newcommand{\tdl}{\delta_{\osig}}
\newcommand{\tvp}{\tilde{\vp}}
\newcommand{\tchi}{\tilde{\chi}}
\newcommand{\tpsi}{\tilde{\psi}}
\newcommand{\btau}{\bar{\tau}}
\newcommand{\peps}{p_{\eps}}
\renewcommand{\tt}{\tilde{t}}
\def\mathclap#1{\text{\hbox to 0pt{\hss$\mathsurround=1pt#1$\hss}}}
\newcommand{\iintl}[1]{\iint\limits_{\mathclap{#1}}}
\newcommand{\intl}[1]{\int\limits_{\mathclap{#1}} }
\newcommand{\ovps}[1]{\overline{#1}^{\psi}}
\newcommand{\ovrt}{\ovps{\bsv(t,\cdot)}}
\newcommand{\ogr}{\ovps{\bsg}}
\newcommand{\uC}{\underline{C}}
\newcommand{\oC}{\overline{C}}
\newcommand{\usig}{\underline{\sigma}}
\newcommand{\osig}{\overline{\sigma}}
\newcommand{\bmo}{\textrm{bmo}}
\newcommand{\BMO}{\textrm{BMO}}
\newcommand{\bszeta}{\boldsymbol{\zeta}}
\newcommand{\bsld}{\boldsymbol{\ld}}
\renewcommand\sfq{{\mathsf{q}}}
\newcommand\sfs{{\mathsf{s}}}
\newcommand{\Ca}{C^{\set{\alpha_n}}}
\newcommand{\Caa}{C^{\{\alpha_n\}}}
\newcommand{\Caap}{C^{\set{\alpha'_n}}}
\newcommand{\tbsvl}{\hat{\bsv}^{(l)}}
\newcommand{\tbsY}{\hat{\bsY}}
\newcommand{\tbsYl}{\hat{\bsY}^{(l)}}
\newcommand{\bsYm}{\bsY^{(m)}}
\newcommand{\bsYmp}{\bsY^{(m')}}
\newcommand{\bsZm}{\bsZ^{(m)}}
\newcommand{\bsZmp}{\bsZ^{(m')}}
\newcommand{\umo}{\text{uBMO}}
\newcommand{\tbsv}{\hat{\bsv}}
\newcommand{\Ly}{\mathbf{Ly}}
\newcommand{\lLy}{\mathbf{Ly}_{loc}}
\newcommand{\BF}{\mathbf{BF}}
\newcommand{\BFa}{{\mathbf{BF}}_{e}}
\renewcommand{\bsX}{X}
\renewcommand{\bsx}{x}
\renewcommand{\bsb}{b}
\renewcommand{\bsig}{\sigma}
\renewcommand{\bsW}{W}
\newcommand{\one}{\boldsymbol{1}}
\renewcommand{\seq}[1]{\{#1_n\}}
\renewcommand{\bsa}{a}
\renewcommand{\bsxi}{\xi}
\newcommand{\BFckq}{\BF(\seq{C}, \seq{\kappa}, \seq{q})}
\newcommand{\bssa}{\boldsymbol{a}}
\newcommand{\bseta}{\boldsymbol{\eta}}
\let\asum\sum
\renewcommand{\sum}{\textstyle\asum}
\title[Globally solvable BSDE systems]{A class of globally solvable
Markovian quadratic BSDE systems and applications}\thanks{The authors are
grateful to Ying Hu for inspiring discussions, and both referees and the associated editor for their valuable suggestions.
  The second author acknowledges the support
  by the National Science Foundation under Grants No.,
  No.~DMS-1107465 (2012 - 2017) and No.~DMS-1516165 (2015-2018). Any
  opinions, findings and
  conclusions or recommendations expressed in this material are those of the
  author(s) and do not necessarily reflect the views of the National Science
  Foundation (NSF)}
\author{Hao Xing}
\address[Hao Xing]{Department of Statistics,
London School of Economics and Political Science,
London, UK}
\email{h.xing@lse.ac.uk}
\author{Gordan {\v Z}itkovi{\' c}}
\address[Gordan {\v Z}itkovi{\' c}]{Department of Mathematics, The
University of Texas at Austin, Austin, TX, USA}
\email{gordanz@math.utexas.edu}
\subjclass{
Primary: 60H30, 
60G44, 
60G99, 
Secondary:
58J65, 
91A15, 
91B51 
}
\keywords{BSDE, systems of BSDE, quadratic nonlinearity, stochastic
equilibriu, martingales on manifolds, non-zero-sum stochastic games}
\begin{document}
\date{\today}
\begin{abstract}
We establish global existence and uniqueness for a wide class of Markovian
systems of backward stochastic differential equations (BSDE) with quadratic
nonlinearities.  This class
is characterized by an abstract structural assumption on the generator,
an a-priori local-boundedness property, and a locally-H\" older-continuous
terminal condition. We present easily verifiable sufficient
conditions for these assumptions and treat several applications, including stochastic
equilibria in incomplete financial markets, stochastic
differential games, and martingales on Riemannian manifolds.
\end{abstract}
\maketitle

\thispagestyle{empty}

\section{Introduction}

\subsection{Backward Stochastic Differential Equations}
Having appeared
first in their linear variant in \cite{Bis73},
backward stochastic differential equations
(BSDE) have been a subject of extensive study since the seminal paper
\cite{Pardoux-Peng}. Given
a time horizon $T\in (0,\infty)$
and a filtered
probability space $(\Omega,\sF,\prf{\sF_t},\PP)$ which satisfies the usual
conditions,
these equations take the following form
\begin{align}
   \label{equ:BSDE-gen}
   \bsY_t = \bsG + \int_t^T \bsf(s,\bsY_s, \bsZ_s)\,ds - \int_t^T \bsZ_s\, dW_s,
\end{align}
where $W$ is a $d$-dimensional $\prf{\sF_t}$-Brownian
motion, $\bsG\in \sF_T$ an $N$-dimensional  random vector and
$\bsf$ a (possibly random) function, called the
generator. A solution to such an equation is a pair $(\bsY, \bsZ)$
consisting of an $N$-dimensional
semimartingale $\bsY$ and an $N\times d$-dimensional adapted process $\bsZ$
which, together, satisfy \eqref{equ:BSDE-gen} pathwise, a.s.

\smallskip

The existence- and uniqueness theory is well developed in the scalar ($N=1$)
setting. It originated with the Lipschitz-generator case treated in
\cite{Pardoux-Peng}, continued in \cite{Lepeltier-SanMartin}
for merely
continuous generators with linear growth, and culminated with
the treatment of quadratic nonlinearities in \cite{Kobylanski} and superquadratic nonlinearities in \cite{DelHuBao11}. A host of
extensions, simplifications, and applications, too numerous to list here,
appeared in the literature since.

\smallskip

On the other hand, systems ($N>1$) of BSDE  - the focus of the present
paper -  pose a greater challenge. Their successful treatment is one
of the most important (and long-standing) open problems in the entire theory,
as mentioned already by Peng in \cite{Pen99}.  While the case of a Lipschitz generator was treated already in \cite{Pardoux-Peng}, the
general, nonlinear, quadratically-growing case is still open. One of
the most well-known general-purpose results has been established in
\cite{Tevzadze}, where the generator has general quadratic growth, but the terminal condition is subject to a restrictive
``smallness'' assumption imposed on its $\linf$-norm.
As is the case in the theory of systems of parabolic PDEs with quadratic nonlinearities,
a smallness assumption is often made and seems to be necessary for
existence in full generality, in absence of any further, structural assumptions; cf. \cite{Str81} and \cite{Chang-Ding-Ye}.
A simple nonexistence example given in \cite{FreRei11} illustrates
this point quite effectively.

\smallskip

Positive results without any smallness assumptions have been established in some special cases. Focusing only
on the general existence results in the multidimensional case pertinent to our
findings, let us mention just a few: \cite{Tan03} deals with linear-quadratic
systems, \cite{CheNam15} builds a structure around the ability to change
the probability measure in the Markovian case, and in \cite{HuTan15}, a
slightly less general, ``diagonally''-quadratic case is treated, but
without the Markovian assumption.

\smallskip

Our present work was motivated not only by the unresolved status of the
basic existence and uniqueness problems for quadratic systems of BSDE,
but also by a number of applications such systems have.
Indeed, in addition to their innate
mathematical interest, BSDE appear in numerous applications, including
stochastic representations for partial differential
equations, optimal stochastic control and  stochastic games (see, e.g.
\cite{El-Karoui-Hamadene}, \cite{Cheridito-et.al},
\cite{Espinosa-Touzi},
and \cite{Kramkov-Pulido}).
Moreover, as has been shown in \cite{Kardaras-Xing-Zitkovic}, arguably the
most important open problem in stochastic financial economics -
namely, the so-called
incomplete-market equilibrium problem -  can be reduced to a quadratic system
of BSDE (which we solve in the present paper).
Quadratic systems of BSDE also appear in geometry, most prominently in
the study of harmonic functions between manifolds and the construction of
martingales on curved spaces, (see,
e.g., \cite{Darling}, \cite{Bla05} and \cite{Bla06}).

\subsection{Our contributions - the main result.} We focus on
a Markovian framework, where the randomness in the generator and the
terminal condition is supplied by a (forward) $d$-dimensional
non-degenerate diffusion $X$. Our terminal condition is of the form
$\bsG=\bsg(\bsX_T)$ and the generator $\bsf(t,X_t,\bsY_t, \bsZ_t)$ grows at
most quadratically in $\bsZ$. We formulate and work with a novel structural
condition on $\bsf$ which requires the existence of what we term a
\emph{Lyapunov function}.
Loosely speaking, a
Lyapunov function $h$ has the property that $h(\bsY)$ is a ``strict''
submartingale, a-priori, for any solution $\bsY$ of \eqref{equ:BSDE-gen}
(actually, we consider a pair of functions, but we
focus on only one of them in this introduction).
Under the quadratic-growth assumption,
Lyapunov functions always exist in the $1$-dimensional case and can be
found in the class of exponential functions;
this explains the success of the exponential transform in the
$1$-dimensional setting. The multidimensional case appears to be much more
difficult, but as we show, widely applicable sufficient conditions can be
given.

\smallskip

Our main result states that when a Lyapunov function exists and an
additional a-priori local-boundedness condition holds, the equation
\eqref{equ:BSDE-gen} admits a Markovian solution as soon as $\bsg$
belongs to an appropriately-defined local H\" older space, \emph{without
any assumptions of the ``smallness'' type} on the terminal condition, the
driver, or the time horizon.  Moreover, under an additional mild assumption,
this solution turns out to be unique in a wide class of stochastic
processes.

\smallskip

In contrast to the bulk of the literature on multidimensional BSDE, we
rely on deep analytic results for systems of PDEs and combine them with
probabilistic techniques. More precisely, we use powerful ideas first
introduced to study regularity theory for systems of parabolic PDEs, most
notably the so-called \emph{partial-regularity} estimates and the
\emph{hole-filling} technique of \cite{Widman1971},  initially developed
for elliptic systems and later extended to parabolic systems of PDEs in
\cite{Str81}, which was later used in \cite{Bensoussan-Frehse}.  Partial-regularity and hole-filling
techniques can be seen as a replacement for order-based
arguments involving, for instance,  the maximum principle (comparison
principle) or the related notion of a viscosity solution; such methods,
unfortunately, fail miserably in the multidimensional case (see
\cite{HuPen06}). However we represent this analytic theory entirely in a
probabilistic fashion. This allows us to implement the
hole-filling technique only assuming the existence of a Lyapunov function.
This strategy not only decouples the hole-filling technique from specific
structural conditions on the nonlinearity such as the ``smallness"
condition in \cite{Str81} and the structural condition in
\cite{Bensoussan-Frehse}. It also links naturally to the notion of
\emph{geodesical convexity} in the studies of martingales on manifolds (see
Section \ref{subsec:mart}). The probabilistic treatment also reduces some
technical estimates from its the analytic counterpart. In particular, by replacing
integration-by-parts techniques by \itos{} formula, we 
bypass the estimates on derivatives of
Gaussian transition densities present in \cite{Str81} and
\cite{Bensoussan-Frehse}.

\smallskip

A major difficulty in adopting the techniques from the theory of systems of
PDE lies in
the choice of the regularity class of the Markov representatives, i.e.,
functions $\bsv$ such that $\bsY_t=\bsv(t,X_t)$ is a solution. On one hand,
the classical notion of a weak solution - typically a starting
point for any regularity analysis in the PDE world - is too weak for us; indeed, the
very definition of a solution to a BSDE requires $\bsY$ to be a
semimartingale (see, however, \cite{Barles-Lesigne},
\cite{Bally-Matoussi}, \cite{Lejay}, and \cite{Matoussi-Xu} for
developments in Lipschitz systems). On the opposite end of the spectrum,
a classical $C^{1,2}$-solution would, indeed, guarantee the
semimartingale property of $\bsv(t,X_t)$, but one can hardly expect that
kind of regularity from a solution to a nonlinear system. In the
one-dimensional case, the situation is fully understood - Markov
representatives of solutions to Markovian BSDE in dimension $1$ are
viscosity solutions to the associated quasilinear PDEs (see \cite{ParPen92},
\cite{Kobylanski}, \cite{BriHu08}
or \cite{DelHuBao11}). The multidimensional case, again, presents major
difficulties: unless the system is very weakly coupled (only in its zero-th order
terms), there is no natural notion of a viscosity solution and there is no
corresponding characterization of the class of semimartingale functions (see,
however, \cite{ChiMan97} for a related result in the Brownian case).
However, in many applications, the automatic semimartingale
property is especially useful, as it allows us to perform so-called
``verification'' directly and without additional assumptions or the
invocation of the dynamic-programming principle.

\smallskip

The way we overcame these difficulties in the present paper is by:
1) approximating our system by a
sequence of well-behaved systems, 2) combining analytic methods described
above with probabilistic ones to obtain adequate uniform estimates on these
approximations, and 3) showing that the passage to the limit preserves the
semimartingale property (as well as the equation itself), while relying mostly on probabilistic arguments. This way, we obtain a solution of the form
$\bsY_t=\bsv(t,X_t)$, $\bsZ_t = \bsw(t,X_t)$, where $\bsv$ is locally H\" older
continuous, $\bsv(t,X_t)$ is a semimartingale, and $\bsw$ is the weak
Jacobian of $\bsv$. his strategy bypasses regularity and
pointwise estimates on $\bsw$, which is typically needed to establish a PDE
solution in more analytical approaches. While we are still far from 
complete understanding of the appropriate class of functions  to
replace viscosity solutions in the multidimensional case, we feel that the functions with
above properties constitute a promising first step.

\subsection{Our results - sufficient conditions and examples.}
As a complement to our main existence/\-unique\-ness theorem, we provide 
a sufficient condition for the existence of Lyapunov functions - termed
the Bensoussan-Frehse (BF) condition - as well as a somewhat simpler
sufficient condition for a-priori boundedness. The (BF) condition, a list
of algebraic conditions on various terms
in the generator, is a slight generalization of the structural condition discovered by Bensoussan and
Frehse in \cite{Bensoussan-Frehse}. We add a term of sub-quadratic growth and also allow for a small 
``error'' around the structure, thus incorporating both ``smallness'' and Bensoussan and Frehse's
structural condition  into a single condition. 
Generators in many solvable Markovian BSDE
systems described in the prior literature satisfy our condition (BF). Nevertheless (BF) may not be suitable for our Example \ref{subsec:mart} below where the Lyapunov function is constructed by geometric argument.

\medskip

We illustrate our main results and the sufficient conditions with a number
of examples. Our first example shows that the stochastic equilibria exist
and are unique in
a class of incomplete financial market models, with heterogeneous
``exponential'' agents. Next, we construct a class of martingales on
differential manifolds with connections under fewer assumptions than
before. Then, we treat two non-zero sum stochastic games,
namely, a game with ``cooperation and hindrance'', and a risk-sensitive
stochastic differential game; we show that Nash equilibria exist in both.
Our final example focuses on a different aspect of our results and treats a
one-dimensional equation.

\subsection{Structure of the paper.} After this introduction, we describe
the setting and state our main theorem and various sufficient conditions for
its assumptions in section 2. Section 3 contains examples, while the proofs
are divided between two sections: section 4 deals with H\" older
boundedness and
contains the bulk of the partial-regularity and hole-filling
estimates, while all the other proofs are collected in section 5.

\subsection{Notation and conventions}
For a scalar function $u$, $D u$ denotes its ($\R^d$-valued) spatial gradient, interpreted as
a row vector, while $D^2 u$ denotes its (spatial) Hessian matrix.
Individual spatial partial derivatives of are denoted
by subscripts, i.e., $D_i u= \pds{x^i} u$ and $D_{ij} u$ stands for
$\pdm{x^i}{x^j} u$.
In the
vectorial ($\R^N$-valued) case - which we mark by bold symbols -  $D \bsu$ is
understood as the $\R^{N\times d}$-Jacobian matrix.

We will also have occasion to evaluate bilinear forms on
function gradients; for a $d\times d$ matrix $S$ we write $\scl{D u}{Dw}_S =
\sum_{ij} D_i u D_j w S_{ij} = Du S Dw^\top$. More generally, for a square
matrix $S$ and two matrices $A$ and $B$ of appropriate dimensions, we write
$\scl{A}{B}_S$ for the matrix $A S B^\top$.
The Frobenious product of matrices
is denoted by $A:B$, i.e., for square matrices $A$ and $B$ of the same
dimension, we define $A:B = \sum_{ij} A_{ij} B_{ij} = \Tr (A^\top B)$. The
Frobenius norm of a square matrix $A$ is given by $\sqrt{A:A}$.

The notation $\abs{\cdot}$  is used both for the Lebesgue measure of a subset
of $\R^d$, as well as for the Euclidean norm in any Euclidean space; it should
be interpreted as the Frobenius norm, in case its argument is a matrix. For
$(\tx) \in \R\times \R^d$, we use an anisotropic norm, namely, we set
$\abs{(\tx)} = \max(\sqrt{\abs{t}}, |\bsx|)$. The closed ball of radius $R$
around $\bsx$ in $\R^d$ is denoted by $B_R(\bsx)$. In the
special case when $\bsx=0$ and $R=n\in\N$, we use simply $B_n$.

The notation $\norm{\cdot}$ will be reserved for infinite-dimensional spaces. More
specifically, unless defined otherwise, $\norm{\cdot}$ stands for the
$\ltwo$-norm, both on the underlying probability space, and on an appropriate
domain.

For $r\in \N$ and a (generally matrix-valued) process $(\bsZ_u)_{u\in
[t,T]}$, we write $\bsZ\in \sP^r$ if
$\int_t^T \abs{\bsZ_u}^r\, du<\infty$, a.s. The stochastic
integral $\int \bsZ_s \, d W_s$ of $\bsZ\in\sP^2$ with
respect to a vector  Brownian motion
$\bsW$, defines a vector-valued process whose $i$-th component is given by
$\sum_j \int \bsZ^{ij}_s\, d W^j_s$. We write $\bsZ\in \bmo$ if
$\sup_{\tau\in \mathcal{T}} \norm{\EE_{\tau}[\int_\tau^T |\bsZ_u|^2
du]}_{\linf}<\infty$, where $\mathcal{T}$ is the set of $[0,T]$-valued
stopping times and $\EE_{\tau}$ denotes the conditional expectation $\EE[\cdot | \mathcal{F}_\tau]$ with respect to $\mathcal{F}_\tau$.
The notation $dF \eqm \alpha$
means $F - \int_0^\cdot \alpha_s\, ds$      is a local martingale. Standard
localization techniques and boundedness of processes involved can be used to show that all local martingales in the
sequel can be treated as martingales effectively, therefore we will treat them as such without
explicit mention.

For $\alpha \in (0,1]$, a compact subset $K$ of $\R^d$ and
a function $\bsv:\TR\to\R^N$, the H\" older seminorm $[\bsv]_{\alpha,K}$ is
defined by
\begin{align}
\label{equ:holder-semi}
[\bsv]_{\alpha,K} = \sup_{(\txp)\ne (\tx) \in [0,T]\times K}
\frac{\abs{\bsv(\txp) - \bsv(\tx)}}{\abs{(\tx) - (\txp)}^{\alpha}}.
\end{align}
Sequences are denoted by curly brackets $\set{\cdot}$. The index $n\in\N$
or $m\in\N$ is usually omitted and will always be clear from the context.

\section{Main results}

\subsection{The setup, standing assumptions and key concepts}

\subsubsection{The driving diffusion}
\label{sse:diff}
We work on a probability space $(\Omega, \mathcal{F}_T, \PP)$, on which a
$d$-dimensional Brownian motion $(\bsW_t)_{t\in [0,T]}$ is defined. With
$\FFF=(\sF_t)$ denoting the argumented
filtration generated by $\bsW$, we
consider the stochastic
differential equation
\begin{equation}\label{equ:X}
 d\bsX_t = \bsb(t, \bsX_t) dt + \bsig(t, \bsX_t) d\bsW_t,
 \end{equation}
 where
 \begin{enumerate}
  \item
  the drift vector $\bsb: \TR \to \R^d$ is
  uniformly
  bounded,
  \item the dispersion matrix $\bsig: \TR \to \R^{d\times d}$
  is symmetric and
  there exist a constant $\Lambda>0$ such that
  $\Ld \abs{z}^2 \geq \abs{z \bsig(t,\bsx)}^2 \geq \tfrac{1}{\Ld}\abs{z}^2$, for
  all $(t,\bsx)\in \TR$ and all $z\in \R^d$, and
  \item
      there exists a constant $L$ such that,
      for all $t\in[0,T], \bsx, \bsx'\in \R^d$, we have
      \[
       \abs{\bsb(t, \bsx)-\bsb(t, \bsx')} + \abs{\bsig(t, \bsx) - \bsig(t,
       \bsx')} \leq L\abs{\bsx-\bsx'}.
      \]
 \end{enumerate}
These conditions ensure, in particular, that for each $(\tx)\in\TR$, there
exists a unique strong solution $(\bsX^{\tx}_u)_{u \in [t,T]}$ of
\eqref{equ:X}, defined on $[t,T]$, such that $\bsX^{\tx}_t=\bsx$. For
notational reasons, we extend $\bsX^{\tx}$ by setting $\bsX^{\tx}_u =
\bsx$, for $u\in [0,t)$, and denote by $\PP^{\tx}$ its law on the canonical
space $C^d[0,T]$.

\subsubsection{Markovian and H\" olderian Solutions}
Given $b_0\in \R^d$ and a sequence $\seq{\alpha}$ in $(0,1]$,
a sequence $\{\bsv^m\}$ is said to be \define{bounded in} $\Caa_{loc, b_0}(\TR)$
if there exists a sequence $\seq{c}$
  of positive constants,
  such that, for all $m,n\in\N$,
\[ \norm{\bsv^m}_{C^{\alpha_n}([0,T]\times B_n(b_0))} =\norm{\bsv^m}_{\linf([0,T]\times B_n(b_0))} + [\bsv^m]_{\alpha_n,B_n(b_0)} \leq c_n.\]
We write $\bsv\in \Caa_{loc, b_0}$ if the constant sequence $\{\bsv\}$ is bounded
in $\Caa_{loc, b_0}$. If the sequence $\{c_n\}$ is uniform for all $b_0 \in \R^d$, we say $\bsv\in \Caa_{loc}$.
A completely analogous construction yields the family of local H\" older
spaces $\Ca_{loc, b_0}(\R^d)$ and $\Ca_{loc}$ over $\R^d$ instead of $\TR$.
Various spaces of continuously (non-fractionally) differentiable
functions are defined in the standard manner.
\begin{definition}[A Markovian solution to BSDE]\label{def:Mark-BSDE}
Given Borel functions $\bsf:\TR \times \R^N \times \R^{N\times d} \to \R^N$ and
$\bsg:\R^d \to \R^N$,
 a pair ($\bsv$, $\bsw$) of Borel functions with the domain $\TR$ and
 co-domains $\R^N$ and $\R^{N\times d}$, respectively,
 is a called a \define{Markovian solution} to
the \define{system}
\begin{align}
\label{equ:BSDE1}
  d\bsY_t = - \bsf(t, \bsX_t,\bsY_t, \bsZ_t)\, dt + \bsZ_t \bsig(t, \bsX_t)\, d\bsW_t,\quad \bsY_T=\bsg
  (\bsX_T),
\end{align}
of
\define{backward stochastic differential equations}
if, for all $(\tx)\in\TR$,
\begin{enumerate}
  \item
  $\bsY^{\tx} := \bsv(\cdot, \bsX^{\tx})$ is a continuous process,
$\bsZ^{\tx} := \bsw(\cdot, \bsX^{\tx})\in \sP^2$, and \newline
$\bsf(\cdot,\bsX^{\tx}, \bsY^{\tx}, \bsZ^{\tx})\in \sP^1$,
\item For all $t' \in [t,T]$, we have
\[
  \bsY^{\tx}_{t'}= \bsg(\bsX^{\tx}_T) + \int^T_{t'} \bsf(u, \bsX^{\tx}_u, \bsY^{\tx}_u, \bsZ^{\tx}_u) du -\int^T_{t'} \bsZ^{\tx}_u \bsig(u, \bsX^{\tx}_u) d\bsW_u, \text{ a.s.}
\]
\end{enumerate}
A Markovian solution $(\bsv, \bsw)$ to \eqref{equ:BSDE1} is said to be
\define{bounded} if $\bsv$ is bounded,  \define{continuous} if $\bsv$ is continuous,
\define{locally H\" olderian} if $\bsv\in \Caa_{loc, b_0}$, for some $b_0 \in \R^d$ and some sequence
$\seq{\alpha}$ in $(0,1]$, and a \define{$\bmo$-solution} if $\bsZ^{\tx}\in\bmo$ for all
$(\tx)\in\TR$.
\end{definition}

\begin{remark}
For Markovian BSDE, it is customary to consider the generator
$\tilde{\bsf}(t, \bsx, \bsy, \bsz \bsig)$ instead of our $\bsf(t,\bsx, \bsy, \bsz)$.
Due to our assumptions on $\bsig$, these are equivalent and we maintain the generator as $\bsf$
for notational convenience later on.
\end{remark}
\subsubsection{Lyapunov functions}
The key condition in our main result below concerns the existence of
sequence of functions which we term the Lyapunov functions.  We abbreviate $\bsa
= \bsig\bsig^{\top}$ and define
$\scl{\bsz}{\bsz}_{\bsa(\tx)}=\bsz \bsig(\tx) (\bsz
\bsig(\tx))^{\top}$.

\begin{definition}[Lyapunov functions]
\label{def:Lyapunov}
Let $\bsf:\TR\times \R^N \times \R^{N\times d} \to\R^N$ be a Borel function and let
$c>0$ be a constant. A pair $(h,k)$ of nonnegative functions, with
$h\in C^2(\R^N)$ and $k$ Borel,
is said to be a
\define{$c$-Lyapunov pair} for $\bsf$ if
$h(\boldsymbol{0})=0, Dh(\boldsymbol{0})=\boldsymbol{0}$, and
\begin{align}
        \label{equ:global-Lyapunov}
   \tot   D^2 h(\bsy) : \scl{\bsz}{\bsz}_{\bsa(\tx)} -
   Dh (\bsy) \bsf (t, \bsx, \bsy,
   \bsz) \geq
   \abs{\bsz}^2 - k(\tx),
\end{align}
for all $(t, \bsx, \bsy, \bsz)\in \TR \times \R^N \times
\R^{N\times d}$, with $\abs{\bsy}\leq c$. We write $(h,k) \in
\Ly(\bsf,c)$.

\smallskip

Given $b_0\in \R^d$ and a sequence $\seq{c}$ of positive constants,
a pair $(\seq{h}, \seq{k})$ of sequences of nonnegative functions, with $h_n\in
C^2(\R^N)$ and  $k_n$ Borel,
is called a \define{local $\seq{c}$-Lyapunov pair for
$\bsf$}, if
$h_n(\boldsymbol{0})=0, Dh_n(\boldsymbol{0})=\boldsymbol{0}$
and
\begin{align}
        \label{equ:Lyapunov}
   \tot   D^2 h_n(\bsy) : \scl{\bsz}{\bsz}_{\bsa(\tx)} -
   Dh_n (\bsy) \bsf (t, \bsx, \bsy,
   \bsz) \geq
   \abs{\bsz}^2 - k_n(\tx),
\end{align}
for all $n\in\N$, $(t, \bsx, \bsy, \bsz)\in \TBn(b_0) \times \R^N \times
\R^{N\times d}$, with $\abs{\bsy} \leq
c_n$. We write $(\seq{h}, \seq{k}) \in \lLy(\bsf, \seq{c})$.
\end{definition}
\begin{remark}
$\,$
\begin{enumerate}
\item
Suppose that the process $\bsY$ has a semimartingale decomposition as in
\eqref{equ:BSDE1} (i.e., solves the BSDE system) and satisfies the
bound
$\abs{\bsY}\leq c$. A function
$h$ for which \eqref{equ:global-Lyapunov} holds
has the property that
$h(\bsY_t)$ is a
semimartingale with the finite variation part dominating (in the
increasing order) the process $\int_0^{\cdot} \big( \abs{\bsZ_u}^2 -
k(u,\bsX_u)\big)\, du$. The function $k$ will often be constant, but certain
applications require more flexibility.
If one wants to deal with unbounded $\bsY$, a
layer of localization
- expressed through the dependence on $n$ and the sequence $\seq{c}$ in the
local version - is necessary.

\item
It is interesting to note that in the scalar case
($N=1$), and when the generator
$\bsf$ grows at most quadratically in $\bsz$, it is essentially sufficient
to look for  Lyapunov
pairs with $h(y) = \exp( \alpha y )$, for large enough
$\alpha$. As we shall see below, this no longer works in the vector case,
which
leads to nontrivial
constructions of Lyapunov pairs under specific structural conditions.

\item Let $(\bsv, \bsw)$ be a bounded solution to \eqref{equ:BSDE1} whose
generator $\bsf$ admits a $\norm{\bsv}_{\linf}$-Lyapunov pair $(h,k)$ with
$k$ bounded. Item (1), together with boundedness of $\bsv$ and $k$, implies
that $Z = \bsw(\cdot, X)\in \bmo$. Hence $(\bsv, \bsw)$ is a
$\bmo$-solution. 

\end{enumerate}
\end{remark}

\subsection{A uniform estimate}
The first main result of the paper, contained
in Theorem \ref{thm:abstract} below, provides an abstract stencil for a uniform estimate for a family of BSDE systems under several assumptions, most
notable of which is the existence of a
Lyapunov pair, uniform for all systems in the family.
Sufficient conditions for these assumptions and examples will be given
shortly.
\begin{theorem}[Uniform estimate]\label{thm:abstract}
Let $\{\bsf^m\}$ and $\{\bsg^m\}$ be sequences of
Borel functions $\bsf^m:\TR \times \R^N \times \R^{N\times d} \to \R^N$ and
$\bsg^m:\R^d \to \R^N$ such that, for each $m\in\N$,
 the BSDE system
\begin{align}
\label{equ:BSDEm}
  d\bsY_t = - \bsf^m(t, \bsX_t,\bsY_t, \bsZ_t)\, dt + \bsZ_t \bsig(t,
  \bsX_t)\, d\bsW_t,\quad \bsY_T=\bsg^m(\bsX_T),
\end{align}
admits a Markovian solution
$(\bsv^m,\bsw^m)$.

Suppose that there exist $b_0\in \R^d$ and  sequences $\seq{M}, \seq{c}$ in $[0,\infty)$, $\seq{\alpha}$
in $(0,1]$ and $\seq{q}$
with $q_n > 1+d/2$, such that
\begin{enumerate}
\item \label{asm:g} ($C^{\alpha}_{loc}$-regularity of the terminal condition)
The sequence $\{\bsg^m\}$ is bounded in $\Ca_{loc, b_0}$.
\item \label{asm:v} (A-priori continuity and local uniform boundedness) For
all $m,n\in\N$,
$\bsv^m$ is continuous on $\TR$ and
\[ \abs{\bsv^m(\tx)}\leq
c_n, \quad \text{for all $(\tx)\in \TBn(b_0)$.}\]
 \item \label{asm:q} (Local uniform quadratic growth)
 For each $n\in \N$, there exist functions $\{k^m_n\}$ such that
 \[
 \sup_{m\in\N} \norm{k^m_n}_{\el^{q_n}([0,T]\times B_n(b_0))} < \infty \quad \text{and} \quad
  \abs{\bsf^m(\tx,\bsy,\bsz)} \leq M_n \big( \abs{\bsz}^2 + k^m_n(\tx)
   \big),
 \]
 for all  $m\in\N$,  $(\tx) \in \TBn(b_0)$, $|\bsy|\leq c_n$,
 and $\bsz\in\R^{N\times d}$.
 \item \label{asm:h} (Local Lyapunov pair)
 There exist functions $\{h_n\}$ such that
  $(\seq{h},
 \seq{k^m})$ is a local $\seq{2c}$-Lyapunov pair for $\bsf^m$ for each $m\in\N$, i.e. $(\seq{h},
 \seq{k^m}) \in \lLy(\bsf^m, \seq{2c})$.
\end{enumerate}

\smallskip

  Then, the sequence $\{\bsv^m\}$ is bounded in $\Caap_{loc, b_0}$, for some
  $\seq{\alpha'}$ in $(0,1]$. Moreover, for each $n$, the
  H\"{o}lder seminorm $[\bsv^m]_{\alpha'_n, B_n(b_0)}$ depends only on $d, N, T, \Lambda, L, \norm{b}_{\linf}, \norm{\sigma}_{\linf}, \alpha_n, M_n, c_n$, $[\bsg^m]_{\alpha_n, B_n(b_0)}, h_n$, and $\sup_m\norm{k^m_n}_{\mathbb{L}^{q_n}([0,T]\times B_n(b_0))}$.
\end{theorem}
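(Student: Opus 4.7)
The plan is to adapt the partial-regularity / hole-filling technique of Widman and Struwe to the probabilistic setting, using the Lyapunov pair $(\seq{h}, \seq{k^m})$ as the source of quadratic energy control. The key local quantity is
\[
 E^m(\tx, r) := \Beeq{\tx}{\int_t^{\tau_r^{\tx}\wedge T} \abs{\bsZ^m_u}^2\, du},
\]
where $\tau_r^{\tx}$ is the first exit time of $\bsX^{\tx}$ from the parabolic cylinder $[t, t+r^2]\times B_r(\bsx)$. Once a decay $E^m(\tx, r) \leq C_n\, r^{2\alpha'_n}$ is established uniformly in $m$ and in $(\tx)\in\TBn$, a probabilistic Campanato-type argument converts it into the claimed $\Caap_{loc}$ bound: since $\bsv^m(\tx) - \bsv^m(\tau_r^{\tx}, \bsX^{\tx}_{\tau_r^{\tx}})$ equals a drift controlled by $\abs{\bsZ^m}^2$ plus a martingale whose quadratic variation is essentially $E^m(\tx, r)$, the $L^2$-oscillation of $\bsv^m$ on the cylinder of radius $r$ decays like $r^{\alpha'_n}$, which upgrades to pointwise H\"older continuity.

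For the first step I would derive a uniform $\bmo$-type bound. Fix $n$ and let $\tau_n^{\tx}$ be the first exit time of $\bsX^{\tx}$ from $B_n$. Applying It\^o's formula to $h_n(\bsY^m)$ started at $(\tx)$ and stopped at $\tau_n^{\tx}\wedge T$, the Lyapunov inequality \eqref{equ:Lyapunov} shows that the drift of this semimartingale dominates $\int(\abs{\bsZ^m_u}^2 - k^m_n(u, \bsX^{\tx}_u))\,du$. Combined with $\abs{\bsv^m}\leq c_n$ on $\TBn$ and $\sup_m \norm{k^m_n}_{\el^{q_n}}<\infty$, one extracts, for every stopping time $\sigma\leq\tau_n^{\tx}\wedge T$,
\[
 \Beeq{\tx}{\int_\sigma^{\tau_n^{\tx}\wedge T}\abs{\bsZ^m_u}^2\, du\,\bigg|\,\sF_\sigma}\leq C_n,
\]
with $C_n$ independent of $m$, $(\tx)$, and $\sigma$.

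The second step, hole-filling, is the crux. I would repeat the It\^o argument on a cylinder $Q_r\subset\TBn$ of small radius, but replace $h_n$ by $h_n(\,\cdot\, - \xi)$ for a constant shift $\xi\in\R^N$ with $\abs{\xi}\leq c_n$; the choice of $2c_n$ in the condition $(\seq{h}, \seq{k^m})\in\lLy(\bsf^m, \seq{2c})$ is exactly what is required for the Lyapunov lower bound to survive this shift. Taking $\xi$ as a suitable mean value of $\bsv^m$ on the exit-boundary distribution of $Q_r$, and using the non-degeneracy of $\bsig$ together with a parabolic Poincar\'e inequality (applied to the heat-kernel-weighted measure induced by $\bsX^{\tx}$), one should obtain a hole-filling dichotomy of the form
\[
 E^m(\tx, r/2) \leq \theta\, E^m(\tx, r) + C r^{2\beta},
\]
for some $\theta\in(0,1)$ and $\beta>0$; the perturbation $r^{2\beta}$ is the price of the $k^m_n$ term and uses $q_n > 1+d/2$, which is precisely the integrability threshold for a Morrey-type gain on an $\el^{q_n}$ source.

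Standard iteration of the dichotomy then yields $E^m(\tx, r) \leq C r^{2\alpha'_n}$ with $\alpha'_n \in (0,1]$ depending on $\theta$ and $\beta$, uniformly over $m$ and $(\tx)\in\TBn$; combined with the probabilistic Campanato argument outlined above, this proves that $\{\bsv^m\}$ is bounded in $\Caap_{loc}$. The main obstacle is the hole-filling step: because $\bsf^m$ has quadratic growth in $\bsZ$, the standard linear Caccioppoli-plus-Poincar\'e argument does not apply and the nonlinearity must be absorbed using the precise algebraic structure of \eqref{equ:Lyapunov}. Moreover, in the probabilistic formulation there are no genuine ``test functions'' - only shifted Lyapunov functions composed with the stopped process - so both the choice of $\xi$ and the comparison of interior and annular energies must be performed via carefully chosen stopping times rather than smooth cutoffs.
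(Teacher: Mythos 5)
Your outline follows the paper's own proof essentially step for step: an energy (Caccioppoli-type) estimate obtained by applying It\^o's formula to a shifted Lyapunov function $h_n(\cdot-\bsc)$ with $\abs{\bsc}\leq c_n$ (which is exactly why $2c_n$ appears in condition (4)), a Poincar\'e inequality weighted against the transition density, a hole-filling dichotomy with contraction factor $\theta<1$ and perturbation $r^{2\beta}$ coming from the $\el^{q_n}$ term with $q_n>1+d/2$, iteration to a Morrey-type decay of the gradient energy, and a Campanato embedding to conclude the H\"older bound on $\bsv^m$. So the strategy is the right one and matches the paper's.

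There is, however, one step you compress into the phrase ``parabolic Poincar\'e inequality'' that is genuinely nontrivial and is not delivered by the spatial Poincar\'e inequality alone: to absorb the annular term $\iint\abs{\bsv^m-\bsc}^2$ into the gradient energy, you must also control the variation \emph{in time} of the weighted spatial averages $\overline{\bsv^m(t,\cdot)}^{\psi}$, since a single constant $\bsc$ has to work across all time-slices of the cylinder. In the paper this is Struwe's lemma (Lemma \ref{lem:Struwe}), and its proof is not a Poincar\'e argument at all: it uses the equation itself, rewrites the generator $\sL$ in divergence form (possible because $\bsig$ is Lipschitz) and invokes the forward Kolmogorov equation for the transition density so that the second-order terms integrate by parts into first-order ones; the quadratic growth of $\bsf^m$ is then absorbed using the already-established energy bound. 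Without this ingredient the hole-filling dichotomy does not close. Two smaller omissions in your sketch: (i) the terminal-time boundary term $\int\abs{\bsg^m-\bsc}^2\,p(T,\cdot)$, which is precisely where the assumed $\Ca_{loc}$-bound on $\{\bsg^m\}$ enters and which again requires Struwe's lemma to compare $\overline{\bsg^m}^{\psi}$ with the chosen $\bsc$; and (ii) when $d=1$ the annulus on which the weighted Poincar\'e inequality is applied is disconnected and the inequality fails, so one must first embed the problem into dimension $2$ (Remark \ref{rem:d=1}).
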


\begin{remark}\label{rem:a priori}
$\,$
\begin{enumerate}
\item 
The sequence $\seq{h}$ in condition (4) above is chosen uniformly for all $\{\bsf^m\}$. Therefore the inequality \eqref{equ:Lyapunov} is satisfied for all $\bsf^m$ and $k^m_n$. It is without loss of generality to have $|\bsz|^2$ on the right-hand side of \eqref{equ:Lyapunov}, since any positive constant $\delta_n$ before $|\bsz|^2$ can be normalized to $1$ after scaling $h_n$ and $k^m_n$ by $1/\delta_n$.
\item Applying Theorem \ref{thm:abstract} to a constant
 sequence (i.e. all $\{\bsf^m\}$ and $\{\bsg^m\}$ are the same for
 different $m$), we obtain an a priori estimate for a continuous Markovian solution of a single system: let $(\bsv, \bsw)$ be a continuous Markovian solution \eqref{equ:BSDE1} whose data $\bsf$ and $\bsg$ satisfy assumptions in Theorem \ref{thm:abstract}, then $\bsv$ is locally
 H\"{o}lderian, i.e., $\bsv\in \Caa_{loc, b_0}$. 
\end{enumerate}
\end{remark}

One of the advantages of our probabilistic approach is that
the uniform H\" older bound on $\{\bsv^m\}$ in Theorem \ref{thm:abstract}
is sufficient to establish the
existence result in Theorem \ref{thm:existence}
below. To make a connection 
with a typical analytic treatment of related PDEs, where regularity
and bounds of $\bsw$ need to be obtained, we provide some pertinent
information in the following remark.
\begin{remark}\label{rem:w reg}
$\,$
 \begin{enumerate}
 \item 
\label{rem:further-bounds}
Without structural conditions on $\bsf$, uniform $\linf$-bounds for \emph{systems} do
not always lead to gradient bounds, as evidenced by the following example
due to E.~Heinz.
Consider the following quadratic system of PDE:
\begin{align*}
   v^i_t - v^i_{xx} &= v^1 \Big( (v_x^1)^2 + (v_x^2)^2\Big), \quad i=1,2.
\end{align*}
For any $m\in\N$, $v^1=\cos(mx)$ and $v^2=\sin(mx)$ is a (stationary)
solution, but clearly, $\norm{\nabla v}_{\linf}=m$ cannot be controlled by
$\norm{v}_{\linf}=1$ (and a universal constant independent of $m$).
For a general system of the form \[  \partial_t \bsv - \tot \Delta \bsv +
\bsf(t,x,\bsv,\nabla \bsv)=0,\]
a local estimate of $\norm{\nabla \bsv}_{\linf}$ is established in \cite[Theorem
6.1]{LadSolUra67} in the case when
$f$ satisfies a condition of the form
\[\abs{f(t,x,v,p)}\leq \Big[\eps\abs{v} + P(\abs{p}, \abs{v}) \Big]
(1+\abs{p})^2\]
for some sufficiently small $\eps>0$ and $P(\abs{p},\abs{v}) \to 0$ as
  $\abs{p}\to\infty$.
When $f$ has at most linear growth in $p$, the same
local estimate is established in \cite{Del03} using a probabilistic
techniques. 

\item When $\bsv$ is H\"{o}lder continuous and there exists $k_n \in \mathbb{L}^q([0,T]\times B_n(b_0))$ with $q>1+d/2$ such that 
\[
 |\bsf(t,x,\bsy, \bsz)| \leq M_n \big(|z|^2 + k_n(t,x)\big),
\] 
for all $(t,x)\in [0,T]\times B_n(b_0)$, $\bsy\in \R^N$, and $\bsz\in
  \R^{N\times d}$, then \cite[Proposition 5.1]{Bensoussan-Frehse} used
  regularity theory of elliptic systems in \cite{Fre88} to show that
  $\bsv\in W^{2,1}_{q}([0,T]\times B_n(b_0))$. In particular, when $q>2+d$,
  Sobolev embedding Theorem (see \cite[Lemma 3.3]{LadSolUra67}) implies
  that $\bsw$, as the  weak Jacobian of $\bsv$, is H\"{o}lder continuous on
  $[0,T]\times B_n(b_0)$.
\end{enumerate}
\end{remark}

\subsection{Existence and uniqueness}

A direct consequence of the uniform estimate in Theorem \ref{thm:abstract}
is the existence of a Markovian solution to the system \eqref{equ:BSDE},
whose data $(\bsf, \bsg)$ are approximated by a sequence $\{\bsf^m,
\bsg^m\}$.

\begin{theorem}\label{thm:existence} (Existence by approximation)
 Let $\bsf:\TR\times \R^N \times \R^{N\times d} \to \R^N$ and
 $\bsg:\R^d\to\R$ be a pair of Borel functions. Assume that there
 exist sequences $\{\bsf^m\}$ and $\{\bsg^m\}$ which
 satisfy the assumptions of Theorem \ref{thm:abstract} and
 \begin{align}
\label{equ:approx-f-g}
 \lim_{m\to \infty} \bsf^m(t, \bsx, \bsy^m, \bsz^m)= \bsf(t, \bsx, \bsy, \bsz)\quad
\eand \quad \lim_{m\to\infty} \bsg^m(\bsx) = \bsg(\bsx),
\end{align}
for all
$(t, \bsx, \bsy, \bsz) \in \TR \times \R^N\times \R^{N\times d}$ and  all sequences $\bsy^m\rightarrow \bsy$ and $\bsz^m \rightarrow \bsz$.
 Then the system
\begin{align}
\label{equ:BSDE}
  d\bsY_t = - \bsf(t, \bsX_t, \bsY_t, \bsZ_t)\, dt + \bsZ_t \bsig(t,
  \bsX_t)\, d\bsW_t,\quad \bsY_T=\bsg(\bsX_T),
\end{align}
admits a locally H\"{o}lderian solution $(\bsv,\bsw)$ such that $\bsv$ is a locally uniform
limit of a subsequence of $\set{\bsv^m}$ in Theorem \ref{thm:abstract}, and
$\bsw$ is the weak Jacobian of $\bsv$ on $(0,T)\times \R^d$.
\end{theorem}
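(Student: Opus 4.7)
My plan has four stages corresponding to the four natural ingredients: extracting a locally uniform limit of $\bsv^m$, obtaining $L^2_{loc}$ compactness for $\bsw^m$, identifying the limit of $\bsw^m$ with $D\bsv$, and finally passing to the limit inside the BSDE \eqref{equ:BSDEm}.

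First, I would apply Theorem \ref{thm:abstract} directly, which is available because its four hypotheses have been assumed in the statement. This delivers a sequence $\seq{\alpha'}\subset (0,1]$ and constants $\seq{c'}$ such that $\norm{\bsv^m}_{\linf(\TBn)}+[\bsv^m]_{\alpha'_n,B_n}\le c'_n$ uniformly in $m$. Local Hölder boundedness together with pointwise boundedness provides equicontinuity on every compact $\TBn$, so a standard diagonal Arzelà–Ascoli argument extracts a subsequence (not relabeled) converging locally uniformly on $\TR$ to some $\bsv$, which inherits membership in $\Caap_{loc}$.

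Second, I would obtain uniform $L^2_{loc}$ bounds on $\{\bsw^m\}$ via the Lyapunov device. Fix $n$ and $(\tx)\in\TBn$, and let $\tau_n$ be the first exit time of $\bsX^{\tx}$ from $B_n$, capped at $T$. Applying Itô's formula to $h_n(\bsv^m(s,\bsX^{\tx}_s))$ on $[t,\tau_n]$ and invoking the defining inequality \eqref{equ:Lyapunov} of the local Lyapunov pair $(\seq{h},\seq{k^m})\in \lLy(\bsf^m,\seq{2c})$ (here $\abs{\bsv^m}\le c_n\le 2c_n$ on $\TBn$), the drift of $h_n(\bsv^m(\cdot,\bsX^{\tx}))$ dominates $\int_t^{\cdot\wedge\tau_n}\bigl(\abs{\bsw^m(s,\bsX^{\tx}_s)}^2-k^m_n(s,\bsX^{\tx}_s)\bigr)\,ds$. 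Taking expectation and using the uniform bound on $h_n\circ\bsv^m$ on $\TBn$ together with a Krylov-type estimate for the non-degenerate diffusion $\bsX^{\tx}$ (whose $L^{q_n}\!\to\!L^1$ bound is available because $q_n>1+d/2$) to control $\EE^{\tx}[\int_t^{\tau_n}k^m_n(s,\bsX^{\tx}_s)\,ds]$ by $\norm{k^m_n}_{\el^{q_n}(\TBn)}$, produces a bound on $\EE^{\tx}[\int_t^{\tau_n}\abs{\bsw^m}^2\,ds]$ uniform in $m$ and in $(\tx)\in\TBn$. A second, Fubini-type Krylov estimate translates this into a uniform $L^2((0,T)\times B_{n-1})$ bound on $\bsw^m$, and I extract a weakly convergent subsequence $\bsw^m\rightharpoonup\bsw$ in $L^2_{loc}((0,T)\times\R^d)$.

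Third, I would identify $\bsw$ with the weak Jacobian $D\bsv$. For each approximating system the nonlinear Feynman–Kac correspondence together with the regularity available for the Markovian solution $(\bsv^m,\bsw^m)$ yields $\bsw^m=D\bsv^m$ a.e.\ on $(0,T)\times\R^d$. Combining locally uniform convergence $\bsv^m\to\bsv$ with weak $L^2_{loc}$ convergence $D\bsv^m\rightharpoonup \bsw$, integration by parts against smooth compactly supported test functions gives $\bsw=D\bsv$ in the distributional sense. Finally, I would pass to the limit in \eqref{equ:BSDEm} at each fixed $(\tx)$: continuity of $\bsv$ and locally uniform convergence of $\bsv^m$ handle the terminal and $\bsY$-terms, while weak $L^2$-convergence of $\bsw^m(\cdot,\bsX^{\tx})$ yields convergence of the stochastic integral in probability along a subsequence. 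The main obstacle is the quadratic drift $\int\bsf^m(s,\bsX^{\tx},\bsv^m,\bsw^m)\,ds$, since the quadratic dependence on $\bsz$ rules out a pure weak-convergence argument. I would overcome this by upgrading weak convergence of $\bsw^m$ to a.e.\ convergence along a further subsequence, either through a Meyers-type higher-integrability estimate producing uniform $L^p_{loc}$ bounds with $p>2$ and a Rellich-type compactness step, or through a BSDE stability argument that exploits a uniform $\bmo$-bound on $\bsw^m$ (a byproduct of stage two) together with the assumed pointwise convergence \eqref{equ:approx-f-g}. Once a.e.\ convergence is in hand, the quadratic growth bound in hypothesis (3) of Theorem \ref{thm:abstract} together with the uniform $\el^{q_n}$-bound on $k^m_n$ provides the uniform integrability required to invoke Vitali's theorem and conclude convergence of the drift in $L^1$; uniqueness of the semimartingale decomposition of continuous semimartingales then identifies $(\bsv(\cdot,\bsX^{\tx}),\bsw(\cdot,\bsX^{\tx}))$ as a Markovian solution of \eqref{equ:BSDE}.
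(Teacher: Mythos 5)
Your stages 1 and 2 match the paper: Arzel\`a--Ascoli plus a diagonal argument gives the locally uniform limit $\bsv$, and the Lyapunov pair combined with the transition-density (Aronson/Krylov-type) estimates gives the uniform bound $\EE^{\tx}[\int_t^{\tau_n}|\bsw^m|^2\,du]\le C(n)$ (this is exactly Lemma \ref{lem:Z-bound}). The gap is at the critical step of passing to the limit in the quadratic drift. You correctly diagnose that weak $\ltwo$ convergence of $\bsw^m$ is useless there, but neither of your two proposed remedies closes the argument. A Meyers-type higher-integrability bound plus Rellich compactness does not yield a.e.\ convergence of the \emph{gradients}: Rellich gives strong convergence of the functions $\bsv^m$ (which you already have, in a stronger form), not of $D\bsv^m$; uniform $\el^p_{loc}$ bounds with $p>2$ combined with weak convergence still permit persistent oscillation of $\bsw^m$, so the quadratic term need not converge. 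Your second option, a ``BSDE stability argument,'' is the right idea but is left unexecuted, and the bmo bound you cite as its engine is not the ingredient that makes it work.

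What actually closes the argument --- and what the paper does --- is to bypass weak convergence entirely and prove that $\bsZ^m=\bsw^m(\cdot,\bsX)\ind{\cdot<\tau_n}$ is \emph{Cauchy in} $\ltwo(\text{Leb}\otimes\PP)$. Apply \itos{} formula to $|\bsY^m-\bsY^{m'}|^2$ on $[t,\tau_n]$: uniform ellipticity of $\bsig$ bounds $\EE^{\tx}[\int|\bsZ^m-\bsZ^{m'}|^2]$ by $\|\bsY^m_{\tau_n}-\bsY^{m'}_{\tau_n}\|_{\linf}^2$ plus $\|\sup_u|\bsY^m_u-\bsY^{m'}_u|\|_{\linf}$ times the total variation of the two drifts. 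The latter is uniformly bounded by condition (3) of Theorem \ref{thm:abstract} together with Lemma \ref{lem:Z-bound}, while the sup-norm factors tend to zero by the locally uniform convergence of $\bsv^m$; hence $\bsZ^m$ converges strongly in $\ltwo$ and a.e.\ along a subsequence. With a.e.\ convergence of $(\bsY^m,\bsZ^m)$ and the convergence hypothesis \eqref{equ:approx-f-g} in hand, the domination $|\bsf^m|\lec|\bsZ^m|^2+|\bsZ|^2+k^m_n$ is uniformly integrable (because $\bsZ^m\to\bsZ$ in $\ltwo$ and $\{k^m_n\}$ is bounded in $\el^{q_n}$), and Vitali gives convergence of the drift; your final uniform-integrability and semimartingale-identification steps then go through as you describe. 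The identification $\bsw=D\bsv$ is also not automatic from a ``Feynman--Kac correspondence'' for the approximating systems; it is the content of Lemma \ref{lem:weak-Jacobian}, whose proof applies verbatim to the limit pair.
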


The solutions produced in Theorem
\ref{thm:existence} are not necessarily unique, even when the solutions to the
approximating equations are. Indeed, one only needs to consider the case
where $\bsf\equiv 0$ and
where $\{\bsg^m\}$ is a sequence of bounded and smooth approximations to the
function $\bsg$ appearing in Tychonov's non-uniqueness theorem (see
\cite[p.~171]{Joh78}) for the heat equation.

As we shall see below, these pathologies disappear under appropriate
conditions on  $\bsf$ and $\bsg$. When the H\"{o}lder norm of $\bsg$ does not depend on $b_0$, $\bsf$ does not depend on $\bsy$ and
satisfies additional regularity assumption in $\bsz$, uniqueness is recovered.
Two Markovian solutions, $(\bsv,\bsw)$ and
$(\bsv',\bsw')$, are considered equal if $\bsv(\tx)=  \bsv'(\tx)$ for all
$(\tx)\in \TR$ and
$\bsw=\bsw'$, a.e., with respect to the Lebesgue measure on $\TR$.

\begin{theorem}[Uniqueness] \label{thm:unique}
Suppose that 
\begin{enumerate}
 \item $\bsg\in \Caa_{loc}\cap \mathbb{L}^\infty$ for some sequence $\{\alpha_n\}$ in $(0,1]$;
 \item $\bsf$ is continuous, does not depend on $\bsy$, and there exists
$M \geq 0$ such that
  \[ 
  |\bsf(t, x, \bsz)| \leq M\big(1+ |\bsz|^2\big) \quad \text{and} \quad
  \abs{\bsf(t, \bsx, \bsz) - \bsf(t, \bsx, \bsz')} \leq M (\abs{\bsz} +
  \abs{\bsz'})\abs{\bsz-\bsz'},   
  \]
  for all $(t,\bsx) \in [0,T]\times \R^d, \bsz, \bsz'\in \R^{d\times N}$;
  \item There exists a (global) Lyapunov pair
$(h,k)\in\Ly(\bsf,c)$ with $k$ bounded for some $c>0$.
\end{enumerate}
Then \eqref{equ:BSDE1} admits at most one
continuous solution $(\bsv,\bsw)$ with
$\norm{\bsv}_{\linf}\leq c$.
\end{theorem}

\subsection{A sufficient condition for existence and uniqueness}
This section provides explicit conditions on the generator $\bsf$ and
the terminal condition $\bsg$ such that assumptions in Theorems
\ref{thm:abstract}, \ref{thm:existence}, and \ref{thm:unique} hold for \eqref{equ:BSDE1}. While the proof depends on
the abstract Theorem \ref{thm:abstract} above, we state it in a
self-contained form to make it more accessible to a reader interested in
its applications.

We start with a structural condition on the generator $\bsf$.
To the best of our knowledge, a version of it was first formulated in
\cite{Bensoussan-Frehse}. We present here a
generalization including   a subquadratic term; a further generalization
will be discussed in Remark \ref{rem:BF} below.
We interpret $\bsz\in\R^{N\times d}$
as an $N\times d$-matrix, and use $\bsz^j$ to denote its
$j$-th row, $j=\ft{1}{N}$. In the vector case the superscript $j$ denotes
the $j$-th component.
\begin{definition}[The Bensoussan-Frehse (BF) condition]
\label{def:BF}
 We say that a continuous function $\bsf:\TR\times \R^N \times \R^{N\times d} \to \R^N$
 satisfies the \define{condition (BF)} if
 it admits a decomposition of the form
 \begin{equation}\label{equ:BF}
 \bsf(t, \bsx, \bsy, \bsz) = \diag (\bsz \, \bsfl(t, \bsx, \bsy, \bsz)) + \bsfq(t,
  \bsx, \bsy, \bsz) + \bsfs(t, \bsx, \bsy, \bsz) + \bsfk(t, \bsx),
 \end{equation}
such that the functions
 $\bsfl:[0,T]\times \R^d \times\R^N \times  \R^{N\times d} \to \R^{d\times N}$  and
 $\bsfq, \bsfs, \bsfk:[0,T]\times \R^d \times \R^N \times \R^{N\times d} \to \R^N$
  have following property:
 there exist $b_0\in \R^d$ and two  sequences $\seq{C}$ and $\seq{q}$ of positive constants with
 $q_n>1+d/2$, and a sequence $\seq{\kappa}$ of functions $\kappa_n:[0,\infty)\to
 [0,\infty)$ with $\lim_{w\to\infty} \kappa_n(w)/w^2=0$ such that, for each
 $n\in\N$ and all
$(\tx, \bsy, \bsz)\in \TBn(b_0) \times \R^N \times \R^{N\times d}$  we have
 \begin{align*}
& \abs{\bsfl(\tx, \bsy, \bsz)} \leq C_n(1+ \abs{\bsz}), && \text{ (quadratic-linear)} \\
&  \abs{\bsfq^i(\tx, \bsy, \bsz)} \leq C_n \big(1+ \sum_{j=1}^i \abs{\bsz^j}^2\big),\quad i=\ft{1}{N}, &&
  \text{ (quadratic-triangular) } \\
& \abs{\bsfs(\tx, \bsy, \bsz)} \leq \kappa_n( \abs{\bsz}),  &&
 \text{ (subquadratic) } \\
& \,\,\bsfk\in \mathbb{L}^{q_n}([0,T]\times B_n),
  && \text{ ($\bsz$-independent) }
\end{align*}
In that case, we write $\bsf\in \BFckq$
\end{definition}
The (BF) conditions are simple enough to be easily checked in applications,
but also strong enough to yield the following result which will play a
major role in the existence theorem below:
\begin{proposition}[Existence of Lyapunov pairs under condition (BF)]
\label{thm:BF-Lyap}
 Let $\seq{c}$ is
 an arbitrary sequence of positive constants, and $\bsf$ a function in
 $\BFckq$.
 \begin{enumerate}
     \item There exists a local $\seq{c}$-Lyapunov pair $(\{h_n\}, \{k_n\})$ for
 $\bsf$. Furthermore, the same pair $(\{h_n\},\{k_n\})$ is a local
 $\seq{c}$-Lyapunov pair for any other function $\bsf'\in\BFckq$.
\item
 If, additionally, the sequences $\seq{C}$, $\seq{q}$ and $\{\kappa_n\}$ are
 constant (in $n$), then, for each $c$, a (global) $c$-Lyapunov
 pair for $\bsf$ exists.
 \end{enumerate}
\end{proposition}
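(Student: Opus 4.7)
The plan is to construct the Lyapunov pair componentwise out of hyperbolic cosines, with rates and weights chosen so that the triangular structure of $\bsfq$ can be absorbed by a backward iteration. Specifically, set
\[
   h_n(\bsy) = \sum_{i=1}^N \gamma_{i,n}\bigl(\cosh(\lambda_n y^i)-1\bigr),
\]
with positive scalars $\lambda_n$ and $\gamma_{i,n}$ to be determined. Then $h_n\ge 0$, $h_n(\boldsymbol{0})=0$ and $Dh_n(\boldsymbol{0})=\boldsymbol{0}$ hold automatically, the gradient $Dh_n$ acts componentwise with $(Dh_n)_i = \gamma_{i,n}\lambda_n\sinh(\lambda_n y^i)$, and the Hessian $D^2 h_n$ is diagonal with entries $\gamma_{i,n}\lambda_n^2\cosh(\lambda_n y^i)$.

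Plugging into \eqref{equ:Lyapunov} and using the ellipticity of $\bsa$, the Hessian term is bounded below by $\tfrac{1}{2\Lambda}\sum_i \gamma_{i,n}\lambda_n^2\cosh(\lambda_n y^i)|\bsz^i|^2$. For the gradient--generator term I invoke the (BF) decomposition: the quadratic--linear piece $\bsz^i\bsfl^{\cdot i}$ is split by Young's inequality as $|\bsz^i|(1+|\bsz|)\le \tfrac12(1+|\bsz^i|^2) + \delta|\bsz|^2 + \tfrac{1}{4\delta}|\bsz^i|^2$ for a small parameter $\delta>0$; the triangular--quadratic $\bsfq^i$ contributes a staircase term of the form $C_n\gamma_{i,n}|\sinh(\lambda_n y^i)|\,|\bsz^j|^2$ to the coefficient of $|\bsz^j|^2$ for each $j\le i$; the subquadratic $\bsfs^i$ is split as $\kappa_n(|\bsz|)\le \eta|\bsz|^2 + K_\eta$ for $\eta$ to be chosen small; and the $\bsz$--independent $\bsfk$ is collected into the slack $k_n(\tx)$.

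The parameters are then fixed in a carefully chosen order. First, $\lambda_n$ is taken large (essentially $\lambda_n\ge 4\Lambda C_n$, with $\delta$-dependent corrections) so that at every $y^j$ the Hessian already dominates the self contributions. The decisive algebraic input here is the pointwise bound $|\sinh(\lambda_n y^j)|/\cosh(\lambda_n y^j)=|\tanh(\lambda_n y^j)|\le 1$, which holds on all of $\R$ and hence removes any would--be smallness assumption on $c_n$. Next, the weights $\gamma_{j,n}$ are chosen recursively from $j=N$ down to $j=1$, each one taken large enough that the surviving positive coefficient $\gamma_{j,n}\lambda_n^2/(4\Lambda)$ dominates the accumulated cross contribution $C_n\lambda_n\sinh(\lambda_n c_n)\sum_{i>j}\gamma_{i,n}$ from the triangular term; since $N$ is finite this recursion terminates in finite weights. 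Finally, $\delta$ and $\eta$ are pushed small enough to render the index--free $|\bsz|^2$--residuals harmless, and $k_n(\tx)$ is defined to absorb all remaining $\bsz$--independent leftovers, which lies in $\el^{q_n}([0,T]\times B_n)$ because $\bsfk$ does. Since $\lambda_n$ and $\gamma_{i,n}$ depend only on $C_n,\kappa_n,q_n$, the very same $\{h_n\}$ serves every $\bsf'\in\BFckq$; the slack is simply replaced by the analogous function built from the $\bsfk'$--piece of $\bsf'$.

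Part~(2) requires no new idea: when $\seq{C}, \seq{q}, \seq{\kappa}$ are constant in $n$, the same construction is carried out once, with the given $c$ in place of $c_n$, producing a single $(h,k)\in\Ly(\bsf,c)$. The main technical obstacle throughout is to keep the order of quantification straight---$\lambda_n$ first (from $C_n$), then the $\gamma_{j,n}$ recursively backward in $j$, and only afterwards the Young parameters $\delta,\eta$; reversing any of these steps reintroduces an artificial smallness assumption on $c_n$ and the argument breaks down.
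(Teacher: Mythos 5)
Your construction handles the quadratic--triangular part correctly: since $\bsfq^i$ only loads $|\bsz^j|^2$ for $j\le i$, the bad cross contributions flow strictly downward in the index and the backward recursion on $\gamma_{j,n}$ (with $|\tanh|\le 1$ killing the diagonal $i=j$ case) closes after finitely many steps. The genuine gap is in the quadratic--linear part, and it is fatal to the separable ansatz $h_n(\bsy)=\sum_i\gamma_{i,n}(\cosh(\lambda_n y^i)-1)$. The term $Dh_n\cdot\diag(\bsz\bsfl)$ produces, for every ordered pair $i\neq j$, a contribution $C_n\lambda_n\gamma_{i,n}|\sinh(\lambda_n y^i)|\,|\bsz^i|\,|\bsz^j|$; this is \emph{not} triangular, so it flows in both directions between indices. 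After Young's inequality with any parameter $a_{ij}>0$, the piece $\tfrac{1}{2a_{ij}}C_n\lambda_n\gamma_{i,n}|\sinh(\lambda_n y^i)|\,|\bsz^j|^2$ must be absorbed by the $j$-th diagonal Hessian entry $\tfrac{1}{2\Lambda}\gamma_{j,n}\lambda_n^2\cosh(\lambda_n y^j)$, evaluated in the worst case $y^i=c_n$, $y^j=0$, where $\cosh(\lambda_n y^j)=1$ gives no help while $\sinh(\lambda_n c_n)$ is exponentially large. Working out the two-sided constraints forces $\gamma_{i,n}/\gamma_{j,n}\lesssim \lambda_n^2/\sinh(\lambda_n c_n)$ for \emph{all} pairs, hence (taking $i$ and $j$ both ways) $\sinh(\lambda_n c_n)\lesssim\lambda_n^2$ --- precisely the smallness assumption on $c_n$ you claim to avoid. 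Equivalently, your order of quantifiers is circular: the self-term $\tfrac{1}{4\delta}|\bsz^i|^2$ forces $\lambda_n\gtrsim C_n/\delta$, while absorbing the $\delta|\bsz|^2$ residual into the smallest diagonal Hessian entry forces $\delta\lesssim\gamma_{\min}\lambda_n/(C_n\,\gamma_{\max}\sinh(\lambda_n c_n))$, and these two requirements are incompatible once $\lambda_n c_n$ is large.

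This is exactly the obstruction the paper's proof (Proposition \ref{pro:h-exists}) is designed to circumvent. There $h$ is the nested exponential $H_1$ with $H_k=\exp(\cosh(\alpha_k y_k)+H_{k+1})$, whose Hessian is \emph{not} diagonal: contracting it with $\scl{\bsz}{\bsz}_{\bsa}$ yields, in addition to the diagonal terms $\tA_i|\bszeta^i|^2$, the positive quantities $\sum_i P_i^{-1}|\bseta^i(\bszeta)|^2$ with $\bseta^i(\bszeta)=\sum_{j\ge i}A_j\bszeta^j$. A summation-by-parts identity rewrites $Dh\cdot\diag(\bsz\bsfl)$ entirely in terms of these tail sums $\bseta^i$, after which Young's inequality absorbs it into $\tot\sum_iP_i^{-1}|\bseta^i|^2$ plus a remainder $C\sum_iP_i(1+|\bszeta|^2)$ whose coefficient does not involve $\sinh(\lambda c)$ against the wrong index. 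If you want to keep a componentwise flavor, you must at minimum introduce coupling between components in $h$ (as the product/tower structure does); no purely additive $h$ will satisfy \eqref{equ:Lyapunov} for the quadratic-linear class on an arbitrary ball $|\bsy|\le c_n$.
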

Another ingredient necessary to
guarantee the existence of a solution to \eqref{equ:BSDE1} is a-priori
boundedness. We remind the
reader that a set of non-zero vectors $\bssa_1, \dots, \bssa_K$ in $\R^N$ (with $K > N$) is said to
\define{positively span} $\R^N$, if, for each $\bssa\in \R^N$ there exist
nonnegative constants $\ld_1,\dots, \ld_K$ such that
\[ \ld_1 \bssa_1 + \dots + \ld_K \bssa_K = \bssa.\]
The following two well-known characterization (see \cite{Dav54}), presented
here for reader's convenience, make
positively-spanning sets easy to spot: (1)
Non-zero vectors $\bssa_1,\dots, \bssa_K$ positively span $\R^N$ if for every
$\bssa\in \R^N\setminus\{\boldsymbol{0}\}$ there exists $k\in \set{1,\dots, K}$ such that
$\bssa^{\top} \bssa_k>0$. (2) If non-zero vectors $\bssa_1,\dots, \bssa_K$ already
span $\R^N$, then they positively span $\R^N$ if $\boldsymbol{0}$ admits a
nontrivial
positive representation, i.e., if there exist nonnegative $\ld_1,\dots,
\ld_K$, not all $0$, such that $\ld_1\bssa_1+\dots+ \ld_K \bssa_K=0$.
\begin{definition}[The a-priori boundedness (AB) condition]
\label{def:AB}
We say that $\bsf$ satisfies the \define{condition (AB)} if
 there exist a deterministic function $l\in
 \lone
 [0,T]$,
 and a set $\bssa_1,\dots, \bssa_K$
 which positively spans $\R^N$, such that
            \begin{align}
            \label{equ:AB}
            \bssa_k^{\top} \bsf (\tx, \bsy, \bsz) \leq   l(t) +  \tot
            \sabs{\bssa_k^{\top} \bsz}^2, \quad
            \text{ for all $(\tx, \bsy, \bsz)$ and }
            k=1,\dots, K.
\end{align}
We say that $\bsf$ satisfies the \define{weak condition (AB)} - abbreviated
as \define{(wAB)} - if there exist Borel functions $\bsL_k:\TR\times \R^{N\times d}
\to \R^{d}$, for $k=1, \dots, K$,  such that $|\bsL_k(t,\bsx, \bsz)|\leq C (1+ |\bsz|)$ for some constant $C$ and
 \begin{align}
            \label{equ:wAB}
            \bssa_k^{\top} \bsf (\tx,\bsy, \bsz) \leq   l(t) +  \tot
            \sabs{\bssa_k^{\top} \bsz}^2+ \bssa^\top_k \bsz \bsL_k(t,\bsx, \bsz), \quad
            \text{ for all $(\tx,\bsy,\bsz)$ and }
            k=1,\dots, K.
\end{align}
\end{definition}
\begin{remark}\label{rem:wAB}
  The constant $\tot$ in \eqref{equ:AB} is simply a convenient choice for later
  use; it can easily be replaced by any other constant by scaling.
  Furthermore, conditions (AB) and (wAB) are invariant under invertible linear transformation of $\R^N$. More precisely,
  suppose that $\bsf$ satisfies $(wAB)$ with $l \in \lone[0,T]$, the
  positively-spanning set
  $\bssa_1,\dots, \bssa_K$ and the functions $\{\bsL_k\}$,  and that
  $\Sigma:\R^N\to\R^N$ is an invertible linear
  map. Then the generator of the transformed system, namely $\tilde{\bsf}(\tx, \tilde{\bsy},
  \tilde{\bsz}):= \Sigma \bsf (\tx, \bsy, \bsz)$,
  satisfies $(wAB)$ with the same $l$,  $\tilde{\bsL}_k(t,\bsx,\tilde{\bsy}, \tilde{\bsz}) = \bsL_k(t,\bsx,\bsy, \bsz)$, and  transformed (but still
  positively spanning) set $(\Sigma^{-1})^{\top} \bssa_k$, $k=1,\dots, K$.
\end{remark}

\begin{theorem}\label{thm:sufficient}
$\,$

\noindent(Existence under (BF)$+$(AB))
 Suppose that
 $\bsf$ satisfies
 conditions (BF) and (AB), and that
 $\bsg\in \Ca_{loc, b_0}$ for some $b_0$ and it satisfies
 $\lim_{\abs{\bsx}\to\infty} \abs{\bsg(\bsx)}/ \abs{\bsx}^2 = 0$.
 Then the system \eqref{equ:BSDE1} admits a locally H\"{o}lderian solution
 $(\bsv, \bsw)$, i.e., $\bsv\in \Caa_{loc, b_0}$ for some sequence $\{\alpha'_n\}$ in $(0,1]$. When $\bsg$ is bounded, the condition (AB) can be replaced by $(wAB)$ and $(\bsv, \bsw)$ is a bounded $\bmo$-solution. 
 
 \medskip

 \noindent (Uniqueness under (BF)$+$(wAB)) Suppose that 
 \begin{enumerate} 
 \item $\bsg\in \Caa_{loc}\cap \linf$ for some sequence $\{\alpha_n\}\in (0,1]$;
 \item (wAB) is satisfied, and (BF) is satisfied with the constants $\seq{C}$ and functions $\seq{\kappa}$ independent of $n$;
 \item $\bsf$ does not depend on $\bsy$, $\bsf(\cdot,\cdot,0)$ is bounded, and there exists a constant $M$ such that $\abs{\bsf(t, \bsx, \bsz) - \bsf(t, \bsx, \bsz')} \leq M (\abs{\bsz} +
  \abs{\bsz'})\abs{\bsz-\bsz'}$ for all $t\in [0,T]$, $\bsx\in \R^d$, $\bsz,\bsz'\in \R^{N\times d}$.
 \end{enumerate}
 Then the solution $(\bsv,\bsw)$ is unique in the class of bounded continuous solutions.
 \end{theorem}

\begin{remark} \label{rem:BF} Here are two extensions of Theorem \ref{thm:sufficient}
which, for the sake of simplicity of presentation, we did not put into its
statement. They will be proved, however, along with Theorem
\ref{thm:sufficient}, below.
\begin{enumerate}
\item
When $\bsg$ is bounded,
the conclusions of Theorem \ref{thm:sufficient} hold if the equality in \eqref{equ:BF} holds only
approximately, namely if, for each $n\in \N$, there exists a sufficiently small $\eps_n$ such that
\begin{align}
\label{equ:BF-approx}
 \babs{
  \bsf(t, \bsx, \bsy, \bsz) - \text{diag}(\bsz \bsfl(t, \bsx, \bsy, \bsz)) - \bsfq(t,
  \bsx,  \bsy, \bsz) - \bsfs(t, \bsx, \bsy, \bsz)  - \bsfk(t,
  \bsx)}\leq \eps_n \abs{\bsz}^2
\end{align}
holds on $[0,T]\times B_n(b_0)\times \R^N\times \R^{N\times d}$.
How small this $\eps_n$ needs to be
depends on the constants $\seq{C}$ in the condition (BF), on
$\norm{\bsv}_{\linf}$
(which, in turn, depends on $\norm{\bsg}_{\linf}$ and the functions and constants
appearing in condition (wAB)), as well as the universal constants ($\Lambda, T, d,
N$, etc.). In general, it is possible to obtain an explicit expression for an estimate
of $\eps_n$ by keeping track of the explicit values of the constants
involved in the proof, but we do not pursue that here.
The case in which such an explicit expression may prove to be useful is when $\bsfl=\boldsymbol{0}$, $\bsfq=\bsfs=\boldsymbol{0}$ and $\bsfk=\boldsymbol{0}$, i.e.
when $\bsf$ is of general structure, but satisfies a smallness assumption.
This case
allows for an especially simple treatment; indeed, to construct a global
Lyapunov pair, it suffices to pick
\[ h(\bsy) = \tot \abs{\bsy}^2, k \equiv 0,  \text{ so that
 $D^2 h = I_{d}$ and $Dh(\bsy) =  \bsy$.}\]
 Then,
\[ \tot D^2 h : \scl{\bsz}{\bsz}_{\bsa}  = \tot \sum_{i=1}^D \abs{\bsz^i
\sigma}^2 \geq \tot \Ld^{-1} \sum_{i=1}^D \abs{\bsz^i}^2, \quad \text{ but }
\quad
 Dh \bsf \leq \eps \abs{\bsy} \abs{\bsz}^2.\]
Therefore, it suffices to
require $\eps < (4\Ld \norm{\bsv}_{\linf})^{-1}$ so that $(h,0)\in \Ly(\bsf, 2 \|\bsv\|_{\linf})$.
This recovers the situation in \cite{Str81} where solutions to parabolic
systems of PDEs were constructed under a parallel ``smallness'' condition.

\item
 Suppose that some component of $\bsg(X_T)$,
say the $j$-th, has a bounded
Malliavin derivative and $\bsf^j$ does not depend on
$\bsz^i$ for $i\neq j$. It is known  then
(see
\cite{Briand-Elie} and \cite{Cheridito-Nam}  for sufficient conditions on
$\bsX, f^j$ and $g^j$) that
 the $j$-th component $\bsZ^j$ of the
solution is bounded, too.
 In this case, Theorem \ref{thm:sufficient} still holds if  any locally bounded
function of $\bsz^j$ is added to the right-hand side of
\eqref{equ:AB} and \eqref{equ:wAB}.
\end{enumerate}
\end{remark}

\section{Examples}\label{sec:examples}
We illustrate the strength of our results by considering four different
classes of BSDE systems arising from game theory, geometry, mathematical
economics and mathematical finance. Proofs of all statements are postponed
until section 5.

\subsection{Incomplete stochastic equilibria}

The existence and properties of equilibrium (market-clearing) asset-price
dynamics in financial markets is one of the central problems in financial
economics and mathematical finance. While the so-called complete market case has
been fully understood, the incomplete market case has been
open since early 1990s.
A stochastic equilibrium among $N$ heterogeneous agents in
incomplete markets has been considered in \cite{Kardaras-Xing-Zitkovic}.
There the filtration is generated by a $2$-dimensional Brownian motion
$\bsW= (B, B^\bot)$, where the first component drives the price of a
tradable asset but both components can determine the size of agents' random
endowment. Preference of agents are modeled by exponential utilities with
heterogenous risk-tolerance coefficients. An equilibrium is a pair
consisting of an asset-price process and
agents' trading strategies such that every agent maximizes the expected
utility from trading and random endowment, meanwhile supply equals to
demand (market clears), cf. \cite[Definition 1.1]{Kardaras-Xing-Zitkovic}.

In this setting, \cite{Kardaras-Xing-Zitkovic} considered the following system of quadratic BSDE:
\begin{equation}\label{equ:equilibrium}
 d\bsY_t = \bmu_t dB_t + \bnu_t dB^\bot_t + \big(\tfrac{1}{2} \bnu_t^2 - \tfrac{1}{2} \bsA[\bmu_t]^2 + \bsA[\bmu_t] \bmu_t \big)\, dt, \quad \bsY_T = \bsG,
\end{equation}
where $\bsA[\bmu] = \sum_{i=1}^N \alpha^i \mu^i$ for a sequence of
constants $(\alpha^i)$ with $\alpha^i\in (0,1)$ and $\sum_{i=1}^N
\alpha^i=1$.
It is proved in \cite[Theorem 1.6]{Kardaras-Xing-Zitkovic} that equilibria one-to-one correspond to solutions of \eqref{equ:equilibrium} with $(\bmu,
\bnu)\in \bmo$. Moreover in an equilibrium, each component of $\bsY$ represents the certainty equivalence of each agent. However, when it comes to the existence and uniqueness of
solutions, certain ``smallness-type" of conditions need to be assumed;
either $\norm{\bsG}_{\linf}$ is sufficiently small or $T$ is sufficiently
small, cf. \cite[Corollaries 2.6 and 2.7]{Kardaras-Xing-Zitkovic}. In the
Markovian setting, existence of solutions was also established for
sufficiently small $T$ in
\cite{Zit12} in a similar model, and \cite{Zha12} and
\cite[Theorem 3.1]{ChoLar14}.

The following result establishes global existence and uniqueness of equilibrium in a Markovian setting with bounded random endowment.
Here, $X$ is the solution of \eqref{equ:X} with $\bsW=(B, B^\bot)$,
$b: [0,T]\times \R^d \rightarrow \R^d$, and $\sigma: [0,T]\times \R^d
\rightarrow \R^{d\times 2}$ satisfying conditions (1)-(3) after
\eqref{equ:X}.

\begin{theorem}[Existence and uniqueness of incomplete stochastic equilibria]\label{thm:equilibrium} Suppose that the terminal condition
is of the form $\bsG=\bsg(X_T)$ for some $\bsg\in \Caa_{loc}
\cap \linf$. Then the system \eqref{equ:equilibrium} admits a unique bounded continuous
solution. Consequently, an incomplete stochastic
equilibrium in the setting of \cite{Kardaras-Xing-Zitkovic} exists and
is unique in the class of equilibria in which
each agent's certainty-equivalence process is a continuous function of
time and the state $X$.
\end{theorem}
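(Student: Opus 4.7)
The plan is to reduce Theorem~\ref{thm:equilibrium} to Theorems~\ref{thm:existence} and~\ref{thm:unique} applied to the BSDE~\eqref{equ:equilibrium}, viewed as an instance of~\eqref{equ:BSDE-no-y}, and then invoke \cite[Theorem~1.6]{Kardaras-Xing-Zitkovic} to translate the BSDE result into the equilibrium statement. Writing $\bsZ\sigma=[\hat{\bmu}\mid\hat{\bnu}]$, the generator becomes
\[
f^j(\tx,\bsz)=-\tfrac{1}{2}(\hat{\nu}^j)^2+\tfrac{1}{2}\bsA[\hat{\bmu}]^2-\bsA[\hat{\bmu}]\hat{\mu}^j,
\]
which is $\bsy$-independent, of degree two in $\bsz$ with bounded coefficients; the terminal $\bsg$ is bounded and locally H\"olderian by hypothesis.

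Next I will verify the condition (wAB) with the positively-spanning set $\{\boldsymbol{\alpha},-e_1,\dots,-e_N\}$. The crucial one-sided identity, immediate from $\sum_j\alpha^j=1$, is
\[
\boldsymbol{\alpha}^\top\bsf=-\tfrac{1}{2}\sum_j\alpha^j(\hat{\nu}^j)^2-\tfrac{1}{2}\bsA[\hat{\bmu}]^2\le 0,
\]
which handles $\bssa=\boldsymbol{\alpha}$ trivially; the choice $\bsL_j(\tx,\bsz)=-\bsA[\hat{\bmu}]\sigma e_1-\hat{\nu}^j\sigma e_2$, with $|\bsL_j|\le C(1+|\bsz|)$ automatic, gives $-f^j+\bsz^j\bsL_j=-\tfrac{1}{2}(\hat{\nu}^j)^2-\tfrac{1}{2}\bsA[\hat{\bmu}]^2\le\tot|\bsz^j|^2$ and so handles $\bssa=-e_j$. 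This produces an a-priori bound $\norm{\bsv}_{\linf}\le c$ for some $c$ depending only on $\norm{\bsg}_{\linf}$, $T$, $\Ld$, $N$, and $\boldsymbol{\alpha}$.

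The main obstacle is the construction of a $c$-Lyapunov pair $(h,k)$ with bounded $k$: the term $\tfrac{1}{2}\bsA[\hat{\bmu}]^2$ is quadratic in $\bsz$ but not triangular in the Bensoussan--Frehse sense, so Proposition~\ref{thm:BF-Lyap} does not apply directly and the negativity of $\boldsymbol{\alpha}^\top\bsf$ must be exploited. The key algebraic identity
\[
f^j=-\tfrac{1}{2}|\bsz^j\sigma|^2+\tfrac{1}{2}(\hat{\mu}^j-\bsA[\hat{\bmu}])^2
\]
separates $\bsf$ into a negative-definite quadratic piece in $\bsz^j$ and a pointwise-nonnegative ``variance-like'' piece, and suggests
\[
h(\bsy)=A\sum_j\alpha^j(y^j-\boldsymbol{\alpha}^\top\bsy)^2+B\,\varphi(\boldsymbol{\alpha}^\top\bsy),
\]
with $\varphi\in C^2(\R)$ strictly convex, $\varphi(0)=\varphi'(0)=0$, and positive constants $A,B$ calibrated to $c$, $\Ld$ and $\alpha_{\min}=\min_j\alpha^j$. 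The Hessian of the first summand produces $\boldsymbol{\alpha}$-weighted variances of $\hat{\bmu}$ and $\hat{\bnu}$ in $\tfrac{1}{2}D^2h:\scl{\bsz}{\bsz}_{\bsa}$, which match (up to constants) the positive $(\hat{\mu}^j-\bsA[\hat{\bmu}])^2$-contribution appearing in $-Dh\cdot\bsf$; the second summand, combined with $\boldsymbol{\alpha}^\top\bsf\le0$, supplies the missing coercivity in the $\boldsymbol{\alpha}$-direction, while all remaining cross-terms are uniformly bounded on $\{|\bsy|\le c\}$ and can be absorbed into a constant $k$.

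With the Lyapunov pair in hand I build an approximating sequence $(\bsf^m,\bsg^m)$ by bounded truncation in $\bsz$ and smoothing of $\bsg$; each truncated system has a classical smooth Markovian solution by standard Lipschitz BSDE theory, and both the a-priori bound and the Lyapunov pair transfer uniformly in $m$. Theorem~\ref{thm:abstract} yields local equicontinuity, Theorem~\ref{thm:existence} produces a locally H\"olderian Markovian solution $(\bsv,\bsw)$ with $\norm{\bsv}_{\linf}\le c$, and Theorem~\ref{thm:unique}(1) upgrades it to a $\bmo$-solution. Uniqueness among bounded continuous solutions follows from Theorem~\ref{thm:unique}(2), whose local Lipschitz requirement $|\bsf(\tx,\bsz)-\bsf(\tx,\bsz')|\le \tilde M(|\bsz|+|\bsz'|)|\bsz-\bsz'|$ is immediate from the polynomial structure of $\bsf$. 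Finally, \cite[Theorem~1.6]{Kardaras-Xing-Zitkovic} translates the unique bounded continuous solution of the BSDE into the unique incomplete stochastic equilibrium in the prescribed class.
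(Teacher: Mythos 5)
Your overall architecture matches the paper's: reduce to the Markovian BSDE framework, verify (wAB) with the positively spanning set $\{\boldsymbol{\alpha},-\mathfrak{e}_1,\dots,-\mathfrak{e}_N\}$ to get the a-priori bound, produce a Lyapunov pair, approximate, and then invoke Theorems \ref{thm:abstract}, \ref{thm:existence}, \ref{thm:unique} and \cite[Theorem 1.6]{Kardaras-Xing-Zitkovic}. The (wAB) computation, the identity $\boldsymbol{\alpha}^{\top}\bsf=-\tfrac12\bsA[\hat{\bnu}^2]-\tfrac12\bsA[\hat{\bmu}]^2\le 0$, and the decomposition $f^j=-\tfrac12|\bsz^j\sigma|^2+\tfrac12(\hat{\mu}^j-\bsA[\hat{\bmu}])^2$ are all correct, and you correctly identify that the term $\tfrac12\bsA[\hat{\bmu}]^2$ violates the quadratic-triangular structure so that Proposition \ref{thm:BF-Lyap} does not apply as stated.

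The gap is in your Lyapunov construction. Write $w^i=y^i-\boldsymbol{\alpha}^{\top}\bsy$ and $\bszeta^i=\bsz^i\sigma$. For $h_1(\bsy)=A\sum_j\alpha^j(w^j)^2$ one has $D_ih_1=2A\alpha^iw^i$ and $\tfrac12 D^2h_1:\scl{\bsz}{\bsz}_{\bsa}=A\sum_i\alpha^i|\bszeta^i-\bsA[\bszeta]|^2=:A\,\mathrm{Var}$, while
\[
-Dh_1\cdot\bsf=A\sum_i\alpha^iw^i|\bszeta^i|^2-A\sum_i\alpha^iw^i(\hat{\mu}^i-\bsA[\hat{\bmu}])^2 .
\]
The second sum alone is bounded below only by $-2Ac\,\mathrm{Var}$, and the first sum contributes a further $-O(Ac)\,\mathrm{Var}$ after Young's inequality; the $\varphi$ summand cannot help with the variance direction because its Hessian only produces $\tfrac{B}{2}\varphi''\,|\bsA[\bszeta]|^2$ (and its drift term has the wrong sign when $\varphi'<0$). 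Hence the coefficient of $\mathrm{Var}$ in your final lower bound is at most $A(1-4c)-\tfrac{B}{2}\sup|\varphi'|$, which is negative as soon as $c\ge 1/4$ \emph{no matter how $A$ and $B$ are calibrated}: both the good Hessian term and the bad drift term scale linearly in $A$, and their ratio is governed by $|w^i|\le 2c$. Since $c$ is the a-priori bound coming from an arbitrary bounded $\bsg$, the construction as written only covers small terminal conditions, which is exactly the regime the theorem is meant to go beyond. The standard cure is to replace the quadratic profile by one with $\psi''\gg|\psi'|$ (exponential/$\cosh$-type, as in Proposition \ref{pro:h-exists}); the paper sidesteps the bespoke construction altogether by the linear change of variables $\overline{Y}^i=\tilde Y^i-\tilde Y^N$, $\overline{Y}^N=\tilde Y^N$, after which the full quadratic $\tfrac12\bsA[\hat{\bmu}]^2$ survives only in the $N$-th component (where the triangular condition permits it) and the remaining components become quadratic-linear, so that (BF) holds and Proposition \ref{thm:BF-Lyap} delivers the Lyapunov pair for every $c$. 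You should either adopt that transformation or redo your direct construction with exponential-type profiles and a full verification of the cross terms.
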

\begin{remark}
 When $\bsg$ is of merely subquadratic growth, the system
 \eqref{equ:equilibrium} still admits a locally H\"{o}lderian solution, but
 the martingale part associated to this solution may not have enough
 integrability to be identified with an equilibrium.
\end{remark}

\subsection{Martingales on manifolds}\label{subsec:mart}
It is well-known that semimartingales can be defined on arbitrary
differentiable manifolds, but that martingales require additional
structure, namely that of a connection (if one wants a Brownian
motion, one needs a full Riemannian metric). We refer the reader to the
books \cite{Eme89} and \cite{Hsu02} for more details.

In the flat (Euclidean) case, martingales are
easily constructed from their terminal values by a simple process of
filtering, i.e.,  computing conditional expectation. When the underlying
filtration is Brownian, one can,
additionally, build this martingale from the given Brownian motion via the
martingale representation theorem; this
amounts to a solution to a linear system of BSDE.

If the geometry is not flat, one cannot simply filter anymore, but, as it
turns out, the problem can still be formulated in terms of a system of
BSDE. This system, however, is no longer linear and the existence of its
solution has been a subject of extensive study (see, e.g.,
\cite{Darling}, \cite{Bla05} and \cite{Bla06}).

Before we write down this system, we set the stage by assuming that a
$d$-dimensional Brownian motion $W$ is given,
and that the target space is an $N$-dimensional
differentiable manifold $M$, without boundary,
endowed with an affine connection. This connection,  $\Gamma$, is  described in coordinates by its
Christoffel symbols $\Gamma^k_{ij}$; we assume these are all Lipschitz on
compact sets, but not necessarily differentiable (as we will not be needing
the concept of curvature).

The martingale property on a manifold with a connection $\Gamma$ can be
formulated in many ways - we prefer to give the one that resembles a
characterization in the flat case; we say that a continuous $M$-valued
semimartingale $\bsY$ is a
\define{$\Gamma$-martingale} (with respect to the natural
filtration of $W$) if
\[ f(\bsY_t) - \tot \int_0^t  \Hess f(d\bsY_t, d\bsY_t), t\in [0,T],\]
is a local martingale for each smooth real-valued $f$. Here $\Hess f$ is the
(covariant) Hessian of $f$, i.e.,  a $(0,2)$-tensor, given in our coordinate chart by
\[ (\Hess f)_{ij}(\bsy)   = D_{ij} f(\bsy) -  \Gamma^k_{ij}(\bsy) D_k
f(\bsy).\] We refer the reader to \cite[p.~23]{Eme89} for the definition of
quadratic variation with respect to a $(0,2)$-tensor field (such as $\Hess$) on
a manifold.  \itos{} formula immediately implies that $\bsY$ is a
$\Gamma$-martingale if its coordinate representation
admits the following semimartingale decomposition
\begin{equation}
\label{equ:BSDE-Darling}
dY^k_t = - \tot \sum_{i,j=1}^d \Gamma^k_{ij}(\bsY_t) (\bsZ^i_t)^{\top} \bsZ^j_t\, dt +
\bsZ^k_t\, dW_t,\ k=1,\dots, N,
\end{equation}
where, as usual, $\bsZ^j$ denotes the $j$-th row of the $N\times
d$-matrix-valued process $\bsZ$.

For simplicity, and without too great a loss of generality, we assume
that the given terminal value $\bsG$ of the  martingale we want to construct is
of the form $\bsg(W_T)$. Furthermore, we assume that the image of $\bsg$ is
localized in the following way: there exists a convex and  compact set $M_0$, covered by
the image
$V \subseteq M$ of a single chart,
with coordinates $\bsy=(y_1,\dots, y_N)$, such that $\bsg(x) \in M_0$, for
all $x\in R^d$.
This way, we can work in a single coordinate chart, as if
$M$ itself were an open set of $\R^N$ and, in fact, assume that $M=\R^N$.
Also since we only care about the connection in a neighborhood of $M_0$, we
assume that the Christoffel symbols are globally Lipschitz.

As in \cite{Darling}, we make the following assumption on the geometry of $M$
around the image of $\bsg$:
\begin{assumption}[Double convexity]
\label{asm:geometry}
 There exists a convex function $\phi \in C^2(\R^N)$
 such that
 \begin{enumerate}
     \item $M_0=\phi^{-1}((-\infty,0])$, and
     \item $\Hess \phi$ is nonnegative definite ($\phi$ is
     geodesically convex), and
     strictly positive definite on some neigborhood of $M_0$.
 \end{enumerate}
\end{assumption}

Applying Theorem \ref{thm:existence} to the current setting, we obtain the following result.
\begin{proposition}\label{pro:Darling}
If $\bsg\in \Ca_{loc}(\R^d)$ and
Assumption \ref{asm:geometry} holds, there exists a $\Gamma$-martingale
$\prf{\bsY_t}$ with $\bsY_T= \bsg(W_T)$ which takes values in
$M_0$, for all $t\in [0,T]$.
\end{proposition}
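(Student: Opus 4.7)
The plan is to apply the approximation theorem (Theorem \ref{thm:existence}) to the Markovian system \eqref{equ:BSDE-Darling}, taking the forward diffusion to be $\bsX = W$ (so $\bsb\equiv 0$, $\bsig\equiv I_d$) and reading off the generator as $\bsf^k(\bsy,\bsz)=\tot\sum_{i,j} \Gamma^k_{ij}(\bsy)\scl{\bsz^i}{\bsz^j}$. The algebraic heart of the argument is the following identity, valid for every $h\in C^2(\R^N)$ and obtained by a short computation from the definition of the covariant Hessian:
\[
\tot D^2 h(\bsy):\scl{\bsz}{\bsz}_{I_d} - Dh(\bsy)\bsf(\bsy,\bsz) = \tot\sum_{i,j} (\Hess h)_{ij}(\bsy)\scl{\bsz^i}{\bsz^j}.
\]
This ties the Lyapunov condition of Definition \ref{def:Lyapunov} directly to geodesic convexity: with $h=\phi$ and $\Hess\phi\succeq 0$, it makes $\phi(\bsY_t)$ a submartingale for any solution of \eqref{equ:BSDE-Darling}, which is precisely the mechanism that confines $\bsY$ to $M_0$.

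To construct an approximating family, I would mollify $\bsg$ into a sequence $\{\bsg^m\}$ of bounded $C^2$ functions with uniform $\Ca_{loc}$ bounds and images contained in the convex set $M_0$, fix an open neighborhood $U$ of $M_0$ on which $\Hess\phi\succeq \lambda I_N$ for some $\lambda>0$ (which exists by Assumption \ref{asm:geometry} and compactness of $M_0$), fix a smooth cutoff $\chi$ equal to $1$ on $M_0$ and supported in $U$, and replace $\bsf$ by the modified generator $\tilde\bsf=\chi\bsf$ with globally bounded coefficients. Then a further Lipschitz truncation of $\tilde\bsf$ in $\bsz$ outside $\abs{\bsz}\leq m$ yields $\bsf^m$ for which classical Pardoux--Peng theory supplies Markovian solutions $(\bsv^m,\bsw^m)$, and $(\bsf^m,\bsg^m)\to (\tilde\bsf,\bsg)$ in the sense of \eqref{equ:approx-f-g}. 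The a-priori bound on $\bsv^m$ comes from applying \itos{} formula to $\phi(\bsY^m_t)$: the identity above gives $d\phi(\bsY^m_t)\geqm 0$ and, combined with $\phi(\bsY^m_T)=\phi(\bsg^m(W_T))\leq 0$, a standard submartingale argument forces $\bsY^m_t\in M_0$ a.s. Consequently $\norm{\bsv^m}_{\linf}$ is bounded uniformly by the compactness of $M_0$; moreover, because $\chi\equiv 1$ along the whole trajectory, the eventual limit solution of the cut system will in fact solve the original, uncut BSDE \eqref{equ:BSDE-Darling}.

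The main obstacle is verifying condition (4) of Theorem \ref{thm:abstract}: the inequality \eqref{equ:Lyapunov} must hold for \emph{all} $\bsy$ with $\abs{\bsy}\leq 2c_n$, not merely for $\bsy\in M_0$. After translating the chart so that the convex $\phi$ attains its minimum at $\boldsymbol{0}\in M_0$ (giving $\phi\geq 0$, $\phi(\boldsymbol{0})=0$, $D\phi(\boldsymbol{0})=\boldsymbol{0}$), the identity above reduces the task to producing $h_n\geq 0$, vanishing to second order at the origin, whose Euclidean Hessian $D^2h_n$ dominates $2I_N$ outside $\supp(\chi)$ (where $\tilde\bsf=0$) and whose covariant Hessian $\Hess h_n$ dominates $2I_N$ inside $\supp(\chi)$. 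A natural candidate is $h_n(\bsy)=\phi(\bsy)+A_n\abs{\bsy}^2$: for $A_n$ large, the requirement outside $\supp(\chi)$ is automatic, while inside $\supp(\chi)$ the strict positivity $\Hess\phi\succeq \lambda I_N$ furnishes a definite contribution, and the remaining cross-term $2A_n\Gamma^k_{ij}(\bsy)y_k$ is made small by shrinking $U$ around $M_0$ (and possibly by composing $\phi$ with an increasing convex function such as $s\mapsto e^{\alpha s}-1-\alpha s$ to boost geodesic convexity) before committing to $A_n$. The delicate point is the transitional annulus $\{0<\chi<1\}$, where neither regime dominates cleanly and a careful interpolation is required. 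Once a local Lyapunov pair is in hand, Theorem \ref{thm:existence} delivers a locally H\"olderian Markovian solution $(\bsv,\bsw)$; the $\phi$-submartingale argument, now applied to the limit, confines $\bsY$ to $M_0$, and \itos{} formula applied to an arbitrary smooth real-valued $f$ yields the defining $\Gamma$-martingale identity.
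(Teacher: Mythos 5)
Your plan — approximate, confine $\bsY^m$ to $M_0$ via an It\^o/submartingale argument with $\phi$, build a Lyapunov pair from geodesic convexity, pass to the limit via Theorem \ref{thm:existence} — is exactly the paper's route, and the algebraic identity $\tot D^2 h:\scl{\bsz}{\bsz}_{I_d}-Dh\bsf=\tot\sum_{ij}(\Hess h)_{ij}\scl{\bsz^i}{\bsz^j}$ is the right engine. The extra cutoff $\chi$ in $\bsy$ is a reasonable alternative to the paper's blanket assumption that the Christoffel symbols are globally Lipschitz.

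There is, however, a genuine gap in the Lyapunov step, and it sits precisely where you feared. First, the requirement you extract is incomplete: since $\bsf^m$ is also truncated in $\bsz$ (via $\Pi^m\bsz=\rho\bsz$ with $\rho=(\abs{\bsz}\wedge m)/\abs{\bsz}$), the quantity you must bound below is
\[
\tot\big((1-\rho^2\chi)\,D^2h_n + \rho^2\chi\,\Hess h_n\big):\bsz\bsz^\top,
\qquad \rho\in(0,1],
\]
so \emph{inside} $\supp\chi$ you need both $D^2h_n$ and $\Hess h_n$ uniformly positive definite, not just $\Hess h_n$ as you state. Second, your candidate $h_n=\phi+A_n\abs{\bsy}^2$ puts the large constant on the wrong term: $\Hess(A\abs{\bsy}^2)=2A\big(I_N-\Gamma^k_{ij}(\bsy)\,y_k\big)$, and whenever the matrix $\big(\Gamma^k_{ij}(\bsy)y_k\big)_{ij}$ has an eigenvalue exceeding $1$ somewhere on $M_0$ — which is entirely possible for generic Christoffel symbols and a terminal set that is not tiny — increasing $A_n$ makes $\Hess h_n$ \emph{more} negative, not less. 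Shrinking $U$ does not help, since $\bsy$ still ranges over all of $M_0$. The fix is to flip the weight: take $h_n=A_n\phi+\abs{\bsy}^2$. Then $D^2h_n=A_n D^2\phi+2I_N\succeq 2I_N$ everywhere (using the Euclidean half of double convexity), and on $\supp\chi\subseteq U$, where $\Hess\phi\succeq\lambda I_N$, you get $\Hess h_n\succeq A_n\lambda I_N-2\sup_U\|\Gamma^k y_k\|\,I_N\succeq 2I_N$ for $A_n$ large. This is exactly what the paper's ``use the strict geodesic convexity of $\phi$ near $M_0$, redefine outside'' is shorthand for. With these two corrections, the ``transitional annulus'' worry evaporates: the $\chi$- and $\rho$-weighted matrix above is a pointwise convex combination of $D^2h_n$ and $\Hess h_n$, both of which you have arranged to be $\succeq 2I_N$, so no interpolation argument is needed at all.

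A smaller point: ``the identity above gives $d\phi(\bsY^m_t)\geqm 0$'' is too quick, because that identity is for the \emph{untruncated, uncut} driver. With $\bsf^m$ as you defined it, the drift of $\phi(\bsY^m)$ is $\tot\big[(1-\rho^2\chi)D^2\phi + \rho^2\chi\Hess\phi\big]:\bsZ^m(\bsZ^m)^\top$, and one needs \emph{both} halves of Assumption \ref{asm:geometry} (Euclidean and geodesic nonnegativity) to see this is nonnegative — which is precisely why the assumption is called ``double'' convexity and is the point the paper's It\^o computation makes explicit.
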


\begin{remark}\
\begin{enumerate}
\item
While the detailed proof of Proposition \ref{pro:Darling} above is
postponed until Section 5, we comment, briefly, on the interpretation of
Lyapunov pairs in this, special, case. What makes it especially convenient
is the fact that
the driver $\bsf$ depends on $\bsZ$ only
through the symmetric matrix $\bsZ^{\top} \bsZ$. A simple computation shows
that
$(h,0)$ is a $c$-Lyapunov pair if (and only if) the matrix
\[ \big( D^2_{ij} h(\bsy)  - \sum_k D_k h(\bsy)\,  \Gamma^k_{ij}(\bsy)
\big)_{ij},\ i,j =1,\dots, d\]
is strictly positive definite for all $\abs{\bsy}\leq c$. Equivalently,
$\Hess h (\bsy) \succeq 0$, i.e., $h$ is (geodesically)
strictly convex (see, e.g.,
Chapter 3 of \cite{Udr94} for a detailed discussion of convexity on
Riemannian manifolds). This characterization fits perfectly with our
interpretation of Lyapunov functions as ``submartingale'' functions.
\item Unlike in the flat case, where convex functions abound, the very
existence of (geodesically) convex functions depends on geometric properties on $M$. We do not go into
details, but note that smooth nontrivial global convex functions always exist on
complete, simple-connected Riemanninan
manifolds of nonpositive sectional curvature (Cartan-Hadamard manifolds);
cf. \cite{Kendall}. In the general case, one
can always find a convex function locally, but it is not hard to see that
compact Riemannian manifolds, e.g., never admit nonconstant global convex functions. We refer the
reader to \cite{Udr94} for a thorough treatment of geodesic convexity.
\item The condition of
double convexity has been imposed in \cite{Darling} to
construct not-necessarily-Markovian martingales with values in manifolds
with connections. Our construction not only recovers some of the results from
\cite{Darling} in the
Markovian case, but also gives a partial positive answer to
Conjecture 7.2., p.~1257.  Indeed, Proposition \ref{pro:Darling} does not
require $M_0$ to have \emph{doubly convex geometry}, i.e., does not
assume conditions 5.1 and 6.1 in \cite[Theorem 7.1]{Darling}.
\item Without too much work, Proposition \ref{pro:Darling} can be extended
in several directions. First, the flat
Brownian motion $W$ can be replaced by a Brownian motion on a Riemannian
manifold (with metric $g$); indeed, one simply needs
to solve a modified version of BSDE \ref{equ:BSDE-Darling}
driven by a driftless diffusion whose dispersion coefficient $\sigma$
relates to the underlying metric as $\sigma \sigma^{\top} = g^{-1}$.
Moreover, under appropriate growth conditions,
the linear connection $\Gamma$ can be replaced by a nonlinear one, of the
form $\Gamma=\Gamma(\bsy, \bsZ^{\top} \bsZ)$.
\item Seen as a map between manifolds, a function  which transforms a
Browninan motion into a martingale is called {harmonic}. Our BSDE
\eqref{equ:BSDE-Darling}
corresponds to the parabolic system introduced by \cite{EelSam64} in order
to show that, under certain geometric conditions, harmonic maps exist within each
homotopy class (see \cite[Chapter 10]{Aub98a} for a detailed treatment of
this fascinating problem).
\end{enumerate}
\end{remark}

\subsection{A Stochastic Game with Cooperation or Hinderance}
Our next example concerns a finite-horizon stochastic differential game which is inspired
by a bounded-domain discounted game treated in
\cite{Bensoussan-Frehse-Nplayer}. For simplicity of presentation, we assume
there are only 2 players whom we call Player $1$ and Player $2$.
On a $d$-dimensional Brownian filtration, these players choose two
$\R^d$-valued processes, namely  $\bmu$ and $\bnu$ in $\bmo$, as their
respective controls. These
affect the state $\bsX$ through its drift in the following way:
\[
 d\bsX^{(\mu, \nu)}_t = \big(\bsb(\bsX^{(\mu, \nu)}_t) + \bmu_t +
 \bnu_t\big) dt +
 d\bsW^{(\mu, \nu)}_t, \quad \bsX^{(\mu, \nu)}_0=\bsx,
\]
where $\bsb: \R^d \rightarrow \R^d$ is a bounded Lipschitz vector field, and
$\bsW^{(\mu, \nu)} = \bsW - \int_0^\cdot(\bmu_u+\bnu_u) du$ is a Brownian
motion under the probability measure $\PP^{(\mu, \nu)}$ defined via
$d\,\PP^{(\mu, \nu)}/d\,\PP = \mathcal(\int (\bmu_u+ \bnu_u)^\top
d\bsW_u)_T$. Given a constant $\theta$ - which we term the
\define{cooperation penalty} - and integrable-enough functions $h^i, g^i: \R^d
\rightarrow \R$, $i=1,2$, the cost of player $i$ with the initial state
$\bsx$ at time $t=0$ is defined as
\[
 J^i(0, \bsx, \mu, \nu) = \EE^{(\bsx,\mu, \nu)} \Big[\int_0^T
 \big(h^i(\bsX_u) + \tfrac12 |\bmu_u|^2 + \theta \bmu_u^\top \bnu_u\big)
 dt + g^i(\bsX_T)\Big],\quad i=1,2,
\]
where the expectation is taken with respect to $\PP^{(\bsx, \mu, \nu)}$.
It is clear from its form how large positive values of the parameter $\theta$
incentivize the players to push in opposing directions, while the large
negative values motivate them to cooperate.
A \define{Nash equilibrium} between these two players is a
pair $(\hat{\bmu}, \hat{\bnu})$ of controls with the property that, for any $\bmu,
\bnu\in \bmo$, we have
\begin{equation}\label{equ:Nash-equ}
 J^1(0, \bsx, \hat{\bmu}, \hat{\bnu}) \leq J^1(0, \bsx, \bmu, \hat{\bnu})
 \quad \text{and} \quad J^2(0, \bsx, \hat{\bmu}, \hat{\bnu}) \leq J^2(0,
 \bsx, \hat{\bmu}, \bnu),
\end{equation}
and $(J^1, J^2)(\cdot, \cdot, \hat{\bmu}, \hat{\bnu})$ is called the value of this equilibrium.

We recast the problem as a BSDE system by introducing the
Lagrangians of the two players:
\[
 L^1(\bmu, \bnu, \bsp) = \tfrac12 |\bmu|^2 + \theta \bmu \cdot \bnu +
 \bsp^1 (\bmu+ \bnu) \quad \text{and} \quad L^2(\bmu, \bnu, \bsp) =
 \tfrac12 |\bnu|^2 + \theta \bmu \cdot \bnu + \bsp^2 (\bmu+ \bnu),
\]
where $\bsp^i$ is $i$-th row vector of $\bsp$.
When $\theta\neq \pm 1$, the minimizers are given by
\begin{equation}\label{equ:munu*}
 \hat{\bmu}(\bsp)= \tfrac{\theta}{(1+\theta)(1-\theta)}(\bsp^1 + \bsp^2) -
 \tfrac{1}{1-\theta} \bsp^1 \quad \text{and} \quad \hat{\bnu}(\bsp) =
 \tfrac{\theta}{(1+\theta)(1-\theta)}(\bsp^1 + \bsp^2) -
 \tfrac{1}{1-\theta} \bsp^2.
\end{equation}
Setting $L^i(\bsp) = L^i(\hat{\bmu}(\bsp), \hat{\bnu}(\bsp), \bsp)$ and $\bsL =
(L^1, L^2)^{\top}$, we pose the following BSDE:
\begin{align}\label{equ:BSDE-BF}
 d\bsY_t = -\bsf(\bsX_t, \bsZ_t) dt + \bsZ_t d\bsW_t,& \quad \bsY_T = \bsg(\bsX_T),
\ewhere
\bsf(\bsx, \bsz) = \bsh(\bsx) + \bsL(\bsz),
\end{align}
 with the state process given by $d\bsX_t = \bsb(\bsX_t) dt + d\bsW_t$ on the (augmented) filtration generated by $\bsW$. The following result establishes a unique bounded H\"{o}lderian solution, which corresponds to a Nash equilibrium.

\begin{proposition}\label{pro:BF-game}
 Assume that $\bsh \in \linf$ and
 $\bsg\in\Caa_{loc} \cap \linf$. When $-1\neq \theta \leq 1/2$ or
 $\theta>1$, the equation
 \eqref{equ:BSDE-BF} admits a unique bounded continuous solution
 $(\bsv, \bsw)$. Moreover the pair $(\hat{\bmu}(\bsZ), \hat{\bnu}(\bsZ))$,
 where $\bsZ=\bsw(\cdot, \bsX)$, is in $\bmo$ and enacts a Nash equilibrium
 with the value $\bsv$.
\end{proposition}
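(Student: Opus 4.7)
The plan is to verify the hypotheses of Theorem~\ref{thm:sufficient} (or of its abstract predecessors Theorem~\ref{thm:abstract} and Theorem~\ref{thm:existence}) for the generator $\bsf(x, \bsz) = \bsh(x) + \bsL(\bsz)$, producing a bounded continuous $\bmo$-solution, and then to deduce the Nash-equilibrium property via a standard verification argument. Since $\bsh$ is bounded (and plays the role of $\bsfk$) and $\bsL$ is a pure quadratic form in $\bsz$, the first step is to verify (BF). Performing the invertible change of coordinates $\tilde Y^1 = Y^1 - Y^2,\ \tilde Y^2 = Y^1 + Y^2$ (so $\tilde\bsz^1 = \bsz^1 - \bsz^2$, $\tilde\bsz^2 = \bsz^1+\bsz^2$), a direct computation using \eqref{equ:munu*} gives
\begin{align*}
\tilde L^1(\tilde\bsz) = \tfrac{2\theta-1}{2(1-\theta^2)}\,\tilde\bsz^1 \cdot \tilde\bsz^2,\qquad
\tilde L^2(\tilde\bsz) = -\tfrac{3+2\theta}{4(1+\theta)^2}\sabs{\tilde\bsz^2}^2 + \tfrac{1-2\theta}{4(1-\theta)^2}\sabs{\tilde\bsz^1}^2.
\end{align*}
The pure bilinear $\tilde L^1$ fits into $\diag(\tilde\bsz\, \tilde\bsfl)$ with $\tilde\bsfl^{(\cdot,1)} = \tfrac{2\theta-1}{2(1-\theta^2)}(\tilde\bsz^2)^{\top}$ (linear in $\tilde\bsz$), while $\tilde L^2$ is accommodated by the quadratic-triangular slot $\bsfq^2$ (which for $i=2$ may depend on both $\sabs{\tilde\bsz^1}^2$ and $\sabs{\tilde\bsz^2}^2$); the bounded $\bsh$-pieces transform into $\bsfk$. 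Hence (BF) holds.

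The verification of (wAB) splits on the value of $\theta$. For $-1 \ne \theta \leq 1/2$, take the positively spanning set $\{\bse_1,\ \bse_2,\ -(\bse_1 + \bse_2)\}$ in the original $\R^2$. For $\bssa_1 = \bse_1$, the coefficient of $\sabs{\bsz^2}^2$ in $L^1$ is non-positive, the $\sabs{\bsz^1}^2$ coefficient is absorbed by $\alpha\sabs{\bsz^1}^2$ after rescaling, and the bilinear cross-term is written as $\bsz^1\bsL_1$ with $\bsL_1$ linear in $\bsz^2$; $\bssa_2 = \bse_2$ is symmetric. For $\bssa_3 = -(\bse_1 + \bse_2)$, the identity
\begin{equation*}
-(L^1 + L^2) = \tfrac{3+2\theta}{4(1+\theta)^2}\sabs{\bsz^1+\bsz^2}^2 - \tfrac{1-2\theta}{4(1-\theta)^2}\sabs{\bsz^1-\bsz^2}^2
\end{equation*}
has its second term non-positive (because $\theta \leq 1/2$) while its first term equals a constant multiple of $\sabs{\bssa_3^\top\bsz}^2$, so even (AB) holds after a scalar rescaling. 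Theorem~\ref{thm:sufficient} then delivers the bounded continuous $\bmo$-solution. The case $\theta > 1$ is more delicate: (wAB) cannot be obtained from any positively spanning set in the naive way, since for any $\bssa$ with both components strictly negative the quadratic part of $\bssa^\top \bsf$ is a full-rank positive-definite form in $(\hat\bmu, \hat\bnu)$ that the one-dimensional $\sabs{\bssa^\top\bsz}^2$ alone cannot dominate. I therefore expect to bypass Theorem~\ref{thm:sufficient} in that regime and instead invoke Proposition~\ref{thm:BF-Lyap} to produce a global Lyapunov pair $(h, k)$ with $k$ bounded (possible because $\bsfk = \bsh \in \linf$ and $\bsfs \equiv 0$); the Lyapunov inequality translates into an a priori $\linf$-bound on $\bsY$, after which Theorem~\ref{thm:abstract} applied to smooth truncated approximations $\{\bsf^m, \bsg^m\}$, combined with the passage-to-the-limit machinery of Theorem~\ref{thm:existence}, yields a bounded locally H\"olderian solution, and $\bmo$-regularity follows from Theorem~\ref{thm:unique}(1).

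For uniqueness, the quadratic form $\bsL$ satisfies $\sabs{\bsL(\bsz) - \bsL(\bsz')} \leq C(\sabs{\bsz} + \sabs{\bsz'})\sabs{\bsz - \bsz'}$, so Theorem~\ref{thm:unique}(2) delivers uniqueness in the class of bounded continuous solutions. For the Nash-equilibrium property, I apply It\^o's formula to $Y^1_t$ under the measure $\PP^{(\bmu, \hat\bnu)}$ in which Player~1 deviates to some $\bmu \in \bmo$; using the change $dW_t = d\tilde W_t + (\bmu + \hat\bnu)\,dt$ together with the identity $\bsf^1 = h^1 + \tfrac{1}{2}\sabs{\hat\bmu}^2 + \theta \hat\bmu \cdot \hat\bnu + \bsZ^1(\hat\bmu + \hat\bnu)$, completing the square via the first-order optimality $\hat\bmu + \theta \hat\bnu + (\bsZ^1)^{\top} = 0$ produces
\begin{equation*}
J^1(0, x, \bmu, \hat\bnu) - Y^1_0 = \EE^{(\bmu, \hat\bnu)}\Big[\int_0^T \tfrac{1}{2}\sabs{\bmu - \hat\bmu(\bsZ)}^2\,dt\Big] \geq 0,
\end{equation*}
with equality iff $\bmu = \hat\bmu(\bsZ)$; the symmetric inequality for Player~2 completes the verification that $(\hat\bmu(\bsZ), \hat\bnu(\bsZ))$ is a Nash equilibrium with value $\bsv$. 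The main obstacle I anticipate is the $\theta > 1$ case, where (wAB) genuinely fails and the Lyapunov-pair-based a priori bound must be carried through the approximation scheme cleanly.
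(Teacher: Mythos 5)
Your overall architecture --- verify (BF) after an invertible linear change of coordinates, verify (wAB), invoke Theorem~\ref{thm:sufficient}, then a completion-of-the-square verification of the Nash property --- is the same as the paper's. Your (BF) computation in the coordinates $(\tilde Y^1,\tilde Y^2)=(Y^1-Y^2,\,Y^1+Y^2)$ is correct (the paper uses $(Y^1-Y^2,\,Y^2)$ to the same effect), and your treatment of $-1\neq\theta\le 1/2$, the uniqueness step via the local Lipschitz estimate, and the verification argument for the equilibrium are all sound.

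The genuine gap is the case $\theta>1$. Your claim that (wAB) ``cannot be obtained from any positively spanning set'' is incorrect --- your impossibility argument only rules out vectors $\bssa$ with \emph{both} components negative --- and the workaround you propose does not close the hole: in this paper a $c$-Lyapunov pair is only required to satisfy \eqref{equ:global-Lyapunov} on the set $\sabs{\bsy}\le c$, so it \emph{presupposes} an a priori $\linf$-bound on the solution and yields control of $\int\sabs{\bsZ}^2$, not of $\bsY$. It therefore cannot supply condition (2) of Theorem~\ref{thm:abstract} (a priori local boundedness of the approximations $\bsv^m$), which is exactly what (AB)/(wAB) is there to provide; your route for $\theta>1$ has no source of that bound. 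The correct fix is a smarter choice of spanning vectors: take $\bssa_3=(-\theta,1)$ and $\bssa_4=(1,-\theta)$. By \eqref{equ:munu*} one has $\hat{\bmu}=\tfrac{1}{(\theta-1)(1+\theta)}\,\bssa_4^\top\bsz$ and $\hat{\bnu}=\tfrac{1}{(\theta-1)(1+\theta)}\,\bssa_3^\top\bsz$, whence
\[
\bssa_3^\top\bsL \;=\; -\tfrac{\theta}{2}\sabs{\hat{\bmu}}^2+\tfrac12\sabs{\hat{\bnu}}^2+\theta(1-\theta)\,\hat{\bmu}\cdot\hat{\bnu}+\bssa_3^\top\bsz\,(\hat{\bmu}+\hat{\bnu}),
\]
i.e.\ a nonpositive term, a constant multiple of $\sabs{\bssa_3^\top\bsz}^2$ (absorbed after rescaling $\bssa_3$), and terms of the form $\bssa_3^\top\bsz\,\bsL_3(\bsz)$ with $\bsL_3$ of linear growth --- exactly the structure required in \eqref{equ:wAB}; the same holds symmetrically for $\bssa_4$. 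Since $(\mathfrak{e}_1,\mathfrak{e}_2,\bssa_3,\bssa_4)$ positively spans $\R^2$ precisely when $\theta>1$, Theorem~\ref{thm:sufficient} applies in both regimes, and the remainder of your argument then goes through.
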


\subsection{Risk-sensitive nonzero-sum stochastic games}
Next, we consider a risk-sensitive stochastic game between $2$ players
studied in \cite{El-Karoui-Hamadene}. Let $U$ and $V$ be two compact metric
spaces. Player $1$ (resp. player $2$) chooses a $U$-valued (resp.
$V$-valued) control process $\bmu$ (resp. $\bnu$), which affects the state
$X$ in the following way:
\[
 dX^{(\mu, \nu)}_t = b(t,X^{(\mu, \nu)}_t, \bmu_t, \bnu_t) \, dt +
 \sigma(t,X^{\mu, \nu}_t) \, d\bsW^{(\mu, \nu)}_t,
\]
where $b: [0,T]\times \R^d\times U\times V\rightarrow \R^d$ is a bounded
measurable vector field, $\sigma$ satisfies conditions (2) and (3) after
\eqref{equ:X}, and $X$ is understood as the unique weak solution of the
previous stochastic differential equation. Given measurable functions $h^i:
[0,T]\times \R^d\times U\times V\rightarrow \R$ and $g^i: \R^d \rightarrow
\R$ with enough integrability, the cost of player $i$ with the initial
state $x$ at time $t=0$ is defined as
\[
 J^i(0, x, \bmu, \bnu)= \EE^{(x,\mu,\nu)}\Big[\exp\Big(\int_0^T h^i(u, X_u,
 \bmu_u, \bnu_u)\, du + g^i(X_T)\Big)\Big], \quad i=1,2.
\]
The problem is to find a Nash equilibrium $(\hat{\bmu}, \hat{\bnu})$
satisfying \eqref{equ:Nash-equ}.
To solve it, we define the Hamiltonian function
\begin{align*}
 &H^1(t,x, \bmu, \bnu) = \bmu \sigma^{-1}(t,x) b(t,x,\bmu, \bnu) + h^1(t,
 x, \bmu, \bnu) \quad \text{and}\\
 &H^2(t,x,\bmu, \bnu) = \bnu \sigma^{-1}(t,x) b(t,x,\bmu, \bnu) + h^2(t, x,
 \bmu, \bnu),
\end{align*}
and assume the \emph{generalized Issac's condition} holds, i.e., there
exists two measurable functions $\hat{\bmu}(t,x, \bsz)$ and $\hat{\bnu}(t,
x, \bsz)$ such that
\begin{align*}
 &H^1(t,x, \bsz^1, \hat{\bmu}(t,x,\bsz), \hat{\bnu}(t,x, \bsz)) \leq
 H^1(t,x, \bsz^1, \bmu, \hat{\bnu}(t,x, \bsz)) \quad \text{and}\\
 &H^2(t,x,\bsz^2, \hat{\bmu}(t,x,\bsz), \hat{\bnu}(t,x, \bsz))\leq
 H^2(t,x,\bsz^2, \hat{\bmu}(t,x, \bsz), \bnu),
\end{align*}
for any $(t,x,\bsz, \bmu, \bnu)\in [0,T]\times \R^d\times \R^{2\times d} \times U\times V$. Denote $\hat{H}^1(t,x,\bsz) = H^1(t,x, \bsz^1, \hat{\bmu}(t,x,\bsz), \hat{\bnu}(t,x, \bsz))$ and $\hat{H}^2(t,x,\bsz) = H^2(t,x,\bsz^2, \hat{\bmu}(t,x,\bsz), \hat{\bnu}(t,x, \bsz))$. We consider the following system of BSDE:
\begin{equation}\label{equ:EH-game}
 dY^i_t = - \big(\hat{H}^i(t, \bsX_t, \bsZ_t) + \tfrac12 |\bsZ^i_t|^2\big) \,
 dt + \bsZ^i_t d\bsW_t, \quad Y^i_T = g^i(\bsX_T), \, i=1,2.
\end{equation}

\begin{proposition}\label{pro:EH-game}
 Assume that $g^i\in \Caa_{loc}\cap \linf$, $h^i\in \linf$, and  $\hat{H}^i$ is continuous, for $i=1,2$. Moreover $b$ has at most linear growth in $(\bmu, \bnu)$, and $(\hat{\bmu}, \hat{\bnu})$ has at most linear growth in $\bsz$, both uniformly in $(t,x)$. Then \eqref{equ:EH-game} admits a unique bounded continuous solution $(\bsv, \bsw)$. Moreover $(\hat{\bmu}(\cdot, \cdot, \bsw), \hat{\bnu}(\cdot, \cdot,\bsw))$ is a Nash equilibrium with value $(\exp(v^1), \exp(v^2))$.
\end{proposition}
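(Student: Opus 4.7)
My plan is to fit \eqref{equ:EH-game} into the framework of Theorem \ref{thm:sufficient} to obtain a bounded continuous solution, and then carry out a Girsanov-based verification to identify it with the claimed Nash equilibrium. The generator reads $\bsf^i(t,\bsx,\bsz)=\hat{H}^i(t,\bsx,\bsz)+\tfrac12|\bsz^i|^2$, and the change of variables $\bsz\mapsto \bsz\sigma^{-1}$ noted in the remark after Definition \ref{def:Mark-BSDE} brings \eqref{equ:EH-game} into the form \eqref{equ:BSDE-no-y}.

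To verify (BF) I decompose
\[
\bsf^i(t,\bsx,\bsz)\,=\,\bsz^i\bigl(\sigma^{-1}(t,\bsx)\,b(t,\bsx,\hat{\bmu},\hat{\bnu})+\tfrac12(\bsz^i)^{\top}\bigr)\,+\,h^i(t,\bsx,\hat{\bmu},\hat{\bnu}),
\]
where $\hat{\bmu}=\hat{\bmu}(t,\bsx,\bsz)$ and $\hat{\bnu}=\hat{\bnu}(t,\bsx,\bsz)$. The bracketed column vector plays the role of the $i$-th column of $\bsfl$ and has linear growth in $\bsz$ by the linear-growth hypotheses on $b$ and on $(\hat{\bmu},\hat{\bnu})$, combined with the uniform ellipticity of $\sigma$; the remaining term is bounded and goes into $\bsfs$, while $\bsfq=\bsfk=0$. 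For (wAB) I take the positively-spanning set $\{\pm e_1,\pm e_2\}\subset \R^2$ and, for each of these, the common function $\bsL(t,\bsx,\bsz)=\sigma^{-1}(t,\bsx)\,b(t,\bsx,\hat{\bmu},\hat{\bnu})$ of linear growth in $\bsz$; substituting into \eqref{equ:wAB} and cancelling the quadratic and $\bsz$-linear parts, the inequality collapses to $\pm h^i\leq l(t)$, which holds with $l\equiv \max_i\|h^i\|_{\linf}$. Since $\bsg$ is bounded, Theorem \ref{thm:sufficient} combined with Remark \ref{rem:wAB} yields a bounded $\bmo$-solution $(\bsv,\bsw)$; uniqueness among bounded continuous solutions then follows from the same theorem, with condition (a) satisfied by boundedness of $\bsf(\cdot,\cdot,0)=(h^i(\cdot,\cdot,\hat{\bmu}(\cdot,\cdot,0),\hat{\bnu}(\cdot,\cdot,0)))_i$, condition (b) by the $n$-independence of the (BF)-constants recorded above, and condition (c) by the Lipschitz-in-$\bsz$ behavior inherited through the composition $(b,h^i)\circ(\hat{\bmu},\hat{\bnu})$.

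For the Nash-equilibrium property, set $\bsZ_t=\bsw(t,\bsX_t)$, $\hat{\bmu}^*_t=\hat{\bmu}(t,\bsX_t,\bsZ_t)$, $\hat{\bnu}^*_t=\hat{\bnu}(t,\bsX_t,\bsZ_t)$; these inherit $\bmo$-regularity from $\bsZ$ via the linear-growth assumption on $(\hat{\bmu},\hat{\bnu})$. Under $\PP^{(\bsx,\hat{\bmu}^*,\hat{\bnu}^*)}$ the shifted process $\bsW^*=\bsW-\int_0^\cdot \sigma^{-1}(s,\bsX_s)\,b(s,\bsX_s,\hat{\bmu}^*_s,\hat{\bnu}^*_s)\,ds$ is Brownian, so \eqref{equ:EH-game} becomes $dY^i_t=-(h^i(t,\bsX_t,\hat{\bmu}^*_t,\hat{\bnu}^*_t)+\tfrac12|\bsZ^i_t|^2)\,dt+\bsZ^i_t\,d\bsW^*_t$. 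Itô's formula applied to $U^i_t=e^{Y^i_t}$ cancels the quadratic term and yields $dU^i=-U^i h^i(t,\bsX,\hat{\bmu}^*,\hat{\bnu}^*)\,dt+U^i\bsZ^i\,d\bsW^*$, so the bounded process $U^i_t\exp(\int_0^t h^i(s,\bsX_s,\hat{\bmu}^*_s,\hat{\bnu}^*_s)\,ds)$ is a true $\PP^*$-martingale, giving $\exp(v^i(0,\bsx))=J^i(0,\bsx,\hat{\bmu}^*,\hat{\bnu}^*)$. For a unilateral deviation $\bnu\in\bmo$ by player $2$ (with player $1$ frozen at $\hat{\bmu}^*$), the analogous computation under $\PP^{(\bsx,\hat{\bmu}^*,\bnu)}$ is converted to an inequality by the generalized Isaacs condition $\hat{H}^2(t,\bsx,\bsz)\leq H^2(t,\bsx,\bsz^2,\hat{\bmu}(t,\bsx,\bsz),\bnu)$: the corresponding exponential process becomes a $\PP^{(\bsx,\hat{\bmu}^*,\bnu)}$-submartingale, yielding $\exp(v^2(0,\bsx))\leq J^2(0,\bsx,\hat{\bmu}^*,\bnu)$, and a symmetric argument handles player $1$.

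The main technical hurdle is the integrability bookkeeping during each Girsanov change of measure: I must justify that the stochastic integrals appearing under $\PP^{(\bsx,\hat{\bmu}^*,\hat{\bnu}^*)}$ and each $\PP^{(\bsx,\hat{\bmu}^*,\bnu)}$ are true (sub)martingales rather than merely local ones. Via John–Nirenberg/Kazamaki-type estimates this reduces to the $\bmo$-regularity of $\bsZ$ combined with the $\linf$-bounds on $\bsv$ and $h^i$, but the verification must be repeated, with care, for each of the deviation measures (and, along the way, one must also confirm that the linear-growth assumption on $(\hat{\bmu},\hat{\bnu})$ suffices to transfer the $\bmo$ property from $\bsZ$ to the candidate equilibrium controls).
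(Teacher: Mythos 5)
Your proposal is correct and follows essentially the same route as the paper: check (BF) and (wAB) for the generator $\hat{H}^i(t,\bsx,\bsz)+\tfrac12|\bsz^i|^2$, invoke Theorem \ref{thm:sufficient} for existence and uniqueness of a bounded continuous solution, and then verify the Nash property. The only structural difference is in the last step: the paper delegates the verification to \cite[Proposition 5.1]{El-Karoui-Hamadene}, whereas you carry it out directly via Girsanov, the exponential transform $U^i=e^{Y^i}$, and the generalized Isaacs condition. Your computation there is sound, and the integrability bookkeeping you flag as the main hurdle is lighter than you fear: since $b$ is bounded and the control sets $U,V$ are compact, every Girsanov kernel $\sigma^{-1}b$ is bounded, so all the measure changes are unproblematic and the bounded local (sub)martingales you produce are automatically true (sub)martingales; no John--Nirenberg argument is needed. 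Two small caveats. First, your justification of condition (c) of Theorem \ref{thm:sufficient} (``Lipschitz-in-$\bsz$ behavior inherited through the composition'') does not follow from the stated hypotheses, which give only continuity of $\hat{H}^i$ and linear growth of $(\hat{\bmu},\hat{\bnu})$; the locally Lipschitz estimate in (c) is genuinely an extra regularity requirement on $\hat{H}^i$ (the paper's one-line proof glosses over the same point, so this is not a defect specific to your argument). Second, admissible deviations here are $U$- or $V$-valued processes, not arbitrary $\bmo$ processes as in the cooperation/hindrance game, which is precisely why the measure-change issues disappear.
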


\begin{remark}
A solution to \eqref{equ:EH-game} was constructed in
\cite[Theorem 5.3]{El-Karoui-Hamadene} in the case of a bounded $b$ and a bounded (but not
necessarily continuous) terminal condition $\bsg$. When $\bsg$ is locally H\"{o}lder, our result shows that the solution is also locally H\"{o}lder (cf. Remark \ref{rem:a priori} part (2)). Moreover when $\bsg$ is of merely subquadratic and $b$ is bounded, our result still ensures the existence of locally H\"{o}lderian solution to \eqref{equ:EH-game}.

The system \eqref{equ:EH-game} belongs to the \emph{diagonally quadratic}
 class studied recently in \cite{HuTan15}, whose Theorem 2.7 implies the
 existence of a unique bounded solution of \eqref{equ:EH-game} with
 non-Markovian bounded terminal condition.
\end{remark}

\subsection{A scalar example with unbounded coefficients}
\renewcommand{\bsZ}{Z} 
\renewcommand{\bsw}{w} 
Given continuous functions $f, g: \R^d \to \R$, with $g\in \Caa_{loc} \cap \linf$,
but $f$ possibly
unbounded, we consider the BSDE
\begin{equation}\label{equ:single}
 dY_t = - \tfrac12 f(\bsX_t) |\bsZ_t|^2 dt + \bsZ_t\, d\bsW_t, \quad Y_T = g(\bsX_T).
\end{equation}
An equation of this type played a central role in a recent solution
\cite{Cetin-Danilova}
of a long-standing
open problem of \cite{Sub91}.
Since the ``coefficient'' $f$ in front of the quadratic nonlinearity is
unbounded, the generator of \eqref{equ:single} does not satisfy the standard
quadratic growth bound in $\bsz$ (as presented, e.g., in \cite{Kobylanski}).

Our Theorem \ref{thm:existence} implies that \eqref{equ:single} admits a
bounded locally H\"{o}lderian solution. Indeed, consider a sequence
$\{f^m\}$ of bounded Lipschitz
approximations  of $f$ such that $\lim_{m\to \infty} f^m(\bsx) =
f(\bsx)$ for any $\bsx\in \R^d$, and the approximating BSDE
\begin{equation}
\label{equ:apB}
 dY^m_t = -\tfrac12 f^m(\bsX_t) (|\bsZ^m_t|^2 \wedge m) dt + \bsZ^m_t \,
 d\bsW_t, \quad Y^m_T= g(X_T), m\in\N.
 \end{equation}
Standard Lipschitz theory implies that \eqref{equ:apB} admits a unique
bounded continuous solution $(v^m, \bsw^m)$. Moreover,
the generator of \eqref{equ:apB} satisfies the condition (BF) of Definition
\ref{def:BF} with
 $\bsfl^m=\bsfs^m=\bsfk^m=0$; the component
 $\bsfq^m$ satisfies the
quadratic-triangular growth condition on each $B_n$ uniformly in $m$.
By Proposition \ref{thm:BF-Lyap} above, $\bsf$ admits a local $\seq{c}$-Lyapunov
sequence, for each $\seq{c}$.
To establish a-priori boundedness, we rewrite \eqref{equ:apB} as
\[
 dY^m_t = \bsZ^m_t \big[-\tfrac12 f^m(\bsX_t) \tfrac{|\bsZ^m_t|^2 \wedge m}{|\bsZ^m|^2} (\bsZ^m_t)^\top dt + d\bsW_t\big], \quad Y^m_T = g(\bsX_T).
\]
Since $f^m$ and $g$ are bounded, a simple measure-change argument implies
that $\|v^m\|_{\linf}\leq \|g\|_{\linf}$. Therefore it is enough to pick
a local $(\norm{g}_{\linf})$-Lyapunov pair to establish the existence of a
bounded locally H\"{o}lderian solution by Theorem
\ref{thm:existence}.
It is worth noting that our uniqueness results do not apply in this case.
In fact, as far as we know, no general-purpose uniqueness result is known
for BSDE of this type.
\begin{remark} The techniques of the present paper, geared towards systems
of equations, have limited impact in the one-dimensional case where
powerful methods based on comparison principle apply. To
illustrate that point, we note that
the existence of a Markovian solution for \eqref{equ:single} can also be
established using a localization technique of \cite{Briand-Hu} or from a
forward point of view, as in   \cite{Barrieu-El-Karoui}, as follows.
 With $f^{m,p}(\bsx) = (-p) \vee f(\bsx) \wedge m$ for
$m,p\in \N$, the approximating BSDE
\[
 dY^{m,p}_t = -\tfrac12 f^{m,p}(\bsX_t) |\bsZ^{m,p}_t|^2 dt + \bsZ^{m,p}_t \, d\bsW_t, \quad Y^{m,p}_T= g(X_T)
\]
admits a unique bounded continuous solution $(v^{m,p}, \bsw^{m,p})$, cf.
\cite[Theorems 2.3 and 3.7]{Kobylanski}. Define the exit time $\tau_n
=\inf\{u\geq 0\,:\, \bsX_u \notin B_n\}$. Comparison theorem for quadratic
BSDE implies $v^{m,p+1}\leq v^{m,p}\leq v^{m+1, p}$. It then follows from
the monotone stability of quadratic BSDE (cf. \cite[Proposition
2.4]{Kobylanski}) that $Y^{m,p}_{\cdot \wedge \tau_n}= v^{m,p}(\cdot,
\bsX^{\tau_n}_\cdot)$ increasingly converges to some process $Y^{p}_{\cdot
\wedge \tau_n}$ as $m\rightarrow \infty$, and $Y^{p}_{\cdot \wedge
\tau_n}$ decreasingly converges to $Y_{\cdot \wedge \tau_n}= v(\cdot,
\bsX^{\tau_n}_\cdot)$, for some function $v$, as $p\rightarrow \infty$.
The convergence of $Y^{m,p}_{\cdot \wedge \tau_n}$ to $Y_{\cdot \wedge
\tau_n}$ is also uniform and $Z^{m,p}_{\cdot \wedge \tau_n}$ also
converges to some $Z_{\cdot \wedge \tau_n}$ in \bmo, cf. \cite[Theorems
4.5 and 4.7]{Barrieu-El-Karoui}. Sending $n \rightarrow \infty$, we
obtain a solution to \eqref{equ:single}.
\end{remark}
\renewcommand{\bsZ}{\boldsymbol{Z}}
\renewcommand{\bsw}{\boldsymbol{w}}

\section{Proof of Theorem \ref{thm:abstract}} \label{sec:uniform est}

Within this proof, all the constants $T, d, N, \Ld, L, \norm{b}_{\linf}, \norm{\sigma}_{\linf}$, and functions
$\{h_n\}$, which define the setting or appear in the assumptions of Theorem
\ref{thm:abstract} will be
thought of as \emph {global variables}; any function of them will
be treated as a constant, which we call an \define{universal constant}. For
quantities dependent on additional parameters, we write, e.g., $C=C(\psi)$
to signal that, in addition to the global
variables mentioned above, $C$ also depends on $\psi$.
In Hardy's manner, universal, constants will always be denoted by the
letter $C$ which may change from line to line, and they are always positive.
To increase readability, we use the notation $\leq_C$ as follows
 \begin{align*}
   `a \lec b' \quad\text{ stands for }\quad`a\leq C\, b'.
 \end{align*}

Furthermore,
we fix both $m$ and $n\in\N$, and removing them almost entirely from
the notation throughout this section. It is important to note, however, that
our treatment of $m$ and $n$ will be different.
One one hand, since we are after uniform estimates on the entire sequence
$\set{\bsv^m}$, we do not allow any of our constants to depend on $m$. (We will see later that the dependence on $\{k^m_n\}$ is through its $\mathbb{L}^{q_n}$-norm which is assumed to be bounded uniformly in $m$.)
On the other hand, all our analysis in this section will be restricted locally to the ball $B_n(b_0)$. Therefore
$n$ \emph{is
be added, temporarily, to the list of
universal constants}
 and all the estimates below will depend on it
implicitly. Hence, for the time being, the
conditions of Theorem
\ref{thm:abstract} is localized to $\bsx \in B_n(b_0)$ and we
simply assume, for the reminder of this section,
that conditions of \eqref{asm:g} - \eqref{asm:h},   with the centre $b_0$ of the ball $B_n(b_0)$ and
indices $m, n$ removed,  are satisfied. In particular, we assume that
there exists constants $\alpha, b, c, \ell, M>0$ and $q>1+d/2$ such that
\begin{equation}\label{eq:abc const} \norm{\bsv}_{\linf([0,T]\times B_n)}\leq c \quad \text{and}\quad \norm{\bsg}_{C^{\alpha}( B_n)} \leq b.
\end{equation}
There exists a Lyapunov pair $(h,k)\in \Ly(\bsf, 2c)$ on $B_n$ such that
\begin{align}
\label{equ:f}
\|k\|_{\mathbb{L}^q([0,T]\times B_n)} \leq \ell \quad \text{and} \quad
\abs{\bsf(t,\bsx,\bsy,\bsz)}\leq M( \abs{\bsz}^2+ k(\tx)),
\end{align}
for all
$(\tx) \in [0,T]\times B_n$, $|\bsy|\leq c$, and $\bsz\in\R^{N\times d}$.
The $\alpha, b, c, \ell, M$ and $q$ are also added to the list of universal constants. Throughout this section, the dependence on $b_0$ is only through $\alpha, b, c, \ell, M$ and $q$.

\subsection{A ``testing'' Lemma}
As we already mentioned,
for each initial condition, the SDE \eqref{equ:X} admits a unique strong
solution $\bsX^{\tx} = (\bsX^{\tx}_u)_{u\in [t,T]}$. For notational convenience in several proofs below, we allow $\bsX$ to start from negative time, i.e. $t\leq 0$. Therefore we extend $\bsb$ and $\bsig$ via
\[
 \bsb(t, \bsx) = \bsb(0, \bsx) \quad \eand \quad \bsig(t, \bsx) = \bsig(0, \bsx) \quad \text{for } t\leq 0.
\]
These extended coefficients still satisfy conditions (1)-(3) after
\eqref{equ:X}, ensuring the existence of the unique strong solution, which
is still denoted by $\bsX^{\tx} = (\bsX^{\tx}_u)_{u\in [t,T]}$. Its
infinitesimal generator is given by
\begin{equation}\label{equ:generator}
 \sL = \sum_{i=1}^d \bsb_i(\cdot,\cdot)\, D_i + \tot \sum_{i,j=1}^d \bsa
 (\cdot,\cdot)\, D_{ij}.
 \end{equation}
Parameterized by $(\tx)$, the laws of these solutions constitute a Markov
family $(\PP^{\tx})_{(\tx)\in (-\infty,T] \times \R^d}$ of probability measures on $C([0,T]\to
\R^d)$. (Even through the canonical process may start from negative time, we only focus on its trajectory on $[0,T]$.)
In a minimal notational overload, we use $\bsX$ for the
coordinate map on $C([0,T]\to\R^d)$ and set throughout
\begin{align}
\label{equ:YZ}
  \bsY_u = \bsv(u, \bsX_u) \quad \eand \quad \bsZ_u = \bsw (u,\bsX_u) \quad \text{for } u\in[t\vee 0, T].
\end{align}
A $C^{1,2}$-function $\vp:[0,T]\times \R^d \to [0,1]$
is said to be \define {testable}  if its support is contained in
$[0,T]\times B_n$
and we have $\Gamma_{\vp}<\infty$, where
\begin{align*}
   \Gamma_{\vp} := \sup_{(\tx)\, :\, \vp(\tx) > 0} \Big( \abs{ \tpddt \vp} +
   \abs{D \vp\, \bsb} + \tot \abs{D^2 \vp : \bsa}
   + \oo{\vp} \abs{D\vp}^2 \Big).
\end{align*}
\begin{lemma} \label{lem:testing}
With $c$ as in \eqref{eq:abc const},
there exists a universal constant $C>0$ such that, for each $\bsc\in \R^N$
with $\abs{\bsc}\leq c$, all $t'\in [t\vee 0, T]$,  $\bsx\in\R^d$,
and any testable $\vp$, we have
     \begin{multline*}
     \EE^{\tx} \Big[\int_{t'}^T \ind{\{\vp(u,\bsX_u) =1\}}\abs{\bsZ_u}^2\, du\Big]
     \lec
     \Gamma_ {\vp}\,
     \EE^{\tx}\Big[\int_{t'}^T \ind{\{\vp(u,\bsX_u) \in (0,1)\}} \abs{\bsY_u -
     \bsc}^2\, du\Big]\, +\\
      +
      \EE^{\tx}\Big[\int_{t'}^T \ind{\{\vp(u,\bsX_u) >0\}}\,  k(u, \bsX_u)\, du\Big]
      +  \EE^{\tx}[\ind{\{\vp(T,\bsX_T)>0\}}
      \abs{\bsY_T - \bsc}^2],
     \end{multline*}
where the expectation $\EE^{\tx}$ is with respect to $\PP^{\tx}$.
\end{lemma}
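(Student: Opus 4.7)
The approach is a Caccioppoli-type estimate. The plan is to apply \itos{} formula to the product $\vp(u,\bsX_u)\Psi(\bsY_u)$, where $\Psi$ is the test function
\[
\Psi(\bsy) := \lambda\, h(\bsy) + \abs{\bsy - \bsc}^2,
\]
with $\lambda > 0$ a universal constant to be specified, and then to take $\EE^{\tx}$ and rearrange to isolate $\EE^{\tx}\big[\int_{t'}^T \vp\abs{\bsZ}^2\,du\big]$ on the left-hand side. The role of the Lyapunov summand $\lambda h(\bsy)$ is to control the quadratic-in-$\bsZ$ part of the generator, while the centered quadratic $\abs{\bsy-\bsc}^2$ supplies the shifted boundary data appearing on the right.

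First I will pick $\lambda$ so that the drift of $d\Psi(\bsY_u)$ is pointwise at least $\abs{\bsZ}^2 - C_0\, k(u,\bsX_u)$ for a universal $C_0$. The Lyapunov summand contributes $\lambda[\tot D^2 h:\scl{\bsZ}{\bsZ}_{\bsa} - Dh\cdot\bsf]\geq \lambda(\abs{\bsZ}^2 - k)$, valid since $\abs{\bsY}\leq c\leq 2c$; and the It\^o correction from the quadratic contributes $\abs{\bsZ\bsig}^2 - 2(\bsY-\bsc)\cdot\bsf\geq (\Ld^{-1}-4cM)\abs{\bsZ}^2 - 4cMk$, using uniform ellipticity, $\abs{\bsY-\bsc}\leq 2c$, and $\abs{\bsf}\leq M(\abs{\bsZ}^2+k)$. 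Choosing $\lambda$ larger than a universal multiple of $cM$ secures the claimed lower bound. After taking expectations in the It\^o identity for $\vp\Psi(\bsY)$ and using $\Psi\geq 0$ to drop the $t'$-contribution, the right-hand side will consist of $C_0\EE^{\tx}[\int \vp\,k\,du]$, the terminal $\EE^{\tx}[\vp(T)\Psi(\bsY_T)]$, the operator term $\EE^{\tx}[\int \Psi(\bsY)\abs{\sA\vp}\,du]$ (with $\sA:=\tpddt+\sL$), and the cross-variation $\EE^{\tx}[\int \abs{d\langle\vp, \Psi(\bsY)\rangle_u}]$.

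The terminal and operator contributions will be handled via the Taylor bound $\Psi(\bsy)\leq C\abs{\bsy-\bsc}^2 + C$, which follows from $h(\boldsymbol 0)=0$, $Dh(\boldsymbol 0)=\boldsymbol 0$, $h\in C^2$, and $\abs{\bsY}\leq c$, together with the fact that $\sA\vp$ is supported on $\{0<\vp<1\}$ (since $\vp$ is constant where $\vp=1$) with $\abs{\sA\vp}\leq\Gamma_\vp$ there. The hard part will be the cross-variation, whose integrand is bounded by $C\Ld\abs{D\vp}\abs{\bsZ}$ (using $\abs{Dh}\leq C$ on $\abs{\bsy}\leq c$); Young's inequality in the form
\[
C\Ld\,\abs{D\vp}\,\abs{\bsZ} \;\leq\; \tfrac14\,\vp\,\abs{\bsZ}^2 + C\,\frac{\abs{D\vp}^2}{\vp}
\]
on $\{\vp>0\}$ then allows absorbing a quarter of $\EE^{\tx}[\int\vp\abs{\bsZ}^2du]$ into the left-hand side, with the singular factor $\abs{D\vp}^2/\vp$ controlled by $\Gamma_\vp$ by construction -- precisely the reason that quantity is built into the definition of $\Gamma_\vp$. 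Combining everything produces the stated estimate up to two residual ``support'' terms $C\,\EE^{\tx}[\ind{\{\vp(T)>0\}}]$ and $C\Gamma_\vp\,\EE^{\tx}[\int\ind{\{0<\vp<1\}}du]$ that originate from the additive $+C$ in the bound on $\Psi$; these are absorbed into $\EE^{\tx}[\int \ind{\{\vp>0\}}\,k\,du]$ by passing from $(h,k)$ to $(h,k+c_0)$ for a sufficiently large universal $c_0\geq 0$ -- still a Lyapunov pair, whose $\el^q$-norm over $[0,T]\times B_n$ grows only by a universal amount, so the universal constant $\ell$ may be enlarged to absorb the change. Finally, $\ind{\{\vp=1\}}\leq\vp$ on the left and $\vp\leq\ind{\{\vp>0\}}$, $\vp(T)\leq\ind{\{\vp(T)>0\}}$ on the right complete the deduction.
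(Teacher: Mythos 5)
Your plan departs from the paper in the choice of test function: the paper applies It\^o to $\vp\,h^c$ where $h^c(\bsy):=h(\bsy-\bsc)$, while you use $\vp\,\Psi$ with $\Psi(\bsy)=\lambda h(\bsy)+\abs{\bsy-\bsc}^2$. That difference is where your argument breaks. The paper's $h^c$ vanishes to second order at $\bsy=\bsc$ (since $h(\boldsymbol 0)=0$, $Dh(\boldsymbol 0)=\boldsymbol 0$), giving the clean bounds $h^c(\bsy)\le C\abs{\bsy-\bsc}^2$ and $\abs{Dh^c(\bsy)}^2\le C\abs{\bsy-\bsc}^2$ with \emph{no additive constant}; this is precisely why the final estimate carries $\abs{\bsY-\bsc}^2$ factors and nothing more. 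Your $\Psi$ does not vanish at $\bsy=\bsc$ (one has $\Psi(\bsc)=\lambda h(\bsc)$, which is generically nonzero) nor does $D\Psi$ (since $D\Psi(\bsc)=\lambda Dh(\bsc)$), so both the Taylor bound $\Psi\le C\abs{\bsy-\bsc}^2+C$ and the cross-variation bound $\abs{D\Psi}\le C$ pick up $\abs{\bsY-\bsc}^2$-free pieces, producing the residuals you identify.

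The proposed repair — passing from $(h,k)$ to $(h,k+c_0)$ with a universal $c_0$ — does not close the gap. For the residual $C\Gamma_\vp\,\EE^{\tx}\big[\int\ind{\{0<\vp<1\}}du\big]$, absorption into the $k$-term would require $c_0\gtrsim\Gamma_\vp$, but $\Gamma_\vp$ depends on $\vp$ and is not universal; the lemma must hold for \emph{all} testable $\vp$ with a single constant $C$, and in the application $\Gamma_\vp\sim R^{-2}\to\infty$. For the other residual, $C\,\EE^{\tx}\big[\ind{\{\vp(T)>0\}}\big]$ is not dominated by $c_0\,\EE^{\tx}\big[\int\ind{\{\vp>0\}}du\big]$ for any universal $c_0$: the time-support of $\vp$ can be an arbitrarily short window near $T$, making the integral as small as you like while the terminal indicator stays bounded away from zero. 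Both residuals are $O(1)$ but the downstream Proposition \ref{pro:testing} needs all right-hand side terms to be $o(1)$ as $R\to 0$ (they carry factors of $\abs{\bsv-\bsc}^2$ or negative powers of $R$ that vanish with $R$), so leaving an $O(1)$ constant would destroy the hole-filling iteration. The fix is to adopt the paper's translate $h^c=h(\cdot-\bsc)$: the Lyapunov inequality is then invoked at the point $\bsY-\bsc$, which lies in $\{\abs{\bsy}\le 2c\}$ — exactly the reason the hypothesis stipulates $(h,k)\in\Ly(\bsf,2c)$ rather than $\Ly(\bsf,c)$ — and both residuals disappear because $h^c$ and $Dh^c$ vanish at $\bsc$.
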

\begin{proof}
We overload the notation by writing $\vp$ for both the process $\vp
(\cdot,\bsX)$ and the function $\vp$;
similarly, having fixed $\bsc\in\R^N$ with $\abs{\bsc}\leq c$, we write
$h^c$ both for the function
$h(\cdot - \bsc)$ and the process $h(\bsY-\bsc)$.
We define the product process $F= \vp h^c=
\vp(\cdot,\bsX) h(\bsY-\bsc)$ and write down
the semimartingale decompositions (under any $\PP^{\tx}$)
\[ d \vp = (\tfrac{\partial}{\partial u}\vp + \sL \vp)\, du +
 D\vp \, \bsig\, d\bsW\]
 and
\begin{align}
\label{equ:ito-H-Y}
    d h^c =
   \Big( \tot  D^2h^c :  \scl{\bsZ}{\bsZ}_{\bsa}
- Dh^c\, \bsf \Big)\, du
    + Dh^c\, \bsZ\, \bsig \, d\bsW.
\end{align}
Reminding the reader that $dF\eqm \alpha$ means  that
$F - \int_0^\cdot \alpha_s\, ds$ is a local martingale, we conclude that
\begin{align}
\label{equ:dF}
    dF &\eqm \vp
     \Big(  \tot D^2 h^c: \scl{\bsZ}{\bsZ}_{\bsa} - Dh^c\, \bsf \Big) + h^c
     (\tfrac{\partial}{\partial u}\vp + \sL \vp)
     + \scl{Dh^c\bsZ}{D\vp}_{\bsa}.
\end{align}
The $C^2$-regularity of the function $h$ and the fact that $h(\boldsymbol{0})=Dh(\boldsymbol{0})=\boldsymbol{0}$
imply that there exists a constant $C>0$ which depends only on $h$ and $c$ such that
\begin{align}
\label{equ:sq}
h^c (\bsy) \leq C |\bsy - \bsc |^2 \quad \eand \quad \abs{D h^c (\bsy) }^2
\leq C |\bsy - \bsc|^2 \quad
\text{
for all $\bsy$ with $\abs{\bsy}\leq  c$.}
\end{align}
The fact that $(h,k)\in \Ly(\bsf, 2c)$
coupled with the boundedness of $a$ and the fact that
$\abs{\bsY- \bsc}\leq 2 c$,
imply that the
right-hand side of \eqref{equ:dF} above is bounded from below by
\begin{multline*}
       \vp (\abs{\bsZ}^2-k) - C \abs{\bsY-\bsc}^2 \abs{\tpddt \vp +
     \sL \vp}
     - \sum_{il} \Big(  \oo{2} \vp  (Z_{il})^2
     +  C \oo{2\vp} (D_i h^c\, D_l \vp)^2 \Big) \ind{\vp >0} \geq \\ \geq
     \oo{2} \vp \abs{\bsZ}^2 - \vp k - C \abs{\bsY - \bsc}^2 \Big( \abs
     {\tfrac{\partial}{\partial u}\vp +
     \sL
     \vp} +
     \oo{\vp} \abs{D \vp}^2\Big) \ind{\vp>0}.
\end{multline*}
It remains take the expectation and use boundedness of $F$ (implied by the boundedness of
$\vp$ and $\bsY$ on $[0,T]\times B_n$, as well as continuity of $h^c$) and its positivity.
\end{proof}

\subsection{First consequences of the regularity of transition densities}
It follows from the conditions imposed on $\bsig$ and $b$ (see
\cite[Theorem 3.2.1, p.~71]{StrVar06}) that
the Markov family $(\PP^{\tx})$
admits a family of transition
densities \[ p(\tx; \txp), \quad \ t'\in [t, T], \bsx'\in \R^d.\] Moreover,
they satisfy the
following fundamental estimate (known as the Aronson's estimate): there
exist constants $\usig, \osig>0$, as
well as
$\uC, \oC>0$, depending only on the $\linf$- and ellipticity bounds on $\bsb$ and $\bsig$,
such that,
for all $0\leq t < t'\leq T$ and all $\bsx,\bsx' \in \R^d$, we have
\begin{align}
\label{equ:Aronson}
   \tfrac{\uC}{ (t-t')^{d/2}}\,  e^{-\tfrac{r}{2\usig^2}}
\leq p (\tx;\txp) \leq
   \tfrac{\oC}{(t-t')^{d/2}} \, e^{-\tfrac{r}{2\osig^2}}
   \ewhere
r=\tfrac{\abs{\bsx'-\bsx}^2}{{t'-t}}.
\end{align}
\begin{remark}
Under our assumptions (cf. conditions (1)-(3) after \eqref{equ:X}),
the upper bound in \eqref{equ:Aronson} can be obtained by the parametrix
method (see
\cite[equation (6.12), p. 24]{Friedman}). The lower bound was first
obtained in the paraboloid $|\bsx'-\bsx|^2 \leq \text{const} (t'-t)$ by
\cite[equation (4.75)]{Ilin-et-al}, then extended globally by a standard
chaining argument. When  $\bsb$ and $\bsig$ are only measurable and $\sL$
is in the divergence form,
\eqref{equ:Aronson} was obtained by \cite[Theorem 1]{Aro67}.
\end{remark}
The first consequence of the estimates \eqref{equ:Aronson} is the following
uniform boundedness result:
\begin{lemma}
\label{lem:Z-bound}
 There exists a universal constant $C>0$ such that
 \[ \EE^{\tx}\left[ \int_t^T \big(\abs{\bsZ_u}^2 + k(u,\bsX_u)\big)
 \inds{ \bsX_u \in B_n}
    \, du\right] \leq C,\]
   for all $\tx\in\TR$.
 \end{lemma}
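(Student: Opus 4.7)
The plan is to decompose the quantity into two pieces and bound each using different tools: the $k$-integral by a direct computation with the transition-density Aronson upper bound, and the $|\bsZ|^2$-integral by the testing Lemma~\ref{lem:testing} reduced via the first bound and the a-priori $\linf$-bound on $\bsv$.

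First, I would handle the $k$-integral. Writing
\[
\EE^{\tx}\Big[\int_t^T k(u,\bsX_u)\,\inds{\bsX_u\in B_n}\,du\Big]
= \int_t^T \int_{B_n} k(u,\bsx')\,p(\tx;u,\bsx')\,d\bsx'\,du,
\]
I would apply the upper Aronson bound \eqref{equ:Aronson} to $p$, and then H\" older's inequality in $(u,\bsx')$ with conjugate exponents $q$ and $q^*=q/(q-1)$. Since $\norm{k}_{\el^q([0,T]\times B_n)}\leq \ell$ is a universal constant, it remains to show that the $\el^{q^*}$-norm of the Aronson kernel is universally bounded. Computing the spatial Gaussian integral gives a factor $(u-t)^{d/2}$, so the kernel's $\el^{q^*}$-norm reduces to an integral of $(u-t)^{d(1-q^*)/2}$ on $[t,T]$; this is finite precisely when $q^*<1+2/d$, i.e., when $q>1+d/2$ — exactly the standing hypothesis in \eqref{equ:f}. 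This yields a universal constant $C$ such that
\[
\EE^{\tx}\Big[\int_t^T k(u,\bsX_u)\,\inds{\bsX_u\in B_n}\,du\Big]\leq C.
\]
The same argument applied on any $B_{n'}$ with $n'$ universal yields the analogous bound with $B_n$ replaced by $B_{n'}$.

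Next, for the $|\bsZ|^2$-integral, I would invoke Lemma~\ref{lem:testing} with $\bsc=\boldsymbol{0}$ (admissible since $\norm{\bsv}_{\linf([0,T]\times B_n)}\leq c$) and $t'=t$, evaluated at a fixed $C^{1,2}$ cutoff $\vp$ with $0\leq \vp\leq 1$, $\vp\equiv 1$ on $B_n$, and $\supp(\vp)\subset [0,T]\times B_{n'}$ for some slightly enlarged universal $n'$ (recall $n$ is now a universal constant, so the enlargement preserves universality of all local $\linf$-bounds on $\bsv$ and of $\Gamma_\vp$). Then $\inds{\vp(u,\bsX_u)=1}\geq \inds{\bsX_u\in B_n}$, while on $\{\vp>0\}$ we have $\bsX_u\in B_{n'}$ and $|\bsY_u|\leq c$. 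Therefore, each of the three terms on the right-hand side of the testing inequality is universally bounded: the first by $C\,\Gamma_\vp\, T\, c^2$, the second by the previous step applied on $B_{n'}$, and the terminal term by $c^2$.

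Combining the two estimates gives the claim. The main (minor) technical point is the exponent count $q>1+d/2 \Leftrightarrow q^*<1+2/d$ that makes the Aronson kernel's $\el^{q^*}$-norm convergent at the diagonal $u\downto t$; once that is set up correctly, the rest is routine application of Lemma~\ref{lem:testing} with $\bsc=\boldsymbol{0}$.
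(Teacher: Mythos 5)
Your proposal is correct and follows essentially the same route as the paper's proof: apply Lemma~\ref{lem:testing} with $\bsc=\boldsymbol{0}$ and a fixed cutoff together with the $\linf$-bound on $\bsv$ to reduce the $|\bsZ|^2$-integral to the $k$-integral, then control the latter via H\"older with exponents $q,q'$, the upper Aronson bound \eqref{equ:Aronson}, and the exponent condition $q>1+d/2 \Leftrightarrow q'<1+2/d$. The only cosmetic difference is that the paper shrinks the cutoff (support in $B_n$, $\equiv 1$ on $B_{n-1}$), whereas you enlarge it (support in $B_{n'}$, $\equiv 1$ on $B_n$); both work because the standing assumptions hold for every $n$, and your choice actually matches the ball $B_n$ in the lemma's statement more literally than the paper's own proof does.
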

 \begin{proof}
Let $\chi$ be a testable function,  such that, for all $t\in [0,T]$,
$\chi(\tx)=1$ for $ \bsx \in B_{n-1}$
 and $\chi(\tx)=0$ for $\bsx\in B_n^c$.  According to Lemma
\ref{lem:testing} and the boundedness of $\bsv$ on $[0,T]\times B_n$, there exists a universal constant $C=C(\chi)$ such that,
uniformly over $(\tx)\in \TR$ we have
\[ \EE^{\tx} \Big[\int_t^T \abs{\bsZ_u}^2 \inds{\bsX_u\in B_{n-1}}\, du\Big] \lec
1+ \EE^{\tx} \Big[\int_t^T k(u, \bsX_u) \inds{\bsX_u \in B_n}\, du\Big].\]
By H\" older's inequality with $1/q+1/q'=1$, we have
\begin{multline*}
\EE^{\tx} \Big[\int_t^T k(u,\bsX_u)\inds{\bsX_u\in B_n} \, du\Big]
=
\int_t^T \int_{B_n} k(u,\bsxi) p(\tx, u;\bsxi)\, du\, d\bsxi \leq \\
\leq
\norm{k}_{\el^{q}([0,T]\times B_n)}
\Big(\int_t^T \int_{B_n} p(\tx, u;\bsxi)^{q'} \, du\, d\bsxi\Big)^{1/q'}.
\end{multline*}
The proof is completed once we ues the upper bound in \eqref{equ:Aronson}
and the fact that $q'<1+2/d$ to obtain
\begin{align*}
 \int_t^T \int_{B_n} p(t, \bsx; u, \bsxi)^{q'}\, du\, d\bsxi & \lec \int_t^T
 (u-t)^{-\tfrac{d}{2}(q'-1)} \Big(\int_{B_n} (u-t)^{-\tfrac{d}{2}}
 e^{-\tfrac{q'|\bsxi-\bsx|^2}{2\overline{\sigma}^2 (u-t)}} d\bsxi\Big)\, du\\
 &\lec \int_t^T (u-t)^{-\tfrac{d}{2}(q'-1)} \, du\leq
 T^{1-\tfrac{d}{2(q-1)}}.\qedhere
\end{align*}
 \end{proof}
The uniform bound of Lemma \ref{lem:Z-bound} helps provide the following
fundamental relation between $\bsw$ and $\bsv$.
\begin{lemma}
\label{lem:weak-Jacobian}
$\bsw$ is the weak (spatial) Jacobian $D \bsv$ of $\bsv$ on $(0,T)\times
\R^d$.
\end{lemma}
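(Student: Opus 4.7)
The plan is to identify $\bsw$ with the distributional spatial gradient $D\bsv$ via a mollification argument, combined with It\^o's formula for the smoothed function and the quadratic-variation identity for $\bsv_\eps - \bsv$. Since the claim is local, I will fix a compact rectangle $K = [t_1, t_2] \times \overline{B_R} \subset (0,T) \times \R^d$ and prove the identification on $K$. Let $\rho_\eps$ be a standard smooth compactly supported mollifier on $\R^{d+1}$ and set $\bsv_\eps = \bsv * \rho_\eps$; the local H\"older continuity of $\bsv$ established in Theorem~\ref{thm:abstract} yields $\norm{\bsv_\eps - \bsv}_{\linf(K')} \lec \eps^\alpha$ on any compact $K'$ slightly larger than $K$, and the standard mollifier calculus gives $D\bsv_\eps \to D\bsv$ in the distributional sense on $(0,T) \times \R^d$.

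Next, I fix a starting point $(\txz)$ with $0 < t_0 < t_1$ and apply It\^o's formula to $\bsv_\eps(\cdot, \bsX^{(\txz)}_\cdot)$ to obtain
\[
  \bsv_\eps(u, \bsX_u) - \bsv_\eps(\txz) = \int_{t_0}^u (\partial_s + \sL)\bsv_\eps(s, \bsX_s)\, ds + \int_{t_0}^u D\bsv_\eps(s, \bsX_s)\, \bsig(s, \bsX_s)\, dW_s,
\]
while the Markovian solution property gives $\bsv(u, \bsX_u) - \bsv(\txz) = -\int_{t_0}^u \bsf\, ds + \int_{t_0}^u \bsw\, \bsig\, dW_s$. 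Subtracting, the continuous semimartingale $Y^\eps_u := \bsv_\eps(u, \bsX_u) - \bsv(u, \bsX_u)$ has drift $\int_{t_0}^u (\partial_s \bsv_\eps + \sL \bsv_\eps + \bsf)\, ds$ and martingale part $\int_{t_0}^u (D\bsv_\eps - \bsw)\bsig\, dW$, so by the ellipticity of $\bsa = \bsig\bsig^\top$,
\[
  [Y^\eps]_u \geq \tfrac{1}{\Ld} \int_{t_0}^u |D\bsv_\eps - \bsw|^2\, ds.
\]

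The heart of the argument is to show $\EE^{(\txz)}\big[[Y^\eps]_T\big] \to 0$ as $\eps \to 0$. Applying It\^o to $(Y^\eps)^2$ and taking expectation yields
\[
  \EE\big[[Y^\eps]_T\big] = \EE[(Y^\eps_T)^2] - (Y^\eps_{t_0})^2 - 2\, \EE\Big[\int_{t_0}^T Y^\eps_s\, \big(\partial_s \bsv_\eps + \sL \bsv_\eps + \bsf\big)\, ds\Big].
\]
The first two terms are $O(\eps^{2\alpha})$ by the uniform bound $\sup_u |Y^\eps_u| \lec \eps^\alpha$ (after a routine localization of $\bsX$ to stay in some $B_n$), and the integral is bounded by $\eps^\alpha\, \EE[\int_{t_0}^T (|\partial_s \bsv_\eps + \sL \bsv_\eps| + |\bsf|)\, ds]$. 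The $\bsf$ contribution is finite by Lemma~\ref{lem:Z-bound} together with the local quadratic-growth hypothesis on $\bsf$; the remaining integral is controlled by first deriving the distributional identity $\partial_s \bsv + \sL \bsv + \bsf = 0$ on $(0,T) \times \R^d$ from the BSDE (by integrating against a test function and using Aronson's two-sided bound to factor out the transition density), and then applying a Friedrichs-type commutator lemma to conclude $\partial_s \bsv_\eps + \sL \bsv_\eps \to -\bsf$ in $L^1_{loc}$, both against Lebesgue measure and against the law of $\bsX$.

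To conclude, the lower bound in \eqref{equ:Aronson} ensures $p(\txz; s, \bsy) \geq c > 0$ on $K$, so $\EE[\int_{t_0}^T |D\bsv_\eps - \bsw|^2\, ds] \to 0$ upgrades to $D\bsv_\eps \to \bsw$ in $L^2(K, ds\, d\bsy)$. Combined with the distributional convergence $D\bsv_\eps \to D\bsv$, uniqueness of distributional limits forces $\bsw = D\bsv$ on $K$, and since $K$ was arbitrary, $\bsw$ is the weak spatial Jacobian of $\bsv$ on $(0,T) \times \R^d$. The main obstacle is the PDE-plus-commutator step providing the uniform $L^1$-in-expectation bound on $\partial_s \bsv_\eps + \sL \bsv_\eps$; everything else reduces to standard semimartingale manipulations and Aronson's estimates.
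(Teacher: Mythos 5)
The overall scaffold you propose (approximate $\bsv$ by a smooth sequence, apply It\^o to the squared difference, extract $L^2_{loc}$ convergence of the Jacobians of the approximations, then let the weak derivative pass to the limit) is exactly the paper's strategy, and the final step using the lower Aronson bound to pass from $\EE^{(\txz)}$-expectations to Lebesgue-$L^2_{loc}$ is the same. Where you diverge is in the choice of approximation: you mollify $\bsv$ spatially by $\rho_\eps$, whereas the paper's key move is the \emph{temporal semigroup average}
\[
  \hat{\bsv}^{(l)}(\tx)=l\int_t^{t+1/l}\EE^{\tx}\big[\hat{\bsv}(u,\bsX_u)\big]\,du,
  \qquad (\tddt+\sL)\hat{\bsv}^{(l)}(\tx)=l\big(\EE^{\tx}[\hat{\bsv}(t+1/l,\bsX_{t+1/l})]-\hat{\bsv}(\tx)\big),
\]
so that $(\partial_t+\sL)$ applied to the approximation collapses to an explicit difference quotient that can be bounded in $L^1$ against $p$ directly from the BSDE dynamics and Lemma~\ref{lem:Z-bound}, with no second derivatives of $\bsv$ ever appearing.

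This is precisely where your argument has a gap. You correctly identify the crux — a uniform bound on $\EE\big[\int|\partial_s\bsv_\eps+\sL\bsv_\eps|\,ds\big]$ — but the ``Friedrichs-type commutator lemma'' you invoke does not deliver it. Writing $\partial_s\bsv_\eps+\sL\bsv_\eps=-\bsf*\rho_\eps+[\sL,\rho_\eps*]\bsv$ reduces the matter to the commutator $[\sL,\rho_\eps*]\bsv$, and while the first-order piece $[\tilde\bsb\cdot D,\rho_\eps*]\bsv$ is fine (Lipschitz coefficients, $\bsv\in\linf_{loc}$), the second-order piece is not: even after the divergence-form rewriting $\sum_{ij}D_i(a_{ij}D_j\cdot)$, the classical DiPerna--Lions commutator estimate produces $L^1$-smallness of $[a_{ij},\rho_\eps*]D_j\bsv$ only if $D_j\bsv$ already lies in $L^1_{loc}$, and the outer $D_i$ then demands control in $W^{1,1}_{loc}$, which is stronger still. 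At this stage of the paper one has only that $\bsv$ is locally H\"older and that $\bsw\in L^2_{loc}$; nothing yet identifies $D\bsv$ as an integrable function, and indeed the existence and identification of the weak Jacobian is exactly what Lemma~\ref{lem:weak-Jacobian} is supposed to establish. Put differently, for a merely H\"older $\bsv$, $|D^2\bsv_\eps|\sim\eps^{\alpha-2}$ blows up and there is no structural cancellation in $\sL\bsv_\eps$ that you can point to without assuming the conclusion. The paper avoids this circularity entirely by never differentiating the mollifier against $\sL$: its approximation is engineered so that $(\tddt+\sL)$ is computed algebraically from the Markov semigroup, and the resulting quantity is then estimated through the BSDE (using the growth bound on $\bsf$ and Lemma~\ref{lem:Z-bound}) rather than through any regularity of $\bsv$. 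To salvage your route you would need a commutator estimate for the non-divergence second-order part valid for merely H\"older $\bsv$, which is not available; the cleaner fix is to switch to the paper's semigroup-averaged approximation.
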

\begin{proof}
Given $\epsilon \in (0, T/2)$ and
the testable function $\chi$ from the proof of Lemma
\ref{lem:Z-bound}, we define
\[ \tbsv(\tx) = \bsv(t, \bsx) \chi(\tx) \quad \efor t, \bsx\in [0, T] \times \R^d,\]
and let the
sequence $\set{\tbsvl}$ (with $l\geq 1/\epsilon$) of approximations to $\tbsv$ be given by
\[ \tbsvl(t,\bsx) = l \int_{t}^{t+1/l} \EE^{\tx}[ \tbsv(u, \bsX_u)
]\, du \quad \efor \tx\in [0, T-\epsilon]\times \R^d.\]
The functions $(\tbsvl)$ are uniformly bounded (by $c$, in fact), and,
thanks to smoothness of the transition densities of $\bsX$,  each $\tbsvl$ is $C^{1,2}$-differentiable.
Moreover, as one readily checks, we have
\[ (\tddt + \sL) \tbsvl(t,\bsx) = l \Big( \EE^{\tx}[
\tbsv(t+1/l, \bsX_{t+1/l})] - \tbsv(\tx) \Big). \]
Having fixed a pair $(\tx)\in [0, T-\epsilon]\times \R^d$, we apply \itos{} formula to
$\tbsv(\cdot,\bsX^{\tx})$, use the boundedness of $\bsv$ on $[0, T]\times
B_n$ in the second inequality below, and recall the second inequality in
\eqref{equ:f} in the last inequality, to obtain
\begin{align*}
\abs{ (\tddt + \sL) \tbsvl( t,\bsx)}
& \lec
l \,\EE^{\tx}\Big[\int_t^{t+1/l} |\bsv(u, \bsX_u)|\abs{(\tfrac{d}{dt} +
\sL) \chi(u, \bsX_u)} du\Big]\\ & \qquad + l\, \EE^{\tx}\Big[\int_t^{t+1/l} \chi(u,
\bsX_u) |\bsf(u, \bsX_u, \bsY_u, \bsZ_u)| du\Big]\\ & \qquad
+ l\, \EE^{\tx}\Big[\int_t^{t+1/l} \abs{D\chi(u, \bsX_u)} \abs{\bsZ_u}du\Big]\\
&\lec1+
l\, \EE^{\tx}\Big[\int_t^{t+1/l}
\chi (u,\bsX_u) \abs{\bsf(u,\bsX_u, \bsY_u, \bsZ_u)}\, du\Big] \\ & \qquad +
\EE^{\tx}\Big[ \int_t^{t+1/l} \inds{\chi(u,\bsX_u)>0} \abs{\bsZ_u}^2\, du\Big]\\
&\lec  1 + l\, \EE^{\tx} \Big[\int_t^{t+1/l} (k(u, \bsX_u) + \abs{\bsZ_u}^2)
\inds{\bsX_u \in B_n}\, du\Big],
\end{align*}
for a universal constant $C=C(\chi)$. The Markov property of the family
  $(\PP^{\tx})$ now implies that, with
  $\bsZ'_s =\bsw(s,\bsX'_s)$, where
  $\bsX'$ denotes the coordinate
  process inside the $\PP^{u,\bsX_u}$-expectation, we have
\begin{equation}
\label{equ:fvb}
\begin{split}
\EE^{\tx} &\Big[\int_t^{T-\epsilon}
\abs{ (\tfrac{d}{dt} + \sL) \tbsvl( u,\bsX_u)}
\, du\Big] \\ &\lec 1 +
\EE^{\tx} \left[
\int_t^{T-\epsilon} \EE^{u,\bsX_u}\Big[ l \int_u^{u+1/l}
\Big(\abs{\bsZ'_s}^2 + k(s,\bsX'_s) \inds{\bsX'_s \in B_n}\Big)\, ds\, \Big]
du\right]
\\
&=  1+ \EE^{\tx}  \Big[\int_t^{T-\epsilon} l \int_{u}^{u+1/l}
\big(\abs{\bsZ_s}^2 + k(s,\bsX_s)\big)\inds{\bsX_s\in B_n}\, ds\,
\, du  \Big]\\
&\lec 1+ \EE^{\tx}\Big[ \int_t^{T-\epsilon}
\Big(\abs{\bsZ_s}^2 + k(s,\bsX_s)\Big)\inds{\bsX_s\in B_n}\, ds\Big]
\leq C, \quad \text{for all } \ell,
\end{split}
\end{equation}
where the last inequality follows from Lemma \ref{lem:Z-bound};  the constant $C$ obtained above is also uniform for all $\tx \in [0,T-\epsilon]\times \R^d$.

Continuity of $\bsv$ implies that $\hat{\bsv}$ is also continuous, and,
hence,
uniformly continuous on compacts. For any $\tilde{\epsilon}>0$,
there exists $\delta = \delta(\tilde{\epsilon}, n)\in (0,1)$ such that
\[
 \abs{\hat{\bsv}(t, \bsx) - \hat{\bsv}(t', \bsx')} \leq \tilde{\epsilon}
 \quad \text{if}\quad |(t, \bsx) - (t', \bsx')|\leq \delta \eand |\bsx- b_0|\leq
 n+1.
\]
The difference above vanishes if $|(t, \bsx)-(t', \bsx')|\leq \delta$ and
$|\bsx- b_0|>n+1$, since,
in that case, $\bsx, \bsx'\notin \text{supp} \chi$.
Therefore, using the boundedness of $\hat{\bsv}$ and the upper bound in \eqref{equ:Aronson}, for $(\tx)\in [0,
T-\epsilon]\times \R^d$ we have
\begin{align*}
 \abs{\hat{\bsv}^{(l)}(\tx) - \hat{\bsv}(\tx)}
 \leq & \iintl{[t, t+1/l]\times \R^d} \inds{|\bsxi-\bsx|>\delta}
 \abs{\hat{\bsv}(u, \bsxi) -\hat{\bsv}(t, \bsx)} l\, p(t, \bsx; u, \bsxi)
 \,du\,
 d\bsxi\\
 &+ \iintl{[t, t+1/l] \times \R^d} \inds{|\bsxi- \bsx|\leq \delta,
 |\bsx|\leq n+1} \abs{\hat{\bsv}(u, \bsxi) -\hat{\bsv}(t, \bsx)} l\, p(t,
 \bsx; u, \bsxi) \,du\, d\bsxi\\
 \lec& \, l \int_0^{1/l}\int_{|\bsxi|\geq \delta} u^{-d/2}
 e^{-\tfrac{|\bsxi|^2}{2\overline{\sigma}^2 u}}\, d\bsxi\, du + \tilde{\epsilon}\\
 \lec&\, l \int_0^{1/l}(1-\Phi(\delta/\sqrt{ \overline{\sigma}^2 u})) \,du+ \tilde{\epsilon},
\end{align*}
where $\Phi(\cdot)$ is the distribution function for standard normal.
Note that $\lim_{u\downarrow 0} \Phi(\delta/\sqrt{ \overline{\sigma}^2
u})=1$. The last expression is less than $2\tilde{\epsilon}$, for
sufficiently large $l$, uniformly for $\tx \in [0,T-\epsilon]\times \R^d$.
Since the choice of $\tilde{\epsilon}$ is arbitrary, the previous estimates
implies the uniform convergence of $\{\hat{\bsv}^{(l)}\}$ to $\hat{\bsv}$ on
$[0, T-\epsilon]\times \R^d$.

Setting $\tbsYl = \tbsvl(\cdot, \bsX^{0,\bsx})$ and $\tbsY =
\tbsv(\cdot,\bsX^{0, \bsx})$, we use the uniform convergence of
$\set{\hat{\bsv}^{(l)}}$ to obtain $\tbsYl \to \tbsY$, uniformly. Applying \itos{}
formula to $\sabs{\tbsY-\tbsYl}^2$ and using \eqref{equ:fvb}, we obtain
\[
 \EE^{0, \bsx}\big[\ab{\tbsYl - \tbsY}_{T-\epsilon}\big] \lec \Big\|\sup_{u\in [0, T-\epsilon]}|\tbsYl_u - \tbsY_u|\Big\|_{\mathbb{L}^\infty},
\]
which converges to $0$.
This means that
\[ \lim_l \int_0^{T-\epsilon} \int_{B_n} g^2_l(u,\bsxi) \, p(0, \bsx ;u, \bsxi)\, du\,
d\bsxi  = 0, \quad \text{where } g_l= \abs{
  (\chi\bsw + \bsv D\chi -   D \tbsvl) \bsig }.
\]
By the lower bound in \eqref{equ:Aronson}, the density $p(0, \bsx;\cdot,\cdot)$ is
bounded away from $0$ on
$[\epsilon,T-\epsilon]\times B_n$ and $\bsig$ is uniformly elliptic. Therefore,
$D\tbsvl \to
\chi \bsw + \bsv D \chi$ in $\ltwo([\epsilon, T-\epsilon]\times B_n)$, as $l\to\infty$. Coupled with the
fact that $\tbsvl \to \tbsv$ also in $\ltwo([\epsilon, T-\epsilon]\times B_n)$, this implies that
$\chi \bsw + \bsv D\chi $ is the weak Jacobian of $\bsv \chi$.  The
statement follows by noting that for each compact $K\subseteq \R^d$, $\chi
\bsw+ \bsv D\chi = \bsw$ and $\bsv\chi = \bsv$ on $[0,T]\times K$, for large enough $n$, and the choice of $\epsilon$ is arbitrary.
\end{proof}

\subsection{Uniform local estimates}
We now choose and fix $R\in (0, 1/4]$
and a pair $(\txz) \in [0,T]\times B_{n-1}$. It is important to note that
\emph{none} of the constants in the sequence of lemmas in the next two subsections depends on the choice of $(\txz)$ and $R$.

We will use the point $ (\txz)$ (or only
$\bsx_0$) as the
origin throughout the proof and dilate the coordinate system around it  with
the change of variables $ (\tx) \to ( \tau, \bsxi)$, given by
\begin{align*}
     \bsx = \bsx_0 + R \bsxi \quad  \eand \quad t = t_0 +  R^2 \tau.
 \end{align*}
 Most balls, cylinders, etc.~in the sequel will be centered around $(t_0,\bsx_0)$
 (or $\bsx_0$) and their dimensions (radius, etc) will have much nicer
 expressions in the $(\tau,\bsxi)$-coordinates, so we introduce the following
 notation:
 \begin{align*}
   \beta(\rho) &= \sets{\bsx\in\R^d}{ \abs{\bsx-\bsx_0} \leq \rho R}, \eand \\
    \gamma(\theta; \rho) &= \Bsets{ (\tx)\in\TR }{ t_0
    \leq t \leq t_0+ \theta  R^2, \abs{\bsx-\bsx_0}\leq \rho R},
 \end{align*}
 for the ball $\beta$ and the parabolic cylinder $\gamma$.
Typically, a function $\tilde{\vp}: \R \times \R^d \rightarrow \R$ will be defined in $
(\tau, \bsxi)$-coordinates, and then its counterpart
\begin{align}
\label{equ:vp-tvp}
     \vp (\tx) = \tilde{\vp}\left(\tfrac{t-t_0}{
     R^2}, \tfrac{\bsx- \bsx_0}{R} \right),
\end{align}
restricted to $\TR$,
is used in computations. A similar notation will be used for functions of $\bsx$
only or for subsets of $[0,T] \times \R^d$ or $\R^d$ (identified with their indicators).
In the same spirit, we set
 $\tT=(T-t_0)/( R^2)$.

 Within this proof, $\int$ denotes the integral over $R^d$, while $\iint$ stands for
the integral over $[t_0,T]\times \R^d$. If the domain of integration is
notationally further
restricted, as e.g., in $\iint_D$, the integral is taken over $([t_0,T]\times \R^d)\cap D$ (or $\R^d\cap D$ in the spatial case). Similarly, in order to avoid
repeated explicit minimization with $T$, we assume that temporal variables
cannot take
values above $T$, so that, for example, the interval $[t_0,t_0+4 R^2]$
coincides with $[t_0,T]$, when $t_0+4  R^2>T$. Meanwhile, as we mentioned before, $t$ in $\PP^{t,x}$ is allowed to be negative.

We continue with some consequences of \eqref{equ:Aronson} which will be
used in the sequel.
 Given the  origin $(t_0, \bsx_0)$ and the radius $R$ fixed above,
 we introduce the following  shortcut
 \begin{align}
 \label{equ:p-tau}
   \peps(\tx) = p(t_0 - \eps R^2, \bsx_0; \tx), \quad \efor \eps\geq 0,
 \end{align}
 for the transition density, and state
several useful estimates where
 the functions  $\Delta_{\sigma}, \delta_{\sigma}$ are given by
\begin{align}
\label{equ:deltas}
\Delta_{\sigma}(\eps) = \exp(-\oo{2\sigma^2} \eps^{-1}) \quad \eand \quad
  \delta_{\sigma} (\eps) = \eps^{-d/2} \Delta_{\sigma}(\eps),
\end{align}
and extended to $\eps=0$, $\eps=+\infty$ by continuity.
We also define the positive (universal) constant
$\eps_0$ by
\begin{align}
\label{equ:eps-z}
\eps_0 = \inf\sets{ \eps>0 }{\tdl(\eps) = 1/2 } \wedge \min(1, (\osig^2 d)^{-1}).
\end{align}
Note that $\delta_{\overline{\sigma}}(\epsilon)$ is increasing on $[0, (\overline{\sigma}^2 d)^{-1}]$ and decreasing on $[(\overline{\sigma}^2 d)^{-1}, \infty)$ with $\delta_{\overline{\sigma}}(0)=0$. Therefore the definition of $\epsilon_0$ implies $\delta_{\overline{\sigma}}(\epsilon)\leq \delta_{\overline{\sigma}}(\epsilon_0)\leq 1/2$ for $\epsilon \in [0, \epsilon_0]$. The role of $\epsilon_0$ will be clear in Proposition \ref{pro:hol-fil-2} below.
\begin{lemma}
\label{lem:Gauss} There exists a universal constant $C>0$ such that, for all
$(\tx)\in \gamma(4,4)$ and
$\eps\in (0,\eps_0/2]$, we have
\begin{align}
\label{equ:G1}
      \peps(\tx) &\lec R^{-d} &&\ewhen (\tx)\not\in
      \gamma(1/4;1/2), \\
\label{equ:G2}
     \peps(\tx) &\gec R^{-d} \, \delta_{\usig/4} (\eps_0/2+\eps ) &&\ewhen
      t \geq t_0 + \eps_0 R^2/2, \\
\label{equ:G3}
     \peps(\tx) &\lec R^{-d} \, \delta_{\osig} (\eps_0/2+\eps) &&\ewhen
    t< t_0+ \eps_0 R^2/2 \eand
    \bsx\not\in \beta(1).
 \end{align}
\end{lemma}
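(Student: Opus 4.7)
The plan is to derive all three estimates directly from the two-sided Aronson bound \eqref{equ:Aronson} applied to $\peps(\tx)=p(t_0-\eps R^2,\bsx_0;t,\bsx)$, rewritten in the dilated coordinates $\tau=(t-t_0)/R^2$, $\bsxi=(\bsx-\bsx_0)/R$. Under this change, the elapsed time is $R^2(\tau+\eps)$ and the normalized squared distance is $|\bsxi|^2/(\tau+\eps)$, so \eqref{equ:Aronson} reads
\[
\tfrac{\uC R^{-d}}{(\tau+\eps)^{d/2}}\, e^{-|\bsxi|^2/(2\usig^2(\tau+\eps))} \le \peps(\tx) \le \tfrac{\oC R^{-d}}{(\tau+\eps)^{d/2}}\, e^{-|\bsxi|^2/(2\osig^2(\tau+\eps))}.
\]
Throughout, the hypotheses give $\tau\in[0,4]$, $|\bsxi|\le 4$, $\eps\in(0,\eps_0/2]$. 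The remainder of the proof just squeezes these Gaussian factors in each geometric regime.

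For \eqref{equ:G1} I would split according to whether $(\tx)\notin\gamma(1/4;1/2)$ because $\tau>1/4$ or because $|\bsxi|>1/2$. In the first case $(\tau+\eps)^{-d/2}\le 4^{d/2}$ and the exponential is at most $1$, so the upper bound above gives $\peps\lec R^{-d}$ directly. In the second case I would invoke the elementary identity $\sup_{s>0} s^{-d/2} e^{-a/s}=(d/(2ae))^{d/2}$, applied with $a=|\bsxi|^2/(2\osig^2)\ge 1/(8\osig^2)$, which yields a bound of the same form with a universal constant.

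For \eqref{equ:G2}, set $u=\tau+\eps$ and $u_0=\eps_0/2+\eps$, so $u\ge u_0$ and $u\le 9/2$. Since $|\bsxi|\le 4$, the lower Aronson bound gives
\[
\peps(\tx) \ge \uC R^{-d}\, u^{-d/2}\, e^{-8/(\usig^2 u)} = \uC R^{-d}\, \delta_{\usig/4}(u),
\]
matching the claimed form with $u$ in place of $u_0$. To swap $u$ for $u_0$ I would observe that both values lie in the bounded interval $[\eps_0/2,\,9/2]$, on which $\delta_{\usig/4}$ is continuous and strictly positive, hence bounded above and below by positive universal constants; therefore $\delta_{\usig/4}(u)\gec \delta_{\usig/4}(u_0)=\delta_{\usig/4}(\eps_0/2+\eps)$.

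For \eqref{equ:G3}, the same substitution combined with $|\bsxi|\ge 1$ yields
\[
\peps(\tx) \le \oC R^{-d}\, u^{-d/2}\, e^{-1/(2\osig^2 u)} = \oC R^{-d}\, \delta_{\osig}(u).
\]
Now the purpose of \eqref{equ:eps-z} enters: the inequality $\eps_0\le(\osig^2 d)^{-1}$ places the whole interval $[0,\eps_0]$ inside the increasing branch of the unimodal map $s\mapsto s^{-d/2}e^{-1/(2\osig^2 s)}$, and under $\tau<\eps_0/2$ we have $u<u_0\le\eps_0$, so $\delta_{\osig}(u)\le\delta_{\osig}(u_0)=\delta_{\osig}(\eps_0/2+\eps)$, which finishes the proof. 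I do not foresee any serious obstacle: the lemma is a rescaling-plus-monotonicity consequence of \eqref{equ:Aronson}, and the only mildly delicate point is tracking on which side of the unimodal maximum of $s\mapsto s^{-d/2}e^{-a/s}$ one is sitting, which is exactly what the definition of $\eps_0$ in \eqref{equ:eps-z} is tailored to handle.
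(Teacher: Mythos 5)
Your proof is correct and follows essentially the same route as the paper: rewrite the Aronson bounds \eqref{equ:Aronson} in the dilated variables and squeeze the resulting Gaussian profile in each geometric regime, using its boundedness for \eqref{equ:G1} and its monotonicity on $[0,(\osig^2 d)^{-1}]$ (built into the definition of $\eps_0$) for \eqref{equ:G3}. The only (harmless) deviation is in \eqref{equ:G2}, where you replace the paper's two-case analysis around the maximizer of $\delta_{\usig/4}$ by the simpler observation that $\delta_{\usig/4}$ is bounded above and below by positive universal constants on the fixed compact interval $[\eps_0/2,\,9/2]$.
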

\begin{proof}
We set $\btau = \eps+ (t-t_0)/R^2 >0$,  $\bsxi =
(\bsx-\bsx_0)/R$ and
$r = \btau/\abs{\bsxi}^2 \in (0,\infty]$,   and note
that the bounds in the density estimates
\eqref{equ:Aronson} can be represented in two forms (with the outer one
holding only
for $\bsxi\ne 0$):
\begin{align}
\label{equ:alt-A}
    \abs{\bsxi}^{-d} \delta_{\usig} ( r )  =
    \btau^{-\tfrac{d}{2}} \Delta_{\usig} ( r )  \lec
   R^d\, \peps(\tx)  \lec
     \btau^{-\tfrac{d}{2}} \Delta_{\osig} ( r )=
    \abs{\bsxi}^{-d} \delta_{\osig} ( r ),
\end{align}
which will be used throughout the proof.

- Inequality \eqref{equ:G1}:
 Both $\delta_{\sigma}$ and $\Delta_{\sigma}$ are
bounded by a constant $C$ on $
[0,\infty]$, so, by the right-hand side of \eqref{equ:alt-A}, we have
\begin{align*}
 R^d \peps(\tx) \lec
    \max( \abs{\bsxi}, \sqrt{\btau})^{-d},
\end{align*}
which, in turn, implies \eqref{equ:G1} since $\max(\abs{\bsxi}, \sqrt{\btau})
\geq 1/2$ on $\gamma(1/4;1/2)^c$.

- Inequality \eqref{equ:G2}:
Under the conditions of \eqref{equ:G2}, we have $r \geq \btau/16$, and so,
by monotonicity of $\Delta_{\usig}$ and \eqref{equ:alt-A}, we have
\begin{align*}
   R^d\, \peps(\tx) \gec
    \btau^{-d/2} \Delta_{\usig} ( r ) \gec
    \delta_{\usig} ( \btau/16 ) \gec \delta_{\usig/4}(\btau).
\end{align*}
The function $\delta_{\usig/4}$ attains it maximum at $16( \usig^2 d)^{-1}$, and
is nondecreasing to the left of it and nonincreasing and positive to the right.
Since $\eps_0/2+\eps \leq \btau $, in the case that $\btau \leq 16( \usig^2 d)^
{-1}$,
we have
$\delta_ {\usig/4} (\btau) \geq \delta_
{\usig/4}(\eps_0/2+\eps)$.
On the other hand, if  $\btau \in (16( \usig^2 d)^
{-1},4 \tfrac{1}{2}]$, we
have
\begin{align*}
    \delta_{\usig/4}(\eps_0/2+\eps) \leq \delta_{\usig/4}(16(\usig^2 d)^{-1})
    \lec \delta_{\usig/4}(4 \tfrac12) \leq \delta_{\usig/4}(\btau).
\end{align*}
Both alternatives lead to \eqref{equ:G2}.

- Inequality \eqref{equ:G3}:
The conditions of  \eqref{equ:G3} translate into $\btau \leq \eps_0/2+\eps \leq
(\osig^2
d)^{-1}$, as well as $r\leq \btau$ (since $\abs{\bsxi}\geq 1$).
The function $\delta_{\osig}$
is nondecreasing on $[0,(\osig^2 d)^{-1}]$, so the \eqref{equ:alt-A} implies
 \[
   R^d\, \peps(\tx)   \lec \delta_{\osig} ( r ) \leq \delta_
   {\osig}
   (\eps_0/2+\eps). \qedhere\]
\end{proof}

An operational form of Lemma \ref{lem:testing}, stated in Proposition
\ref{pro:testing} below, employs a particular testing function $\vp$,
obtained via \eqref{equ:vp-tvp} from
a function $\tvp:\R \times \R^d \to [0,1]$ in the class $C^{1,2}(\R\times \R^d)$ satisfying
\begin{align*}
\tvp(\tau,\bsxi) =
1, \text{ when } \tau \leq 1 \text{ and } \abs{\bsxi} \leq 1,
\ \tvp(\tau,\bsxi) = 0, \text{ when } \tau \geq 4 \text{ or } \abs{\bsxi} \geq 2,
\end{align*}
and $\tvp(\tau,\bsxi) \in (0,1)$ otherwise.
By making sure that the $\tvp$ decreases fast enough (quadratically, for example)
towards its $0$-level set, one can also guarantee the boundedness of $\abs{D
\tvp}^2/\tvp$. For the corresponding function $\vp$, expressed in the original
coordinates (as defined in \eqref{equ:vp-tvp}), we easily check that, relative to the set $[t_0, T]\times \R^d$,
\begin{align*}
    \set{\vp=1} = \gamma(1;1) \quad \eand \quad \set{\vp=0} = \overline{ \gamma( 4; 2)^c},
\end{align*}
and
that the quantity $\Gamma_{\vp}$ is independent of the choice of $(t_0,\bsx_0)$. Due to $R\leq 1/4$, we have $|D \vp \bsb| \lec \tfrac{1}{R^2} |D\tvp \bsb|$. Therefore $\Gamma_\vp$
satisfies
\begin{equation}\label{equ:Gamma-vp}
\Gamma_{\vp}  \lec  \tfrac{1}{R^2} \Gamma_{\tvp}.
\end{equation}
Finally, the support of $\vp(t,\cdot)$ is a subset of $B_n$, this follows from $\bsx_0\in
    B_{n-1}$,  $\text{supp }\tilde{\vp}(\tau,\cdot)= B_2$, and $R\leq 1/4$.
\begin{lemma}\label{lem:peq}
For $p_\epsilon$ defined in \eqref{equ:p-tau},
there exists a universal constant $C>0$ such
that, with
$\oo{q'} = 1- \oo{q}$ we have
\begin{align}
\label{equ:mit}
 \iint_{\gamma(4;2)} k p_\epsilon &\lec
   (2R\wedge \sqrt{T})^{2-\tfrac{2+d}{q}}.
\end{align}
\end{lemma}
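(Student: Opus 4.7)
The plan is to combine H\"older's inequality with the upper Aronson bound in \eqref{equ:Aronson}, exactly as in the proof of Lemma \ref{lem:Z-bound}, but with care about the exponent in $R$. Since $\bsx_0\in B_{n-1}$ and $R\leq 1/4$, the spatial projection of $\gamma(4;2)$ is contained in $B_n$, so $\norm{k}_{\el^{q}(\gamma(4;2))}\leq \ell$ is controlled by a universal constant. H\"older's inequality with $1/q+1/q'=1$ therefore gives
\[
 \iint_{\gamma(4;2)} k\, p_\epsilon \;\lec\; \Big(\iint_{\gamma(4;2)} p_\epsilon^{q'}\Big)^{1/q'},
\]
so the main task is to bound $\iint_{\gamma(4;2)} p_\epsilon^{q'}$ by $(2R\wedge\sqrt{T})^{2q'-(2+d)q'/q}$, which rearranges to the stated exponent $2-(2+d)/q$ after taking the $1/q'$ power.

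To estimate the $\el^{q'}$-integral, I would substitute the upper bound of \eqref{equ:Aronson} with $\tau=t-t_0+\eps R^2$ in place of $t'-t$ to get
\[
 p_\epsilon(\tx)^{q'} \;\lec\; \tau^{-dq'/2}\exp\!\Big(\!-\tfrac{q'|\bsx-\bsx_0|^2}{2\osig^2 \tau}\Big),
\]
and then perform the spatial integral first over all of $\R^d$ (enlarging $\beta(2)$), producing the Gaussian factor $C\,\tau^{d/2}$. This leaves
\[
 \iint_{\gamma(4;2)} p_\epsilon^{q'} \;\lec\; \int_0^{4R^2\wedge (T-t_0)}\!\!\tau^{-d(q'-1)/2}\,d\tau,
\]
after dropping $\eps R^2\geq 0$ inside the remaining negative power (valid since the integrand is then larger). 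The assumption $q>1+d/2$ translates precisely to $d(q'-1)/2<1$, so the time integral converges and equals a universal constant times $(4R^2\wedge(T-t_0))^{1-d(q'-1)/2}$. Bounding $4R^2\wedge(T-t_0)\leq (2R\wedge\sqrt{T})^2$ and taking the $1/q'$-th root yields the exponent $\tfrac{2}{q'}-\tfrac{d}{q}=2-\tfrac{2+d}{q}$ after simple algebra, which is the claim.

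The calculation is essentially routine; the only subtlety is keeping the right balance between the $R$-dependence of the cylinder and the $\tau$-integration interval, so that the final power of $(2R\wedge\sqrt{T})$ comes out correctly and independently of $\epsilon$. In particular, it is important that the estimate is uniform in $\epsilon \in [0,\epsilon_0/2]$; this is automatic because replacing $\tau=t-t_0+\epsilon R^2$ by $t-t_0$ in a \emph{negative} power only increases the integrand, so the $\epsilon$-dependence drops out after majorisation. No use of the lower Aronson bound or of the Lyapunov pair is needed here.
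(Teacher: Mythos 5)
Your proposal is correct and follows essentially the same route as the paper: H\"older's inequality against $\norm{k}_{\el^q}$, the upper Aronson bound for $p_\epsilon^{q'}$, the spatial Gaussian integral producing $\tau^{d/2}$, majorising $t-t_0+\eps R^2$ by $t-t_0$ in the negative power, and the time integral over $[t_0,(t_0+4R^2)\wedge T]$ with $q>1+d/2$ guaranteeing integrability. The exponent bookkeeping ($2/q'-d/q=2-(2+d)/q$) matches the paper's computation exactly.
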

\begin{proof}
By
\eqref{equ:Aronson},
with $t_\epsilon=t_0 - \eps R^2$,
we have
\begin{multline*}
 \iint_{\gamma(4;2)} \peps^{q'} \lec
   \intl{[t_0,t_0+ 4R^2]} (t-t_\epsilon)^{-\tfrac{d}{2}
  (q'-1) }
 \Big(  \intl{\beta(2)} (t-t_\epsilon)^{-\tfrac{d}{2}} e^{-\tfrac{q'|\bsx-
 \bsx_0|^2}
 {2\overline {\sigma}^2 (t-t_\epsilon)}}\, d\bsx\Big)\,  dt\lec \\
   \lec  \intl{[t_0,t_0+4R^2]} (t-t_\epsilon)^{-\tfrac{d}{2}(q'-1)} dt
   \leq \intl{[t_0, t_0+4R^2]} (t-t_0)^{-\tfrac{d}{2}(q'-1)} dt
   \lec
   (2R\wedge \sqrt{T})^{2-\tfrac{d}{q-1}}.
  \end{multline*}
  The previous inequality, combined with H\"{o}lder's inequality in the
  form $\iint k p_\epsilon \leq \norm{k}_{\mathbb{L}^q} (\iint
  p_\epsilon^{q'})^{1/q'}$, establishes the statement.
\end{proof}
Reminding the reader that the constant $c$ is defined in \eqref{eq:abc const}, we state
the following result which is a combination of Lemma
\ref{lem:testing}, applied with the testing function $\vp$ introduced
above, and Lemmas  \ref{lem:weak-Jacobian}, \ref{lem:peq}, together with \eqref{equ:Gamma-vp}.

\begin{proposition} \label{pro:testing}
There exists a universal constant $C>0$ such that for all
$\eps\geq 0$
and all $\bsc$ with $\abs{\bsc}\leq c$,  we have
\begin{equation}\label{equ:Dv est}
     \iintl{\gamma(1;1)} \,\abs{D \bsv}^2 \peps
      \lec
      R^{-2}
     \iintl{ \gamma(4;2) \setminus \gamma(1; 1)}
     \abs{\bsv - \bsc}^2 \peps
      + \inds{ \tilde{T} \leq 4}  \intl{\beta(2)}
      \abs{\bsg - \bsc}^2 \peps( T, \cdot) + (2R\wedge \sqrt{T})^{2-\frac{2+d}{q}}.
\end{equation}
\end{proposition}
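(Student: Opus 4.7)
\medskip
\noindent\textbf{Proof plan for Proposition \ref{pro:testing}.}
The strategy is to apply Lemma~\ref{lem:testing} with the specific initial point $(t,\bsx)=(t_0-\eps R^2,\bsx_0)$, intermediate time $t'=t_0$, and the testing function $\vp$ obtained from the bump $\tvp$ via \eqref{equ:vp-tvp}; the three terms produced on the right-hand side of the testing inequality match, one-by-one, the three terms of the claimed estimate after the auxiliary results are invoked. Note that Lemma~\ref{lem:testing} is applicable since $\bsx_0\in B_{n-1}$ and $R\le 1/4$ force $\supp\vp(t,\cdot)\subseteq B_n$, where $\abs{\bsv}\le c$, and since the hypothesis $\abs{\bsc}\le c$ is assumed.

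First I would unfold the expectations appearing in Lemma~\ref{lem:testing}. Under $\PP^{t,\bsx}$ any integral $\EE^{t,\bsx}[\int_{t_0}^T F(u,\bsX_u)\,du]$ equals $\iint F(u,\bsx')\,\peps(u,\bsx')\,du\,d\bsx'$, where $\peps$ is the density introduced in \eqref{equ:p-tau}. Setting $\bsY_u=\bsv(u,\bsX_u)$ and using Lemma~\ref{lem:weak-Jacobian} to identify $\bsZ_u=\bsw(u,\bsX_u)$ with $D\bsv(u,\bsX_u)$ on $(0,T)\times\R^d$, Lemma~\ref{lem:testing} becomes
\begin{align*}
\iintl{\{\vp=1\}}\abs{D\bsv}^2\,\peps
&\lec
\Gamma_\vp\!\iintl{\{\vp\in(0,1)\}}\!\abs{\bsv-\bsc}^2\,\peps
+\iintl{\{\vp>0\}}\!k\,\peps
+\intl{\{\vp(T,\cdot)>0\}}\!\abs{\bsg-\bsc}^2\,\peps(T,\cdot).
\end{align*}

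Next, I would read off the supports from the construction of $\tvp$: intersected with $[t_0,T]\times\R^d$, one has $\{\vp=1\}=\gamma(1;1)$, $\{\vp>0\}\subseteq\gamma(4;2)$, and consequently $\{\vp\in(0,1)\}\subseteq\gamma(4;2)\setminus\gamma(1;1)$. Combining this with the bound $\Gamma_\vp\lec R^{-2}$ from \eqref{equ:Gamma-vp}, the first right-hand term is absorbed into $R^{-2}\iint_{\gamma(4;2)\setminus\gamma(1;1)}\abs{\bsv-\bsc}^2\,\peps$, which is exactly the first term of the claim. For the middle term, Lemma~\ref{lem:peq} bounds $\iint_{\gamma(4;2)}k\,\peps$ by a universal constant times $(2R\wedge\sqrt{T})^{2-(2+d)/q}$. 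Finally, for the terminal term: if $\tT>4$, i.e.\ $T>t_0+4R^2$, the defining properties of $\tvp$ give $\vp(T,\cdot)\equiv 0$ and the term vanishes; if $\tT\le 4$, the support of $\vp(T,\cdot)$ is contained in $\beta(2)$, which yields the indicator factor $\inds{\tT\le 4}$ in the last term of the claim.

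There is no real obstacle: the argument is essentially a clean reassembly of the already-established machinery. The only care required is bookkeeping the support sets of $\vp$ intersected with the truncated parabolic strip $[t_0,T]\times\R^d$, and verifying that $\abs{\bsY-\bsc}\le 2c$ on the support of $\vp$, both of which follow directly from $\abs{\bsv}\le c$ on $[0,T]\times B_n$ and $\abs{\bsc}\le c$.
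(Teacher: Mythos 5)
Your proposal is correct and reconstructs exactly the argument the paper has in mind: the paper states Proposition \ref{pro:testing} follows by combining Lemma \ref{lem:testing} (with the testing function $\vp$ built from $\tvp$), Lemma \ref{lem:weak-Jacobian}, Lemma \ref{lem:peq}, and the bound $\Gamma_\vp\lec R^{-2}$ from \eqref{equ:Gamma-vp}, and your write-up supplies precisely the choices $(t,\bsx)=(t_0-\eps R^2,\bsx_0)$, $t'=t_0$ and the support bookkeeping $\{\vp=1\}=\gamma(1;1)$, $\{\vp>0\}\subseteq\gamma(4;2)$, $\supp\vp(T,\cdot)\subseteq\beta(2)$ when $\tT\le 4$ that make this combination go through.
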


\subsection{A weighted Poincar\'e inequality and Struwe's lemma}
Next we state a weighted Poincar\'{e} inequality in Lemma \ref{lem:Por} below. Let $\tD$
be
a Lipschitz domain (nonempty open connected set) in $\R^d$, and let $D$ be its
translate/dilate as described around \eqref{equ:vp-tvp}. Similarly, let
$\tilde{\chi}$
be a \define{weight} function, i.e., such that $\tilde{\chi} \in \linf(\tilde{D})$
and $\int_{\tilde{D}} \tilde{\chi}>0$, and let $\chi$ be its translated/dilated
version.
Given a function $u\in\lone(D)$, we define its $\chi$-\define{average}
\begin{align*}
    \bar{u}^{\chi}_D = \frac{1}{\textstyle \int_D \chi} \int_D u\chi.
\end{align*}
The special case $\chi=1$ is denoted simply by $\bar{u}_D$.
For a vector-valued function $\bsu$, the same notation is used, but with
averaging is applied component-wise. When the domain $D$ is omitted, it is assumed
that $D=\Int\supp \chi$.

In keeping with the notational philosophy of the proof, $\tilde{D}$ and
$\tilde{\chi}$  are thought of as prototypes, and $D$ and $\chi$ as the family
of their homothetic copies, indexed by $t_0,\bsx_0$, and $R$. As above,
the main message behind our results below is that estimates can be made
independently (or explicitly dependently) of those indices. Here,
$\norm{\cdot}$ denotes the $\ltwo$-norm on $D$ and $H^1(D)$ the
 Sobolev space $W^{1,2}$ on $D$.
\begin{lemma}\label{lem:Por} Given
$\tilde{D}$ and $\tilde{\chi}$ as above, there exists a universal constant $C=C
(\tilde{D}, \tilde{\chi})$ such that for all
$u\in H^1(D)$ we have
     \begin{align*}
         \norm{
             u - \bar{u}^{ \chi}_{D}
         }^2
         \lec  R^2 \, \norm
         { Du}^2,
     \end{align*}
     \end{lemma}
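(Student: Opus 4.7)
The plan is to prove the inequality first on the prototype domain $\tilde{D}$ with weight $\tilde{\chi}$ (the unscaled, $R=1$ case), and then obtain the stated bound by changing variables. Because the prototype constant depends only on $\tilde{D}$ and $\tilde{\chi}$, the factor $R^2$ will appear purely from the change of variables, uniformly in $(t_0,\bsx_0,R)$.

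For the prototype, I would start from the standard (unweighted) Poincar\'e inequality on the Lipschitz domain $\tilde{D}$,
\[
\norm{\tilde{u} - \bar{\tilde{u}}_{\tilde{D}}}_{\ltwo(\tilde{D})}^2 \leq C_0(\tilde{D})\, \norm{D\tilde{u}}_{\ltwo(\tilde{D})}^2,
\]
valid for any $\tilde{u}\in H^1(\tilde{D})$. To switch from the unweighted mean $\bar{\tilde{u}}_{\tilde{D}}$ to the $\tilde{\chi}$-weighted mean $\bar{\tilde{u}}^{\tilde{\chi}}_{\tilde{D}}$, I would use the identity
\[
\bar{\tilde{u}}^{\tilde{\chi}}_{\tilde{D}} - \bar{\tilde{u}}_{\tilde{D}} = \frac{1}{\int_{\tilde{D}} \tilde{\chi}} \int_{\tilde{D}} (\tilde{u} - \bar{\tilde{u}}_{\tilde{D}})\, \tilde{\chi},
\]
from which Cauchy--Schwarz yields $\babs{\bar{\tilde{u}}^{\tilde{\chi}}_{\tilde{D}} - \bar{\tilde{u}}_{\tilde{D}}} \leq \tfrac{\norm{\tilde{\chi}}_{\ltwo(\tilde{D})}}{\int \tilde{\chi}}\, \norm{\tilde{u} - \bar{\tilde{u}}_{\tilde{D}}}_{\ltwo(\tilde{D})}$. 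Combining this with the triangle inequality in $\ltwo(\tilde{D})$ and the unweighted Poincar\'e estimate gives
\[
\norm{\tilde{u} - \bar{\tilde{u}}^{\tilde{\chi}}_{\tilde{D}}}_{\ltwo(\tilde{D})}^2 \leq C(\tilde{D},\tilde{\chi})\, \norm{D\tilde{u}}_{\ltwo(\tilde{D})}^2.
\]

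To rescale, I set $\tilde{u}(\bsxi) := u(\bsx_0 + R\bsxi)$, so that $\tilde{u}\in H^1(\tilde{D})$ whenever $u\in H^1(D)$. Straightforward changes of variables give $\int_D u^2\, d\bsx = R^d \int_{\tilde{D}} \tilde{u}^2\, d\bsxi$ and $\int_D \abs{Du}^2\, d\bsx = R^{d-2} \int_{\tilde{D}} \abs{D\tilde{u}}^2\, d\bsxi$, and, applying the same substitution in both numerator and denominator, $\bar{u}^{\chi}_D = \bar{\tilde{u}}^{\tilde{\chi}}_{\tilde{D}}$. Plugging these identities into the prototype inequality and cancelling the factor $R^d$ produces the stated bound with constant $C(\tilde{D},\tilde{\chi})$ independent of $t_0$, $\bsx_0$ and $R$.

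No substantial obstacle is expected; the only subtlety is keeping track of the scaling of the gradient ($D_{\bsx}u = R^{-1} D_{\bsxi}\tilde{u}$), which is precisely the mechanism that produces the factor $R^2$ on the right-hand side.
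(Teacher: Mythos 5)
Your proposal is correct and follows essentially the same route as the paper: establish the weighted inequality on the prototype domain $\tilde{D}$ by combining the unweighted Poincar\'e inequality with a Cauchy--Schwarz bound on $\sabs{\bar{\tilde{u}}^{\tilde{\chi}}_{\tilde{D}}-\bar{\tilde{u}}_{\tilde{D}}}$, and then transfer it to $D$ via the substitution $\tilde{u}(\bsxi)=u(\bsx_0+R\bsxi)$, with the factor $R^2$ coming from the gradient scaling. The only (cosmetic) difference is that the paper centers $\tilde{\chi}$ as well in the Cauchy--Schwarz step, which does not change the argument.
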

\begin{proof}
For $w\in \ltwo(\tD)$, by the Cauchy-Schwarz inequality, we have
   \begin{align*}
   \textstyle
       \norm {w -\tfrac{1}{|\tilde{D}|}\int_{\tD}  w}\, \norm{\tchi - \tfrac{1}{|\tilde{D}|}\int_{\tD} \tchi} \geq
       \abs{ \int_{\tD} w \tchi -
       \tfrac{1}{|\tilde{D}|}\int_{\tD} w \int_{\tD} \tchi} = \abs{\int_{\tD} \tchi} \abs{ \overline
       {w}^{\tchi}_{\tD} - \overline{w}_{\tD}}.
   \end{align*}
  If, additionally, $w\in H^1(\tD)$, then,   combining the previous inequality and Poincar\' e's inequality,
  we have
   \begin{align*}
       \norm{w - \overline{w}_{\tD}^{\tchi}}^2 &\leq
       2(\overline{w}_{\tD} - \overline{w}_{\tD}^{\tchi})^2 +
       2\norm{w - \overline{w}_{\tD}}^2 \leq
       C \norm{ w-\overline{w}_{\tD}}^2\leq
       C \norm{\nabla w}^2,
   \end{align*}
   with $C$ depending only on $\tD$ and $\tchi$. It remains to set
$w(\bsxi) = u(\bsx_0 + R \bsxi)$.
\end{proof}

Next,
let us  pick a (weight function) $\tpsi:\R^d \to [0,1]$
such that
\begin{align}\label{equ:psi}
    \tpsi(\bsxi) =
    0, \efor \abs{\bsxi} \leq \tot\text{ or } \abs{\bsxi} \geq 4, \quad
    \tpsi(\bsxi) = 1, \efor 1\leq \abs{\bsxi} \leq 2,
\end{align}
and
    $\tpsi(\bsxi)\in (0,1)$, otherwise,
and consider its version $\psi$ in the $\bsx$-coordinates.
\begin{lemma}
\label{lem:Por-2}
When $d\geq 2$, there exists a universal constant $C=C(\tpsi)$ such
that, for any $u\in {H}^1_{loc}
(\R^d)$ we have
\begin{align*}
    \int_{\beta(2)\setminus \beta(1)} \abs{u-\overline{u}^{\psi} }^2 \lec
    R^2 \,
    \int_{\beta(4)\setminus \beta(1/2) } \abs{D u}^2.
\end{align*}
The same inequality holds when the domain of the left integral is replaced
by $\beta(2)$ and that of right one is replaced by $\beta(4)$, but $d$ is allowed to be $1$ in this case.
\end{lemma}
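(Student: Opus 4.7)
\textbf{Proof proposal for Lemma \ref{lem:Por-2}.}
The plan is to reduce both statements directly to the weighted Poincar\'e inequality of Lemma \ref{lem:Por}, with the only subtlety being the choice of the underlying Lipschitz domain. Note first that, by the convention stated just before Lemma \ref{lem:Por}, the weighted mean $\overline{u}^{\psi}$ (with no domain subscript) is taken over $D:=\Int\supp \psi$; and since $\tpsi$ vanishes outside the annulus $\{1/2<|\bsxi|<4\}$, we have $\supp \psi$ equal (up to boundary) to $\beta(4)\setminus\beta(1/2)$. In particular, for any other Lipschitz domain $\tD'$ containing $\supp\tpsi$, the weighted mean $\overline{u}^{\psi}_{D'}$ computed over the dilated $D'$ coincides with $\overline{u}^{\psi}$ because the extra region contributes zero to both the numerator $\int u\psi$ and the denominator $\int \psi$.

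For the first statement (with $d\ge 2$) I would take the prototype $\tD:=\{\bsxi\in\R^d\,:\,1/2<|\bsxi|<4\}$ and $\tchi:=\tpsi$ in Lemma \ref{lem:Por}. This annulus is a bounded Lipschitz domain, and it is connected precisely because $d\ge 2$, so that $\tpsi$ is an admissible weight and the underlying Poincar\'e inequality on $\tD$ used in Lemma \ref{lem:Por} is available. Applying Lemma \ref{lem:Por} on the dilated domain $D=\beta(4)\setminus\beta(1/2)$ yields
\begin{equation*}
\int_{\beta(4)\setminus\beta(1/2)}\abs{u-\overline{u}^{\psi}}^2
\;\lec\; R^2\int_{\beta(4)\setminus\beta(1/2)}\abs{Du}^2.
\end{equation*}
Since $\beta(2)\setminus\beta(1)\subset\beta(4)\setminus\beta(1/2)$, the left-hand side is an upper bound for the left-hand side of the claim, and this completes the first part.

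For the second statement I would instead take $\tD:=\{|\bsxi|<4\}$ (the open ball), which is a connected Lipschitz domain for every $d\ge 1$, and again $\tchi:=\tpsi$. Lemma \ref{lem:Por} then gives
\begin{equation*}
\int_{\beta(4)}\babs{u-\overline{u}^{\psi}_{\beta(4)}}^2
\;\lec\; R^2\int_{\beta(4)}\abs{Du}^2,
\end{equation*}
and by the observation in the first paragraph $\overline{u}^{\psi}_{\beta(4)}=\overline{u}^{\psi}$. Restricting the integration on the left-hand side to $\beta(2)\subset\beta(4)$ yields the desired bound, now valid for all $d\ge 1$.

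The only genuine ``obstacle'' to be aware of is the connectedness issue: for $d=1$ the set $\beta(4)\setminus\beta(1/2)$ is the disjoint union of two intervals, on which a single-mean Poincar\'e inequality fails, and this is precisely why the annular version requires $d\ge 2$ while the ball version does not. Everything else is a direct application of Lemma \ref{lem:Por} together with the innocuous identification of weighted means.
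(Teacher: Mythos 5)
Your proposal is correct and follows essentially the same route as the paper: apply Lemma \ref{lem:Por} with weight $\tpsi$ on the annulus $\beta(4)\setminus\beta(1/2)$ (respectively the ball $\beta(4)$), note that the weighted mean is unchanged when the domain is enlarged beyond $\supp\psi$, and then shrink the domain of the left-hand integral. Your explicit remark on the connectedness of the prototype domain (which is where $d\ge 2$ enters, and which fails for the two-interval set when $d=1$) is exactly the point the paper makes in Remark \ref{rem:d=1}.
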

\begin{proof}
      With $\tilde{D}$ such that $D= \beta(4)\setminus \beta(1/2)$, we observe
      that $\supp \psi \subseteq D$ and that $\beta(2) \setminus \beta(1)  \subseteq D$. Therefore, applying Lemma
      \ref {lem:Por}, we have
      \[
      \int_{\beta(2)\setminus \beta(1)} \abs{ u - \overline{u}^{\psi}}^2
      \leq
      \int_{D} \abs{ u - \overline{u}^{\psi}_{D} }^2
      \lec    R^2 \int_{D} \abs{ D u}^2.
      \]
      The proof is the same when $\beta(2)\setminus \beta(1)$ is replaced by $\beta(2)$ and $D= \beta(4)$.
\end{proof}
\begin{remark}\label{rem:d=1}
 When $d=1$, the set
 $\beta(4)\setminus \beta(1/2)= [-4, -\tfrac12] \cup [\tfrac12, 4]$ is not connected, and, in fact,
the statement of Lemma \ref{lem:Por-2} does not hold.
To see that, it is enough to consider
$u=1$ on $[-4, -\tfrac12]$ and $u=2$ on $[4, \tfrac12]$. Then
$\int_{\beta(4)\setminus \beta(1/2)} |Du|^2 =0$, but
$1<\overline{u}^\psi<2$ implies $\int_{\beta(2)\setminus
\beta(1)}|u-\overline{u}^\psi|^2>0$.

\smallskip

On the other hand, let us argue that we can assume, without loss of
generality, that $d\geq 2$ in Theorem \ref{thm:abstract}. Indeed, suppose that we have established
Theorem \ref{thm:abstract} for $d\geq 2$, but we are
facing a situation where $d=1$. In this case, we simply embed our
one-dimensional problem into a two-dimensional one. More precisely, we
define the new state process, perhaps on an enlarged probability space, as
$\hat{X}=(X,B)$, where $B$ is a Brownian motion independent of $W$. The so-obtained
coefficients $\hat{b}=(b,0)$ and $\hat{\sigma}=\diag(\sigma,1)$ satisfy all the
necessary assumptions. Furthermore, the functions $\hat{\bsv}^m(t,x,x') =
\bsv^m(t,x)$ and $\hat{\bsw}^m(t,x,x') = (\bsw^m(t,x), \boldsymbol{0})$ is a Markovian solution to the system \eqref{equ:BSDEm} on the enlarged probability space. The similarly defined
$\hat{\bsf}^m$ and $\hat{\bsg}^m$ satisfy the assumptions
of Theorem \ref{thm:abstract}. In particular, for the Lyapunov pair in condition (4), the inequality \eqref{equ:Lyapunov} (with $\bsf$ and $k_n$ replaced by $\bsf^m$ and $k^m_n$ respectively) is satisfied for all $\hat{\bsz} = (\bsz, \boldsymbol{0})\in \R^{N\times 2}$. Therefore Theorem \ref{thm:abstract} implies that $\{\hat{\bsv}^m\}$ is uniformly locally H\"{o}lderian in its first two variables. As a result, Theorem \ref{thm:existence} produces
a locally
H\" olderian solution $(\hat{\bsv}, \hat{\bsw})$ on the extended space. It remains to use
locally uniform convergence and Lemma \ref{lem:weak-Jacobian}
to conclude that $\hat{\bsv}$ does not depend on the
additional coordinate $x'$ and that the second column of $\hat{\bsw}$
vanishes. Therefore $\bsY= \hat{\bsv}(\cdot, \bsX, 0)$ and $\bsZ=\hat{\bsw}^1(\cdot, \bsX, 0)$ are adapted to the original filtration and solves the original system.
\end{remark}

We consider $\tilde{\psi}$ as a global variable for the remainder of the proof.
Consequently, the dependence of universal constants on it will be
suppressed in sequel. The following Lemma generalizes an important result of
Struwe (see \cite[Lemma 4., p.~134]{Str81})
\begin{lemma}
\label{lem:Struwe}
There exists a universal constant $C$ such that
\begin{align*}
  \babs{\overline{\bsv(t_2,\cdot )}^{\psi} -
  \overline{\bsv(t_1,\cdot )}^{\psi}
  }^2
  \lec
   R^{-d} \iint_{ \gamma(\theta_2,4)\setminus (\gamma(\theta_1,4) \cup \gamma
  (\theta_2, 1/2) )} \abs{D\bsv}^2
  +  R^{ 2 - \tfrac{2+d}{q}},
\end{align*}
for
all $ 0 \leq \theta_1 \leq \theta_2 \leq 4$, where
$t_i = t_0+ \theta_i  R^2$, $i=1,2$.
\end{lemma}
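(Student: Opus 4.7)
The starting point is the weak PDE
$$\partial_t \bsv + \sL \bsv + \bsf(\cdot,\cdot,\bsv, D\bsv) = 0,$$
which $\bsv$ satisfies on $(0,T)\times\R^d$ in the distributional sense. This follows from the BSDE decomposition \eqref{equ:BSDE1} combined with the identification $\bsw = D\bsv$ from Lemma \ref{lem:weak-Jacobian}; rigorously, one applies It\^o's formula to the mollifications $\hat\bsv^{(l)}$ constructed in the proof of Lemma \ref{lem:weak-Jacobian} and passes to the limit using Lemma \ref{lem:Z-bound} and the Aronson bounds. Testing this PDE against the purely spatial weight $\psi$ on the time slab $[t_1,t_2]$ yields the key identity
\begin{equation*}
   \int \bigl(\bsv(t_2,\bsx) - \bsv(t_1,\bsx)\bigr)\psi(\bsx)\,d\bsx = -\int_{t_1}^{t_2}\!\int (\sL \bsv + \bsf)\psi\, d\bsx\, dt.
\end{equation*}

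The next step integrates by parts once on the second-order term of $\sL\bsv$, writing $\tfrac12 a_{ij}(\partial_{ij}\bsv)\psi = -\tfrac12\partial_j \bsv\bigl[a_{ij}\partial_i\psi + (\partial_i a_{ij})\psi\bigr]$, with no boundary contribution since $\psi$ is compactly supported. Using the boundedness of $\bsb, \bsa, D\bsa$, the inclusion $\supp\psi\subset\beta(4)\setminus\beta(1/2)$, and $\|D\psi\|_{\linf}\lec R^{-1}$, one obtains
$$\Bigl|\int \sL\bsv \cdot \psi\,d\bsx\Bigr|\lec R^{-1}\int_{\supp\psi}|D\bsv|\,d\bsx.$$
Combining this with the quadratic growth $|\bsf|\le M(|D\bsv|^2 + k)$ and integrating over $[t_1,t_2]$ gives
$$\Bigl|\int \bigl(\bsv(t_2,\bsx) - \bsv(t_1,\bsx)\bigr)\psi\,d\bsx\Bigr|\lec R^{-1}\iintl{\Sigma}|D\bsv| + \iintl{\Sigma}(|D\bsv|^2 + k),$$
where $\Sigma := (t_1,t_2]\times(\beta(4)\setminus\beta(1/2))$ matches the integration region in the claim. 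Cauchy--Schwarz on the linear term (with $|\Sigma|\lec R^{d+2}$) and H\"older's inequality with exponents $q,q'$ on the $k$-term (using $\|k\|_{\el^q}\le\ell$) yield
$$\Bigl|\int (\bsv(t_2)-\bsv(t_1))\psi\,d\bsx\Bigr| \lec R^{d/2}\|D\bsv\|_{\ltwo(\Sigma)} + \|D\bsv\|_{\ltwo(\Sigma)}^2 + R^{d+2-\tfrac{d+2}{q}}.$$

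Dividing by $\int\psi\asymp R^d$ and squaring via $(a+b+c)^2\lec a^2+b^2+c^2$, one obtains
$$\babs{\overline{\bsv(t_2,\cdot)}^\psi - \overline{\bsv(t_1,\cdot)}^\psi}^2 \lec R^{-d}\|D\bsv\|_{\ltwo(\Sigma)}^2 + R^{-2d}\|D\bsv\|_{\ltwo(\Sigma)}^4 + R^{4-\tfrac{2(d+2)}{q}}.$$
The last term is dominated by $R^{2-(d+2)/q}$ because $R\le 1/4<1$ and $q>(d+2)/2$ forces both exponents to be positive. The quartic middle term is absorbed into the first once one establishes the a priori Caccioppoli bound $\|D\bsv\|_{\ltwo(\Sigma)}^2\lec R^d$, which follows from Proposition \ref{pro:testing} applied to a slightly enlarged cylinder together with $\|\bsv\|_{\linf}\le c$ (the $p_\epsilon$-weighted estimate combined with the lower bound \eqref{equ:G2} on the transition density). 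The main technical obstacles are (i) the rigorous passage from the BSDE to the weak PDE identity in the first step, which requires the mollification argument already used to prove Lemma \ref{lem:weak-Jacobian}, and (ii) the bookkeeping of $R$-powers needed to absorb the quartic term, which relies on the Aronson bounds and the $\linf$-boundedness of $\bsv$.
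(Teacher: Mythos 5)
Your proposal is correct in substance but takes a genuinely different route from the paper's proof, and the comparison is worth spelling out. The paper does \emph{not} test the weak PDE against $\psi$ directly: instead it applies It\^o's product rule to $F_t = \eta(t,\bsX_t)\,v^i(t,\bsX_t)$ with the density-weighted test function $\eta = \psi/\peps$ and $\bsX$ started from $(t_0-\eps R^2,\bsx_0)$. Because $\eta$ is chosen as $\psi/\peps$, the identity $\peps\,\partial_t\eta + \eta\,\partial_t\peps = 0$ together with the forward Kolmogorov equation $\partial_t\peps = \sL^*\peps$ and the divergence form of $\sL^*$ produce, after integration by parts, the clean unweighted identity
\[
L := \Big(\ovrt\big|_{t_2}-\ovrt\big|_{t_1}\Big)\int\psi = -\iint f^i\psi + \iint\Big(\tot\scl{Dv^i}{D\psi}_{\bsa} - \psi\, Dv^i\,\tilde{\bsb}\Big).
\]
The crucial algebraic move the paper then makes — and which your proposal misses — is to multiply this identity by $-L$ and exploit the a priori bound $\abs{L}\lec \nvl R^d$. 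This converts the linear-in-$Dv^i$ cross-terms directly into $R^d\iint\ind{\psi>0}\abs{D\bsv}^2$ plus an absorbable $\tot L^2$, so \emph{no quartic term ever appears}. You instead go through the weak PDE and are forced to square the inequality, which produces the quartic $R^{-2d}\norm{D\bsv}^4_{\ltwo(\Sigma)}$ and requires an additional a priori Caccioppoli input $\norm{D\bsv}^2_{\ltwo(\Sigma)}\lec R^d$ to absorb it. That estimate is available (it is most cleanly obtained from Lemma \ref{lem:Z-bound} combined with the lower Gaussian bound $p_{\eps_0}\gec R^{-d}$ on $\gamma(4;4)$, as used in the proof of Lemma \ref{lem:hol-fil}, rather than from Proposition \ref{pro:testing} as you state), so there is no circularity; but it is an extra ingredient the paper's argument never needs. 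A second, smaller, point of difference: the paper never needs to re-run the mollification of Lemma \ref{lem:weak-Jacobian} to justify It\^o's formula, because it applies the product rule with a \emph{smooth} factor $\eta$ against the known continuous semimartingale $v^i(\cdot,\bsX)$, whereas your route to the distributional PDE for $\bsv$ does require repeating that approximation step. Both proofs arrive at the stated bound and the domains of integration agree ($\Sigma = (t_1,t_2]\times(\beta(4)\setminus\beta(1/2))$); the paper's argument is the more economical of the two, and yours is the more familiar PDE-style hole-filling computation.
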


\begin{proof} We fix $\eps\in (0,1]$, and,
reminding the reader that $\peps = p(t_0 - \eps R^2, \bsx_0, \cdot,\cdot)$,
set $\eta = \psi/\peps$.
\itos{} formula and Lemma \ref{lem:weak-Jacobian}, applied to the product
$F_t = \eta(t,\bsX_t) v^i(t,\bsX_t)$ (with $\bsX=\bsX^{t_0-\eps
R^2,\bsx_0}$)
yields
\begin{align*}
d (\eta v^i) \eqm - \eta f^i + v^i (\tpddt \eta + \sL \eta)+ \scl{D v^i}{D
\eta}_
{\bsa}.
\end{align*}
Therefore, with
  $L = \EE^{\eps}[ F_{t_2} - F_{t_1}] =
  \Big(
  \overline{v^i(t_2,\cdot )}^{\psi} -
  \overline{v^i(t_1,\cdot )}^{\psi}\Big) \int \psi$ and the
  understanding that all space-time integrals $\iint$ in the rest of the proof
  are over $\gamma(\theta_2,4)\setminus \gamma(\theta_1,4)$
we have
    \begin{align}\label{equ:L}
    L = -
    \iint f^i \psi+
    \iint \Big( \peps v^i (\tpddt\eta + \sL \eta) +
    p_\epsilon \scl{Dv^i}{D\eta}_{\bsa} \Big).
    \end{align}
Since $\bsig$ is bounded
and globally Lipschitz, so is  $\bsa$, and the
infinitesimal generator $\sL$ can be written in a divergence form:
\[
 \sL = \sum_k \tilde{\bsb}_k D_k + \tot \sum_{j,k} D_j (\bsa_{jk} D_k),
\]
where $\tilde{\bsb}_k = \bsb_k - \tot \sum_j D_j(\bsa_{jk})$ is bounded and $D_j(\bsa_{jk})$ is the weak derivative of $\bsa_{jk}$.
Another consequence of the (regularity and ellipticity) assumptions imposed
on $\bsig$ is the fact that the
transition density $\peps$ is smooth for $t> t_0$ and satisfies the
forward Kolmogorov equation
\begin{equation}\label{equ:adj-gen}
  \tpds{t} \peps = \sL^* \peps = - \sum_k D_k (\tilde{\bsb}_k p_\epsilon) + \tot \sum_{j,k} D_k(\bsa_{jk} D_k p_\epsilon).
\end{equation}
Furthermore, since $\psi$ does not depend on $t$, we have
$\peps \tpddt \eta + \eta \tpddt \peps =0$, and, so,
\begin{align}
\label{equ:fi}
  \iint \peps v^i \tpddt \eta
   = -\iint \eta v^i \tpddt \peps = - \iint \eta v^i
   \sL^ * \peps.
\end{align}
Using the divergence form
of $\sL^*$ in \eqref{equ:adj-gen} and the fact that $\eta(t,\cdot)$ is
supported in $\beta(4)$, we conclude that
\begin{align}
\label{equ:se}
  - \int_{\beta(4)} \eta v^i \sL^* \peps =  -\int_{\beta(4)} D(\eta
  v^i)\tilde{\bsb}\, p_\epsilon + \tot \int_{\beta(4)}\scl{ D (\eta v^i)}{D\peps}_
  {\bsa}.
\end{align}
Similarly,
\begin{align}
\label{equ:th}
   \int_{\beta(4)} \peps v^i \, \sL \eta = \int_{\beta(4)}  D \eta\,
   \tilde{\bsb}\, v^i\,  p_\epsilon
  - \tot \int_{\beta(4)}
 \scl{D (\peps v^i  )}{D \eta}_
 {\bsa}.
\end{align}
Finally, we integrate both \eqref{equ:se} and \eqref{equ:th}
over $t\in [t_1,t_2]$, and combine
them with \eqref{equ:fi}, to conclude that
\begin{multline}
\label{equ:parts-p}
    \iint  \Big( \peps v^i ( \tpddt\eta + \sL \eta) + \peps \scl{Dv^i}{D \eta}_
    {\bsa}\Big) = \\
    =\iint \Big(
 \tot \scl{D (\eta v^i)}{D\peps}_{\bsa}
 -\tot \scl{ D(\peps v^i)}{D\eta}_{\bsa} +
 \peps \scl{Dv^i}{D\eta}_{\bsa}  - D(\eta v^i) \tilde{\bsb} p_\epsilon + D \eta \tilde{\bsb} v^i p_\epsilon
\Big) = \\
=  \iint \Big(\tot \scl{Dv^i}{ D \psi}_{\bsa} - \psi D v^i \tilde{\bsb}\Big).
\end{multline}
Next, we multiply both sides of \eqref{equ:L} by $-L$ and use \eqref{equ:parts-p}
together with the uniform ellipticity of $\bsig$
and the fact
that $\abs {f^i} \leq C (\abs {D\bsv}^2 + k)$ to
obtain
    \begin{align*}
        L^2
     &\leq C \Big(
       \abs{L} \iint\abs{\psi} \abs{D\bsv}^2
      +
       \abs{L} \iint\abs{\psi} k +
    \iint (\abs{L D \psi} + \abs{L \psi \tilde{\bsb}}) \abs{D v^i}
    \Big).
    \end{align*}
H\"{o}lder's inequality, applied to the
third term on the right-hand side above, and use the fact that $\abs{L}\leq C
\nvl R^d$ for the first term yield
\[
 L^2 \leq C R^d \Big(\iint\ind{\psi>0} \abs {D\bsv}^2 + \iint \ind{\psi>0}\,
 {k}\Big)
       + \tot L^2 +
       \tot \iint (\abs {D\psi}+ \psi|\tilde{\bsb}|)^2 \iint \ind{\psi>0} \abs{D v^i}^2.
\]
To complete the proof, we use the first inequality in \eqref{equ:f}, apply
H\"older's inequality to the integral
$\iint \ind{\psi>0} k$, and  use the boundedness of $\tilde{\bsb}$ and $R\leq 1/4$ to obtain $\iint \psi^2\abs{\tilde{\bsb}}^2 \lec R^d$ and
  \[
  \int \abs{D\psi}^2 = R^{d-2} \int \abs{D \tpsi} \text{ so that }
 \iint \abs{D\psi}^2 \leq (\theta_2-\theta_1) R^{d} \int \abs{D
 \tpsi}^2\lec R^d.
 \qedhere
 \]
\end{proof}

Coming back to \eqref{equ:Dv est}, we will estimate different terms on the right-hand side using Lemmas \ref{lem:Por-2} and \ref{lem:Struwe}, together with a specific choice of $\bsc$.

\begin{lemma}
\label{lem:consts}
There exists universal constant $C$ such that, for all
$\eps \in (0,\eps_0/2]$ we have
\[
      \sup_{\gamma(4;4)\setminus \gamma(1,1)} R^d
      \abs{\overline{\bsv(t, \cdot)}^\psi - \bsc}^2
     \peps
      \lec   \iintl{\gamma(4;4)\setminus \gamma(4;1/2)}
      |D\bsv|^2 \Big(\tfrac{\peps}{\dl(\eps+\eps_0/2)}
      + \tfrac{\tdl(\eps+\eps_0/2)}{R^{d}}\Big) + R^{2-\tfrac{2+d}{q}},
\]
where $\bsc = \overline{\bsv(t_0+(\tot \eps_0\wedge \tT) R^2 , \cdot)}^\psi$.
\end{lemma}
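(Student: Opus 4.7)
The plan is to bound the weighted deviation pointwise via Struwe's Lemma \ref{lem:Struwe} and then insert or extract factors of $p_\epsilon$ using the regime-dependent Aronson bounds in Lemma \ref{lem:Gauss}, according to the position of $(t,\bsx)$ relative to $(t_0,\bsx_0)$. Set $\theta_{\ast} := \tfrac12\eps_0 \wedge \tT$, so that $\bsc = \overline{\bsv(t_0+\theta_{\ast} R^2,\cdot)}^{\psi}$. Fix $(\tx) \in \gamma(4;4)\setminus \gamma(1;1)$ and write $\theta = (t-t_0)/R^2$. Since $\gamma(1/4;1/2) \subset \gamma(1;1)$, estimate \eqref{equ:G1} gives $R^d p_\epsilon(\tx) \lec 1$ at the outset.

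First consider the case $\theta \geq \tfrac12 \eps_0$, which also forces $\theta \geq \theta_{\ast}$. Lemma \ref{lem:Struwe} applied with $\theta_1 = \theta_{\ast}$ and $\theta_2 = \theta$ yields
\[
  \abs{\overline{\bsv(t,\cdot)}^{\psi} - \bsc}^2 \lec R^{-d} \iint_{D_A} \abs{D\bsv}^2 + R^{2-(2+d)/q},
\]
where $D_A = (t_0+\theta_{\ast} R^2, t_0+\theta R^2] \times (\beta(4)\setminus \beta(1/2)) \subset \gamma(4;4)\setminus \gamma(4;1/2)$. Every $(s,\bsxi) \in D_A$ satisfies $s \geq t_0 + \tfrac12 \eps_0 R^2$, so \eqref{equ:G2} yields $R^{-d} \lec p_\epsilon(s,\bsxi) / \delta_{\usig/4}(\eps_0/2+\eps)$. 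This converts the integral into the $p_\epsilon/\delta_{\usig/4}$-weighted term on the right-hand side, and multiplying by $R^d p_\epsilon(\tx) \lec 1$ closes this case.

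In the complementary case $\theta < \tfrac12 \eps_0 \leq \tfrac12$, the fact that $(\tx) \notin \gamma(1;1)$ forces $\bsx \notin \beta(1)$, so \eqref{equ:G3} gives $R^d p_\epsilon(\tx) \lec \tdl(\eps_0/2+\eps)$. Lemma \ref{lem:Struwe} with $\theta_1 = \theta$ and $\theta_2 = \theta_{\ast}$ produces an analogous Struwe bound on a domain $D_B \subset \gamma(4;4)\setminus \gamma(4;1/2)$; multiplying through by $R^d p_\epsilon(\tx)$ now transfers the factor $\tdl(\eps_0/2+\eps)/R^d$ in front of the integral, producing the second contribution to the right-hand side, while the residual $R^{2-(2+d)/q}$ picks up only a universal constant (thanks to $\tdl(\eps_0/2+\eps) \leq \tfrac12$, which follows from the definition of $\eps_0$ in \eqref{equ:eps-z}).

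Taking the supremum over $(\tx)$ and enlarging each of $D_A$, $D_B$ to $\gamma(4;4)\setminus \gamma(4;1/2)$ completes the argument. The main obstacle is simply careful bookkeeping: the splitting point $\tfrac12\eps_0$ (which dictates which Aronson estimate is available at $(\tx)$ versus at the integration variable) and the intermediate reference time $t_0+\theta_{\ast} R^2$ defining $\bsc$ must be calibrated so that both Struwe applications land inside $\gamma(4;4)\setminus \gamma(4;1/2)$ and so that the pointwise factors $p_\epsilon(\tx)$ and $R^{-d}$ arising from Struwe's estimate can be exchanged for the prescribed universal integrand $p_\epsilon/\delta_{\usig/4}(\eps_0/2+\eps) + \tdl(\eps_0/2+\eps)/R^d$. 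The degenerate regime $\tT < \tfrac12\eps_0$ collapses Case~A but only Case~B is needed and goes through unchanged.
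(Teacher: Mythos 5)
Your proposal is correct and follows essentially the same route as the paper: both apply Struwe's Lemma between $t$ and $t_0+(\tfrac12\eps_0\wedge\tT)R^2$, split according to whether $\tt \geq \eps_0/2$, and use \eqref{equ:G1}, \eqref{equ:G2}, \eqref{equ:G3} in exactly the same roles to trade the Struwe factor $R^{-d}$ for $\peps/\dl(\eps+\eps_0/2)$ inside the integral (first case) or to absorb $R^d\peps(\tx)$ into $\tdl(\eps+\eps_0/2)$ outside it (second case). The only cosmetic difference is that the paper writes the two cases via $\theta_1=\min(\tt,\eps_0/2\wedge\tT)$, $\theta_2=\max(\tt,\eps_0/2\wedge\tT)$ rather than treating the degenerate regime separately.
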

\begin{proof}
We fix $(t,\bsx) \in \gamma(4;4)\setminus \gamma(1,1)$, set $\tt=(t-t_0)/R^2$ and
define  $\theta_1=\min(\tt,(\eps_0/2) \wedge \tT)$,  and $\theta_2=\max(
\tt,(\eps_0/2)\wedge \tT)$, so that $0\leq \theta_1\leq \theta_2\leq 4$, and
Lemma \ref{lem:Struwe} can be applied. We distinguish  the following two
cases:

\textbf{Case 1: $\eps_0/2 \leq \tt$.} In this case, $\theta_1= \epsilon_0/2$ and
the estimate \eqref{equ:G2} of Lemma \ref{lem:Gauss} applies. Lemma \ref{lem:Struwe}, with
\eqref{equ:G1} applied to the $p_\epsilon$ outside the integral in the second inequality, yields
\begin{align*} R^d\babs{\ovrt - \bsc }^2 \peps
    &\lec
      \peps \iint_{\gamma(\theta_2;4) \setminus( \gamma(\theta_1;4) \cup \gamma
      (4;1/2))}
     \abs{D \bsv}^2 \tfrac{R^d \peps}{\dl(\eps+\eps_0/2)}  + \peps R^{2+d-\tfrac
     {2+d} {q}} \\
     &\lec R^d \peps
    \oo{\dl(\eps+\eps_0/2)}
     \iint_{\gamma(4,4)\setminus \gamma(4,1/2)}
     \abs{D \bsv}^2 \peps + p_\epsilon R^{2+d-\tfrac{2+d}{q}}\\
     &\lec
  \oo{\dl(\eps+\eps_0/2)}
   \iint_{\gamma(4,4)\setminus \gamma(4,1/2)}
   \abs{D \bsv}^2 \peps + R^{2-\tfrac{2+d}{q}},
\end{align*}

\textbf{Case 2: $\eps_0/2 > \tt $.}
Since $\eps_0\leq 1$, we have $\tt\leq 1/2$. Therefore,
$(\tx)\in\gamma(1/2,4) \setminus \gamma(1/2,1)$, and, so $\bsx \not \in \beta(1)$.
Now that its conditions are met, inequlity \eqref {equ:G3} of Lemma \ref{lem:Gauss},
together with the fact that $\tdl$ is bounded from above and Lemma \eqref{lem:Struwe},
implies that
  \[R^d \abs{ \ovrt - \bsc} \peps \lec \tdl (\eps+\eps_0/2) R^{-d} \iint_{\gamma(4,4)\setminus \gamma(4,1/2)}
     \abs{D \bsv}^2  + R^{2-\tfrac{2+d}{q}}.\qedhere \]
\end{proof}
\begin{lemma}
\label{lem:consts-2}
There exists universal constant $C$ such that, for all $\eps \in (0,\eps_0/2]$,
and all
$t\in [t_0, t_0+4 R^2]$, we have
\begin{align*}
      \int_D \abs{\bsv(t,\cdot) - \ovrt}^2 \peps
      \lec   R^2 \int_{D'} \abs{D \bsv(t,\cdot)}^2
       \Big(\tfrac{\peps}{\dl(\eps+\eps_0/2)}
      + \tfrac{\tdl(\eps+\eps_0/2)}{ R^{d}}\Big),
\end{align*}
where either $(D,D') = (\beta(2)\setminus \beta(1), \beta(4)\setminus \beta(1/2))$,
or $(D,D') = (\beta(2), \beta(4))$.
\end{lemma}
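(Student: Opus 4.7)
My approach mirrors the strategy used for Lemma \ref{lem:consts}, splitting the argument based on whether $t \geq t_0 + (\eps_0/2)R^2$ (Case 1) or $t < t_0 + (\eps_0/2)R^2$ (Case 2), and combining the unweighted Poincar\'e inequality of Lemma \ref{lem:Por-2} with the pointwise bounds on $p_\eps$ from the Aronson estimate \eqref{equ:Aronson} and Lemma \ref{lem:Gauss}. In both cases the plan is to bring the weight out of the integral on the LHS as a supremum (or a threshold-truncation), apply Lemma \ref{lem:Por-2}, and then reabsorb the resulting constant into either the first or the second term of the RHS weight.

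In Case 1, the lower bound \eqref{equ:G2} yields $p_\eps \gec R^{-d}\,\dl(\eps+\eps_0/2)$ uniformly on $D'\subseteq \beta(4)$, while the upper Aronson estimate \eqref{equ:Aronson} applied with $t-t_0 \geq (\eps_0/2)R^2$ yields $\sup_D p_\eps \lec R^{-d}$. Hence, by Lemma \ref{lem:Por-2},
\[
\int_D |\bsv-\ovrt|^2 \peps \leq (\sup_D \peps)\int_D |\bsv-\ovrt|^2 \lec R^{-d}\cdot R^2\int_{D'}|D\bsv|^2,
\]
and the pointwise lower bound $R^{-d}\lec \peps/\dl(\eps+\eps_0/2)$ on $D'$ converts the constant $R^{-d}$ into the first term of the claimed RHS.

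In Case 2 the crucial observation is that $\eps+\eps_0/2\in [\eps_0/2,\eps_0]$ lies within a compact interval contained in the increasing branch of $\tdl$ (and of $\dl$); hence $\tdl(\eps+\eps_0/2)\geq \tdl(\eps_0/2)>0$ is a universal positive constant, and similarly for $\dl$. In particular $\tdl(\eps+\eps_0/2)/R^d \gec 1$ thanks to $R\leq 1/4$. For $(D,D')=(\beta(2)\setminus \beta(1),\beta(4)\setminus \beta(1/2))$ every $\bsx\in D$ lies outside $\beta(1)$, so \eqref{equ:G3} gives $\sup_D \peps \lec R^{-d}\tdl(\eps+\eps_0/2)$; direct application of Lemma \ref{lem:Por-2} (first version) then yields exactly the second term of the RHS. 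For $(D,D')=(\beta(2),\beta(4))$ I split $\beta(2) = (\beta(2)\setminus\beta(1))\cup \beta(1)$: the annular piece is handled exactly as above but using the second version of Lemma \ref{lem:Por-2} to access all of $\beta(4)$, whereas for the remaining integral over $\beta(1)$ I apply a weighted Poincar\'e inequality with weight $\peps$ on $\beta(4)$. This weighted Poincar\'e is valid with a constant of order $R^2$ because the Aronson bounds make $\peps$ comparable to a Gaussian of scale $\lec R$ on $\beta(4)$, and it gives
\[
\int_{\beta(4)} |\bsv - \bar\bsv^{\peps}_{\beta(4)}|^2 \peps \lec R^2\int_{\beta(4)}|D\bsv|^2\,\peps \lec R^2\int_{\beta(4)}|D\bsv|^2\,\peps/\dl(\eps+\eps_0/2),
\]
using $\dl \leq 1/2$. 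The mismatch between $\bar\bsv^{\peps}_{\beta(4)}$ and $\ovrt$ is then absorbed via the triangle inequality and a standard integral representation of $\bsv(\bsx)-\bsv(\bsxi)$ as a line integral of $D\bsv$, followed by a change-of-variables argument that reduces everything to an integral of $|D\bsv|^2$ against a kernel pointwise dominated by a constant multiple of the RHS weight.

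The main obstacle I anticipate is the $\beta(1)$-portion of the second subcase, where $\peps$ is concentrated and cannot simply be pulled out as a supremum; handling it requires the weighted Poincar\'e inequality plus the triangle-inequality bookkeeping for the two distinct weighted averages. All universal constants that appear remain bounded because our parameter $\eps+\eps_0/2$ is confined to the compact interval $[\eps_0/2,\eps_0]$ on which both $\dl$ and $\tdl$ are bounded above and below by positive constants.
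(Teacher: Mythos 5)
Your Case 1 and your Case 2 treatment of the annulus pair $(D,D')=(\beta(2)\setminus\beta(1),\beta(4)\setminus\beta(1/2))$ are correct and are exactly the argument the paper has in mind: the paper's own proof is a two-line remark ("same strategy as Lemma \ref{lem:consts}, combine the corresponding estimates of Lemma \ref{lem:Gauss} with Lemma \ref{lem:Por-2}"), and your use of \eqref{equ:G2} plus the Aronson upper bound to get $\sup_D\peps\lec R^{-d}\lec \peps/\dl(\eps+\eps_0/2)$ on $D'$ in Case 1, and of \eqref{equ:G3} to get $\sup_D\peps\lec R^{-d}\tdl(\eps+\eps_0/2)$ on the annulus in Case 2, is precisely how those details fill in. Note also that in the only place the lemma is invoked (the proof of Lemma \ref{lem:pre-hole}), the full-ball pair $(\beta(2),\beta(4))$ is used only for $\tt\in(1,4]$, which, since $\eps_0\le 1$, lies entirely in Case 1; so the cases you have rigorously established cover everything that is actually needed downstream.

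The part of your argument that does not hold up is the full-ball pair in the regime $\tt<\eps_0/2$. There you invoke a $\peps$-weighted Poincar\'e inequality on $\beta(4)$ with constant of order $R^2$, justified "because the Aronson bounds make $\peps$ comparable to a Gaussian of scale $\lec R$." This is not a valid deduction: \eqref{equ:Aronson} sandwiches $\peps$ between two Gaussians with \emph{different} variances $\usig^2\tau$ and $\osig^2\tau$, and the ratio of these two bounds at distance $r$ from $\bsx_0$ is of order $\exp\bigl(\tfrac{r^2}{2\tau}(\usig^{-2}-\osig^{-2})\bigr)$, which on $\beta(4)$ (i.e.\ $r\le 4R$, $\tau=(\eps+\tt)R^2$) blows up like $\exp\bigl(c/(\eps+\tt)\bigr)$ as $\eps+\tt\downarrow 0$. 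Hence $\peps$ is \emph{not} comparable, with universal constants, to any single reference measure whose Poincar\'e constant you control, and the transference argument for Poincar\'e inequalities under two-sided density bounds does not apply. The subsequent step --- absorbing $\bigl|\bar{\bsv}^{\peps}_{\beta(4)}-\ovrt\bigr|^2\int\peps$ via "a standard integral representation of $\bsv(\bsx)-\bsv(\bsxi)$ as a line integral of $D\bsv$" --- is also only asserted; a pointwise line-integral bound is not controlled by $\|D\bsv\|_{\ltwo}$ when $d\ge 2$, so the "change-of-variables" reduction would need to be written out to be checked. Since the paper itself silently omits this subcase and never uses it, the cleanest fix is either to restrict the full-ball alternative of the lemma to $\tt\ge\eps_0/2$ (which suffices for Lemma \ref{lem:pre-hole}) or to supply an actual proof of a Poincar\'e inequality for the transition density at short times; the latter is a genuinely nontrivial statement, not a corollary of \eqref{equ:Aronson}.
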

\begin{proof} We omit the details, since the
same strategy as in the proof of Lemma \ref{lem:consts}, namely separating the cases
$\tt \leq \eps_0/2$ and $\tt \geq \eps_0/2$, where $\tt = (t-t_0)/R^2$,
and using the corresponding estimates from Lemma \ref{lem:Gauss},
but this time
together with Lemma \ref{lem:Por-2},  can be applied.
\end{proof}
\begin{lemma}
\label{lem:pre-hole}
Set $\alpha'= \min(\alpha, 1- \oo{q} (1+\tfrac{d}{2}) ) >0$.
 There exists a universal constant $C$ such that, for each $\eps \in (0,\eps_0/2]$
 we have
\begin{equation}\label{equ:hole}
\begin{split}
  &\iintl{\gamma(1;1/2)} \abs{D \bsv}^2 \peps\lec
  R^{2\alpha'}
  + \iintl{\gamma(4,4) \setminus \gamma(1,1/2)}
  \abs{D \bsv}^2 \Big(\tfrac{\peps}{\dl(\eps+\eps_0/2)}  + \tfrac{\tdl
  (\eps+\eps_0/2)}
  {R^{d}}\Big).
\end{split}
\end{equation}
\end{lemma}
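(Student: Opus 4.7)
The plan is to mimic the Caccioppoli-type argument of Proposition~\ref{pro:testing}, but with a cutoff $\hat\varphi$ adapted to the hole-filling geometry, and then to control the resulting oscillation integral by decomposing the variation of $\bsv$ into a spatial piece (treated by a weighted Poincar\'e inequality) and a temporal piece (treated by Struwe's Lemma~\ref{lem:Struwe}); the boundary integral at $T$ will be handled via H\"older regularity of $\bsg$.

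First I would fix a testable cutoff $\hat\varphi$ satisfying $\{\hat\varphi=1\}=\gamma(1;1/2)$ and $\{\hat\varphi=0\}=\overline{\gamma(4;4)^c}$, with $|D\hat\varphi|^2/\hat\varphi$ bounded. Applying Lemma~\ref{lem:testing} to $\PP^{t_0-\eps R^2,\bsx_0}$ (using $\bsw=D\bsv$ from Lemma~\ref{lem:weak-Jacobian}), then invoking \eqref{equ:Gamma-vp} and the same H\"older-in-$k$ computation as in Lemma~\ref{lem:peq} (which works identically on $\gamma(4;4)$ and gives $R^{2-(2+d)/q}$), yields, for every $|\bsc|\leq c$,
\[
\iint_{\gamma(1;1/2)}|D\bsv|^2 \peps \lec R^{-2}\iint_{\gamma(4;4)\setminus\gamma(1;1/2)}|\bsv-\bsc|^2 \peps + \inds{\tT\leq 4}\int_{\beta(4)}|\bsg-\bsc|^2 \peps(T,\cdot)+R^{2\alpha'},
\]
where the $R^{2-(2+d)/q}$ remainder is absorbed into $R^{2\alpha'}$ because $R\leq 1$ and $2\alpha'\leq 2-(2+d)/q$. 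I then set $t_\star=t_0+(\eps_0/2\wedge\tT)R^2$ and $\bsc=\overline{\bsv(t_\star,\cdot)}^\psi$, and decompose $|\bsv-\bsc|^2\leq 2|\bsv-\overline{\bsv(t,\cdot)}^\psi|^2+2|\overline{\bsv(t,\cdot)}^\psi-\bsc|^2$.

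For the spatial piece, observe that the $t$-slice of $\gamma(4;4)\setminus\gamma(1;1/2)$ is $\beta(4)\setminus\beta(1/2)$ for $\tt\leq 1$ and $\beta(4)$ for $\tt\in(1,4]$. I would run the two-case argument of Lemma~\ref{lem:consts-2} ($\tt\lessgtr\eps_0/2$, using the Aronson bounds of Lemma~\ref{lem:Gauss}), but with Lemma~\ref{lem:Por} applied on $\tilde D=\beta(4)\setminus\beta(1/2)$ (resp.\ $\tilde D=\beta(4)$) -- both of which contain $\supp\psi$ -- to obtain the slice-wise weighted bound $\int|\bsv(t,\cdot)-\overline{\bsv(t,\cdot)}^\psi|^2 \peps \lec R^2\int|D\bsv(t,\cdot)|^2(\peps/\dl(\eps+\eps_0/2)+\tdl(\eps+\eps_0/2)/R^d)$; integrating in $t$ and multiplying by $R^{-2}$ then delivers exactly $\iint_{\gamma(4;4)\setminus\gamma(1;1/2)}|D\bsv|^2(\peps/\dl(\eps+\eps_0/2)+\tdl(\eps+\eps_0/2)/R^d)$. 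For the temporal piece, $|\overline{\bsv(t,\cdot)}^\psi-\bsc|^2$ is a function of $t$ alone, and Lemma~\ref{lem:Struwe} applied with endpoints $\{\tt,\,\eps_0/2\wedge\tT\}\subset[0,4]$ gives $|\overline{\bsv(t,\cdot)}^\psi-\bsc|^2\lec R^{-d}\iint_{\gamma(4;4)\setminus\gamma(4;1/2)}|D\bsv|^2+R^{2-(2+d)/q}$; multiplying by $R^{-2}$ and integrating against $\peps$ (with $\int\peps(t,\cdot)\,d\bsx\leq 1$ and interval-of-$t$ length $\leq 4R^2$), combined with $R^{-d}\lec\tdl(\eps+\eps_0/2)/R^d$ and the inclusion $\gamma(4;4)\setminus\gamma(4;1/2)\subset\gamma(4;4)\setminus\gamma(1;1/2)$, places this contribution under the target RHS too. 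For the boundary term, when $\tT\leq 4$, I decompose $|\bsg-\bsc|^2\leq 2|\bsg-\overline{\bsg}^\psi|^2+2|\overline{\bsg}^\psi-\bsc|^2$: H\"older continuity of $\bsg$ on $\beta(4)\subset B_n$ yields $|\bsg(\bsx)-\overline{\bsg}^\psi|^2\lec R^{2\alpha}\leq R^{2\alpha'}$; the second difference equals $|\overline{\bsv(T,\cdot)}^\psi-\overline{\bsv(t_\star,\cdot)}^\psi|^2$ and is bounded exactly as in the temporal piece via Lemma~\ref{lem:Struwe}; and $\int_{\beta(4)}\peps(T,\cdot)\,d\bsx\leq 1$ converts these pointwise estimates into integrated ones.

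The main obstacle will be the spatial step: the literal versions of Lemma~\ref{lem:consts-2} stated on $(\beta(2)\setminus\beta(1),\beta(4)\setminus\beta(1/2))$ and $(\beta(2),\beta(4))$ do not cover the $t$-slices of $\gamma(4;4)\setminus\gamma(1;1/2)$, while using Lemma~\ref{lem:Por} on all of $\beta(4)$ uniformly in $t$ would let the gradient integral on the right-hand side leak into the hole $\gamma(1;1/2)$. The fix is to choose the Poincar\'e domain $\tilde D$ time-slice by time-slice so that it precisely matches $\gamma(4;4)\setminus\gamma(1;1/2)$ -- namely $\beta(4)\setminus\beta(1/2)$ for $\tt\leq 1$ (where $\gamma(1;1/2)$ is present and must be excluded) and $\beta(4)$ for $\tt\in(1,4]$ (where $\gamma(1;1/2)$ is absent). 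This matching is what delivers the hole-free estimate needed for the subsequent hole-filling step.
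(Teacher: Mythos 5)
Your proposal is, in substance, the paper's own argument: a Caccioppoli estimate from the testing lemma, the splitting of $\abs{\bsv-\bsc}^2$ into a spatial oscillation handled by the weighted Poincar\'e inequality and a temporal oscillation handled by Struwe's Lemma~\ref{lem:Struwe}, with $\bsc$ the $\psi$-average of $\bsv$ at time $t_0+(\tot\eps_0\wedge\tT)R^2$, and the H\"older bound on $\bsg$ for the terminal term. The only structural deviation is the cutoff geometry: the paper keeps the cutoff of Proposition~\ref{pro:testing} (hole $\gamma(1;1)$, support $\gamma(4;2)$), so that the $t$-slices of the oscillation domain $\gamma(4;2)\setminus\gamma(1;1)$ coincide exactly with the pairs $(D,D')$ in Lemma~\ref{lem:consts-2}, and only shrinks the left-hand side to $\gamma(1;1/2)$ at the very end; you instead build a cutoff matched to $\gamma(1;1/2)$ and $\gamma(4;4)$ from the outset. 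Your diagnosis of why the Poincar\'e domains must be chosen slice by slice (to keep the gradient integral out of the hole) is exactly right.

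One sub-step does not go through as literally described. For $\tt<\eps_0/2$ your left-hand slice is $\beta(4)\setminus\beta(1/2)$, and ``running the two-case argument of Lemma~\ref{lem:consts-2}'' there would invoke the upper bound \eqref{equ:G3}, which is only stated (and only proved) for $\bsx\notin\beta(1)$; your slice contains the annulus $\beta(1)\setminus\beta(1/2)$, where \eqref{equ:G3} gives nothing. The repair is an observation you already use in your temporal step: since $\eps+\eps_0/2\in[\eps_0/2,\eps_0]$ and $\tdl$ is increasing on $[0,(\osig^2 d)^{-1}]\supseteq[0,\eps_0]$, one has $\tdl(\eps+\eps_0/2)\geq\tdl(\eps_0/2)>0$, a universal constant. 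Hence \eqref{equ:G1} alone yields $\peps\lec R^{-d}\lec\tdl(\eps+\eps_0/2)R^{-d}$ everywhere on $\gamma(4;4)\setminus\gamma(1;1/2)$, and the entire spatial contribution can be placed under the $\tdl(\eps+\eps_0/2)R^{-d}$ weight with no case distinction at all; since both terms on the right of \eqref{equ:hole} are nonnegative, this still proves the stated inequality. With that substitution, and with the reduction to $d\geq2$ from Remark~\ref{rem:d=1} (needed because $\beta(4)\setminus\beta(1/2)$ is disconnected in dimension one), your argument is complete.
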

\begin{proof}
With $\bsc = \ovps{v(t_0+\tot\eps_0 \wedge \tT) R^2,\cdot)}$, as in Lemma \ref
{lem:consts}, we start from  the inequality
\begin{align}
\label{equ:tri-sq}
\sabs{\bsv(\tx) - \bsc}^2 \peps  \leq 2 \sabs{\bsv(\tx) - \ovrt}^2 \peps  + 2
\sabs
{\ovrt -
  \bsc}^2 \peps.
\end{align}
With $\tt = (t-t_0)/R^2$, we integrate \eqref{equ:tri-sq}
   over $D(t) = \beta(2)\setminus \beta(1)$ when $\tt \in [0,1]$ and
  over
  $D(t)= \beta(2)$ when $\tt \in (1,4]$.
  Thanks to Lemmas \ref {lem:consts} and \ref
  {lem:consts-2} we get, for $t\in [t_0,t_0+4 R^2]$,
  \begin{multline}
  \label{equ:ppp}
R^{-2}\int_{D(t)} \abs{\bsv(t,\cdot) - \bsc}^2 \peps  \lec
\int_{D'(t)} \abs{D \bsv(t,\cdot)}^2
       \Big(\tfrac{\peps(t,\cdot)}{\dl(\eps+\eps_0/2)}
      + \tfrac{\tdl(\eps+\eps_0/2)}{R^{d}}\Big)+ \\ +
  R^{-2}\iintl{\gamma(4;4)\setminus \gamma(1;1/2)}
      |D\bsv|^2 \Big(\tfrac{\peps}{\dl(\eps+\eps_0/2)}
      + \tfrac{\tdl(\eps+\eps_0/2)}{R^{d}}\Big) + R^{-\tfrac{2+d}{q}},
 \end{multline}
 where $D'(t) = \beta(4)\setminus \beta(1/2)$ for $\tt \in [0,1]$ and
 $D'(t) = B (4)$, for $\tt \in (1,4]$. Then we
 integrate \eqref {equ:ppp}
 over $t\in [t_0,t_0+4R^2]$ to obtain
\begin{align}
\label{equ:int-v-c}
R^{-2}\iintl{\gamma(4,2) \setminus \gamma(1,1)} \abs{\bsv - \bsc}^2 \peps
\lec
  \iintl{\gamma(4;4)\setminus \gamma(1;1/2)}
      |D\bsv|^2 \Big(\tfrac{\peps}{\dl(\eps+\eps_0/2)}
      + \tfrac{\tdl(\eps+\eps_0/2)}{ R^{d}}\Big) + R^{2-\tfrac{2+d}{q}}.
\end{align}

Consider, now, the case when $\tilde{T}\leq 4$, i.e., $t_0\geq T - 4 R^2$.
Since $\bsg$ is $\alpha$-H\"{o}lder and
$\supp \psi \subseteq \beta(4)$, we have
$\sup_{\beta(4)} |\bsg-\ogr| \leq \max_{\bsx, \bsx'\in B (4)} |\bsg(x) -\bsg(x')| \lec
R^\alpha$. This inequality and Lemma \ref {lem:Struwe} combined, together with $R\leq 1/4$ together with \eqref{equ:G2} in Lemma \ref{lem:Gauss} applied to the last inequality, imply that
\begin{equation}\label{equ:int-g-c}
\begin{split}
 \int_{\beta(2)} |\bsg-\bsc|^2 \peps(T, \cdot)   & \leq
\int_{\beta(2)}|\bsg-\ogr|^2 p_\epsilon(T,\cdot) +2 |\ogr -\bsc|^2 \\
& \lec R^{2\alpha} + |\ovps{\bsv
 (T,\cdot)}-\ovps{\bsv(t_0+\eps_0 R^2/2,\cdot)}|^2   \\
  &\lec
 R^{2\alpha} + R^{-d}\iintl {[t_0\!+\!\eps_0 R^2/2, T] \times (B (4)\! \setminus\! B
 (1/2))}
\abs{D \bsv}^2 + R^
 {2-\tfrac{d+2}{q}} \\
 &\lec R^{2\alpha'} + \oo{\dl(\eps+\eps_0/2)} \iintl {\gamma
 (4,4)\setminus \gamma (4,1/2)}
\abs{D\bsv}^2  \peps.
\end{split}
\end{equation}
Finally, we combine the estimates \eqref{equ:int-v-c} and
\eqref{equ:int-g-c} with \eqref{equ:Dv est}
(shrinking and extending the domains of integration
appropriately) and use $R\leq 1/4$ to obtain \eqref{equ:hole}.
\end{proof}

\subsection{Hole-filling}
The following technique is so called ``hole-filling" which was first applied to parabolic systems by \cite{Str81}. In the previous subsections, $(t_0, x_0)$ and $R$ are fixed. Now they will be varied in $[0,T]\times B_{n-1}$ and $(0,1/4]$. However the centre of the ball $B_{n-1}$ is still fixed at $b_0$. It is important to note that none of constants $C$ below depends on $(t_0, x_0)$ and $R$.
\begin{lemma} There exists an universal constant $C$
\label{lem:hol-fil}
such that
 \begin{align}
 \label{equ:fil-hol}
   \sup_{\eps\in (0,\eps_0/4]} \iint_{\gamma(1/4;1/2)} \abs{D \bsv}^2 \peps \leq
   \kappa
   (\eps_0) \sup_{\eps' \in (0,16\eps_0]}
  \iint_{\gamma(16;4)}\abs{D\bsv}^2 p_{\eps'}
 +C R^{2\alpha'}
 \end{align}
where $\alpha'= \min(\alpha, 1- \oo{q} (1+\tfrac{d}{2}) ) >0$.
  and $\kappa(\eps) = (1 + \tot \dl(\eps))/(
  1+\dl(\eps))$.
  \end{lemma}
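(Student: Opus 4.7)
The plan is to derive the claim from Lemma \ref{lem:pre-hole} via the classical ``hole-filling'' trick adapted to the Gaussian-weighted setting, which upgrades the pre-hole ``annulus'' estimate into a contraction on the full cylinder. I first note the trivial inclusions $\gamma(1/4;1/2)\subseteq\gamma(1;1/2)$ on the left and $\gamma(4;4)\setminus\gamma(1;1/2)\subseteq\gamma(16;4)$ on the right, so that the domains in the statement are already reachable from the pre-hole estimate. Splitting the annulus integral on the right of Lemma \ref{lem:pre-hole} as $\iint_{\gamma(4;4)}-\iint_{\gamma(1;1/2)}$ and bringing a multiple of the hole integral to the left, I obtain a bound of the form
\[
\iintl{\gamma(1;1/2)}\abs{D\bsv}^2 \peps \leq a(\eps)\iintl{\gamma(4;4)}\abs{D\bsv}^2\peps + \frac{b(\eps)}{R^d}\iintl{\gamma(4;4)}\abs{D\bsv}^2 + c\, R^{2\alpha'},
\]
with explicit coefficients involving $\dl(\eps+\eps_0/2)$ and $\tdl(\eps+\eps_0/2)$.

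The unweighted term $R^{-d}\iint\abs{D\bsv}^2$ is then controlled by the Aronson lower bound in \eqref{equ:Aronson}. Taking $\eps'=16\eps_0\in(0,16\eps_0]$, one checks that $p_{\eps'}(t,x)\gec R^{-d}$ holds uniformly on $\gamma(16;4)$: indeed $(t-t_0)+\eps' R^2\lec R^2$ and $|x-x_0|^2/((t-t_0)+\eps' R^2)\leq 1/\eps_0$ keep the Gaussian exponent bounded, leaving only the $R^{-d}$ scaling. Consequently $R^{-d}\iint_{\gamma(16;4)}\abs{D\bsv}^2\lec\iint_{\gamma(16;4)}\abs{D\bsv}^2 p_{\eps'}$, which is absorbed into the supremum on the right of the statement; moreover, the construction of $\eps_0$ in \eqref{equ:eps-z} gives $\tdl(\eps+\eps_0/2)\leq\tdl(\eps_0)\leq 1/2$ for every $\eps\in(0,\eps_0/4]$, so the coefficient $b(\eps)$ carries a harmless factor of $\tot$ from this bound. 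The inclusion $\gamma(4;4)\subseteq\gamma(16;4)$ next upgrades $\iint_{\gamma(4;4)}\abs{D\bsv}^2\peps$ into $\sup_{\eps'\in(0,16\eps_0]}\iint_{\gamma(16;4)}\abs{D\bsv}^2 p_{\eps'}$, and taking $\sup_{\eps\in(0,\eps_0/4]}$ on both sides yields the desired inequality.

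The principal technical obstacle is producing the precise contraction factor $\kappa(\eps_0)=(1+\tot\dl(\eps_0))/(1+\dl(\eps_0))$ rather than an unspecified universal constant strictly less than $1$. This forces one to track the specific universal constant appearing in Lemma \ref{lem:pre-hole} through the hole-filling algebra, and to balance the two competing roles played by $\dl$: as the reciprocal weight controlling the annulus integral in the pre-hole estimate, and as the parameter determining the size of the resulting contraction. The factor $\tot$ inside the numerator of $\kappa$ is exactly the $1/2$ coming from $\tdl(\eps_0)\leq 1/2$ used to tame the unweighted $\tdl/R^d$ term, while the denominator $1+\dl(\eps_0)$ is the standard hole-filling factor arising when one rearranges $(1+\dl^{-1})$ times the hole integral to the left.
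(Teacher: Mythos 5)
Your argument is correct and follows essentially the same route as the paper: hole-filling applied to Lemma \ref{lem:pre-hole}, the Aronson lower bound $p_{\eps'} \gec R^{-d}$ to absorb the unweighted $\tdl/R^d$ term into the weighted supremum (the paper takes $\eps'=\eps_0$ on $\gamma(4;4)$ where you take $\eps'=16\eps_0$, an immaterial difference), the bound $\tdl \leq \tot$ built into the definition of $\eps_0$, and enlargement of domains to reach the supremum form. The constant-tracking you flag as the main obstacle is handled in the paper exactly along the lines you sketch, by naming the constant $C_0$ from \eqref{equ:hole} explicitly, performing the add-and-divide step with $C_0/\dl$, and checking that the resulting factor $\kappa'$ is dominated by $\kappa$.
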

\begin{proof}
Inequality \eqref
 {equ:G2} of Lemma \ref{lem:Gauss}
 yields $p_{\eps_0} \gec R^{-d}$ on $\gamma (4;4)$.
  Given $\eps \in (0,\eps_0/2]$,
 this inequality, combined with
 \eqref{equ:hole}, yields
 \begin{align}
 \label{equ:hf}
   &\iintl{\gamma(1;1/2)} \abs{D \bsv}^2 \peps \lec R^{2\alpha'} +
   \tfrac{1}{\dl(\eps+\eps_0/2)} \iintl{\gamma(4,4) \setminus \gamma(1,1/2)} \abs{D
  \bsv}^2  \peps +
  \tdl(\eps+\eps_0/2) \iintl{\gamma(4;4)}\abs{D\bsv}^2 p_{\eps_0},
 \end{align}
Let  $C_0$ denote the constant $C$ from \eqref{equ:hf}; we assume, without
loss of generality, that $C_0\geq 1$.
 Adding $\tfrac{C_0}{\dl(\eps+\eps_0/2)} \iint_{\gamma(1;1/2)} \abs{D \bsv}^2 \peps$
 to both sides of \eqref{equ:hf} and
 dividing throughout by $1+\tfrac{C_0}{\dl(\eps+\eps_0/2)}$ yields
 \begin{align}
 \label{equ:hf2}
   \iintl{\gamma(1;1/2)} \abs{D \bsv}^2 \peps &\leq C_0 R^{2\alpha'} + \kappa'
   (\eps+\eps_0/2) \Big(
    \iintl{\gamma(4,4)} \abs{D \bsv}^2  \peps +
  \iintl{\gamma(4;4)}\abs{D\bsv}^2 p_{\eps_0} \Big)
 \end{align}
where $\kappa'(\eps) = (1 + \dl \tdl)/(C_0 + \dl)$. Our choice of the constant
$\eps_0$ implies that $\kappa'(\eps) \leq \kappa(\eps)$, for $\eps\leq \eps_0$.
Moreover, $\kappa$ is strictly decreasing on $[0,\eps_0]$, so
$\kappa(\eps_0/2) \geq \kappa(\eps+\eps_0/2)$, for
$\eps \in (0,\eps_0/2]$. Therefore, extending domains on the right-hand side and shrink domains on the left-hand side, we obtain
 \begin{align*}
   \iintl{\gamma(1/4;1/2)} \abs{D \bsv}^2 \peps &\leq
   \iintl{\gamma(1;1/2)} \abs{D \bsv}^2 \peps \leq  C_0 R^{2\alpha'} + \kappa
   (\eps_0/2) \sup_{\eps' \in (0,16\eps_0]}
  \iint_{\gamma(16;4)}\abs{D\bsv}^2 p_{\eps'}.
 \end{align*}
Maximizing over $\eps \in (0,\eps_0/4]$ on the left-hand side completes the argument.
 \end{proof}
 \begin{proposition}
 \label{pro:hol-fil-2}
 There exists universal constants $C,\alpha_0>0$ such
 that
 \begin{align*}
 \sup_{R\in (0,1/4]} R^{-d-2\alpha_0} \iint_{\gamma(1;1)} \abs{D \bsv}^2 \leq C.
 \end{align*}
 \end{proposition}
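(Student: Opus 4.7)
The plan is to iterate the hole-filling inequality of Lemma \ref{lem:hol-fil}, turn the resulting geometric decay into a power-law bound in $R$, and then pass from a density-weighted integral to the unweighted one via the Gaussian lower bound \eqref{equ:G2}. Making the dependence on the radius $R$ explicit by writing $\gamma^{R}(\theta;\rho) = [t_0, t_0+\theta R^2]\times B_{\rho R}(\bsx_0)$ and $q^{R}_{\eps}(t,\bsx) = p(t_0-\eps R^2, \bsx_0; t, \bsx)$, define
\[
\Phi(R) = \sup_{\eps\in (0,\,16\eps_0]} \iintl{\gamma^{R}(16;4)} |D\bsv|^2\, q^{R}_{\eps}.
\]
The two elementary scaling identities $\gamma^{R}(1/4;1/2) = \gamma^{R/8}(16;4)$ and $q^{R}_{\eps} = q^{R/8}_{\eps/64}$ identify the left-hand side of \eqref{equ:fil-hol} with $\Phi(R/8)$, so Lemma \ref{lem:hol-fil} reads as the scalar recursion
\[
\Phi(R/8) \leq \kappa(\eps_0)\, \Phi(R) + C R^{2\alpha'}, \quad R\in (0,1/4],
\]
with $\kappa(\eps_0)\in (0,1)$ a universal constant, and, crucially, with $C$ independent of $(t_0,\bsx_0)$.

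The second step is a standard geometric iteration. I pick $\alpha_0\in (0,\alpha')$ strictly small enough that $\mu := \kappa(\eps_0)\, 8^{2\alpha_0} < 1$ and set $R_j = 4^{-1}\cdot 8^{-j}$. By induction on $j$ I would prove $\Phi(R_j) \leq C_0 R_j^{2\alpha_0}$ for a universal $C_0$. The base case $j=0$ follows from Lemma \ref{lem:weak-Jacobian} (so $D\bsv=\bsw$), the inclusion $\gamma^{R_0}(16;4)\subseteq [t_0-\eps R_0^2, T]\times B_n$, and Lemma \ref{lem:Z-bound}. The inductive step amounts to choosing $C_0$ large enough that $\mu C_0 + C\cdot 8^{2\alpha'} \leq C_0$, which is possible thanks to $\mu < 1$. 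For arbitrary $R\in (0,1/4]$, sandwich $R_{j+1}<R\leq R_j$; the inclusion $\gamma^{R}(16;4)\subseteq\gamma^{R_j}(16;4)$ together with the rescaling $\eps\mapsto \eps(R/R_j)^2$ inside the supremum yields $\Phi(R)\leq \Phi(R_j)\leq C R^{2\alpha_0}$, uniformly in $(t_0,\bsx_0)\in [0,T]\times B_{n-1}$.

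For the final conversion, observe that the recursion and the iteration go through verbatim with $(t_0,\bsx_0)$ replaced by any admissible base point, producing the same estimate with universal constants. I apply the bound at the shifted base point $(t_0',\bsx_0') := (t_0 - \eps_0 R^2/2,\, \bsx_0)$, which is admissible because $\bsb$ and $\bsig$ were extended to negative times. The cylinder $\gamma(1;1)$ at $(t_0,\bsx_0)$ is contained in the cylinder $\gamma^{R}(16;4)$ based at $(t_0',\bsx_0')$, and every $(t,\bsx)\in\gamma(1;1)$ satisfies $t - t_0' \geq \eps_0 R^2/2$, so inequality \eqref{equ:G2} of Lemma \ref{lem:Gauss} applied with $\eps=\eps_0$ delivers $p(t_0' - \eps_0 R^2, \bsx_0';\, t,\bsx) \gec R^{-d}$. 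Combining the pointwise density lower bound with the $\Phi$-estimate at $(t_0',\bsx_0')$,
\[
\iintl{\gamma(1;1)} |D\bsv|^2 \lec R^d\, \Phi(R) \lec R^{d+2\alpha_0},
\]
uniformly in $(t_0,\bsx_0,R)$, which is the stated conclusion.

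The step I expect to require the most care is the very first one: verifying that Lemma \ref{lem:hol-fil} is genuinely a self-recursion for the same functional $\Phi$ at the two scales $R$ and $R/8$, which requires carefully tracking how the admissible range of the density parameter $\eps$ rescales under $R\mapsto R/8$. Once that matching is done, the Campanato-type iteration is textbook, and the density-based conversion reduces to the single time-shift trick $t_0\mapsto t_0 - \eps_0 R^2/2$ needed to cover \emph{all} of $\gamma(1;1)$ by the Aronson lower-bound region.
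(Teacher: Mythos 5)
Your argument is essentially the paper's: the paper sets $\ld_{\alpha_0}(R)=\sup_{\eps\in(0,\eps_0]}R^{-2\alpha_0}\iint_{\gamma_R(1;1)}|D\bsv|^2\,p(t_0-\eps R^2,\bsx_0;\cdot,\cdot)$, reads Lemma \ref{lem:hol-fil} as the recursion $\ld_{\alpha_0}(\tfrac12 R)\leq C_0+8^{2\alpha_0}\kappa(\eps_0)\,\ld_{\alpha_0}(4R)$, picks $\alpha_0$ so that $8^{2\alpha_0}\kappa(\eps_0)<1$, and converts back using the Aronson lower bound at $\eps=\eps_0$; your unnormalized functional $\Phi$ and the explicit induction along $R_j=4^{-1}8^{-j}$ (anchored by Lemma \ref{lem:Z-bound} instead of the paper's appeal to Proposition \ref{pro:testing}) is the same iteration in different clothing. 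Two small slips are worth noting. First, the scaling identity should read $q^{R}_{\eps}=q^{R/8}_{64\eps}$, not $q^{R/8}_{\eps/64}$; the range matching $(0,\eps_0/4]\ni\eps\mapsto 64\eps\in(0,16\eps_0]$ that you actually use is nevertheless the correct one, so nothing downstream breaks. Second, in the final conversion the shift of the base point to $t_0-\eps_0R^2/2$ can land at a negative base time, whereas the whole chain of lemmas in this section is set up only for $(t_0,\bsx_0)\in[0,T]\times B_{n-1}$ (e.g.\ Lemma \ref{lem:consts} evaluates $\bsv$ at $t_0+(\tfrac12\eps_0\wedge\tilde T)R^2$, which would be undefined for negative times); the shift is, however, unnecessary: taking $\eps=\eps_0$ in the argument for \eqref{equ:G2}, one has $\btau=\eps_0+(t-t_0)/R^2\geq\eps_0$ on all of $\gamma(4;4)$, whence $p_{\eps_0}\gec R^{-d}$ there — exactly the form in which it is invoked in the proof of Lemma \ref{lem:hol-fil} — and the conversion then goes through at the original base point, as in the paper.
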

\begin{proof}
 In this proof, we need to vary $R$ and do not consider it fixed, while we
 still keep $(\txz)$ fixed.
 Hence, we include explicit dependence  on $R$
 in the notation as in, e.g., $\gamma_{R}(1,1)$.
 Given $\alpha_0 \in (0,\alpha']$, with $\alpha'$ as in Lemma
 \ref{lem:hol-fil}, we define
\begin{align*}
  \ld_{\alpha_0}(R) = \sup_{\eps \in (0,\eps_0]} R^{-2\alpha_0} \iint_
  {\gamma_{R}
  (1,1)} \abs {D \bsv}^2 p(t_0-\eps R^2, \bsx_0, \cdot,\cdot)
 \end{align*}
Lemma \ref{lem:hol-fil} implies that there exists a universal constant $C_0>0$
such that
 \begin{equation}\label{equ:ld}
 \ld_{\alpha_0} (\tot R) \leq C_0 + \nu(\alpha_0)\, \ld_{\alpha_0}
 (4R), \efor R\in(0,1/4],
 \end{equation}
 where, with $\kappa<1$ as in Lemma \ref{lem:hol-fil}, we have
$\nu(\alpha_0) = 8^{2\alpha_0} \kappa( \eps_0)$.
Choosing $0<\alpha_0\leq \alpha'$ small enough so that
$\nu_0= \nu
(\alpha_0)<1$, we obtain
\begin{align}
 \label{equ:varphi2}
 \ld_{\alpha_0} (\tot R) \leq C_0 + \nu_{0}\, \ld_{\alpha_0}(4R),
 \efor R\in(0, 1/4].
\end{align}
On the other hand, Proposition \ref{pro:testing} together with the boundedness of $\bsv$ imply that
$\ld_{\alpha_0}(\cdot)$ is bounded on compact segments of $(0,\infty)$. This and \eqref{equ:varphi2} combined
 yield
\begin{align}
\label{equ:rbigb}
\sup_{R\leq 1/4} \ld_{\alpha_0}(R) \leq C_1,
\end{align}
for some universal constant $C_1$.
 The statement then follows from specializing the supremum in the
 definition of $\vp$ to $\eps=\eps_0$ and estimating $p_{\eps_0}$ using
 \eqref {equ:G1} of
 Lemma \ref{lem:Gauss}.
\end{proof}

The following result finishes the proof of Theorem \ref{thm:abstract}.
\begin{corollary}[Uniform $C^{\alpha}$-bounds]
\label{cor:ca-nd}
  There exists a universal constants $C$ and $\alpha_0>0$ such that
  \[ [\bsv]_{\alpha_0; B_{n-1}} \leq C.\]
\end{corollary}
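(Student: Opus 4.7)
\medskip

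\noindent\textbf{Plan.} The strategy is to upgrade the Morrey-type estimate on the spatial gradient from Proposition \ref{pro:hol-fil-2} into a Campanato-type estimate on the oscillation of $\bsv$ over parabolic cylinders, and then invoke the standard Campanato characterization of parabolic H\"older spaces. Throughout, I may assume $d\geq 2$ thanks to the reduction described in Remark \ref{rem:d=1}.

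\medskip

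\noindent\textbf{Step 1: spatial Poincar\'e.} At each fixed time level $t\in[t_0,t_0+R^2]$, the ordinary Poincar\'e inequality on $B_R(\bsx_0)$ yields
\begin{align*}
\intl{B_R(\bsx_0)} \abs{\bsv(t,\bsx)-\overline{\bsv(t,\cdot)}_{B_R(\bsx_0)}}^2 d\bsx \lec R^2 \intl{B_R(\bsx_0)} \abs{D\bsv(t,\bsx)}^2 d\bsx.
\end{align*}
Integrating over $t\in[t_0,t_0+R^2]$ and applying Proposition \ref{pro:hol-fil-2} gives
\begin{align*}
\iintl{\gamma(1;1)} \abs{\bsv(t,\bsx)-\overline{\bsv(t,\cdot)}_{B_R(\bsx_0)}}^2 \lec R^2 \cdot R^{d+2\alpha_0} = R^{d+2+2\alpha_0}.
\end{align*}

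\medskip

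\noindent\textbf{Step 2: time oscillation of the spatial average.} It remains to control how the (weighted) spatial mean $\overline{\bsv(t,\cdot)}^\psi$ varies with $t\in[t_0,t_0+R^2]$. Lemma \ref{lem:Struwe} applied with $\theta_1,\theta_2\in[0,1]$ gives
\begin{align*}
\abs{\overline{\bsv(t_2,\cdot)}^\psi-\overline{\bsv(t_1,\cdot)}^\psi}^2 \lec R^{-d}\iintl{\gamma(4;4)} \abs{D\bsv}^2 + R^{2-(2+d)/q}.
\end{align*}
The first term, estimated via Proposition \ref{pro:hol-fil-2} applied on the cylinder of radius $4R\leq 1$ (up to rescaling), is $\lec R^{2\alpha_0}$, while the second is $\lec R^{2\alpha'}\leq R^{2\alpha_0}$ by the choice of $\alpha_0\leq\alpha'$. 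Comparing the weighted and unweighted means via the Cauchy--Schwarz trick used in the proof of Lemma \ref{lem:Por} at a single time slice, one then combines Step 1 with the time-slice bound to conclude that there exists a space-time constant $\bsc_R$ (e.g.\ the space-time average of $\bsv$ over $\gamma(1;1)$) such that the \emph{Campanato-type} estimate
\begin{align*}
\iintl{\gamma(1;1)} \abs{\bsv-\bsc_R}^2 \lec R^{d+2+2\alpha_0}
\end{align*}
holds uniformly in the base point $(t_0,\bsx_0)\in[0,T]\times B_{n-1}$ and the radius $R\in(0,1/4]$.

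\medskip

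\noindent\textbf{Step 3: parabolic Campanato embedding.} From this uniform Campanato bound, one deduces the parabolic H\"older continuity in the sense of \eqref{equ:holder-semi} by the standard telescoping argument: for two points $(t_1,\bsx_1),(t_2,\bsx_2)\in[0,T]\times B_{n-1}$ with parabolic distance $\rho=\abs{(t_1-t_2,\bsx_1-\bsx_2)}$, pick a parabolic cylinder of radius comparable to $\rho$ containing both, telescope the dyadic averages $\bsc_{2^{-k}\rho}$ centered at each point against the ancestor cylinder, and combine with the Lebesgue differentiation theorem (legitimate since $\bsv$ is continuous, by assumption \eqref{asm:v} of Theorem \ref{thm:abstract}). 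Each dyadic step contributes a geometric $(2^{-k}\rho)^{\alpha_0}$, summable to a bound $\lec\rho^{\alpha_0}$, which, after passing to the supremum, is precisely the conclusion $[\bsv]_{\alpha_0;B_{n-1}}\leq C$.

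\medskip

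\noindent\textbf{Main obstacle.} The cleanest part is the Poincar\'e step; the technical crux is Step 2, where the time oscillation of a spatial mean has to be tied back to $\iint|D\bsv|^2$ via Lemma \ref{lem:Struwe}, reconciling the weighted mean $\overline{\cdot}^\psi$ with the unweighted mean used in Step 1, while keeping all constants independent of $(t_0,\bsx_0)$ and $R$. Once the Campanato estimate is in hand, the final embedding is routine.
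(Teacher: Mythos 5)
Your proposal is correct and follows essentially the same route as the paper: a Poincar\'e inequality at each time slice combined with the Morrey bound of Proposition \ref{pro:hol-fil-2} for the spatial oscillation, Lemma \ref{lem:Struwe} for the temporal oscillation of the spatial mean, a resulting uniform Campanato estimate over $\gamma(1;1)$, and the Campanato--H\"older equivalence to conclude. The only cosmetic differences are that the paper works directly with the weighted mean $\overline{\bsv(t,\cdot)}^{\psi}$ via Lemma \ref{lem:Por-2} (avoiding your extra step of reconciling the unweighted average over $\beta(1)$ with the $\psi$-weighted average supported on $\beta(4)\setminus\beta(1/2)$, which live on different domains and require passing through a common larger ball), and that the paper cites the Campanato embedding rather than re-proving it by telescoping.
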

\begin{proof}
Keeping $(\txz) \in [0,T]\times B_{n-1}$ and $R\leq 1/4$ fixed,
we set
\[
\bsc= \tfrac{1}{R^2}\int_{t_0}^{t_0+R^2} \overline{\bsv(t, \cdot)}^\psi
dt,\]
so that
\begin{equation}\label{equ:v-c-cam}
 \int_{\beta(1)} \abs{\bsv(t, \cdot) - \bsc}^2 \lec \int_{\beta(1)} \abs{\bsv(t,
 \cdot) - \overline{\bsv(t, \cdot)}^\psi}^2 +
 R^d \abs{\overline{\bsv(t, \cdot)}^\psi - \bsc}^2.
\end{equation}
Applying Lemmas \ref{lem:Por-2} and \ref{lem:Struwe} to the two terms on the
right-hand side respectively, we obtain
\begin{align*}
 &\int_{\beta(1)} \abs{\bsv(t, \cdot) -\overline{\bsv(t,\cdot)}^\psi}^2 \lec
 R^2 \int_{\beta(4)} \abs{D\bsv(t, \cdot)}^2, \eand\\
 & R^d \abs{\overline{\bsv(t, \cdot)}^\psi-\bsc}^2 \lec \iint_{\gamma(1;4)}
 \abs{D \bsv}^2 + R^{d+ 2- \tfrac{2+d}{q}},
\end{align*}
so that, an integration of \eqref{equ:v-c-cam} over $[t_0, t_0+ R^2]$ yields
\[
 \iint_{\gamma(1;1)}\abs{\bsv-\bsc}^2 \lec R^2 \iint_{\gamma(1;4)} \abs{D \bsv}^2 + R^{2+d + (2-\tfrac{2+d}{q})}.
\]
Dividing both sides by $R^{d+2+2\alpha_0}$, where $\alpha_0$ is from  Proposition
\ref{pro:hol-fil-2}, and using the same proposition on
the right-hand side, we obtain a universal constant $C$ such that
 $\iint_{\gamma(1;1)} \abs{\bsv-\bsc}^2 \lec
R^{d+2+2\alpha_0}$ for all $R\leq 1/4$.
Finally, $\bsc= \overline{\bsv}_{\gamma(1;1)}$ minimizes the integral
$\iint_{\gamma(1;1)} |\bsv-\bsc|^2$, and, thus, we have
\begin{align}
\label{equ:Camp}
\sup_{R\in (0,1/4]} R^{-d-2-2\alpha_0}\iint_{\gamma(1;1)} \abs{\bsv-\overline{\bsv}_{\gamma(1;1)}}^2 \leq C.
\end{align}
The constant $C$ of \eqref{equ:Camp} above does not depend on
$(\txz)\in [0,T]\times B_{n-1}$, so $\bsv$ belongs to the ball of radius $\sqrt{C}$ in the
Campanato space
$\hat{C}^{\alpha_0}([0,T]\times B_{n-1})$, where
\[
 \hat{C}^{\alpha_0}([0,T]\times B_{n-1})
 := \big\{\bsv\in \ltwo\,:\hspace{-2em} \sup_{(t_0, \bsx_0)\in
 [0,T]\times B_{n-1}, R\in (0,1/4]}
\hspace{-2em}
 R^{-d-2-2\alpha_0}\iintl{\gamma_{\txz,R}(1;1)}
 \abs{\bsv-\overline{\bsv}_{\gamma_{\txz,R}(1;1)}}^2<\infty\big\}.
\]
The (topological) equivalence of the Campanato space
$\hat{C}^{\alpha_0}([0,T]\times B_{n-1})$ with the natural metric,
and the H\" older spaces $C^{\alpha_0}([0,T]\times B_{n-1})$ (see, e.g., \cite[IV.2, p. 49]{Lie96})
implies that $[\bsv]_{\alpha_0; B_{n-1}}$ admits a universal bound.
\end{proof}

\section{Additional proofs}

\subsection{Proof of Theorem \ref{thm:existence}}\label{subsec:convergence}
Thanks to our notational convention at the beginning of Section
\ref{sec:uniform est}, the index $m$ was suppressed in the statement
of Corollary \ref{cor:ca-nd}. The dependence on $k^m_n$ is through its $\mathbb{L}^q$-norm on $[0,T]\times B_n(b_0)$, which is assumed to be bounded uniformly in $m$. With the conditions (1)-(4) of Theorem
\ref{thm:abstract} holding uniformly in $m$,  we have a universal constant $C$ such that $[\bsv_m]_{\alpha_0;
B_{n}(b_0)} \leq C$, for all $m$. Combining this uniform H\"{o}lder estimate and
the uniform bound in condition (2) of Theorem \ref{thm:abstract}, we apply
Arzel\' a-Ascoli theorem on $[0,T]\times B_n(b_0)$ to extract a subsequence of
$\set{\bsv^m}$ which converges uniformly. A diagonal procedure then produces
another subsequence - still denoted by $\set{\bsv^m}$, as well as a continuous
function $\bsv:\TR\to \R^N$ such that
$\bsv^m \to \bsv$, locally uniformly. Thanks
to the preservation of H\" older
continuity under uniform convergence, the function $\bsv$ belongs to the
local H\" older space $\Caa_{loc, b_0}$, for some sequence $\seq{\alpha'}$ in $(0,1]$.

Having picked and fixed $n\in\N$ and the initial condition $(\tx)\in \TR$,
we set $\bsX=\bsX^{\tx}$ and define the exit time
\[ \tau_n = \inf\sets{ u\geq t}{ \bsX_u \not \in B_n(b_0)},\]
as well as the following two sequences of processes
\[ \bsYm_u = \bsv^m(u,\bsX^{\tau_n}_u) \quad \eand \quad \bsZm_u =
\bsw^m(u,\bsX_u)\inds{u < \tau_n}, \quad u\in [t,T].\]
Since $(\bsv^m, \bsw^m)$ is a Markovian solution to the system
\eqref{equ:BSDEm}, the process
$\bsY^m$ is a semimartingale whose finite-variation part is given by
\[-\int_t^{\cdot}
\bsf^m(u,  \bsX^{\tau_n}_u, \bsY^{(m)}_u, \bsZm_u) \inds{u\leq \tau_n}
\, du.\]
Condition (3) in Theorem \ref{thm:abstract} and Lemma
\ref{lem:Z-bound} imply that these, finite-variation, parts admit a uniform
bound in total variation, i.e.,
\begin{multline}
\label{equ:bdvar}
\EE^{\tx}\int_t^{T} \Big[\abs{\bsf^m(u,\bsX^{\tau_n}_u, \bsY^{(m)}_u, \bsZm_u)} \inds{u\leq
\tau_n}
\, du\Big] \leq_{C(n)} \\ \leq_{C(n)}
\EE^{\tx}\Big[\int_t^T \Big(\abs{\bsZm_u}^2 + k_n(u,\bsX_u)\Big) \inds{\bsX_u \in
B_n(b_0)}\, du\Big] \leq C(n).
\end{multline}
Moreover, the uniform convergence of
$\bsv^m$ on $B_n(b_0)$ implies that the convergence
$\bsYm \to \bsY^n=\bsv(\cdot, \bsX^{\tau_n})$ is also uniform.

Uniform ellipticity of $\bsig$ and
\itos{} formula applied to $|\bsYm-\bsYmp|^2$ yield
\begin{multline}
\label{equ:cau}
   \EE^{\tx}\Big[ \int_t^{T} \abs{\bsZm_u- \bsZmp_u}^2\, du\Big] \lec
   \norm{\bsYm_{\tau_n}-\bsYmp_{\tau_n}}_{\linf}^2  + \\ +
   \big\| \sup_{u\in[0,\tau_n]}|\bsYm_{u}-\bsYmp_{u}|\big\|_{\linf} \EE^{\tx}\Big[ \int_t^T
   \abs{\bsf^m_u}+
   \abs{\bsf^{m'}_u}\, du\Big],
\end{multline}
where $\bsf^m_u = \bsf^m(u,\bsX^{\tau_n}_u, \bsY^{(m)}_u, \bsZ^{(m)}_u)
   \inds{u\leq \tau_n}$ for all $m,m' \in \N$.  The uniform bound in
   \eqref{equ:bdvar} implies now that the sequence $\set{\bsZm}$ is Cauchy in
   $\ltwo$ uniformly for $(t,\bsx)\in [0,T]\times B_n(b_0)$, with limit
   $\bsZ^n$.  A subsequence, still labeled $\set{\bsZm}$,
   converges $\text{Leb}\otimes \PP$-a.e.~towards the same limit,
   \[ \bsw^m(u,\bsX_u)\inds{u\leq \tau_n} \to \bsZ^n_u,\]
    uniformly for $(t,\bsx)\in [0,T]\times B_n(b_0)$.
   So it follows that
   \[ \bsZ^n_u = \bsZ_u \inds{u\leq \tau_n},
   \text{Leb}\otimes \PP-\text{a.s.,}\]
 where
   \[ \bsZ_u = \bsw(u,\bsX_u) \quad \text{and}\quad
 \bsw(t', \bsx') = \liminf_m \bsw^m(t', \bsx') \text{ componentwise.}\]
 Therefore, by \itos{} isometry, for almost all $t'\geq t$ we have
 \[
 \int_t^{\tau_n \wedge t'} \bsZ^{(m)}_u \bsig(u,\bsX_u)\, d\bsW_u \to
 \int_t^{\tau_n \wedge t'} \bsZ_u \bsig(u,\bsX_u)\, d\bsW_u,\text{
   a.s.}\]
Next, we show that
\begin{align}
\label{equ:fbc}
\int_t^{\tau_n\wedge t'} \bsf^m(u, \bsX_u, \bsY^{(m)}_u, \bsZ^{(m)}_u)\, du \to
\int_t^{\tau_n\wedge t'} \bsf(u, \bsX_u, \bsY_u, \bsZ_u)\, du, \text{ a.s.}
\end{align}
as $m\to\infty$, for almost all $t'\geq t$. For that, we first observe that,
thank to the assumptions placed on the convergence $\bsf^m\to\bsf$, we have
\[ \bsf^m(u,\bsX_u, \bsY^{(m)}_u, \bsZ^{(m)}_u) \inds{u\leq \tau_n} \to \bsf(u,\bsX_u, \bsY_u,
\bsZ_u) \inds{u \leq \tau_n}, \ld\otimes \PP-\text{ a.e.}\]
This is, however, enough to ensure the $\ld\otimes \PP$-convergence, which,
in turn, implies \eqref{equ:fbc}. Indeed, we have
\[
\abs{\bsf^m(u,\bsX_u,\bsY^{(m)}_u, \bsZ^{(m)}_u) - \bsf(u,\bsX_u,\bsY_u\bsZ_u)}\inds{u\leq
\tau_n} \lec \big( \abs{\bsZ^m}^2+\abs{\bsZ}^2+k_n(t,\bsX_u) \big)
\inds{u\leq \tau_n},
\]
with the right-hand side $\text{Leb}\otimes \PP$-uniformly integrable, thanks to
the $\ltwo(\text{Leb}\otimes \PP)$-convergence of $\bsZ^{(m)}$.

It is straightforward now to let $n\to\infty$ and conclude that the pair
$(\bsv,\bsw)$ is a Markovian solution to \eqref{equ:BSDE}. To show that
$\bsw = D \bsv$ in the weak sense, we simply note that the proof of Lemma
\ref{lem:weak-Jacobian} applies verbatim.

\subsection{Proof of Theorem \ref{thm:unique}}
We start with a uniform $\bmo$ estimate which will
also be used in the proof of uniqueness.
For a Borel
function $\bsw:\TR\to\R^{N\times d}$ and a constant $\delta>0$, we define its $\bmo(\delta)$-norm by
\[ 
\norm{\bsw}^2_{\bmo(\delta)} = \sup_{t\in [\delta, T]} \sup_{t-\delta \leq \tau \leq t}
\Big\|
\EE_\tau\Big[ \int^t_\tau \abs{\bsw(u,\bsX_u)}^2\, du\Big]\Big\|_{\linf}, \]
where $\tau$ is any stopping time taking value in $[t-\delta, t]$.
We say that $\bsw\in \umo$ if $\lim_{\delta\downto 0}
\norm{\bsw}_{\bmo(\delta)}=0$; this, stronger, notion of
$\bmo$-regularity will play a role in the uniqueness proof below. We start with a well-known
estimate whose proof we include for the reader's convenience:
\begin{lemma}
\label{lem:X-tau}
For any $t\in [\delta, T]$ and any stopping time $\tau$ taking value in $[t-\delta, t]$, 
and $\alpha\in (0,1]$, we have
\begin{align}
\label{equ:X-tau}
\EE_\tau[ \abs{\bsX_{t}-\bsX_\tau}^{\alpha}] \leq C\,
 \delta^{\alpha/2},
\end{align}
where $C$ depends only on $\alpha$, $d$, $\norm{\bsb}_{\linf}$ and $\norm{\bsig}_{\linf}$.
\end{lemma}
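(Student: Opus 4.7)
The plan is to establish the estimate first for the second moment (i.e., $\alpha=2$) and then deduce the case $\alpha \in (0,1]$ via Jensen's inequality, since $x \mapsto x^{\alpha/2}$ is concave on $[0,\infty)$ whenever $\alpha \leq 2$.

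For the second-moment bound, I would start from the integral form of the SDE \eqref{equ:X},
\[
\bsX_{t'} - \bsx = \int_t^{t'} \bsb(u,\bsX_u)\,du + \int_t^{t'} \bsig(u,\bsX_u)\,d\bsW_u,
\]
use the elementary inequality $|a+b|^2 \leq 2|a|^2 + 2|b|^2$, and then apply the Cauchy--Schwarz inequality to the drift term together with Itô's isometry to the stochastic integral. Using the uniform bounds $\|\bsb\|_{\linf}$ and $\|\bsig\|_{\linf}$ (from conditions (1) and (2) in Section \ref{sse:diff}), this yields
\[
\EE^{\tx}\big[|\bsX_{t'}-\bsx|^2\big] \leq 2(t'-t)^2\|\bsb\|_{\linf}^2 + 2d(t'-t)\|\bsig\|_{\linf}^2 \leq C_0\,(t'-t),
\]
where $C_0$ depends only on $d$, $\|\bsb\|_{\linf}$, $\|\bsig\|_{\linf}$, and $T$ (which, being a fixed global parameter of the setting, is treated here as universal).

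For general $\alpha \in (0,1]$, the map $r \mapsto r^{\alpha/2}$ is concave on $[0,\infty)$, so Jensen's inequality gives
\[
\EE^{\tx}\big[|\bsX_{t'}-\bsx|^{\alpha}\big] = \EE^{\tx}\big[(|\bsX_{t'}-\bsx|^{2})^{\alpha/2}\big] \leq \big(\EE^{\tx}[|\bsX_{t'}-\bsx|^{2}]\big)^{\alpha/2} \leq C_0^{\alpha/2}(t'-t)^{\alpha/2},
\]
which is the desired bound with $C = C_0^{\alpha/2}$.

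The argument is entirely standard and presents no serious obstacle; there is no difficult step. The only minor care is to note that the constant $C$ will depend on $T$ as well through the $(t'-t)^2$ term from the drift, which is dominated by $T(t'-t)$; this is consistent with the paper's convention of absorbing $T$ into the universal constants.
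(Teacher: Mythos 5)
Your proof is correct. The paper proceeds slightly differently: it first bounds the \emph{first} moment $\EE^{\tx}[\abs{\bsX_{t'}-\bsx}]$ using the Burkholder--Davis--Gundy inequality for the stochastic-integral term, and then applies Jensen with the concave map $r\mapsto r^{\alpha}$ to pass to the $\alpha$-moment; you instead bound the \emph{second} moment via It\^o's isometry and apply Jensen with $r\mapsto r^{\alpha/2}$. Both routes are one-line variants of the same idea and land at the same estimate; the second-moment version uses only the isometry (no BDG constant), while the first-moment version avoids the $|a+b|^2\leq 2|a|^2+2|b|^2$ step. Neither buys anything substantial here. Your observation that the drift term contributes $(t'-t)^2\leq T(t'-t)$, so that $T$ must implicitly enter the constant, is accurate and applies equally to the paper's version; both implicitly absorb $T$ into the universal constant.
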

\begin{proof}
 Using the Burkholder-Davis-Gundy inequality, we obtain
 \begin{align*}
  \EE_\tau[\abs{\bsX_t-\bsX_\tau}] &\lec
  \EE_\tau\Big[\Big|\int_{\tau}^{t}\bsb(u,\bsX_u) du\Big|\Big]
  +  \EE_\tau \Big[\Big( \int_{\tau}^{t} \abs{\bsig(u,\bsX_u)}^2 du\Big)^{\tot}\Big]   \lec
    \delta^{1/2}.
   \end{align*}
The inequality \eqref{equ:X-tau} now follows from
the fact that
 $\EE_\tau[\abs{\bsX_{t}-\bsX_\tau}^{\alpha}]
 \leq \EE_\tau[ \abs{\bsX_{t}-\bsX_\tau}]^{\alpha}.$
\end{proof}
\begin{proposition}
\label{pro:unif-bmo} Suppose that, for some $c>0$,
there exists $(h,k)\in
\Ly(\bsf,c)$ with $k\in\linf$.
Then $\bsw\in \umo$ for
any locally H\" olderian solution $(\bsv,\bsw)$
to \eqref{equ:BSDE1} with $\bsv\in \Caa_{loc}$ and 
$\norm{\bsv}_{\linf}\leq c$.
 \end{proposition}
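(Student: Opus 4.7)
The plan is to apply Itô's formula to the composition $h(\bsY_t^{\tx})$, where $\bsY^{\tx} = \bsv(\cdot, \bsX^{\tx})$, and exploit the Lyapunov inequality \eqref{equ:global-Lyapunov} to convert the resulting drift bound into control of $\int |\bsZ|^2 du$. Since $Dh$ and $D^2 h$ are bounded on the ball $\{|\bsy|\leq c\}$ that contains the range of $\bsY^{\tx}$, a standard localization followed by monotone convergence legitimizes the computation and kills the stochastic integral. For any $0\leq t\leq t'\leq T$, one obtains
\begin{equation*}
  \EE^{\tx}\Big[\int_t^{t'}|\bsZ_u^{\tx}|^2\,du\Big]
  \;\leq\; \EE^{\tx}\big[h(\bsY^{\tx}_{t'})\big]-h(\bsv(t,\bsx)) + (t'-t)\,\norm{k}_{\linf}.
\end{equation*}

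For the $\bmo$ claim, apply the same argument starting from an arbitrary stopping time $\tau$ in place of $t$ and with $t'=T$. Because $\norm{\bsv}_{\linf}\leq c$, the difference $h(\bsY^{\tx}_T)-h(\bsY^{\tx}_\tau)$ is bounded by $2\sup_{|\bsy|\leq c}|h(\bsy)|$, so the right-hand side is controlled by a universal constant depending on $h$, $c$, $T$, and $\norm{k}_{\linf}$; this immediately yields $\bsw(\cdot,\bsX^{\tx})\in\bmo$.

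The harder half is $\bsw\in\umo$, which demands that the bound vanish uniformly on $[0,T-\delta]\times K$ as $\delta\downto 0$. The $\delta\norm{k}_{\linf}$ term is trivially uniform, so everything reduces to showing
\begin{equation*}
  \sup_{t\in[0,T-\delta]}\sup_{\bsx\in K}\big|\EE^{\tx}[h(\bsv(t+\delta,\bsX_{t+\delta}))] - h(\bsv(t,\bsx))\big|\;\xrightarrow[\delta\downto 0]{}\;0.
\end{equation*}
To prove this, enlarge $K$ to a compact neighborhood $K'$ and use the local Hölder regularity of $\bsv$ on $[0,T]\times K'$ combined with the local Lipschitz property of $h$ on $\{|\bsy|\leq c\}$. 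Splitting the expectation according to $\{\bsX_{t+\delta}\in K'\}$ and its complement, Hölder continuity gives a bound of order $\EE^{\tx}[\max(\sqrt{\delta},|\bsX_{t+\delta}-\bsx|)^{\alpha}]$ on the first piece, which by Lemma \ref{lem:X-tau} is $O(\delta^{\alpha/2})$; on the complementary event, we use that $h\circ\bsv$ is uniformly bounded and that Markov/Lemma \ref{lem:X-tau} give $\PP^{\tx}[\bsX_{t+\delta}\notin K']\leq C\sqrt{\delta}/\mathrm{dist}(K,(K')^c)$. The resulting estimate is uniform in $(t,\bsx)\in[0,T-\delta]\times K$ and tends to zero.

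The main obstacle is this last uniform-continuity step: the Hölder regularity of $\bsv$ is only local, so one must carefully absorb the event where $\bsX$ exits the Hölder domain, and one must verify that the exit probability decays fast enough relative to the (possibly small) Hölder exponent. Once both pieces are combined, the $\bmo(\delta,K)$-norm of $\bsw$ is bounded by $C(\delta^{\alpha/2}+\delta)$, proving $\bsw\in\umo$.
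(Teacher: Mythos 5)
Your proposal is correct and follows essentially the same route as the paper: Itô's formula applied to $h(\bsv(\cdot,\bsX))$ plus the Lyapunov inequality gives the drift bound, boundedness of $h$ on $\{|\bsy|\le c\}$ gives the $\bmo$ property, and the $\umo$ estimate comes from the local H\"older continuity of $\bsv$ combined with Lemma \ref{lem:X-tau} and a Markov-inequality splitting of the expectation into the event where $\bsX_{t'}$ stays in a compact set and its complement. The only cosmetic difference is that the paper splits on $\{|\bsX_{t'}-\bsx|\le n\}$ rather than on an enlarged compact neighborhood $K'$, and it notes (as you do) that the exit probability $O(\sqrt{t'-t})$ is dominated by $O((t'-t)^{\alpha_n/2})$, so the small H\"older exponent causes no loss.
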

\begin{proof}
Given $(h,k) \in \Ly(\bsf,c)$ and $t-\delta \leq \tau \leq t$, we
apply \itos{} formula to $h(\bsY_u)$, where $\bsY_u=\bsv(u,\bsX_u)$.
With the boundedness of $\bsv$ and a localization argument guaranteeing that the expectations of the
local-martingale parts vanish, we obtain
\begin{align*}
    \EE_\tau\Big[ h(\bsv(t,\bsX_{t})) - h(\bsv(\tau,\bsX_\tau))\Big] \geq
   \EE_\tau\left[ \int_{\tau}^{t} \abs{\bsw(u,\bsX_u)}^2\, du\right] -
   M \delta,
\end{align*}
where $M$ is an upper bound for $k$. 

To derive a $\umo$-estimate, let $L$ be the Lipschitz constant of the
function $h$ on $B_c$. Since $\bsv\in \Caa_{loc}$, for any given $n$, there exists a constant $C_n$ such that 
\[
 |\bsv(t,\bsx) - \bsv(t', \bsx')| \leq C_n \max\{|\bsx- \bsx'|^{\alpha_n}, |t'-t|^{\alpha_n/2}\}, 
\]
for any $t,t'\in [0,T]$ and $|\bsx -\bsx'|\leq n$. The Markov inequality
coupled with Lemma \ref{lem:X-tau} then imply that
\begin{multline*}
    \EE_\tau\Big[ h(\bsv(t,\bsX_{t})) - h(\bsv(t,\bsX_\tau))\Big]
    \leq L\,
 \EE_\tau[ \abs{\bsv(t,\bsX_{t})-\bsv( t, \bsX_\tau)}] \\
 \lec \EE_\tau[\abs{\bsv(t, \bsX_{t}) - \bsv(t, \bsX_\tau)}
 \ind{\abs{\bsX_{t}-\bsX_\tau}\leq n} ] + 2 \norm{\bsv}_{\linf}
 \PP_\tau[\abs{\bsX_{t} - \bsX_\tau} > n]\\
 \lec
 \EE_\tau[ \max( \abs{\bsX_{t}-\bsX_\tau}^{\alpha_n},
 (t-\tau)^{\alpha_n/2})] + (t-\tau)^{1/2}
 \lec \delta^{\alpha_n/2}.
 \end{multline*}

 The statement then follows from
 combining above displayed estimates. \qedhere
\end{proof}

The uniqueness part of the proof is based on a result \cite[Proposition
2.1]{Frei-splitting} of Frei, which, in turn extends \cite[Proposition
1]{Tevzadze} from BSDE whose generator does not depend on $\bsy$ and
the terminal condition is small in
$\mathbb{L}^\infty$-norm,  to those whose terminal condition is small in
the $\BMO$-norm (see,
also, \cite[Theorem A.1]{Kramkov-Pulido} for a similar result). We now work with $\bsf$ which does not depend on $\bsy$ and derive a consequence of Proposition \ref{pro:unif-bmo} above.

\begin{corollary}\label{cor:lin-syst}
Let $\bsF:\TR\to \R^N$ be continuous and bounded,
and let $\bsg \in \Ca_{loc} \cap \linf$.
The linear system
 \begin{equation}\label{equ:BSDE-linear}
  d\bsY_t = -\bsF(t, \bsX_t)\, dt + \bsZ_t
  \bsig_t d\bsW_t,\ \bsY_T =\bsg(\bsX_T),
 \end{equation}
 admits a solution $(\bsv,\bsw)$, which is unique in the class of bounded
 solutions.
 Furthermore, $\bsv\in \Caa_{loc}$ and $\bsw\in \umo$.
\end{corollary}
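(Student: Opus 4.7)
The generator $\bsf(t,\bsx,\bsz):=\bsF(\tx)$ is continuous, bounded, and independent of $\bsz$, so I would first invoke Theorem \ref{thm:sufficient} in its ``bounded $\bsg$'' form. The structural hypotheses are trivially met: for (BF) take $\bsfl\equiv\bsfq\equiv\bsfs\equiv\boldsymbol{0}$ and $\bsfk=\bsF$, which lies in $\linf\subset \mathbb{L}^{q}([0,T]\times B_n)$ for every $q>1+d/2$, with all constants independent of $n$; for (AB) (and in particular (wAB)) pick any positively-spanning set $\bssa_1,\dots,\bssa_K$ of $\R^N$ and set $l(t)\equiv \max_k|\bssa_k|\,\|\bsF\|_{\linf}\in\lone[0,T]$. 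Since $\bsg\in\Ca_{loc}\cap\linf$, Theorem \ref{thm:sufficient} yields a bounded, locally H\"olderian $\bmo$-solution $(\bsv,\bsw)$.

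Next, to upgrade $\bsw(\cdot,\bsX)\in\bmo$ to $\bsw\in\umo$, I would apply Proposition \ref{pro:unif-bmo}, which requires a global Lyapunov pair $(h,k)\in\Ly(\bsf,c)$ with $k\in\linf$, where $c:=\|\bsv\|_{\linf}$. The choice
\[
h(\bsy)=\Ld\,|\bsy|^2, \qquad k\equiv 2\Ld c\,\|\bsF\|_{\linf}
\]
works: clearly $h(\boldsymbol{0})=0$, $Dh(\boldsymbol{0})=\boldsymbol{0}$, and, using uniform ellipticity $|\bsz^i\bsig|^2\geq \Ld^{-1}|\bsz^i|^2$,
\[
\tot D^2 h(\bsy):\scl{\bsz}{\bsz}_{\bsa(\tx)} - Dh(\bsy)\,\bsF(\tx) = \Ld\sum_{i=1}^N|\bsz^i\bsig|^2 - 2\Ld\,\bsy\!\cdot\!\bsF(\tx) \;\geq\; |\bsz|^2 - k
\]
whenever $|\bsy|\leq c$. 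Hence $(h,k)\in\Ly(\bsf,c)$, and Proposition \ref{pro:unif-bmo} delivers $\bsw\in\umo$.

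Finally, for uniqueness among bounded Markovian solutions, let $(\bsv,\bsw)$ and $(\bsv',\bsw')$ both be such. Fix $(\tx)\in\TR$ and write $\bsU_u:=(\bsv-\bsv')(u,\bsX_u^{\tx})$ on $[t,T]$. Subtracting the two BSDEs, the generator parts cancel because $\bsF$ does not depend on $(\bsy,\bsz)$, so
\[
\bsU_u = -\int_u^T (\bsw-\bsw')(s,\bsX_s^{\tx})\,\bsig(s,\bsX_s^{\tx})\,dW_s, \qquad \bsU_T=0.
\]
Boundedness of $\bsv,\bsv'$ makes $\bsU$ a bounded local martingale with zero terminal value, hence $\bsU\equiv 0$; evaluating at $u=t$ gives $\bsv(\tx)=\bsv'(\tx)$. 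It\^o's isometry then forces $\EE^{\tx}\!\int_t^T|(\bsw-\bsw')\bsig|^2(s,\bsX_s^{\tx})\,ds=0$; Aronson's lower bound in \eqref{equ:Aronson} shows that this occupation measure is equivalent to Lebesgue measure on $(t,T]\times\R^d$, and uniform ellipticity of $\bsig$ then yields $\bsw=\bsw'$ Lebesgue-a.e.\ on $(t,T]\times\R^d$. Letting $t\downarrow 0$ completes the uniqueness statement.

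I do not anticipate any serious obstacle: everything reduces to a direct verification of (BF) and (AB) for a constant generator, an explicit quadratic Lyapunov pair, and the standard bounded-local-martingale argument for linear BSDEs. The only point that needs some care is the passage from the $\bmo$-bound supplied by Theorem \ref{thm:sufficient} to the stronger $\umo$-bound, but this is precisely what Proposition \ref{pro:unif-bmo} is designed for, once the Lyapunov pair with bounded $k$ is exhibited.
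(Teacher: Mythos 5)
Your proof is correct, and at bottom it runs on the same machinery as the paper's: approximation of the data, the uniform estimate of Theorems \ref{thm:abstract} and \ref{thm:existence}, a quadratic Lyapunov pair, and Proposition \ref{pro:unif-bmo} for the $\umo$ property. The one substantive difference is the entry point: the paper mollifies $\bsF$ and $\bsg$ by hand (via Proposition \ref{pro:mollification}), solves the resulting Lipschitz systems, and feeds them directly into Theorems \ref{thm:abstract} and \ref{thm:existence}, whereas you invoke the packaged Theorem \ref{thm:sufficient}. That shortcut is legitimate, but only because you use solely its \emph{existence} (and $\bmo$) clause: the uniqueness clause of Theorem \ref{thm:sufficient} rests on Theorem \ref{thm:unique}(2), whose proof uses the present corollary, so citing that clause here would be circular -- this is worth stating explicitly. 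You sidestep the issue by proving uniqueness directly through the bounded-local-martingale argument; the paper dismisses this step as ``straightforward,'' and your version, including the identification $\bsw=\bsw'$ a.e.\ via vanishing quadratic variation, the lower bound in \eqref{equ:Aronson}, and uniform ellipticity of $\bsig$, is exactly the intended one. Likewise, your explicit pair $h(\bsy)=\Ld|\bsy|^2$, $k\equiv 2\Ld c\,\|\bsF\|_{\linf}$ is a correct instance of the ``quadratic $h$ and large-enough constant $k$'' that the paper merely alludes to, and it satisfies Definition \ref{def:Lyapunov} with room to spare, so Proposition \ref{pro:unif-bmo} indeed delivers $\bsw\in\umo$.
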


\begin{proof}
Let
$\set{\bsf^m,\bsg^m}$ be a sequence of smooth approximations obtained by
mollification of the functions $\bsF$ and $\bsg$, respectively. This
sequence of approximation does not depend on $(\bsy, \bsz)$ and can be constructed so that $\norm{\bsf^m}_{\linf}
+ \norm{\bsg^m}_{\linf} \leq 1 + \norm{\bsF}_{\linf} + \norm{\bsg}_{\linf}$
for all $m$, with $\set{\bsg^m}$ bounded in $\Caa_{loc}$ (cf. Proposition \ref{pro:mollification} below).

Thanks to their boundedness and independence of $\bsz$, these functions are
easily seen to satisfy the conditions of Theorem \ref{thm:abstract}. In
fact, they admit a common $c$-Lyapunov pair for any $c$ -
indeed, it is enough to choose a quadratic $h$ and large-enough constant $k$.
Thanks to the Lipschitz continuity of its coefficients,  the equation
 \begin{equation}\label{equ:BSDE-linear-m}
  d\bsY_t = -\bsf^m(t, \bsX_t)\, dt + \bsZ_t
  \bsig_t d\bsW_t,\quad \bsY_T =\bsg^m(\bsX_T),
 \end{equation}
admits a continuous Markovian solution $(\bsv^m, \bsw^m)$ for each $m$, with
$\set{\bsv^m}$ uniformly bounded.  Therefore, by
Theorem \ref{thm:existence}, there exists a locally H\" olderian solution to
\eqref{equ:BSDE-linear}, i.e., a Markovian solution $(\bsv,\bsw)$ with
$\bsv\in \Caap_{loc, b_0}$, for some $b_0\in \R^d$. Moreover, since a (global) Lyapunov pair $(h,k)$ exists with $k$ bounded, and the H\"{o}lder norm of $\bsg$ does not depend on $b_0$, the last statement of Theorem \ref{thm:abstract} implies that the H\"{o}lder norm of $\bsv$ does not depend on $b_0$, i.e., $\bsv\in \Caa_{loc}$.

It is straightforward to see that this solution is unique in the class of
all bounded solutions. Moreover, thanks to the existence of a
Lyapunov pair mentioned above, the conditions of Proposition
\ref{pro:unif-bmo} are satisfied, and, so, $\bsw\in \umo$.  \end{proof}


To complete the proof of Theorem \ref{thm:unique}, we pick that
we pick two bounded-$\bsv$ continuous solutions $(\bsv, \bsw)$
and $(\bsv', \bsw')$. By Remark \ref{rem:a priori} part (2), both of them
are locally H\"{o}lderian. Since a (global) Lyapunov pair $(h,k)$
exists with $k$ bounded, and the H\"{o}lder norm of $\bsg$ does not depend
on $b_0$, the last statement of Theorem \ref{thm:abstract} implies that the
H\"{o}lder norms of $\bsv$ and $\bsv'$ do not depend on $b_0$, i.e., that
$\bsv, \bsv'\in \Caa_{loc}$. (We can assume, without loss of generality,
that they
both belong to some $\Caa_{loc}$, with the same exponent sequence
$\seq{\alpha}$.)  We define $t_0\in [0,T]$ by
\[ t_0= \inf\sets{t \in [0,T]}{ \bsv(u,\bsx)=\bsv'(u,\bsx)\text{ for all }
\bsx\in\ \R^d, u \in [t,T]}. \]
Let us assume - contrary to the conclusion of the theorem - that $t_0>0$.
When restricted to $[0,t_0]$, both $(\bsv,\bsw)$ and $(\bsv', \bsw')$ are
bounded Markovian solutions to \eqref{equ:BSDE1} with the terminal
condition $\bsg=\bsv(t_0,\cdot)= \bsv'(t_0,\cdot)$. They differ, however,
on each interval of the form $[t_0-\delta,t_0]$, $\delta>0$.

Let be
$(\bsv_f,
\bsw_f)$,
be the unique bounded solution
to the auxiliary equation
\eqref{equ:BSDE-linear} on $[0,t_0]$ with
$\bsF= \bsf(\cdot, \cdot, 0)$ and
$\bsg = \bsv(t_0,\cdot)$.
The conditions of Corollary \ref{cor:lin-syst} above
are satisfied, so  we have
\begin{align}
\label{equ:Frei-cond}
 \lim_{\delta\downto 0} \sup_{t_0 -\delta \leq \tau \leq t_0} \EE_\tau
\Big\|\Big[ \int_{\tau}^{t_0}
\abs{\bsw_f(u,\bsX_u)}^2\, du\Big] \Big\|_{\linf}=0.
\end{align}
At this point, everything is ready for the application of the
aforementioned local uniqueness result
of Frei, which we
summarize for the reader's convenience:
when the quantity \[\sup_{t_0-\delta \leq \tau \leq t_0} \Big\|\EE_\tau \Big[\int_{\tau}^{t_0} |\bsw_f(u, \bsX_u)|^2 du\Big]\Big\|_{\linf},\] which is the $\BMO$ norm of the terminal condition $\bsg$ on $[t_0-\delta, t_0]$, is small, solution $(\bsv, \bsw)$ to \eqref{equ:BSDE1} is unique in a class $\sC$ of $(\hat{\bsv}, \hat{\bsw})$ with the $\bmo$-norm of $\hat{\bsw}$ on $[t_0-\delta, t_0]$, i.e., the quantity \[\sup_{t_0-\delta \leq \tau \leq t_0} \Big\|\EE_\tau \Big[\int_\tau^{t_0} |\hat{\bsw}(u, \bsX_u) \sigma(u, \bsX_u)|^2 du\Big]\Big\|_{\linf},\] is sufficiently small.

Thanks to \eqref{equ:Frei-cond}, Frei's result applies when $t \in [0,t_0)$
is chosen close enough to $t_0$. By making it even closer, if necessary, we
can use Proposition \ref{pro:unif-bmo} to make sure that both of our
solutions $(\bsv,\bsw)$ and $(\bsv',\bsw')$ belong to the class $\sC$.
Therefore, thanks to the fact that $\bsX_u$ has a full support under
$\PP^{\tx}$ for $t < u \leq t_0$,
we conclude that $\bsv(u,\cdot) =
\bsv'(u,\cdot)$ for each $t<u\leq  t_0$ - a contradiction with our
definition of $t_0$. To show that $\bsw=\bsw'$, a.e., we simply appeal to
Lemma \ref{lem:weak-Jacobian} above.

\subsection{Proof of Theorem \ref{thm:sufficient}}
Our proof of Theorem \ref{thm:sufficient} proceeds in two steps. In the
first step, we construct a sequence of Lipschitz approximations to the generator
$\bsf$ and the terminal condition $\bsg$, making sure there is enough
uniformity for the construction of a uniform Lyapunov pair.
Next, we observe that those approximation satisfy the condition
(AB) or (wAB), producing a uniform, a-priori bound in $\linf$. Lastly, we
apply the approximation Theorem \ref{thm:abstract}.

\subsubsection{Lipschitz approximations} We start by outlining a
Lipschitz-approximation procedure that will be used in the sequel.  We
extend slightly the notation for the class of functions satisfying the
condition (BF) from Definition \ref{def:BF}, by including a general, but small,
quadratic term; its significance is explained in Remark \ref{rem:BF} and
the additional term corresponds to the `error' in \eqref{equ:BF-approx}.
If a function $\bsf: [0,T]\times \R^d\times \times \R^{N} \times \R^{N\times
d}\rightarrow \R^N$ admits the following decomposition
\begin{align}
\label{equ:BF-decomp}
 \bsf(t,\bsx,\bsy, \bsz) = \text{diag}(\bsz \bsfl(t,\bsx,\bsy, \bsz)) +
 \bsfq(t,\bsx,\bsy, \bsz) + \bsfs(t,\bsx,\bsy,\bsz) + \bsfe(t,\bsx,\bsy,\bsz) +
 \bsfk(t,\bsx),
\end{align}
where $\bsfl, \bsfq, \bsfs, \bsfk$ satisfy the conditions of Definition
\ref{def:BF},
and for each $n\in\N$ we have \[ |\bsfe(t, \bsx,\bsy,\bsz)|\leq \epsilon_n
(1+|\bsz|^2), \text{  for some } \epsilon_n>0
\text{ and all } (t,\bsx,\bsz)\in [0,T]\times B_n(b_0)\times \R^N\times \R^{N\times d}, \]
then we say that $\bsf$ satisfies
the \define{approximate condition (BF)}, and write
$\bsf\in \BFa(\seq{C},\seq{\kappa}, \seq{q}, \seq{\eps})$.

As Proposition \ref{pro:mollification} below shows, a pleasant feature of the
condition (BF) (and its approximate version) is that it allows for
approximation by more regular functions, in a uniform way. More precisely, $\bsf$ can be approximated by a sequence of regular functions $\{\bsf^m\}$, such that, even though the functions $\bsfl,
\bsfq, \bsfs, \bsfk$ and $\bsfe$ in the decomposition of $\bsf^m$ may depend on $m$, the constant sequences $(\seq{C}, \seq{\kappa}, \seq{q}, \seq{\eps})$ do not. This uniformity is essential to construct a sequence of universal Lyapunov functions $\seq{h}$ for the approximation sequence $\{\bsf^m\}$.

\begin{proposition}[Approximations preserving the approximate condition
(BF)]\label{pro:mollification}\
\begin{enumerate}
\item For each $\bsg\in \Caa_{loc}$, then there exists a sequence
$\{\bsg^m\}$, bounded in $\Caa_{loc}$, such that
each $\bsg^m$ is Lipschitz (globally in all arguments)
 and $\bsg^m \to \bsg$ everywhere.
\item There exists a constant $M$, which depends only on $d$ and $N$ such
that for each $\bsf\in \BFa(\seq{C}, \seq{\kappa}, \seq{q}, \seq{\eps})$,
there exists a sequence $\{\bsf^m\}$ and a subquadratic sequence
$\{\kappa'_n\}$ (as in Definition \ref{def:BF}) such that
\begin{enumerate}
\item $\bsf^m\in \BFa(M C_n, \kappa'_n, q_n, M \eps_n)$
 and is globally Lipschitz in all of its arguments,
\item $\bsf^m \to \bsf$ pointwise, locally uniformly in $\bsz$,
and
\item $\norm{\bsf^m(\cdot,\cdot,0)}_{\mathbb{L}^{q_n}([0,T]\times B_n(b_0))}$ is bounded uniformly
in $m$, for each $n\in \N$.
\end{enumerate}
\end{enumerate}
\end{proposition}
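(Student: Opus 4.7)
My plan is to handle both parts by the standard truncate-and-mollify recipe, with the extra care in part (2) going to preserving the five summands of the Bensoussan--Frehse decomposition separately. Fix a nonnegative standard mollifier $\rho$ of unit mass supported in the unit ball, write $\rho_{1/m}$ for its rescaling to scale $1/m$, and let $\chi_m\in C_c^\infty(\R^d;[0,1])$ satisfy $\chi_m\equiv 1$ on $B_m$ and $\supp\chi_m\subseteq B_{m+1}$. For part (1) I set
\[
\bsg^m := (\chi_m\,\bsg)*\rho_{1/m},
\]
which is smooth and compactly supported, hence globally Lipschitz (with constant depending on $m$). Pointwise convergence $\bsg^m\to\bsg$ is the approximate-identity property of $\rho_{1/m}$ together with continuity of $\bsg$. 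For the $\Caa_{loc}$-bound, note that if $m\geq n+2$ then $\chi_m\equiv 1$ on the support of $\rho_{1/m}(\bsx-\cdot)$ for every $\bsx\in B_n$, so $\bsg^m = \bsg*\rho_{1/m}$ on $B_n$; convolution with a probability measure does not increase either the sup-norm or the H\"older seminorm, yielding a uniform-in-$m$ bound of the form $\|\bsg^m\|_{\mathbb{L}^\infty(B_n)}+[\bsg^m]_{\alpha_n,B_n}\leq \|\bsg\|_{\mathbb{L}^\infty(B_{n+1})}+[\bsg]_{\alpha_n,B_{n+1}}$.

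For part (2), I would approximate each summand of the decomposition \eqref{equ:BF-decomp} separately. For the continuous pieces $\bsfl,\bsfq,\bsfs,\bsfe$ I would first multiply by a joint cutoff $\chi_m(\bsx)\chi_m(\bsz)$ and then mollify jointly in $(t,\bsx,\bsz)$; for $\bsfk$ (only assumed locally in $\mathbb{L}^{q_n}$) the same cutoff-mollification would be applied only in $(t,\bsx)$. The approximant is then assembled via
\[
\bsf^m(t,\bsx,\bsz) := \diag\bigl(\bsz\,\bsfl^m(t,\bsx,\bsz)\bigr) + \bsfq^m(t,\bsx,\bsz) + \bsfs^m(t,\bsx,\bsz) + \bsfe^m(t,\bsx,\bsz) + \bsfk^m(t,\bsx).
\]
Global Lipschitz continuity of $\bsf^m$ in $(t,\bsx,\bsz)$ is automatic: each mollified piece is smooth with compactly supported (hence globally bounded) derivative, and the prefactor $\bsz$ in the $\diag$-term is tamed by the compact $\bsz$-support of $\bsfl^m$. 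The locally-uniform-in-$\bsz$ convergence $\bsf^m\to\bsf$ follows componentwise from continuity and the standard mollifier estimate, and the uniform $\mathbb{L}^{q_n}$-bound on $\bsf^m(\cdot,\cdot,0)$ is obtained via Young's inequality applied to $\bsfk^m$ together with the uniform local boundedness of the remaining summands at $\bsz=0$.

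The delicate step, and the one I expect to be the main obstacle, is to recover the \emph{structural} BF growth bounds with constants degraded only by a universal factor $M=M(d,N)$. A direct mollification on scale $1/m$ produces, for example,
\[
|\bsfl^m(t,\bsx,\bsz)|\leq \sup_{(t',\bsx')\in[0,T]\times B_{n+1/m}}\sup_{|\bsw|\leq 1/m}|\bsfl(t',\bsx',\bsz-\bsw)|\leq C_{n+1}(2+|\bsz|),
\]
which has the shape $MC_{n+1}(1+|\bsz|)$ rather than $MC_n(1+|\bsz|)$, and similarly for the quadratic-triangular bound on $\bsfq^m$, the subquadratic bound on $\bsfs^m$ (where one passes to $\kappa'_n(w):=2\kappa_n(w+1)$, still $o(w^2)$), and the small-error bound on $\bsfe^m$. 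I would absorb this unwanted $n\to n+1$ index shift by first replacing the given sequence $\seq{C}$ (and similarly $\seq{\kappa}$ and $\seq{\eps}$) by its non-decreasing regularization $\tilde C_n:=\max_{k\leq n+1} C_k$; this is still a legal BF sequence for $\bsf$ itself, and after the substitution the shift is absorbed into the universal constant $M$. The quadratic-triangular structure of $\bsfq^m$ survives the convolution because triangularity is a pointwise componentwise property that commutes with mollification, and Young's inequality provides the $\mathbb{L}^{q_n}$-bound on $\bsfk^m$ with no loss beyond a dimensional constant.
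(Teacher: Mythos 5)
Part (1) of your proposal is essentially correct and follows the same truncate-and-mollify recipe as the paper. The gap lies in part (2).

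Your construction assembles $\bsf^m$ by mollifying each summand of the BF decomposition \emph{separately} and then re-assembling, and you justify the pointwise convergence $\bsf^m\to\bsf$ ``componentwise from continuity.'' But Definition~\ref{def:BF} only assumes the \emph{full function} $\bsf$ is continuous; the individual pieces $\bsfl,\bsfq,\bsfs,\bsfe$ are merely Borel functions with growth bounds, and $\bsfk$ is only locally $\mathbb{L}^{q_n}$. None of them is assumed continuous, and your reference to ``the continuous pieces'' imports a hypothesis that is not there. As a result, $\bsfq*\rho_{1/m}$, for instance, need not converge to $\bsfq$ at every point, and the componentwise argument does not go through. (The conclusion can in fact be salvaged, because the pointwise errors in the separate mollifications cancel when summed, since they reassemble into a mollification of the continuous $\bsf$ plus a small remainder; but this is exactly the kind of cancellation your proposal must track and does not.)

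The paper avoids the issue by mollifying $\bsf$ directly, via $\bsf^m=(\bsf*\eta^m)(\cdot,\Pi^m(\cdot),\Pi^m(\cdot))$, so pointwise and locally uniform convergence follow from continuity of $\bsf$ alone. The decomposition of $\bsf^m$ into BF pieces then follows by linearity of convolution, but there is a nontrivial step that your proposal also skips: mollifying the quadratic-linear term $\bsfd=\diag(\bsz\,\bsfl)$ does \emph{not} commute with the $\diag(\bsz\,\cdot)$ structure. One has $\bsfd*\eta^m = \diag\bigl(\bsz\,(\bsfl*\eta^m)\bigr) - \bsfL^m$, where $\bsfL^m$ is a mollification of $\bsfl$ against a kernel of the form $z_{ij}\,\eta^m$; since $\bsfl$ has linear growth and the kernel is $O(1/m)$ in size, $\bsfL^m$ is subquadratic and must be shifted into the $\bsfs$ slot of the decomposition. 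By defining $\diag(\bsz\,\bsfl^m)$ directly you build the BF structure for free, but you then lose the identity $\sum_j\bsf_j^m=\bsf*\rho_{1/m}$ (up to cutoff effects) by precisely this same remainder, which is what you need for the convergence to $\bsf$. You would also want all pieces mollified with the same kernel in the same variables — e.g. mollify $\bsfk$ in $(t,\bsx,\bsz)$ with a product kernel, which is harmless since $\bsfk$ is $\bsz$-independent — so that the pieces genuinely sum to a mollification of $\bsf$. Finally, the ``regularize the $\seq{C}$-sequence'' trick you propose for the index shift $n\to n+1$ is a cosmetic workaround; the paper's use of $\Pi^m$ rather than a cutoff keeps the relevant ball within $B_{n+1}$ in the same way, so neither version produces a bound involving only $C_n$ without some such monotonicity normalization.
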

\begin{proof}
The idea is to mollify using smooth kernels with a compact
support and linearize the tails of the quadratic parts.
In this spirit, we define the $C^{\infty}(\R^d)$-function
 \[ \eta(\bsx) = C e^{\oo{|\bsx|^2-1}} \ind{\abs{\bsx}<1},\]
 with the $C$ is chosen so that $\int \eta(\bsx) d\bsx =1$. We use the same
 notation $\eta$ (and the same formula) for its $C^{\infty}(\R)$ and
 $C^{\infty}(\R^{N\times d})$ versions.

 For each $m\in \N$, we set $\eta^m(\bsx)= m^d \eta(m\bsx)$ and
 $\eta^m(t,\bsx, \bsz) = m^{1+d+N\times d} \eta(m t) \eta(m \bsx) \eta(m
 \bsz)$, and use the standard notation for mollification, namely,
 \[ (\bsg \ast \eta)(\bsx) = \int \bsg(\bsx-\bar{\bsx}) \eta(\bar{\bsx})\,
 d\bar{\bsx},\]
 as well as for its $\eta(\tx,\bsz)$-version.  We refer the reader to
 \cite[Appendix C.4, Theorem 6]{evans} for standard properties of
 mollification.

 We also define the partial-truncation function
 $\Pi^m(\bsw)= \tfrac{|\bsw|\wedge m}{|\bsw|} \bsw$, with $\Pi^m(0)=0$;
 clearly,
 $\Pi^m$ is Lipschitz and $|\Pi^m(\bsw)|= |\bsw|\wedge m$.
 Most of the approximations in this proof will be of the form
 \[ \bsg^m(\bsx) = (\bsg \ast \eta^m)
 (\Pi^m(\bsx)) \]
 in the $\R^d$ case and
 \begin{equation}\label{equ:fm}
 \bsf^m (\tx,\bsz) = ( \bsf \ast \eta^m)(t, \Pi^m(\bsx), \Pi^m(\bsz))\end{equation}
 in the $\R^{1+d+N\times d}$-case after extending the
 domain of $\bsf$ via $\bsf(t, \bsx, \bsz) = \bsf(0, \bsx, \bsz)$ for $t< 0$, and
 $\bsf(t, \bsx, \bsz) = \bsf(T, \bsx, \bsz)$ when $t> T$. The same, superscript-$m$, notation will
 be used
 without explicit mention,
 when this operation is applied to other functions below.

\medskip

 (1) For $\bsg\in \Caa_{loc, b_0}$,
 one easily checks that each $\bsg^m$ is Lipschitz, bounded, the
 sequence $(\bsg^m)$ is bounded in $\Caa_{loc, b_0}$, and the convergence
 $\bsg^m \to \bsg$
 follows from the standard properties of
 mollification.

\medskip

 (2) Given a function $\bsf$ satisfying the assumptions in part (2),
  since the convolution $\bsf\ast \eta^m$ is smooth, and $\Pi^m$ is
 Lipschitz and bounded, each approximation $\bsf^m$ is globally
 Lipschitz in all of its variables.
 Furthermore, when $\bsz^m \rightarrow
 \bsz$, we have $\Pi^m(\bsz^m) = \bsz^m$ for sufficiently large $m$ and,
 so, using the properties that mollifications of a continuous function converge locally uniformly, we have
 $\bsf^m \to \bsf$  locally uniformly in $\bsz$.

\smallskip

 To verify the condition $\BFa$ for $\bsf^m$, we fix $n\in\N$, and start with the
 quadratic-triangular component $\bsfq$. Thanks to the fact
 that $\eta$ is of compact support, all components $\bsfq^{m,i}$ of the approximation
 $\bsq^m$ have the following property
 \[|\bsfq^{m,i}(t, \bsx,
 \bsz)| \leq C_n (2+ \sum_{j=1}^i |\bsz^j|^2) \leq 2 C_n (1+\sum_{j=1}^i
 \abs{\bsz^j}^2) \text{  on }
 [0,T]\times B_n \times \R^{N\times d}. \]
 Therefore
 $\bsfq^m$ is quadratic-triangular as well, with $C'_n = 2 C_n$.
 A similar argument can be applied to $\bsfe$.
 For $\bsfk$, it
 follows from \cite[Appendix C.4, Theorem 6 (iv)]{evans} that $\bsfk^m$
 converges to $\bsfk$ in $\mathbb{L}^{q_n}([0,T]\times B_n)$ for each $n$;
 in particular,
 the sequence
 $\norm{\bsfk^m}_{\mathbb{L}^{q_n}([0,T]\times B_n)}$, which
 differs from
 $\norm{\bsf^m(\cdot,\cdot,0)}_{\mathbb{L}^{q_n}([0,T]\times B_n)}$ only by
 a constant, is bounded
 in $m$.

 For $\bsfl$, a direct approximation of
 $\bsfd(\tx,\bsz) = \diag (\bsz \bsfl(\tx,\bsz))$
 does not produce the function in the same class; it needs an adjustment by
 a subquadratic term.
To see that we note that
 \[
 \bsfd^m(\tx,\bsz)
 = \diag ( \bsz\,
 \bsfl^m(\tx,\bsz)) - \bsfL^m(\tx,\bsz),\]
 where
 $\bsfd^m(\tx,\bsz)=( \bsfd  \ast \eta^m)  (t,\Pi^m(\bsx), \Pi^m(\bsz))$
 and
 \[ (\bsfL^i)^m(\tx,\bsz)= \sum_{j=1}^d (\bsfl_{ji} \ast
 \hat{\eta}^m_{ij} )(t, \Pi^m(\bsx), \Pi^m(\bsz))
 \eand
 (\hat{\eta}^m)_{ij}(\tx,\bsz) = z_{ij} \eta^m(\tx,\bsz).\]
 The function $\bsfl^m$ grows at most linearly, with the constant $C'_n$
 bounded from above by $C_n$ multiplied by a constant which depends only on
 $d$ and $N$. On the other hand,
 the components of $\bsfL^m$ are mollifications of
 linearly-growing functions by kernels $\hat{\eta}^m_{ij}$, all of which are
 dominated by $\eta^m$ in absolute
 value. Therefore,
 the functions $\bsfL^m$
 are of subquadratic growth, uniformly in $m$.

The subquadratic growth of $\bsfs$ ensures the same property for
$\bsfs^m$, uniformly in $m$, perhaps with a different growth bound
$\kappa'_n$. Therefore, each $\bsf^m$ admits a
decomposition as in \eqref{equ:BF-decomp} into the functions $\bsfd^m +
\bsfL^m$,
$\bsq^m$, $\bss^m- \bsfL^m$, $\bsfe^m$ and $\bsfk^m$ which have all the
required properties.
\end{proof}

\subsubsection{Existence of Lyapunov pairs}
Proposition \ref{pro:h-exists} below confirms Proposition \ref{thm:BF-Lyap} and Remark \ref{rem:BF} part (1). Its proof is partially based on a construction in
\cite[Proposition 3.1, p.~174]{Bensoussan-Frehse}.

\begin{proposition}
\label{pro:h-exists} Let $\{\bsf^m\}$ be a sequence in $\BFa(\{C_n\}, \{\kappa_n\},
\{q_n\}, \{\eps_n\})$ with
 $\norm{\bsf^m(\cdot,\cdot,0)}_{\mathbb{L}^{q_n}([0,T]\times B_n(b_0))}$
 bounded, for each $n\in\N$.
Then, for each sequence $\seq{c}$ of positive numbers, there exists a
sequence $\seq{\bar{\eps}}$ in $(0,\infty)$ such that if
$\eps_n\leq \bar{\eps}_n$ for all $n$,
there exists families $\seq{h}$ and $\seq{k^m}$ such that, for each $m$,
$(\{h_n\}, \{k^m_n\})$ is
a local $\seq{c}$-Lyapunov pair for $\bsf^m$. In particular, when $\seq{C}, \seq{\kappa}, \seq{q}$ and $\seq{\eps}$ are constants in $n$, there exists a $c$-Lyapunov pair for $\bsf$, for any $c>0$.
\end{proposition}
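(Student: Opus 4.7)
My plan is to construct $h_n$ as an explicit sum of single-variable hyperbolic cosines,
\[
h_n(\bsy) \;=\; \sum_{i=1}^N \alpha_{i,n}\bigl(\cosh(\gamma_{i,n}y_i)-1\bigr),
\]
with positive constants $\alpha_{i,n},\gamma_{i,n}$ chosen by reverse induction on $i$, from $i=N$ down to $i=1$. This ansatz automatically satisfies $h_n(\boldsymbol{0})=0$ and $Dh_n(\boldsymbol{0})=\boldsymbol{0}$, its Hessian is diagonal, and the crucial pointwise inequality $\phi_i''(y)=\gamma_{i,n}^{2}\cosh(\gamma_{i,n}y)\geq \gamma_{i,n}|\phi_i'(y)|$ (equivalent to $\cosh\geq|\sinh|$) lets the diagonal $i$-th Hessian term dominate the same-level first-derivative term once $\gamma_{i,n}$ is taken large.

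I would verify \eqref{equ:Lyapunov} on $\TBn\times\{|\bsy|\leq c_n\}\times \R^{N\times d}$ by decomposing $Dh_n\cdot\bsf^m$ via \eqref{equ:BF-decomp} and tracking the coefficient of each $|\bsz^j|^2$. The quadratic-triangular bound on $\bsfq^i$ contributes to $|\bsz^j|^2$ only when $i\geq j$, giving total coefficient $C_n\sum_{i\geq j}\alpha_{i,n}|\phi_i'(y_i)|$; ellipticity of $\bsa$ supplies the opposing Hessian term $(2\Lambda)^{-1}\alpha_{j,n}\phi_j''(y_j)$. Splitting this in half, one half handles the constant ``$1$'' plus $C_n\sum_{i>j}\alpha_{i,n}\gamma_{i,n}\sinh(\gamma_{i,n}c_n)$ (a constant involving only previously-fixed parameters), while the other half absorbs the same-level term $C_n\alpha_{j,n}|\phi_j'(y_j)|$ via the pointwise inequality above, provided $\gamma_{j,n}\geq 4\Lambda C_n$. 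The diagonal-quadratic contribution $\diag(\bsz\bsfl)$ is folded into the same estimates via $|\bsz^i|(1+|\bsz|)\leq 2(1+|\bsz|^2)$ and a Cauchy--Schwarz splitting, possibly at the cost of enlarging a few constants by a universal factor depending on $N$ and $d$. The induction therefore reads: pick $\gamma_{j,n}$ as a function of $C_n$ and $\Lambda$ only, then choose $\alpha_{j,n}$ as a sufficiently large multiple of $1+\sum_{i>j}\alpha_{i,n}\gamma_{i,n}\sinh(\gamma_{i,n}c_n)$.

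The remaining three components of $\bsf^m$ are packaged into $k^m_n$. Writing $L_n:=\sup_{|\bsy|\leq c_n}|Dh_n(\bsy)|$, the subquadratic property $\kappa_n(w)/w^2\to 0$ yields, for every $\delta>0$, a constant $C(\delta,n)$ with $\kappa_n(|\bsz|)\leq \delta|\bsz|^2+C(\delta,n)$, so $|Dh_n\cdot\bsfs^m|\leq \delta L_n|\bsz|^2+L_nC(\delta,n)$; the error term satisfies $|Dh_n\cdot\bsfe^m|\leq L_n\eps_n(1+|\bsz|^2)$; and $|Dh_n\cdot\bsfk^m|\leq L_n|\bsfk^m|$. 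Define $\bar\eps_n$ so that $L_n\bar\eps_n$ together with a small fixed $\delta$ consume only, say, half of the $|\bsz|^2$-slack that the previous step left in the Hessian budget. Setting $k^m_n(t,\bsx):=L_n\eps_n+L_n C(\delta,n)+L_n|\bsfk^m(t,\bsx)|$ then yields \eqref{equ:Lyapunov} for $\bsf^m$ with the family $\seq{h}$ independent of $m$; the hypothesis that $\|\bsf^m(\cdot,\cdot,0)\|_{\mathbb L^{q_n}([0,T]\times B_n)}$ is bounded uniformly in $m$, together with the uniform $C_n$-, $\kappa_n(0)$- and $\eps_n$-bounds on $\bsfq^m(\cdot,\cdot,0),\bsfs^m(\cdot,\cdot,0),\bsfe^m(\cdot,\cdot,0)$, implies $\|\bsfk^m\|_{\mathbb L^{q_n}}$ is uniform in $m$, hence so is $\|k^m_n\|_{\mathbb L^{q_n}}$. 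For the global statement, when $\seq C,\seq\kappa,\seq q,\seq\eps$ are constant in $n$, the same recipe produces a single tuple $(\alpha_i,\gamma_i)_{i=1}^{N}$ and a bounded $k$ valid on all of $\{|\bsy|\leq c\}$.

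The main obstacle is the tension between needing $\gamma_{i,n}$ large (to exploit $\phi_i''\geq \gamma_{i,n}|\phi_i'|$ at the same level) and the resulting exponential growth of $\sinh(\gamma_{i,n}c_n)$ which enters the $|\bsz^j|^2$-coefficient at lower indices $j<i$. The triangular structure of $\bsfq$ is precisely what breaks this apparent circularity: once the parameters at level $i$ are chosen, the value $\sinh(\gamma_{i,n}c_n)$ becomes a fixed constant that enters the $j$-th Hessian budget only through $\alpha_{j,n}$, which is free to be taken as large as necessary, so the downward induction closes.
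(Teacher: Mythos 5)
Your construction handles the quadratic-triangular part $\bsfq$, the subquadratic, error and $\bsz$-independent parts correctly, and your downward induction for $\bsfq$ is essentially the same bookkeeping the paper performs. The genuine gap is in the quadratic-linear term $\diag(\bsz\,\bsfl)$. Its $i$-th component is $\bsz^i\bsfl^{\cdot i}$ with $|\bsfl|\leq C_n(1+|\bsz|)$, so it contributes $\alpha_{i,n}\gamma_{i,n}|\sinh(\gamma_{i,n}y_i)|\,C_n|\bsz^i|(1+|\bsz|)$ to $Dh_n\cdot\bsf^m$. Any Young/Cauchy--Schwarz splitting of $|\bsz^i|\,|\bsz|$ produces a term $\tfrac{\mu_i}{2}|\bsz|^2$, i.e.\ a contribution to the coefficient of \emph{every} $|\bsz^j|^2$, including $j>i$, plus a same-level term $\tfrac{(C_nA_i)^2}{2\mu_i}|\bsz^i|^2$ with $A_i=\alpha_{i,n}\gamma_{i,n}|\sinh(\gamma_{i,n}y_i)|$. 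Absorbing the latter into the diagonal Hessian budget $\tfrac{1}{2\Lambda}\alpha_{i,n}\gamma_{i,n}^2\cosh(\gamma_{i,n}y_i)|\bsz^i|^2$ forces $\alpha_{i,n}\sinh^2(\gamma_{i,n}c_n)/\cosh(\gamma_{i,n}c_n)\lesssim\mu_i$, which is incompatible with $\alpha_{i,n},\gamma_{i,n}$ large; conversely, keeping $\mu_i$ of order one injects a coefficient proportional to the large quantity $A_i$ into the budgets of levels $j>i$ that were already closed earlier in your downward induction. The triangular structure that rescues $\bsfq$ simply does not exist for $\diag(\bsz\,\bsfl)$, and a diagonal ansatz $h_n(\bsy)=\sum_i\alpha_{i,n}(\cosh(\gamma_{i,n}y_i)-1)$ cannot break this circularity.

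This is exactly why the paper (following Bensoussan--Frehse) uses the nested exponentials $H_k=\exp(\cosh(\alpha_ky_k)+H_{k+1})$ with $h=H_1$: the resulting Hessian is not diagonal and carries the extra positive terms $\sum_iP_i^{-1}|\bseta^i(\bszeta)|^2$ with $\bseta^i=\sum_{j\geq i}A_j\bszeta^j$. A summation-by-parts identity rewrites $Dh\cdot\diag(\bsz\,\bsfl)$ as $\sum_i\bseta^i(\bsld^i-\bsld^{i-1})^\top$, and Young's inequality then splits it into $\tot\sum_iP_i^{-1}|\bseta^i|^2$ (absorbed by those off-diagonal Hessian terms) plus $C\sum_iP_i(1+|\bszeta|^2)$, whose $|\bszeta^j|^2$-coefficient involves only the $P_i$ with $i\geq j$ by the monotonicity $P_1\leq\dots\leq P_N$ --- so it respects the downward induction. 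To repair your proof you would need either to adopt this nested construction, or to add cross terms to $h_n$ producing an analogue of $\sum_iP_i^{-1}|\bseta^i|^2$; as written, the step ``folded into the same estimates via $|\bsz^i|(1+|\bsz|)\leq 2(1+|\bsz|^2)$'' does not close.
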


\begin{proof}
We restrict the spatial domain to $B_n(b_0)$ and suppress the subscript $n$
throughout the proof. When $\seq{C}, \seq{\kappa}, \seq{q}$ and $\seq{\eps}$ are constants in $n$, the spatial domain is $\R^d$. Since $\{\bsf^m\}$ satisfies the approximate condition
$(BF)$
with uniform growth sequences $\seq{C}$, $\seq{\kappa}$, $\seq{q}$ and
$\seq{\eps}$, we suppress the superscript $m$ as well.
For $\bsy=(y_1,\dots, y_N)\in\R^N$ and $k=1, \dots, N$, we define
 \[ G_k(\bsy) = \cosh(\alpha_k y_k) \quad\eand\quad S_k(\bsy) = \sinh (\alpha_k
 y_k),\]
with $\alpha_1,\dots, \alpha_N > 0$ to be determined later.
 Recursively, we set
 \[ H_{N+1}= 0 \quad\eand \quad H_k = \exp(G_k + H_{k+1}),  \efor k=1,\dots, N,\]
as well as \[ P_0=1, \quad P_k=\textstyle\prod_{i=1}^k H_i,\quad k=\ft{1}{N},\]
noting that $1\leq P_1\leq P_2\leq \dots \leq P_N$.
The (linear combinations) of functions $G_k$ and $S_k$ play the
role of $\beta$, and
$H_{k}$ the role $X_{\nu}$, in the notation
of \cite[equation (3.6), p. 174]{Bensoussan-Frehse}. 
With $A_i=\alpha_i
S_i P_i$ we compute
\begin{align*}
D_i H_k &= A_i P_{k-1}^{-1} \inds{i \geq k},
\end{align*}
where $D_i$ stands for $\tfrac{\partial}{\partial y^i}$, so that
\begin{align*} D_j P_i &= P_i \sum_{k=1}^i (H_k)^{-1} D_j H_k = P_i A_j
\sum_{k=1}^{i\wedge j} (P_k)^{-1}.  \end{align*}
Setting $h=H_1$ and $\tA_i = \alpha^2_i G_i P_i$,  for $1\leq
i,j\leq N$, we obtain
\begin{align}
\label{equ:dth}
D_i h = A_i \quad \eand \quad
 D_{ij} h   = \tA_i \inds{i=j}+ A_i A_j \sum_{k=1}^{i\wedge j} P_k^{-1}.
\end{align}

To prove \eqref{equ:Lyapunov}, we pick
an $N\times d$-matrix $\bsz$ and set
$\bszeta^i= \bsz^i \bsig$ where $\bsz^i$ is the $i$-th row of $\bsz$, so that
$\tfrac{1}{\Lambda} |\bsz^i|^2 \leq |\bszeta^i|^2 \leq \Lambda
|\bsz^i|^2$. Thanks to \eqref{equ:dth}, we obtain
\begin{align*}
    D^2 h : \scl{\bsz}{\bsz}_{\bsa} &= \sum_{ij} D_{ij} h\,
    \bszeta^i (\bszeta^j)^{\top} =
    \sum_{i=1}^N \tA_i \abs{\bszeta^i}^2
+  \sum_{i=1}^N P_i^{-1} \abs{\bseta_i(\bszeta)}^2,
\end{align*}
where
$\bseta^i(\bszeta) = \sum_{j=i}^N A_j \bszeta^j$.

To deal with the $Dh \bsf$-part in \eqref{equ:Lyapunov},
we consider various
constituents in  \eqref{equ:BF} separately. We reuse the letter $C$ for
any constant - possibly differing from place to place - which depends only
on the sequences $\seq{C}$ and $(\kappa_n)$ from statement, or the
universal constants.

- \emph{The Quadratic-Linear part:} Let $\bsld^i$ denote the $i$-th row of
the matrix  $\bsfl^{\top} \sigma^{-1}$, so that
\[ \diag( \bsz \bsfl)_i = (\bsz \bsfl)_{ii} = \bszeta^i
(\bsld^i)^{\top}.\]
If we extend the definition of $\bseta=\bseta(\bszeta)$ and $\bsld$ by setting
$\bseta^{n+1}=\bsld^0=\boldsymbol{0}$, `summation by parts' implies that
\[ \sum_{i=1}^N A_i\,  \bszeta^i (\bsld^i)^{\top}
= \sum_{i=1}^N (\bseta^{i} - \bseta^{i+1})   (\bsld^i)^{\top} =
\sum_{i=1}^N \bseta^i  (\bsld^i - \bsld^{i-1})^{\top}.\]
The fact that $|\bsfl|\leq C(1+|\bsz|)$ and Young's inequality yield
\begin{align*}
  Dh \diag(\bsz \bsfl) &=
  \sum_{i=1}^N
 \bseta^i  (\bsld^i - \bsld^{i-1})^{\top}
 \leq  \sum_{i=1}^N C \abs{\bseta^i} (1+\abs{\bszeta})\\
 &\leq \sum_{i=1}^N  C P_i (1+\abs{\bszeta}^2) + \tot \sum_{i=1}^N P_i^{-1}
  \abs{\bseta^i(\bszeta)}^2.
\end{align*}

- \emph{The Quadratic-Triangular part:}
\begin{align*}
  Dh\, \bsfq  &=
  \sum_{i=1}^N
  A_i \sfq^i \leq C
  \sum_{j=1}^N (1+\abs{
    \bszeta^j}^2)
  \sum_{i=j}^N \abs{A_i} \leq  C
  \sum_{j=1}^N (1+\abs{\bszeta^j}^2)
  \sum_{i=j}^N  \abs{A_i}.
\end{align*}

- \emph{Choosing constants $\alpha_1, \dots, \alpha_N$:}
The inequality
$\sum_{j=1}^N P_i \leq N \sum_{j=i}^N P_j$ is valid for each $i$, so
\begin{align*}
 &\tot D^2 h : \scl{\bsz}{\bsz}_{\bsa} - Dh (
\diag (\bsz \bsfl) + \bsfq)\\
 & \quad \geq  \sum_{i=1}^N \Big( \tot \tilde{A}_i -  C \sum_{j=i}^N
 (P_j+\abs{A_j}) \Big)\abs{\bszeta^i}^2 - C \sum_{j=i}^N(P_j+\abs{A_j})\\
 & \quad \geq \tfrac{1}{\Lambda} \sum_{i=1}^N \Big( \tot \tilde{A}_i -  C \sum_{j=i}^N
 (P_j+\abs{A_j}) \Big)\abs{\bsz^i}^2 - C \sum_{j=i}^N(P_j+\abs{A_j}).
 \end{align*}
The choice $\alpha_N \geq 2C + 1$, together with $G_N \geq 1$ and
$|S_N|\leq G_N$ yields
 $\tot \tA_N > C (P_N+\abs{A_N})$. For $i\leq N-1$, we have
 \[
  \tot \tilde{A}_i - C \sum_{j=i}^N (P_j + |A_j|) \geq [\tot \alpha_i^2  - C(1+\alpha_i)]G_i P_i - \sum_{j=i+1}^N (P_j + |A_j|).
 \]
 Next, we observe the fact that $\sum_{j=i+1}^N (P_j
 + \abs{A_j})$ depends only on $\alpha_{i+1},\dots, \alpha_N$, and is
 bounded on the set $[-c,c]^N$ as a function of $y^{i+1},\dots, y^N$.
 Therefore, we
 can choose a sufficiently large $\alpha_i$ so that the left-hand side of the
 previous inequality is positive, and
 continue this process recursively down to $i=1$. This way, we
 obtain a constant $C_0$, depending on $C,c$ and the universal
 constants, so that
\begin{align*}
 \tot D^2 h : \scl{\bsz}{\bsz}_a - Dh (\diag (\bsz \bsfl) + \bsfq)
  \geq  C_0 \abs{\bsz}^2 -C_0.
\end{align*}

- \emph{The Subquadratic part:} With $\alpha_1, \dots, \alpha_N$ now fixed,
$\sum_i |A_i|$ is  bounded on $[-c, c]^N$, as a function of $\bsy$.
Therefore $\eps_0 = C_0/ \sup_{ [-c,c]^N} (\sum_i \abs{A_i})>0$. With
$\eps$ as in
\eqref{equ:BF-approx} assumed to be smaller than $\eps_0$, we
pick $\eps' < \eps_0 - \eps$ and set
\[ \kappa^*= \sup_{\iota\geq 0} ( \kappa(\iota) - \eps' \iota).\]
The sublinear growth of
$\kappa$ ensures that $\kappa^*$ is well-defined in
$[0,\infty)$ and
\begin{align*}
  Dh \, \bsfs  &=   \sum_{i=1}^{n}
  A_i \sfs^i \leq
   \sum_{i=1}^N \abs{A_i} \kappa(\abs{\bsz}^{2})
\leq \tfrac{C_0}{\eps_0} ( \kappa^* + \eps' \abs{\bsz}^2)=
\tfrac{C_0 \eps'}{\eps_0} \abs{\bsz}^2 +
\tfrac{C_0}{\eps_0} \kappa^*.
\end{align*}

Lastly, we combine all of the above estimates  to obtain
\begin{align*}
 \tot D^2 h : \scl{\bsz}{\bsz}_{\bsa} - Dh \bsf
 & \geq
 \tot D^2 h : \scl{\bsz}{\bsz}_{\bsa} -
 Dh\, (\diag (\bsz \bsfl) + \bsfq + \bsfs + \bsfk)
- \eps \abs{Dh} \abs{\bsz}^2\\
&\geq \big( C_0 \tfrac{\eps_0-\eps'}{\eps_0} - \eps \abs{Dh} \big) \abs{\bsz}^2 -C_0 - C_0 \eps_0 \kappa^*-
Dh\, \bsfk.
\end{align*}
Since $ \eps \abs{Dh} \leq  \eps C_0/\eps_0$, it suffices to define $k = C_0+
C_0 \eps_0 \kappa^* + C_1 \abs{\bsfk}$, for some $C_1\geq \abs{Dh}$,  and,
if necessary, scale both $h$ and
$\eps_0$ (yielding $\bar{\eps}$)
to make the coefficient in front of $\abs{\bsz}^2$ equal to $1$.
\end{proof}

\subsubsection{Conclusion of the Proof of Theorem \ref{thm:sufficient}}
Let $\bsg$ and $\bsf$ be two functions which satisfy the conditions
of Theorem \ref{thm:sufficient}, namely $\bsg$ is in $\Ca_{loc, b_0}$ and of
subquadratic growth, $\bsf$ satisfies the conditions (AB) and the
approximate condition (BF).
We start by picking a sequence $(\bsf^m, \bsg^m)$ of
Lipschitz approximations constructed in Proposition \ref{pro:mollification}.
Thanks to the Lipschitz property of all ingredients,
each approximate system  \eqref{equ:BSDEm}
admits a continuous Markovian solution $(\bsv^m, \bsw^m)$
(see, e.g.,  \cite[Theorem 4.1 and Corollary 4.1]{ElKaroui-Peng-Quenez}).
Moreover, the Lipschitz approximations $\bsf^m$ satisfy the condition (AB),
of Definition \ref{def:AB}, possibly with the same positively-spanning set
$\bssa_1,\dots, \bssa_K$, and a possibly different, but $m$-independent,
$\lone$-function $l$. The existence of the latter - as the supremum of a family of finer and
finer mollifications of an integrable function - is guaranteed by the Hardy-Littlewood
maximal theorem. Similarly, the approximations $\bsg^m$
of the terminal condition $\bsg$ satisfy the inequality
$|\bsg^m(\bsx)| \leq \zeta(\bsx)$ uniformly in $m$, for
some smooth function $\zeta$ with $\lim_{\abs{\bsx} \to\infty}
\zeta(\bsx)/\abs{\bsx}^2 = 0$.

Let us first show that each $\bsv^m$ is bounded. Let $\bssa_1, \dots, \bssa_K$ be a positive spanning set from condition (AB). Given $k\in \{1, \cdots, K\}$, we consider the following quadratic BSDE
\[
 d\bar Y^{k,m}_t = - \big[\ell(t) + \tfrac12 |\bar \bsZ^{k,m}_t|^2\big] dt + \bar \bsZ^{k,m}_t dW_t, \quad \bar Y^{k,m}_T= \bssa_k^{\top} \one_N \bsg^m(\bsX_T),
\]
where $\one_N$ denotes the vector of $1$s in $\R^N$. Since $\bsg^m$ is bounded by its construction in Proposition \ref{pro:mollification}, the previous BSDE admits a bounded solution $(\bar Y^{k,m}, \bar \bsZ^{k,m})$. Recall that $(\bsv^m, \bsw^m)$ is a Markovian solution to a Lipschitz BSDE. The comparison theorem for Lipschitz BSDEs (see eg. \cite[Theorem 2.2]{ElKaroui-Peng-Quenez}, whose proof only needs one generator to be Lipschitz) implies that $\bssa_k^{\top} \bsv^m(\cdot, X)\leq \bar Y^{k,m}$, hence $\bssa_k^{\top} \bsv^m(\cdot, \bsX)$ is bounded from above. It remains to use
the following fact: a sequence $\set{\bsv^m}$ in $\R^N$ for
which the sequence $\{ \bssa^{\top} \bsv^m\}$ is bounded from above for each $\bssa$ in some
positive spanning set of $\R^N$, is itself bounded in $\R^N$. Indeed, since the $i$-th canonical base $\mathfrak{e}_i$ for $\R^N$ can be positively spanned by $\mathfrak{e}_i = \lambda^i_1 \bsa_1 + \cdots \lambda^i_K \bsa_K$, we have that the $i$-th component of $\bsv^m$ has the decomposition $\mathfrak{e}_i^\top \bsv^m = \lambda^i_1 \bsa_1^\top \bsv^m + \cdots \lambda^i_K \bsa_K^\top \bsv^m$, which is bounded from above. Similarly argument applied to $-\mathfrak{e}_i$ also shows that the $i$-th component of $\bsv^m$ is bounded from below.

We now show $\{\bsv^m\}$ is bounded uniformly in compacts of $[0,T]\times \R^N$.
Given $k \in \set{1,\dots, K}$, we define
\[ e^m_k(t,\bsx) = \exp\Big( \bssa_k^{\top}
\bsv^m(t,\bsx) - \int_t^T l(s)\, ds\Big).\]
A direct computation yields that for each $(\tx)$, the  drift term in  the
$\PP^{\tx}$-semimartingale decomposition of the process $e^m_k(\cdot,\bsX)$
is given as the integral of
\[    e^m_k (s,\bsX_s) \Big( -\bssa_k^{\top}
\bsf^m(s,\bsX_s, \bsZ^m_s) + l(s) + \tot \abs{\bssa_k^{\top} \bsZ^m_s}^2 \Big), \]
Therefore, by the condition (AB), the process $e^m_k(\cdot,\bsX)$ is a nonnegative local
submartingale. Thanks to the boundedness of $\bsv^m$, it is, in fact, a
uniformly bounded submartigale and we can use the  Markov property to
conclude that
\[ e^m_k (t, \bsx) \leq \bar{e}(t,\bsx), \quad \text{ where }
\bar{e}_k(t,\bsx):=\EE^{\tx}[ \exp( \bssa_k^{\top} \one_N  \zeta(\bsX_T))], \]
Smoothness and the subquadratic growth of $\zeta$ imply that the function
$\bar{e}_k$ is smooth (see \cite[Theorem 12, p.~25]{Friedman}).
We then conclude that the sequence $\{ e^m_k\}$ - and, therefore, also,
$\{\bssa_k^{\top} \bsv^m\}$ -
is bounded from above on compact subsets of $\TR$, uniformly in $m$. Since $\{\bssa_k\}$ positively span $\R^N$, $\{\bsv^m\}$ is uniformly bounded on the same compact subset as well.

When $\bsg$ is bounded, but only the weaker version (wAB) of the condition
(AB) is satisfied, we have from the construction of $\bsf^m$ that
\[
 \bssa^\top_k \bsf^m(t,\bsx,\bsy,\bsz) \leq l(t) +\tfrac12 |\bssa^\top_k \bsz|^2 + \bssa^\top_k \bsz \bsL^m_k(t,\bsx,\bsz) - \bssa^\top_k (\bsL_k\ast \hat{\eta}^m) (t,\Pi^m(\bsx), \Pi^m(\bsz)),
\]
for any $m\in \N$, $(t,\bsx)\in [0,T]\times \R^d$, and $\bsz\in \R^{N\times d}$. Here  $\bsL^m_k$ is defined similarly as in \eqref{equ:fm}, $\hat{\eta}^m(t,\bsx,\bsz) = \bsz \eta^m(t,\bsx,\bsz)$. Since the $\text{supp }\eta^m\subseteq B_{1/m}$ and $\bsL_k$ has at most linear growth, there exists a constant $C$ such that
\[
 \left|\bssa^\top_k(L_k\ast \hat{\eta}^m)(t,\Pi^m(\bsx),\Pi^m(\bsz))\right|\lec \tfrac{1}{m} (1+ m\wedge |\bsz|) \leq C, \quad \text{for all } m.
\]
Combining the previous two estimates, we have
\[
  \bssa^\top_k \bsf^m(t,\bsx,\bsy,\bsz) \leq l(t) + \tfrac12 |\bssa^\top_k \bsz|^2 + \bssa^\top_k \bsz \bsL^m_k(t,\bsx,\bsz),
\]
for a different $l\in \mathbb{L}^1[0,T]$.

Argue as before that the
drift of the process $e^m_k(\cdot,\bsX)$
is given as the integral of
\[    e^m_k (u,\bsX_u) \Big( p^m_k(u,\bsX_u) - \bssa_k^{\top} \bsZ^m_u \bsL^m_k(u, \bsX_u, \bsZ^m_u)\Big),\]
for some functions $p^m_k\geq 0$,
while its martingale-part admits the $dW$-integrand of the form
\[    e^m_k (u,\bsX_u)\, \bssa_k^{\top} \bsZ^m_u.\]
Boundedness of $e^m_k$ and $\bsL^m_k$ allows us to conclude that
$e^m_k(\cdot,\bsX)$ is a submartingale under an equivalent measure (given
as the Girsanov transformation with drift $-\bsL^m_k$),  with
the terminal value bounded  uniformly in  $m$.
The rest is as before, and leads to a similar
conclusion, except that now the boundedness is uniform in $(\tx)$.

Moreover,
using only the function $l$ and the positive spanning set $\bssa_1,\dots,
\bssa_K$ of condition (wAB), we well as the $\linf$-bounds on $\bsg$, one
can produce an a-priori bound $c$ on $\{\bsv^m\}$. This way we obtain a
sequence - namely $\seq{c}$ with each
$c_n=c$ -  independently of
the other constants $\seq{C}$, $\seq{q}$
and $\seq{\kappa}$ appearing in the approximate condition
(BF). This way, we can avoid circularity in the
definition the sequence $\seq{\bar{\eps}}$  of
Proposition \ref{pro:h-exists}, that enforces the `smallness' condition on the
`error' term in the (BF)-decomposition of $\bsf$.

Whether $\bsg$ is bounded or unbounded, we have produced a sequence
$\seq{c}$ of a-priori bounds that can be used together with the sequences
$\{\bsf^m\}$ and $\{\bsg^m\}$ in Proposition \ref{pro:h-exists} to
construct local $\seq{c}$-Lyapunov pairs $\{h_n, k^m_n\}$ for $\bsf^m$ with
uniformly $\el^{q_n}$-bounded $k$-parts.
By Theorem \ref{thm:existence}, this is enough to guarantee the
existence of locally H\"{o}lderian solution $(\bsv,\bsw)$.

To establish uniqueness, we note that the sequence $\{k^m_n\}$ from
Proposition \ref{pro:h-exists} will be constant (both in $n$ and $m$) under
the condition (a) of Theorem \ref{thm:sufficient}. Moreover, due to
the absence of dependence on $n$ in (b), a  $c$-Lyapunov pair $(h,k)$
with constant $k$ can be constructed for any $c$.
The fact that any bounded continuous solution
is a-priori bounded by the constant $c$ constructed above, together with
the local Lipschitz condition in (c), is enough to apply the abstract
uniqueness result Theorem \ref{thm:unique}, part (2).

\subsection{Proofs for examples}

\subsubsection{Proof of Theorem \ref{thm:equilibrium}}
Consider the following system of BSDE
\begin{equation}\label{equ:equil-trans}
 d\tilde{\bsY}_t = \tilde{\bsf}(t, X_t, \tilde{\bsZ}_t) \, dt + \tilde{\bsZ}_t \sigma(t, X_t) \, dW_t, \quad \tilde{Y}_T= \bsg(X_T),
\end{equation}
where
\[
 \tilde{\bsf}(t, x, \tilde{\bsz}) = \bsf(\tilde{\bsz} \sigma(t,x)) \quad \text{ and } \quad \bsf(\bsz) = - \tfrac{1}{2} \bnu^2 + \tfrac{1}{2} \bsA[\bmu]^2 - \bsA[\bmu] \bmu \quad \text{with } \bsz= (\bmu, \bnu).
\]
We will use Theorem \ref{thm:sufficient} to establish the existence of a H\"{o}ldearian solution $(\tilde{\bsv}, \tilde{\bsw})$. Then $(\tilde{\bsv}, \tilde{\bsw}\sigma)$ is a H\"{o}ldearian solution of \eqref{equ:equilibrium}.

Let us first verify the condition (AB).
Let $\sigma_i$, $i=1,2$, be the $i$-th column vector of $\sigma$. Denote $\tilde{\mu} = \tilde{\bsz}\sigma_1$ and $\tilde{\nu} = \tilde{\bsz}\sigma_2$. Then
\[
 \tilde{\bsf} = - \tfrac{1}{2} \tilde{\bnu}^2 + \tfrac{1}{2} \bsA[\tilde{\bmu}]^2 - \bsA[\tilde{\bmu}]\tilde{\bmu}.
\]
Let $(\mathfrak{e}_1, \dots, \mathfrak{e}_N)$ be the standard Euclidean basis of $\R^N$, and $\bssa_{N+1}= (\alpha^1,\dots, \alpha^N)$ where the sequence of constants $\{\alpha^i\}$ appears after \eqref{equ:equilibrium}. Then the set $(-\mathfrak{e}_1, \dots, -\mathfrak{e}_N, \bssa_{N+1})$ positively span $\R^N$. Moreover,
\begin{align*}
 -\mathfrak{e}_i^\top \tilde{\bsf}\leq \tfrac12 (\tilde{\mu}^i)^2 +\tfrac12 (\tilde{\nu}^i)^2 \quad \text{ for } 1\leq i\leq N \quad \text{and} \quad \bssa_{N+1}^\top \tilde{\bsf} = -\tfrac{1}{2} \bsA[\tilde{\bnu}^2]-\tfrac{1}{2} \bsA[\tilde{\bmu}]^2\leq 0.
\end{align*}
In order to verify the condition (BF), let us introduce an invertible linear transformation on $\R^N$ via
\begin{equation}\label{equ:equ-oY}
 \overline{Y}^i = \tilde{Y}^i -\tilde{Y}^N, \quad i=1,\cdots, N-1,  \quad \overline{Y}^N= \tilde{Y}^N,
\end{equation}
and $\overline{\bmu}, \overline{\bnu}$, and $\overline{\bsg}$ in a similar manner.
A simple calculation reveals  the dynamics of
$\overline{\bsY}$ as
\[
 d \overline{\bsY}_t = \overline{\bmu}_t dB_t + \overline{\bnu}_t dB^\bot_t -
 \overline{\bsf}(\overline{\bmu}_t, \overline{\bnu}_t)\, dt, \quad \overline{\bsY}_T =
 \overline{\bsg}(B_T, W_T),
\]
where  $\overline{\bsf}$ is given by
\begin{align*}
 \overline{f}^i &= -\tfrac{1}{2} \overline{\nu}^i (\overline{\nu}^i + 2 \overline{\nu}^N) - \overline{\mu}^i \big( \sum_{j=1}^{N-1} \alpha^j \overline{\mu}^j + \overline{\mu}^N\big), \quad i=1,\dots, N-1, \text{ and}\\
 \overline{f}^N &= -\tfrac{1}{2} (\overline{\nu}^N)^2 +\tfrac12 \big(\sum_{j=1}^{N-1} \alpha^j \overline{\mu}^j + \overline{\mu}^N\big)^2 - \overline{\mu}^N \big(\sum_{j=1}^{N-1} \alpha^j \overline{\mu}^j + \overline{\mu}^N\big).
\end{align*}
Using this, explicit, expression, one easily checks that $\overline{\bsf}$ satisfies the condition (BF) of Definition \ref{def:BF}. On the other hand, since $\tilde{\bsf}$ already satisfies the condition (AB), after the linear transformation of $\R^N$, $\overline{\bsf}$ satisfies (AB) as well (cf. Remark \ref{rem:wAB}).
Therefore the existence and uniqueness of a bounded continuous solution to \eqref{equ:equ-oY} (hence \eqref{equ:equil-trans} and \eqref{equ:equilibrium}) follows from Theorem \ref{thm:sufficient}. Finally, when the terminal condition is bounded, combining Theorem \ref{thm:unique} part (1) and \cite[Theorem 1.6 (2)$\implies$(1)]{Kardaras-Xing-Zitkovic}, we confirm the existence of an equilibrium. Conversely any equilibrium with continuous certainty equivalence functions corresponds to a continuous Markovian solution of \eqref{equ:equilibrium}, which is already proven to be unique.

\subsubsection{Proof of Proposition \ref{pro:Darling}}
We define the approximated driver $\bsf^m$ by
\[(\bsf^m(\bsy, \bsz))^k  = \tfrac12 \sum_{i,j}
{\Gamma}^k_{ij} (\bsy) \Big( \Pi^m(\bsz)^{\top} \Pi^m(\bsz) \Big)_{ij},
\text{ for $\bsy\in \R^N$, $\bsz\in
\R^{N\times d}$},\]
where $\Pi^m(\bsz) = \tfrac{|\bsz|\wedge m}{|\bsz|}\bsz$ for $m\in
\mathbb{N}$.
Also, we construct a sequence
$(\bsg^m)$ of
Lipschitz approximation of $\bsg$ as in Proposition
\ref{pro:mollification}. Mollification does not increase the
$\linf$-norm and the set
sub-level set $M_0 = \phi^{-1}((-\infty,0])$
is convex; therefore, the images of all $\bsg^m$ remain inside $M_0$,
i.e.,  $\phi^{-1}(\bsg^m(x))\leq 0$, for all $x$ and $m$.

The globally-Lipschitz structure of its ingredients
implies that
the approximated system
\[
 d\bsY^m_t = -\bsf^m(\bsY^m_t, \bsZ^m_t) \, dt + \bsZ^m_t \, d\bsW_t, \quad
 \bsY^m_T = \bsg^m(W_T)
\]
admits a unique H\" olderian solution $(\bsv^m, \bsw^m)$.
To show that  the sequence $\set{\bsv^m}$ is uniformly
bounded, more precisely, that $\bsv^m \in M_0$ for all $m$, we define
the stopping time
$\tau=\inf\sets{s\geq t}{ \phi(\bsY^m_s)\leq \eps }\wedge T$,
for given
$t \in [0,T]$ and $\eps>0$.
By \itos{} formula, we obtain
\begin{multline*}
 \EE_{t }[\phi(\bsY^m(\tau)) ] - \phi(\bsY^m_{t}) = \\
 =\EE_{t} \big[\int_t^{\tau} \tfrac12
 D^2\phi(\bsY^m_s):  (\bsZ^m_s)^{\top} \bsZ^m_s - \tfrac12 D\phi(\bsY^m_s)
 \Gamma(\bsY^m_s)
 \Pi^m(\bsZ^m_s)^{\top} \Pi^m(\bsZ^m_s) \, ds\big]\\
 =
 \tot \EE_{t} \left[ \int_{t}^{\tau}
 \left(  \Hess \phi( \Pi^m(\bsZ^m_s), \Pi^m(\bsZ^m_s))
 + \tot \tfrac{|\bsZ_s| - |\bsZ_s|\wedge m}{|\bsZ_s|} D^2 \phi(\bsY^m_s)
 (\bsZ^m_s)^{\top} \bsZ^m_s\right)\, ds\right],
\end{multline*}
where the local martingale term can be dealt with by stopping, using the
fact that $D\phi$ is bounded on compacts and
$\bsZ^m \in H^2$.
Double convexity of $\phi$ implies that
both terms inside the expectation above are nonnegative, so that
\[
 \phi(\bsY^m_{t}) \leq \EE_{t}[\phi( \bsY^m_{\tau})], \text{ a.s.}
\]
Since $\phi(\bsY^m_T)\leq 0<\eps$
the stopping time $\tau$ gets realized strictly before $T$.
 Therefore,
the right-hand side above is bounded from
above by $\eps$, immediately implying that $\phi(\bsY^m_t)\leq \eps$, a.s.,
and establishing the claim about boundedness of $\bsv^m$.

Once the a-priori boundedness of the approximating sequence $\{\bsv^m\}$ is established, we can use the strict
geodesic convexity of $\phi$ on some neighborhood of $M_0$ to conclude that
$\phi$ can be suitably redefined on a complement of a neighborhood of $M_0$
to serve, together with $k=0$, as a $c$-Lyapunov pair, for large enough
$c$.  Then the existence of a locally
H\"{o}lderian solution to \eqref{equ:BSDE-Darling}, with $\bsY_T=\bsg(W_T)$,
readily follows from
Theorem \ref{thm:existence}.

\subsubsection{Proof of Proposition \ref{pro:BF-game}}

Let us first argue that a solution $(\bsY, \bsZ)$ with $(\hat{\bmu}(\bsZ), \hat{\bnu}(\bsZ)) \in \bmo^2$
to \eqref{equ:BSDE-BF} corresponds to a Nash
equilibrium.  For a given $\bmu\in \bmo$, we consider the process
\[
 \tilde{Y}^1_t = \EE^{\mu, \hat{\nu}}_t \Big[\int_t^T \big(h^1(\bsX_u) +
 \tfrac12 |\bmu_u|^2 + \theta \bmu^\top_u \hat{\bnu}_u \big)\, du + g^1(\bsX_T)\Big].
\]
Thanks to the fact that
$h^1$ and $g^1$ are both bounded and $\bmu, \hat{\bnu}\in \bmo$, $\tilde{Y}^1$ is bounded. Since  $\bsX$ generates the
same filtration as $\bsW$, the martingale representation implies the existence of a process $\tilde{\bsZ}^1\in \sP^2$
such that
\[
 \tilde{Y}^1_t = g^1(\bsX_T) + \int_{t}^T \big(h^1(\bsX_u) + \tfrac12
 |\bmu_u|^2 + \theta \bmu^\top_u \hat{\bnu}_u\big) \, du - \int_{t}^T
 \tilde{\bsZ}^1_u \, d\bsW^{\mu, \hat{\nu}}_u.
\]
If we subtract the corresponding
component $Y^1$ of the solution
$\bsY$ of \eqref{equ:BSDE-BF} from it, we obtain
\[
 \tilde{Y}^1_0 - Y_0^1 = \int_0^T (L^1(\bmu_u, \hat{\bnu}_u,
 \bsZ_u)
 - L^1(\hat{\bmu}_u, \hat{\bnu}_u, \bsZ_u))\, du - \int_0^T
 (\tilde{\bsZ}^1_u - \bsZ^1_u) \, d\bsW^{\mu, \hat{\nu}}_u.
\]
Since $\hat{\bmu}$ is the minimizer of $L^1(\cdot, \hat{\bnu}, \bsZ)$, and
both $Y^1$ and $\tilde{Y}^1$ are bounded, a localization argument yields
$\tilde{Y}^1_0 \geq \hat{Y}^1_0$, confirming the first inequality in
\eqref{equ:Nash-equ}. A similar argument applies to the cost of the second
player as well.

For the existence and uniqueness of $(\bsY, \bsZ)$,
we verify all conditions in Theorem \ref{thm:sufficient}.
Introduce an invertible linear transformation on $\R^2$ via $\tilde{y}^1 = y^1 - y^2$ and $\tilde{y}^2 = y^2$. Define $\tilde{\bsz}$, $\tilde{\bsg}$ similarly, and consider the BSDE
\begin{equation}\label{equ:BF-trans}
 d\tilde{\bsY}_t = - \tilde{\bsf}(X_t, \tilde{\bsZ}_t)\, dt + \tilde{\bsZ}_t \, dW_t, \quad \tilde{\bsY}_T = \tilde{\bsg}(X_T),
\end{equation}
where
\[\tilde{f}^1(\bsx, \tilde{\bsz})= \tfrac{2\theta-1}{2(1+\theta)(1-\theta)} \tilde{\bsz}^1 \cdot (\tilde{\bsz}^1+ 2 \tilde{\bsz}^2) + h^1(x) - h^2(x) \quad \text{and} \quad \tilde{f}^2= f^2.\]
Using this explicit expression, one easily checks that $\tilde{\bsf}$ satisfies the condition (BF).

Next we show that $\bsf$ satisfies the condition (wAB), hence $\tilde{\bsf}$ satisfies the same condition as well. To this end, calculation shows that
\begin{equation}\label{equ:L1-ubb}
\begin{split}
 L^1(\bsz) &= -\tfrac{\theta^2}{2(1+\theta)^2 (1-\theta)^2} |\bsz^1+ \bsz^2|^2 - \tfrac{1-2\theta}{2(1-\theta)^2 (1+\theta)} \bsz^1 \cdot ((1-\theta) \bsz^1 + 2\bsz^2)\\
 &\leq - \tfrac{1-2\theta}{2(1-\theta)^2 (1+\theta)} \bsz^1 \cdot ((1-\theta) \bsz^1 + 2\bsz^2).
\end{split}
\end{equation}
A similar inequality holds for $L^2(\bsz)$. Therefore using the fact that $\bsh$ is bounded, we obtain functions $\bsL_i$ such that $\mathfrak{e}_i^\top \bsf \leq h^i+\mathfrak{e}_i^\top \bsz \bsL_i$ for $i=1,2$.

When $\theta\leq 1/2$, using the first identity above, we obtain
\begin{align*}
 L^1(\bsz) + L^2(\bsz) &= \tfrac{\theta^2+\theta-1}{(1+\theta)^2 (1-\theta)^2} |\bsz^1+\bsz^2|^2 + \tfrac{1-2\theta}{2(1-\theta)^2}(|\bsz^1|^2 + |\bsz^2|^2)\\
 & \geq \tfrac{\theta^2+\theta-1}{(1+\theta)^2 (1-\theta)^2} |\bsz^1+\bsz^2|^2.
\end{align*}
Hence, for $\bssa_3=(-1, -1)$, we have
\[
 \bssa_3^\top \bsf \leq 2 \|h^1+h^2\|_{\linf} + \tfrac{|\theta^2+\theta-1|}{(1+\theta)^2 (1-\theta)^2} |\bsz^1+\bsz^2|^2.
\]
As a result, $\bsf$ satisfies the condition (wAB) with the positively spanning set $(\mathfrak{e}_1, \mathfrak{e}_2, \bssa_3)$.

When $\theta>1$, consider $\bssa_3=(-\theta, 1)$ and $\bssa_4=(1, -\theta)$. We have from \eqref{equ:munu*} that
\[
 \hat{\bmu} = \tfrac{1}{(1-\theta)(1+\theta)}(\theta \bsz^2 - \bsz^1) = \tfrac{1}{(\theta-1)(1+\theta)} \bssa_4^\top\bsz \quad \text{and} \quad \hat{\bnu}= \tfrac{1}{(1-\theta)(1+\theta)} (\theta \bsz^1 - \bsz^2) = \tfrac{1}{(\theta-1)(1+\theta)} \bssa_3^\top\bsz.
\]
On the other hand, for $\theta>1$, we have
\begin{align*}
 \bssa_3^\top \bsL &= -\theta L^1+L^2 = -\tfrac{\theta}{2} |\hat{\bmu}+ \hat{\bnu}|^2 + (\tfrac12-\theta) |\hat{\bnu}|^2 + (2\theta-1) \hat{\bnu}^\top (\hat{\bmu} + \tfrac{\theta+2}{2}\hat{\bnu})\\
 &\leq \tfrac{2\theta-1}{(\theta-1)^2(1+\theta)^2} (\bssa_3^\top\tilde{\bsz})^\top (\tfrac{\theta+2}{2} \bssa_3+ \bssa_4)^\top \bsz.
\end{align*}
A similar inequality holds for $\bssa_4^\top \bsL$. Combining the previous two estimates together with boundedness of $\bsh$, we confirm that $\bsf$ satisfies the condition (wAB) with the set of vectors $(\mathfrak{e}_1, \mathfrak{e}_2, \bssa_3, \bssa_4)$, which positively span $\R^2$ when $\theta>1$.

Finally we conclude from Theorem \ref{thm:sufficient} that the system \eqref{equ:BF-trans} (hence \eqref{equ:BSDE-BF}) admits a unique bounded continuous solution.

\subsubsection{Proof of Proposition \ref{pro:EH-game}}
It is clear that the generator of system \eqref{equ:EH-game} satisfies the condition (BF) and (wAB) when $\bsg$ is bounded ((AB) when $\bsg$ is unbounded). Then the existence (and uniqueness for bounded $\bsg$) readily follows from Theorem \ref{thm:sufficient}. Given the bounded continuous solution, $\bsZ=\bsw(\cdot, X)\in \bmo$, hence \cite[Proposition 5.1]{El-Karoui-Hamadene} concludes that $(\hat{\bmu}, \hat{\bnu})$ is a Nash equilibrium with value $(\exp(v^1), \exp(v^2))$.

\bibliographystyle{amsalpha}
\bibliography{XinZit}

\def\cprime{$'$} \def\cprime{$'$}
\providecommand{\bysame}{\leavevmode\hbox to3em{\hrulefill}\thinspace}
\providecommand{\MR}{\relax\ifhmode\unskip\space\fi MR }
\providecommand{\MRhref}[2]{%
  \href{http://www.ams.org/mathscinet-getitem?mr=#1}{#2}
}
\providecommand{\href}[2]{#2}
\begin{thebibliography}{CHKP14}

\bibitem[Aro67]{Aro67}
D.~G. Aronson, \emph{Bounds for the fundamental solution of a parabolic
  equation}, Bull. Amer. Math. Soc. \textbf{73} (1967), 890--896.

\bibitem[Aub98]{Aub98a}
T.~Aubin, \emph{Some nonlinear problems in {R}iemannian geometry}, Springer
  Monographs in Mathematics, Springer-Verlag, Berlin, 1998.

\bibitem[BE13]{Briand-Elie}
P.~Briand and R.~Elie, \emph{A simple constructive approach to quadratic
  {BSDE}s with or without delay}, Stochastic Process. Appl. \textbf{123}
  (2013), 604--618.

\bibitem[BEK13]{Barrieu-El-Karoui}
P.~Barrieu and N.~El~Karoui, \emph{Monotone stability of quadratic
  semimartingales with applications to unbounded general quadratic {BSDE}s},
  Ann. Probab. \textbf{41} (2013), 1831--2853.

\bibitem[BF00]{Bensoussan-Frehse-Nplayer}
A.~Bensoussan and J.~Frehse, \emph{Stochastic games for {$N$} players}, J.
  Optim. Theory Appl. \textbf{105} (2000), no.~3, 543--565, Special Issue in
  honor of Professor David G. Luenberger.

\bibitem[BF02]{Bensoussan-Frehse}
\bysame, \emph{Smooth solutions of systems of quasilinear parabolic equations},
  ESAIM Control Optim. Calc. Var. \textbf{8} (2002), 169--193.

\bibitem[BH06]{Briand-Hu}
P.~Briand and Y.~Hu, \emph{{BSDE} with quadratic growth and unbounded terminal
  value}, Probab. Theory and Related Fields \textbf{136} (2006), 604--618.

\bibitem[BH08]{BriHu08}
\bysame, \emph{Quadratic {BSDE}s with convex generators and unbounded terminal
  conditions}, Probab. Theory Related Fields \textbf{141} (2008), no.~3-4,
  543--567.

\bibitem[Bis73]{Bis73}
J.-M. Bismut, \emph{Conjugate convex functions in optimal stochastic control},
  J. Math. Anal. Appl. \textbf{44} (1973), 384--404.

\bibitem[BL97]{Barles-Lesigne}
G.~Barles and E.~Lesigne, \emph{S{DE}, {BSDE} and {PDE}}, Backward stochastic
  differential equations ({P}aris, 1995--1996), Pitman Res. Notes Math. Ser.,
  vol. 364, Longman, Harlow, 1997, pp.~47--80.

\bibitem[Bla05]{Bla05}
F.~Blache, \emph{Backward stochastic differential equations on manifolds},
  Probab. Theory Related Fields \textbf{132} (2005), no.~3, 391--437.

\bibitem[Bla06]{Bla06}
\bysame, \emph{Backward stochastic differential equations on manifolds. {II}},
  Probab. Theory Related Fields \textbf{136} (2006), no.~2, 234--262.

\bibitem[BM01]{Bally-Matoussi}
V.~Bally and A.~Matoussi, \emph{Weak solutions for {SPDE}s and backward doubly
  stochastic differential equations}, J. Theoret. Probab. \textbf{14} (2001),
  no.~1, 125--164.

\bibitem[CD15]{Cetin-Danilova}
U.~Cetin and A.~Danilova, \emph{Markovian nash equilibrium in financial markets
  with asymmetric information and related forward-backward systems}, to appear
  in Ann. App. Prob., 2015.

\bibitem[CDY92]{Chang-Ding-Ye}
K.-C. Chang, W.~Y. Ding, and R.~Ye, \emph{Finite-time blow-up of the heat flow
  of harmonic maps from surfaces}, J. Differential Geom. \textbf{36} (1992),
  no.~2, 507--515.

\bibitem[CHKP14]{Cheridito-et.al}
P.~Cheridito, U.~Horst, M.~Kupper, and T.~A. Pirvu, \emph{Equilibrium pricing
  in incomplete markets under translation invariant preferences}, to appear in
  Math. Oper. Res., 2014.

\bibitem[CL15]{ChoLar14}
J.H. Choi and K.~Larsen, \emph{{T}aylor approximation of incomplete {R}adner
  equilibrium models}, Finance Stoch. \textbf{19} (2015), no.~3, 653--679.

\bibitem[CM97]{ChiMan97}
R.~Chitashvili and M.~Mania, \emph{On functions transforming a {W}iener process
  into a semimartingale}, Probab. Theory Related Fields \textbf{109} (1997),
  no.~1, 57--76.

\bibitem[CN14]{Cheridito-Nam}
P.~Cheridito and K.~Nam, \emph{{BSDE}s with terminal conditions that have
  bounded {M}alliavin derivative}, J. Funct. Anal. \textbf{266} (2014), no.~3,
  1257--1285.

\bibitem[CN15]{CheNam15}
\bysame, \emph{{Multidimensional quadratic and subquadratic BSDEs with special
  structure}}, to appear in Stochastics, 2015.

\bibitem[Dar95]{Darling}
R.W. Darling, \emph{Constructing {G}amma-martingale with prescribed limit,
  using backward {SDE}}, Ann. Probab. \textbf{23} (1995), no.~3, 1234--1261.

\bibitem[Dav54]{Dav54}
C.~Davis, \emph{Theory of positive linear dependence}, Amer. J. Math.
  \textbf{76} (1954), 733--746.

\bibitem[Del03]{Del03}
Fran\c{c}ois Delarue, \emph{Estimates of the solutions of a system of
  quasi-linear {PDE}s. {A} probabilistic scheme}, S\'eminaire de
  {P}robabilit\'es {XXXVII}, Lecture Notes in Math., vol. 1832, Springer,
  Berlin, 2003, pp.~290--332.

\bibitem[DHB11]{DelHuBao11}
F.~Delbaen, Y.~Hu, and X.~Bao, \emph{Backward {SDE}s with superquadratic
  growth}, Probab. Theory Related Fields \textbf{150} (2011), no.~1-2,
  145--192.

\bibitem[EKH03]{El-Karoui-Hamadene}
N.~El~Karoui and S.~Hamad{\`e}ne, \emph{B{SDE}s and risk-sensitive control,
  zero-sum and nonzero-sum game problems of stochastic functional differential
  equations}, Stochastic Process. Appl. \textbf{107} (2003), no.~1, 145--169.

\bibitem[EKPQ97]{ElKaroui-Peng-Quenez}
N.~El~Karoui, S.~Peng, and M.~C. Quenez, \emph{Backward stochastic differential
  equations in finance}, Math. Finance \textbf{7} (1997), no.~1, 1--71.

\bibitem[{\'E}me89]{Eme89}
M.~{\'E}mery, \emph{Stochastic calculus in manifolds}, Universitext,
  Springer-Verlag, Berlin, 1989, With an appendix by P.-A. Meyer.

\bibitem[ES64]{EelSam64}
J.~Eells and J.~H. Sampson, \emph{Harmonic mappings of {R}iemannian manifolds},
  Amer. J. Math. \textbf{86} (1964), 109--160.

\bibitem[ET15]{Espinosa-Touzi}
G.-E. Espinosa and N.~Touzi, \emph{Optimal investment under relative
  performance concerns}, Math. Finance \textbf{25} (2015), no.~2, 221--257.

\bibitem[Eva98]{evans}
L.~C. Evans, \emph{Partial differential equations}, Graduate Studies in
  Mathematics, vol.~19, American Mathematical Society, Providence, RI, 1998.

\bibitem[FdR11]{FreRei11}
C.~Frei and G.~dos Reis, \emph{A financial market with interacting investors:
  does an equilibrium exist?}, Math. Financ. Econ. \textbf{4} (2011), no.~3,
  161--182.

\bibitem[Fre88]{Fre88}
Jens Frehse, \emph{Remarks on diagonal elliptic systems}, Partial differential
  equations and calculus of variations, Lecture Notes in Math., vol. 1357,
  Springer, Berlin, 1988, pp.~198--210.

\bibitem[Fre14]{Frei-splitting}
C.~Frei, \emph{Splitting multidimensional {BSDE}s and finding local
  equilibria}, Stochastic Process. Appl. \textbf{124} (2014), 2654--2671.

\bibitem[Fri64]{Friedman}
A.~Friedman, \emph{Partial differential equations of parabolic type},
  Prentice-Hall, Inc., Englewood Cliffs, N.J., 1964.

\bibitem[HP06]{HuPen06}
Y.~Hu and S.~Peng, \emph{On the comparison theorem for multidimensional
  {BSDE}s}, C. R. Math. Acad. Sci. Paris \textbf{343} (2006), no.~2, 135--140.

\bibitem[Hsu02]{Hsu02}
E.~P. Hsu, \emph{Stochastic analysis on manifolds}, Graduate Studies in
  Mathematics, vol.~38, American Mathematical Society, Providence, RI, 2002.

\bibitem[HT15]{HuTan15}
Y.~Hu and S.~Tang, \emph{Multi-dimensional backward stochastic differential
  equations of diagonally quadratic generators}, to appear in Stochastic
  Process. Appl., 2015.

\bibitem[IKO62]{Ilin-et-al}
A.~M. Il{\cprime}in, A.~S. Kala{\v{s}}nikov, and O.~A. Ole{\u\i}nik,
  \emph{Second-order linear equations of parabolic type}, Russian Math. Surveys
  \textbf{17} (1962), no.~3 (105), 3--146.

\bibitem[Joh78]{Joh78}
F.~John, \emph{Partial differential equations}, third ed., Applied Mathematical
  Sciences, vol.~1, Springer-Verlag, New York-Berlin, 1978.

\bibitem[Ken90]{Kendall}
W.~S. Kendall, \emph{Probability, convexity, and harmonic maps with small
  image. {I}. {U}niqueness and fine existence}, Proc. London Math. Soc. (3)
  \textbf{61} (1990), no.~2, 371--406.

\bibitem[Kob00]{Kobylanski}
M.~Kobylanski, \emph{Backward stochastic differential equations and partial
  differential equations with quadratic growth}, Ann. Probab. \textbf{28}
  (2000), no.~2, 558--602.

\bibitem[KP16]{Kramkov-Pulido}
D.~Kramkov and S.~Pulido, \emph{A system of quadratic {BSDE}s arising in a
  price impact model}, To appear in Ann. Appl. Probab., 2016.

\bibitem[KX{\v{Z}}15]{Kardaras-Xing-Zitkovic}
C.~Kardaras, H.~Xing, and {\v{Z}}itkovi{\' c}, \emph{Incomplete stochastic
  equilibria with exponential utilities: close to {P}areto optimality}, Working
  paper, 2015.

\bibitem[Lej02]{Lejay}
A.~Lejay, \emph{B{SDE} driven by {D}irichlet process and semi-linear parabolic
  {PDE}. {A}pplication to homogenization}, Stochastic Process. Appl.
  \textbf{97} (2002), no.~1, 1--39.

\bibitem[Lie96]{Lie96}
G.~M. Lieberman, \emph{Second order parabolic differential equations}, World
  Scientific Publishing Co. Inc., River Edge, NJ, 1996.

\bibitem[LSM97]{Lepeltier-SanMartin}
J.~P. Lepeltier and J.~San~Martin, \emph{Backward stochastic differential
  equations with continuous coefficient}, Statist. Probab. Lett. \textbf{32}
  (1997), no.~4, 425--430.

\bibitem[LSU67]{LadSolUra67}
O.~A. Lady{\v{z}}enskaja, V.~A. Solonnikov, and N.~N. Ural{\cprime}ceva,
  \emph{Linear and quasilinear equations of parabolic type}, Translated from
  the Russian by S. Smith. Translations of Mathematical Monographs, Vol. 23,
  American Mathematical Society, Providence, R.I., 1967.

\bibitem[MX08]{Matoussi-Xu}
A.~Matoussi and M.~Xu, \emph{Sobolev solution for semilinear {PDE} with
  obstacle under monotonicity condition}, Electron. J. Probab. \textbf{13}
  (2008), no. 35, 1035--1067.

\bibitem[Pen99]{Pen99}
S.~Peng, \emph{Open problems on backward stochastic differential equations},
  Control of distributed parameter and stochastic systems ({H}angzhou, 1998),
  Kluwer Acad. Publ., Boston, MA, 1999, pp.~265--273.

\bibitem[PP90]{Pardoux-Peng}
{\'E}.~Pardoux and S.~Peng, \emph{Adapted solution of a backward stochastic
  differential equation}, Systems Control Lett. \textbf{14} (1990), no.~1,
  55--61.

\bibitem[PP92]{ParPen92}
\bysame, \emph{Backward stochastic differential equations and quasilinear
  parabolic partial differential equations}, Stochastic partial differential
  equations and their applications ({C}harlotte, {NC}, 1991), Lecture Notes in
  Control and Inform. Sci., vol. 176, Springer, Berlin, 1992, pp.~200--217.

\bibitem[Str81]{Str81}
M.~Struwe, \emph{On the {H}\"older continuity of bounded weak solutions of
  quasilinear parabolic systems}, Manuscripta Math. \textbf{35} (1981),
  no.~1-2, 125--145.

\bibitem[Sub91]{Sub91}
A.~Subrahmanyam, \emph{Risk aversion, market liquidity, and price efficiency},
  Rev. Financ. Stud. \textbf{4} (1991), no.~3, 417-- 441.

\bibitem[SV06]{StrVar06}
D.~W. Stroock and S.~R.~S. Varadhan, \emph{Multidimensional diffusion
  processes}, Classics in Mathematics, Springer-Verlag, Berlin, 2006, Reprint
  of the 1997 edition.

\bibitem[Tan03]{Tan03}
S.~Tang, \emph{General linear quadratic optimal stochastic control problems
  with random coefficients: linear stochastic {H}amilton systems and backward
  stochastic {R}iccati equations}, SIAM J. Control Optim. \textbf{42} (2003),
  no.~1, 53--75 (electronic).

\bibitem[Tev08]{Tevzadze}
R.~Tevzadze, \emph{Solvability of backward stochastic differential equations
  with quadratic growth}, Stochastic Process. Appl. \textbf{118} (2008), no.~3,
  503--515.

\bibitem[Udr94]{Udr94}
C.~Udri{\c{s}}te, \emph{Convex functions and optimization methods on
  {R}iemannian manifolds}, Mathematics and its Applications, vol. 297, Kluwer
  Academic Publishers Group, Dordrecht, 1994.

\bibitem[Wid71]{Widman1971}
K.-O. Widman, \emph{H\"older continuity of solutions of elliptic systems},
  Manuscripta Math. \textbf{5} (1971), 299--308.

\bibitem[Zha12]{Zha12}
Y.~Zhao, \emph{Stochastic equilibria in a general class of incomplete brownian
  market environments}, Ph.D. thesis, The University of Texas at Austin, 2012.

\bibitem[{\v{Z}}it12]{Zit12}
G.~{\v{Z}}itkovi{\'{c}}, \emph{An example of a stochastic equilibrium with
  incomplete markets}, Finance and Stochastics \textbf{16} (2012), no.~2,
  177--206.

\end{thebibliography}
\end{document}